\DeclareSymbolFontAlphabet{\amsmathbb}{AMSb}%
\DeclareSymbolFontAlphabet{\amsmathbb}{AMSb}
\newcommand{\bgv}[1]{\textcolor{violet}{#1}}
\definecolor{lightblue}{rgb}{0.22,0.45,0.70}
\newcommand\qan{\quad\hbox{and}\quad}
\newcommand\cero{\boldsymbol{0}}
\newcommand{\norm}[1]{\left\|#1\right\|}
\newcommand\vdiv{\mathop{\mathrm{div}}\nolimits}
\newcommand\bcurl{\mathop{\mathbf{curl}}\nolimits}
\newcommand\bdiv{\mathop{\mathbf{div}}\nolimits}
\newcommand\tr{\mathop{\mathrm{tr}}\nolimits}
\renewcommand{\div}{\operatorname*{div}}
\newcommand{\beps}{\boldsymbol{\varepsilon}}
\newcommand\vrot{\mathop{\mathrm{rot}}\nolimits}
\newcommand{\bF}{\mathbf{F}}
\newcommand{\bS}{\mathbf{S}}
\newcommand{\bB}{\mathbf{B}}
\newcommand{\bC}{\mathbf{C}}
\newcommand{\bbS}{\amsmathbb{S}}
\newcommand{\bbR}{\amsmathbb{R}}
\newcommand\RR{\bbR}
\newcommand{\Om}{\Omega}
\newcommand{\bu}{\boldsymbol{u}}
\newcommand{\bn}{\boldsymbol{n}}
\newcommand{\bv}{\boldsymbol{v}}
\newcommand{\bw}{\boldsymbol{w}}
\newcommand{\bx}{\boldsymbol{x}}
\newcommand{\bz}{\boldsymbol{z}}
\newcommand{\bp}{\mathbf{p}}
\newcommand{\bm}{\mathbf{m}}
\newcommand{\ff}{\boldsymbol{f}}
\newcommand{\bnabla}{\boldsymbol{\nabla}}
\newcommand{\bkappa}{\boldsymbol{\kappa}}
\newcommand{\bxi}{\boldsymbol{\xi}}
\newcommand{\bzeta}{\boldsymbol{\zeta}}
\newcommand{\bPi}{\boldsymbol{\Pi}}
\newcommand\cA{\mathcal{A}}
\newcommand\cB{\mathcal{B}}
\newcommand\cC{\mathcal{C}}
\newcommand\cG{\mathcal{G}}
\newcommand\cT{\mathcal{T}}
\newcommand\cF{\mathcal{F}}
\newcommand\cN{\mathcal{N}}
\newcommand\cJ{\mathcal{J}}
\newcommand\bP{\mathbf{P}}
\newcommand\bG{\mathbf{G}}
\newcommand\bM{\mathbf{M}}
\newcommand\bU{\mathbf{U}}
\newcommand\bQ{\mathbf{Q}}
\newcommand\bV{\mathbf{V}}
\newcommand{\bW}{\mathbf{W}}
\newcommand\rP{\mathrm{P}}
\newcommand\rM{\mathrm{M}}
\newcommand\rQ{\mathrm{Q}}
\newcommand\rH{\mathrm{H}}
\newcommand\bbV{\amsmathbb{V}}
\newcommand\bbH{\amsmathbb{H}}
\newcommand\bbP{\amsmathbb{P}}
\newcommand\bbM{\amsmathbb{M}}
\newcommand\bH{\mathbf{H}}
\newcommand\bL{\mathbf{L}}
\newcommand\bJ{\mathbf{J}}
\newcommand\rL{\mathrm{L}}
\newcommand\rW{\mathrm{W}}
\newcommand\rF{\mathrm{F}}
\newcommand\bbL{\amsmathbb{L}}
\newcommand\bbPi{\mathbb{\Pi}}
\newcommand\bbI{\amsmathbb{I}}
\newcommand\bbF{\amsmathbb{F}}
\newcommand\bOne{\mathbf{1}}
\newcommand\bbOne{\mathbb{1}}
\newcommand\bsigma{\boldsymbol{\sigma}}
\newcommand\btau{\boldsymbol{\tau}}
\numberwithin{equation}{section}
\numberwithin{figure}{section}
\newtheorem{remark}{Remark}[section]
\newtheorem{theorem}{Theorem}[section]
\newtheorem{lemma}[theorem]{Lemma}
\newtheorem{proposition}[theorem]{Proposition}
\newenvironment{proof}{\noindent{\it Proof.}}{\hfill$\square$}
\newcommand{\isaac}[2]{\def\tmp{#1}{\textcolor{red}{\ifx\tmp\@empty\else[\tmp]\fi}}{\textcolor{teal}{{#2}}}}
\begin{document}
\pagestyle{fancy}
\rhead{{\small Bermudez \textit{et al.}}}
\lhead{\textit{\small Fully mixed VEM for poroelastic stress-assisted diffusion in the brain}}
\title{Fully mixed virtual element schemes for a new model of \\  steady-state  poroelastic stress-assisted diffusion in the brain}

\author{Isaac Bermudez\thanks{Secci\'on de Matem\'atica, Sede de Occidente, Universidad de Costa Rica, San Ram\'on, Alajuela, Costa Rica. Email: isaac.bermudez@ucr.ac.cr.},\quad   
Bryan G\'omez-Vargas\thanks{Secci\'on de Matem\'atica, Sede de Occidente, Universidad de Costa Rica, San Ram\'on, Alajuela, Costa Rica. Email: bryan.gomezvargas@ucr.ac.cr.},\quad  
Kent-Andr\'e Mardal\thanks{Simula Research Laboratory, Oslo, Norway; and University of Oslo, Norway. Email: kent-and@simula.no.},\quad   
Andr\'es E. Rubiano\thanks{School of Mathematics, Monash University, 9 Rainforest Walk, Clayton 3800 VIC,  Australia. Email: andres.rubianomartinez@monash.edu.},\quad  
Ricardo Ruiz-Baier\thanks{School of Mathematics, Monash University, 9 Rainforest Walk, Clayton 3800 VIC,  Australia.
Email: ricardo.ruizbaier@monash.edu.}}

 \date{\today}
\maketitle

\begin{abstract}
\noindent We propose a fully mixed virtual element method for the numerical approximation of the coupling between  linear poroelasticity equations with strong symmetry of total poroelastic stress (using the Hellinger--Reissner principle) and stress-altered solute diffusion (where diffusive flux depends on the poroelastic stress and nonlinearly on the concentration gradient). 
Because of the nonlinear coupling, the function spaces associated with the nonlinear diffusion sub-problem are of Banach type. To handle this structure, the solvability of both the continuous and discrete problems is established through a decoupled fixed-point strategy. The linear poroelasticity component is analysed using the theory for perturbed saddle-point problems, whereas the nonlinear diffusion problem, relies on the classical Minty--Browder theorem for monotone global operators. The existence of solutions for the fully coupled system is rigorously proven via Schauder's fixed-point theorem. Additionally, we establish rigorous a priori error estimates for the discrete scheme, successfully handling the strongly cross-coupled nonlinearities. These findings are supported by computational evidence, demonstrating that the formulation asymptotically recovers optimal convergence rates in practice. As a key contribution, both the numerical scheme and its underlying analysis prove to be robust with respect to the poromechanical parameters. Finally, several numerical examples are presented to illustrate the properties and applicability of the proposed scheme in the study of solute transport in the context of brain multiphysics.

\medskip
\noindent\textbf{MSC (2020):} 65N30; 65N15; 74D05.

\medskip 
\noindent\textbf{Keywords:} Coupled elasticity-diffusion; Mixed virtual element methods;  Stress-based formulations;  Error estimates.
\end{abstract}

\section{Introduction}\label{sec:intro}
\paragraph{Scope.}
In many physical and engineering contexts—ranging from metal processing and subsurface geoscience to biological tissue mechanics—structural distortion of a material and the migration of dissolved species evolve together. In particular scenarios, the coupling is such that the applied stress modifies the internal pore structure or transport pathways of the medium. This class of behaviour is known as stress-assisted diffusion, and its analysis and numerical approximation have been explored under diverse modelling frameworks in the existing literature \cite{gatica18,gatica19,gatica22}. When the solid matrix is poroelastic, the interplay becomes more intricate: the total stress governing deformation incorporates contributions not only from the pore fluid pressure but also from the concentration of the diffusing substance itself. The governing system for poroelastic media under stress-assisted diffusion has been examined through mixed finite element methods in \cite{gomez23} (with a related methodology applied to poroelasticity coupled with thermal effects in the more recent study \cite{careaga25}, and extended to twofold saddle-point structures for cases involving nonlinear hydraulic conductivity in \cite{khan25,lamichhane24}).

The present work proposes a momentum and mass conservative, robust, and Biot-locking-free virtual element formulation for poroelastic stress-assisted diffusion systems. It constitutes an extension of the stress-assisted diffusion VEM  from \cite{khot25} to the fully mixed poroelastic setting.  The underlying model problem is based on \cite{gomez23,careaga25}, but the formulation here is based on the Hellinger--Reissner variational principle and imposing strong symmetry of the total Cauchy stress. For this we follow the similar works \cite{artioli17b,artioli20b,dassi20}. 
In this context we also mention other recent polytopal discretisations for poroelasticity in mixed form, as proposed in \cite{tang2021locking,cadq2023,khot25,botti2021hybrid,botti2025fullymixedvirtualelementmethod,liang2024parameter,kumar2024numerical,guo2022robust,liu2023virtual,droniou2024bubble,wmdh2025}. 
{In biological tissues, the transport of chemical species such as growth factors, nutrients, or signalling molecules is often tightly coupled with the mechanical environment. As mentioned above, tissue deformation generates poroelastic stresses that can alter the diffusivity of solutes through the extracellular matrix. To capture this interplay, the model we use here for the  concentration field satisfies a reaction–diffusion equation with diffusive flux depending implicitly on the  poroelastic stress  tensor (reflecting anisotropic changes in diffusivity due to matrix compression or tension, similarly as used in \cite{gatica18,gatica19,gatica22}), and in addition it depends nonlinearly on concentration gradients. This nonlinear term accounts for concentration‑dependent mobility or steric hindrance at high gradients, while the stress‑dependent term captures mechanotransductive feedback \cite{bollenbach2007morphogen}.}

The mathematical analysis of the steady-state poroelastic stress-assisted diffusion problem presents   challenges due to the strong cross-coupling between the solid deformation, fluid flow, and nonlinear diffusion. This inherent complexity is further exacerbated by the inclusion of a power law  nonlinear term  (resembling the inverse relation in a Forchheimer-type problem) in the diffusion equation. This requires us to deviate from the classical Hilbert space framework used earlier  \cite{gatica18,gatica19,gomez23}, and now   the function space for diffusive flux is $\mathbf{L}^4(\Omega)$. The Banach space setting provides a distinct advantage: rather than imposing artificial constraints on the continuous problem, it provides a general functional framework that captures the exact physics of the strong non-linear coupling across arbitrary domain geometries and spatial dimensions. 

For the   solvability and stability analysis we use  a fixed-point argument separating a linear poroelasticity structure and a nonlinear diffusion problem. The   Biot system is analysed using  the Babu\v{s}ka--Brezzi theory. Conversely, the difficulties encountered in the altered stress-assisted diffusion problem are treated by invoking abstract theory for global nonlinear operators. Specifically, we recast the system into a single, comprehensive mapping that encapsulates all constituent terms. By proving that this operator is coercive, bounded, monotone, and hemicontinuous, its surjectivity is deduced via the classical Minty--Browder theorem, which enables us to recover the original formulation and rigorously establish its existence and uniqueness. The solvability of the fully coupled  system is then established through Schauder's fixed-point theorem. By rigorously proving the Hölder continuity of the  fixed-point map, we conclude the existence of solution.  

On the discrete level, we propose a Virtual Element Method (VEM)   for the coupled model. The solvability and stability of the VEM is established by closely mirroring the continuous analysis, employing analogous saddle-point and fixed-point arguments alongside specialised tools, to handle the cross-coupled stress terms. Following this, we conduct a rigorous a priori error analysis of the discrete scheme. Due to the strongly coupled nonlinearity, we need to repeatedly apply H\"older and Young inequalities that turn the resulting theoretical error bounds more restrictive than usual. Although these a priori bounds account for the most severe analytical scenarios, our computational examples demonstrate that the proposed scheme is highly robust, asymptotically recovering optimal convergence rates in practice.

Finally, we demonstrate the applicability of the proposed model by simulating two key processes linked to neurodegenerative pathology, such as Alzheimer's or Parkinson's disease). First, we examine sleep-driven molecular clearance within brain tissue, a critical glymphatic mechanism responsible for flushing metabolic waste. Second, we model the nonlinear transport and accumulation of Amyloid-$\beta$, a peptide aggregate associated with Alzheimer's disease progression. Both cases are of independent biomedical interest and offer valuable insights into how nonstandard models can capture complex multiscale physiological phenomena.

\paragraph{Outline.} The remainder of the paper has been organised in the following manner. In the rest of this section we recall usual notational convention for the domain and the used functional spaces. We also state the governing equations of Biot--stress-assisted diffusion, giving also assumptions on the nonlinear diffusion coefficient and the rest of the model parameters. Section~\ref{sec:continuous_wp} contains the derivation of the weak formulation in double saddle-point structure, it specifies the splitting of kernels of suitable operators, and it examines the properties of all bilinear forms (including stability and boundedness). This section also addresses the unique solvability of the separate Biot and altered diffusion problems. In Section~\ref{sec:discrete} we formulate the VEM for the coupled model problem, introducing the needed discrete spaces, polynomial interpolation and projection operators, appropriate stabilisation, and discrete operators. In Section~\ref{sec:discrete_wp} we show the solvability and stability of that scheme by using a similar fixed-point argument as in the continuous case. A priori error estimates are presented in Section~\ref{sec:error}, and simple numerical tests are provided in Section~\ref{sec:results}, including the verification of optimal convergence, simulation of classical benchmark tests for poromechanics, and two specific application for coupled stress-assisted diffusion and nonlinear transport within brain multiphysics.

\paragraph{Recurrent notation.}
Let us consider a simply connected bounded and Lipschitz domain $\Omega\subset\RR^d$, $d \in \{2,3\}$ occupied by a poroelastic body. 
The domain boundary $\partial\Omega$ is partitioned into disjoint sub-boundaries where homogeneous displacement and traction-type boundary conditions are imposed  $\partial\Omega:= \overline{\Gamma_{\mathrm{D}}} \cup \overline{\Gamma_{\mathrm{N}}}$, and it is assumed for sake of simplicity that both sub-boundaries are non-empty $|\Gamma_{\mathrm{D}}|\cdot|\Gamma_{\mathrm{N}}|>0$. Throughout the text, given a normed space $S$, by $\bS$ and $\bbS$ we will denote the vector and tensor extensions $S^d$ and $S^{d\times d}$, respectively. We define the Hilbert spaces $\bH(\vdiv,\Omega)=\left\{\bw \in \bL^2(\Omega): \vdiv\bw \in \rL^2(\Omega) \right\}$ and $\bH_{\mathrm{N}}(\vdiv,\Omega) := \{\bw\in \bH(\vdiv,\Omega): \bw \cdot \bn = 0 \text{ on } \Gamma_{\mathrm{N}}\}$ with its norm $\norm{\bw}_{\vdiv,\Omega}^2:=\norm{\bw}^2_{0,\Omega}+\norm{\vdiv\bw}^2_{0,\Omega}$. We also define the Banach spaces $\bH^{4}(\vdiv,\Omega)=\{\bw \in \bL^4(\Omega): \vdiv\bw \in \rL^2(\Omega) \}$ and $\bH^{4}_{\mathrm{N}}(\vdiv,\Omega) := \{\bw\in \bH^{4}(\vdiv,\Omega): \bw \cdot \bn = 0 \text{ on } \Gamma_{\mathrm{N}}\}$, both endowed with the norm $\|\bxi\|_{4,\vdiv;\Omega} := \|\bxi\|_{0,4;\Omega} + \|\vdiv\bxi\|_{0,\Omega}$.
Next, we recall the definition of the tensorial Hilbert spaces 
$\bbH(\bdiv,\Omega)=\{\btau \in \bbL^2(\Omega): \bdiv\btau \in \bL^2(\Omega) \}$, $\bbH(\bcurl,\Omega)=\left\{\btau \in \bbL^2(\Omega): \bcurl\btau \in \bbL^2(\Omega) \right\}$, with their usual norms 
$\norm{\btau}_{\bdiv,\Omega}^2:=\norm{\btau}^2_{0,\Omega}+\norm{\bdiv\btau}^2_{0,\Omega}$,   $\norm{\btau}_{\bcurl,\Omega}^2:=\norm{\btau}^2_{0,\Omega}+\norm{\bcurl\btau}^2_{0,\Omega}$,  
where the divergence acts on the rows of $\btau$, and the curl of a tensor is here understood as the tensor formed by the curl of the rows of $\btau$.  
We also define the  following tensor space  
\begin{equation*}
\bbH^{\mathrm{sym}}_{\mathrm{N}}({\mathbf{div}},\Omega):=\left\{\btau \in
\bbH(\bdiv,\Omega): \ \btau = \btau^{\tt t}, \ \btau\bn=\cero\ \text{on}\ \Gamma_{\mathrm{N}}  \right\},\end{equation*}
which is Hilbert with the $\bbH(\bdiv)$ norm. Also, given a domain $\mathcal{O}$ (in $\bbR^d$ or $\bbR^{d-1}$) we denote the inner product in $\rL^2(\mathcal{O})$ (similarly for $\bL^2(\mathcal{O})$ and $\bbL^2(\mathcal{O})$)  by $(\bullet,\bullet)_{\mathcal{O}}$. When $\mathcal{O}= \Omega$ we simply write $(\bullet,\bullet)$. On the other hand, we will denote by $\norm{\cdot}_{\infty,\Omega}$ the norm of the  Banach space $\mathrm L^\infty(\Omega)$
as well as of its vectorial version $\mathbf L^\infty(\Omega)$.

Finally, throughout the paper, when comparing two quantities $a$ and $b$, we use the notation $a \lesssim b$ to indicate that there exists a constant $M$, independent of the mesh size $h$, such that $a \leq M b$.

\paragraph{Strong mixed form.} Let us recall that the steady-state Biot system states momentum and mass balances 
\begin{align*}
    -\bdiv \bsigma  &= \ff   \quad \, \text{in $\Omega $,}  \\
       s_0 {p}+\alpha\vdiv \bu - \vdiv \left(\bkappa \nabla p \right) &= g \quad \text{in $\Omega $,}  
\end{align*}
respectively, where $\bsigma= 2\mu \beps(\bu) + (\lambda \mathrm{div}(\bu)- \alpha p - \beta \varphi) \bbI$ is the poroelastic Cauchy stress tensor including a modulation due to a diffusing quantity $\varphi$, $\alpha$ is the Biot--Willis parameter, $\beta$ is the modulation intensity, $s_0$ is the storativity coefficient,   $\bkappa$ is a symmetric and positive-definite tensor of permeability of the porous media (scaled by the fluid viscosity), i.e., there exist two strictly positive real numbers $\kappa_1$ and $\kappa_2$ satisfying for a.e. $\bx\in \Om$ and all $\bxi\in \bbR^d$ such that $|\bxi|=1$
\begin{equation*}
	0<\kappa_1\leq \bkappa(\bx)\bxi\cdot \bxi\leq \kappa_2.
\end{equation*}
The coefficients $\lambda,\mu$ are the Lam\'e parameters of Hooke's law, $\ff:\Omega \to \RR^d$ is the vector field of body loads and $g:\Omega\to \RR$ is a scalar source/sink of fluid, and $\beps(\bu)= \frac12(\bnabla \bu+(\bnabla\bu)^{\tt t})$ is the infinitesimal strain tensor. 
 
In addition to the mixed Biot equations, we also consider the presence of a solvent within the poroelastic domain. We denote its concentration by $\varphi:\Omega\to\bbR$ and its movement in the body for given volumetric source $\ell$ is governed by  an nonlinear diffusion process, to account for high-velocity flows.  More precisely, given $\bsigma$, let $\cA_{\bsigma}: \bbR^{d} \to \bbR^{d}$ be the non-linear operator defined by
\begin{align}\label{eq:nonlinear-A-difussion}
\cA_{\bsigma}(\bw):= \varrho(\bsigma)^{-1}\bw+\eta |\bw|^2\bw,
\end{align}
 where the scalar function  $\varrho:\RR^{d\times d} \rightarrow \RR$ is a stress-dependent diffusivity accounting for altered diffusion acting in the poroelastic domain and indicating a change in microstructure due to poroelastic stress generation and $\eta>0$ is a model coefficient. We claim that this operator admits a well-defined continuous inverse $\cA_{\bsigma}^{-1}$. The strong formulation for the concentration $\varphi$ is then governed by
\begin{equation}\label{eq:phi}
\varphi   
- \vdiv( \cA_{\bsigma}^{-1}(\nabla \varphi)) =  \ell \qquad \text{in $\Omega$}.
\end{equation}

We assume that the term $\varrho$ takes the form
 \begin{equation}\label{eq:D}
\varrho(\bsigma) = \eta_0 \varrho_0 + \exp(-\eta_1[ \tr\bsigma]^2),
 \end{equation}
where $\varrho_0>0$ is the base-line effective diffusion (in the absence of stress assistance) and $\eta_0,\eta_1$ are positive modulation parameters (the treatment can also be modified to accommodate for anisotropy with a tensor-valued diffusivity).  
For sake of the analysis, we   require $\varrho^{-1}(\bullet)$ to be uniformly bounded away from zero and Lipschitz continuous with respect to $\bsigma \in \bbL^{2}(\Omega)$. More precisely, there exist positive constants $\varrho_{1}$, $\varrho_{2}$ and $L_{\varrho}$, such that 
\begin{align*}
    0 < \varrho_{1} \leq \varrho^{-1}(\bullet) \leq \varrho_{2} < \infty \qan \|\varrho^{-1}(\bsigma)-\varrho^{-1}(\btau)\|_{0,\Omega} \leq L_{\varrho}\|\bsigma-\btau\|_{0,\Omega},
\end{align*}
for all $\bsigma,\btau \in \bbL^{2}(\Omega)$.

Note that \eqref{eq:phi} is the mass balance of the diffusive substance (drug, tracer, nutrient, signalling molecule, etc.) with linear decay or uptake and with diffusive flux $\bJ=\cA_{\bsigma}^{-1}(\nabla \varphi)$. 
For isothermal transport, the flux is related to the gradient of the chemical potential, and the nonlinear relation can be derived from a dissipation potential $\Psi(\bzeta)$ via $\nabla \varphi = \frac{\partial\Psi}{\partial \bzeta}$. Here we consider a potential of the form 
\[ 
\Psi(\bJ) = \frac12\varrho(\bsigma)^{-1}|\bJ|^2+\frac14\eta|\bJ|^4,
\]
so then $\nabla \varphi = \varrho(\bsigma)^{-1} \bJ + \eta |\bJ|^2\bJ$, which coincides exactly with \eqref{eq:nonlinear-A-difussion}. The quadratic term represents Darcy-type dissipation, the term involving $\varrho$ models stress-assisted diffusion, and the quartic term accounts for nonlinear drag at high fluxes (which is observed in densely cross-linked extracellular matrices). While the form in \eqref{eq:phi} is sufficient to perform the analysis in mixed form that will follow, we observe that the implicit relation can also be made explicit. Following, e.g., \cite{mielke2011gradient}, taking the Legendre transformation of the dissipation provides a dual potential $\widetilde{\Psi}(\nabla \varphi)$ such that $\bJ = \frac{\partial \widetilde{\Psi}}{\partial (\nabla \varphi)}$. For the specific $\Psi$, since $\varrho$ is a scalar function, $\bJ$ is collinear with $\nabla \varphi$. Thus, the diffusive flux can be written in the more familiar form $\bzeta = D(\bsigma,|\nabla \varphi|)\nabla \varphi$, where $D(\bullet,\bullet)>0$ solves 
\begin{equation}\label{eq:Cardano} 
\eta |\nabla \varphi|^2 D^3 + \varrho(\bsigma)^{-1}D - 1 =0.
\end{equation}
It is possible to obtain an explicit expression for $D$ (using Cardano's formula), and from \eqref{eq:Cardano} it readily follows that $D$ is decreasing with respect to $|\nabla \varphi|$. This corresponds to a shear-thinning behaviour typical in biological gels.

The material properties are described at each point by the compliance tensor (the inverse of the fourth-order  linear isotropic stiffness tensor $\bC$) $\bC^{-1}$, which is identified as a symmetric, bounded, and uniformly positive definite linear operator characterised by its action 
\begin{equation*}
\bC\beps(\bu) = 2\mu\beps(\bu) + \lambda(\vdiv\bu)\bbI,\qquad \bC^{-1}\,\bsigma = \frac{1}{2\mu}\biggl(\bsigma - \frac{\lambda}{2\mu+d\lambda} \tr(\bsigma)\bbI\biggr),\end{equation*}
and $\bsigma= \bC\beps(\bu)-\{ \alpha p + \beta \varphi\}\bbI$ to obtain $\bC^{-1} (\bsigma + \{\alpha p + \beta \varphi\} \bbI) =  \beps(\bu).$

The problem is rewritten, considering the elasticity equations with strong symmetric stress imposition, which are coupled with the fluid phase obeying Darcy's law for filtration in porous media, and a mixed form associated with \eqref{eq:phi}. The unknowns are the effective poroelastic Cauchy stress tensor $\bsigma$, the displacement vector $\bu$, the filtration flux vector $\bz$,   the fluid pressure $p$, the diffusive flux $\bzeta$, and the concentration $\varphi$ such that 
\begin{subequations}\label{eq:poroelast}
\begin{alignat}{2}
 \bC^{-1}\, \bsigma & =  \beps(\bu) - \frac{\alpha p + \beta \varphi}{2\mu + d\lambda} \bbI && \quad \text{in $\Omega$}, \label{eq:poroelast-1}\\
 -\bdiv\bsigma & = \ff  && \quad \text{in $\Omega$}, \label{eq:poroelast-2}\\
 \bsigma & = \bsigma^{\tt t} && \quad \text{in $\Omega$}, \label{eq:poroelast-3}\\
\bkappa^{-1} \bz + \nabla p & = \cero  && \quad \text{in $\Omega$}, \label{eq:poroelast-4}\\
s_0 p + \alpha \tr\bC^{-1} [\bsigma + (\alpha p + \beta \varphi)\bbI] + \vdiv\bz & = g && \quad \text{in $\Omega$}, \label{eq:poroelast-5}\\
\cA_{\bsigma}(\bzeta)+\nabla \varphi & = \cero  && \quad \text{in $\Omega$}, \label{eq:poroelast-6}\\
 \varphi + \vdiv\bzeta & = \ell && \quad \text{in $\Omega$}, \label{eq:poroelast-7}\\
 \bu  = \bu_{\mathrm{D}}, \quad p = p_{\mathrm{D}}, \quad \varphi&= \varphi_{\mathrm{D}} && \quad \text{on $\Gamma_{\mathrm{D}}$}, \label{eq:poroelast-bd1}\\
\bsigma\bn  = \cero, \quad  \bz\cdot \bn=0, \quad \bzeta \cdot \bn& = 0 && \quad \text{on $\Gamma_{\mathrm{N}}$}, \label{eq:poroelast-bd2}
\end{alignat}\end{subequations}
stating a rescaling of the stress constitutive relation, the balance of linear momentum, the balance of angular momentum, Darcy's law, the balance of the total amount of fluid, the constitutive equation for the diffusive flux, the concentration balance, and the mixed-loading boundary conditions of homogeneous type, respectively. 

\section{Weak formulation and continuous analysis}\label{sec:continuous_wp}
The functional framework and weak formulation of the coupled problem \eqref{eq:poroelast} are developed in this section. We establish the existence and uniqueness of solutions for the decoupled equations—relying on a nonlinear abstract result for the diffusion sub-problem—and subsequently apply Schauder's fixed-point theorem to guarantee the existence of a weak solution for the fully coupled system.
\subsection{Derivation and main properties}\label{sec:main_properties} We apply algebraic manipulations and multiply the strong form of the balance equations and constitutive relations by suitable test functions, integrate by parts in the constitutive relations and in the diffusion term, and employ the boundary conditions to obtain the weak formulation: for $\ff\in \bL^2(\Omega)$, $g,\ell\in \rL^2(\Omega)$,  $\bu_{\mathrm{D}} \in \bH^{1/2}_{00}(\Gamma_{\mathrm{D}})$, and 
$p_{\mathrm{D}},\varphi_{\mathrm{D}} \in \rH^{1/2}_{00}(\Gamma_{\mathrm{D}})$; 
find $(\bsigma,p,\bu,\bz,\bzeta,\varphi) \in \bbH^{\mathrm{sym}}_{\mathrm{N}}(\bdiv,\Omega)\times \rL^2(\Omega)\times \bL^2(\Omega)\times \bH_{\mathrm{N}}(\vdiv,\Omega)\times \bH^{4}_{\mathrm{N}}(\vdiv,\Omega)\times \rL^2(\Omega)$
 such that 
\begin{subequations}\label{eq:weak1}
\begin{alignat}{2}
(\bC^{-1}\bsigma,\btau) + 
 \bigl(\frac{\alpha p}{2\mu + d\lambda}, \tr\btau \bigr) 
 + (\bdiv\btau,\bu)  +  \bigl( \frac{\beta \varphi}{2\mu + d\lambda}, \tr\btau \bigr)  & = \langle \bu_{\mathrm{D}},\btau\bn\rangle_{\Gamma_{\mathrm{D}}} &&   \forall \btau \!\in  \bbH^{\mathrm{sym}}_{\mathrm{N}}(\bdiv,\!\Omega),\\
  (\tr \bsigma,\frac{\alpha q}{2\mu + d\lambda})   + \bigl[s_0+\frac{d\alpha^2}{2\mu + d\lambda}\bigr]
 ( p,q)  + (q,\vdiv\bz) + \alpha  \bigl(\frac{d\beta \varphi}{2\mu + d\lambda} 
,q\bigr)
& = (g,q) &&\  \forall q \in \rL^2(\Omega),\\
 (\bdiv\bsigma,\bv) & = -(\ff,\bv) &&\  \forall \bv\in \bL^2(\Omega),\\
 (\bkappa^{-1}\bz,\bw) - (p,\vdiv\bw) & = - \langle p_{\mathrm{D}},\bw\cdot\bn\rangle_{\Gamma_{\mathrm{D}}}  &&\  \forall \bw\in\bH_{\mathrm{N}}(\vdiv,\Omega),\\
(\varrho(\bsigma)^{-1}\bzeta+\eta |\bzeta|^2\bzeta, \bxi) - (\varphi,\vdiv\bxi) & = - \langle \varphi_{\mathrm{D}},\bxi\cdot\bn\rangle_{\Gamma_{\mathrm{D}}} &&\ \forall \bxi\in \bH_{\mathrm{N}}^4(\vdiv,\Omega),\label{eq:diffWeak}\\
-(\psi,\vdiv\bzeta) - ( \varphi,\psi)  & = -(\ell,\psi)  && \ \forall \psi \in \rL^2(\Omega),
\end{alignat}
\end{subequations}
where the ordering of the unknowns obeys to the subsequent structure of the analysis. Indeed, we group the Biot function spaces as well as trial and test functions for stress-pressure and displacement-discharge flux as follows 
\[\bbV:= \bbH^{\mathrm{sym}}_{\mathrm{N}}(\bdiv,\Omega)\times \rL^2(\Omega), \quad \bQ:= \bL^2(\Omega)\times \bH_{\mathrm{N}}(\vdiv,\Omega), \]
(endowed with the canonical graph norms of the product spaces) and 
 \[    \vec{\bsigma} := (\bsigma, p), \  \vec{\btau} := (\btau, q) \in \bbV, \qan \vec{\bu} := (\bu, \bz), \  \vec{\bv} := (\bv, \bw) \in \bQ,\]
respectively. Then, \eqref{eq:weak1} consists in finding  $(\vec{\bsigma}, \vec{\bu}) \in \bbV \times \bQ$ and $( \bzeta, \varphi) \in  \bH^4_{\mathrm{N}}(\vdiv,\Omega)\times \rL^2(\Omega)$, such that 
\begin{subequations}\label{eq:weak2}
    \begin{align}
         A(\vec{\bsigma}, \vec{\btau})+\,B(\vec{\btau}, \vec{\bu}) + D(\varphi, \vec{\btau}) &   \, =\, F(\vec{\btau})  &&\forall \vec{\btau} \in \bbV,\\
    B(\vec{\bsigma},  \vec{\bv}) -\,C(\vec{\bu}, \vec{\bv}) &  \,=\, G(\vec{\bv})&&\forall \vec{\bv} \in \bQ,  \\ 
    [\cA_{\bsigma}(\bzeta), \bxi]+\,[\cB(\bxi), \varphi] &   \, =\, [\cF,\bxi] &&\forall \bxi \in \bH_{\mathrm{N}}^{4}(\vdiv,\Omega), \label{eq:weak-diff-1}  \\
   [\cB(\bzeta),  \psi]-\,[\cC(\varphi), \psi] &  \,=\, [\cG,\psi] &&\forall \psi \in \rL^2(\Omega), \label{eq:weak-diff-2}  
    \end{align}
\end{subequations}
where the bilinear forms $A:\bbV\times\bbV\to\RR$, $B:\bbV\times\bQ\to\RR$, $C:\bQ\times\bQ\to\RR$, {$D:\rL^2(\Omega) \times \bbV \to \RR$}, the linear operators  $\cB:  \bH^{4}_{\mathrm{N}}(\vdiv,\Omega) \to \rL^2(\Omega)$, $\cC:\rL^2(\Omega)\to \rL^2(\Omega)$, and (for a given $\widehat{\bsigma} \in \bbL^2(\Omega)$) the nonlinear operator $\cA_{\widehat{\bsigma}}:\bH^{4}_{\mathrm{N}}(\vdiv,\Omega)\to (\bH^{4}_{\mathrm{N}}(\vdiv,\Omega))^{\prime}$, are defined as 
\begin{gather*}
   A(\vec{\bsigma}, \vec{\btau}) := (\bC^{-1}\bsigma, \btau) +  \bigl(\frac{\alpha p}{2\mu + d\lambda}, \tr\btau \bigr) +  
   \bigl(\frac{\alpha q}{2\mu + d\lambda}, \tr\bsigma \bigr) 
   + \biggl[s_0+\frac{d\alpha^2}{2\mu + d\lambda}\biggr]
 ( p,q),\\
  B(\vec{\btau}, \vec{\bv}) := (\bv, \bdiv \btau)+ (q , \vdiv \bw), \quad 
  C(\vec{\bu}, \vec{\bv}):=(\bkappa^{-1}\bz,\bw), \quad 
  D(\psi,\vec{\btau}):= ( \frac{\beta \psi}{2\mu + d\lambda}, \tr\btau + \alpha d q ), \\
 [\cA_{\widehat{\bsigma}}(\bzeta),\bxi]  := (\varrho(\widehat{\bsigma})^{-1}\bzeta,\bxi)+\eta(|\bzeta|^{2}\bzeta, \bxi), \quad  [\cB(\bxi), \psi]:= -(\psi, \vdiv \bxi), \quad  [\cC(\varphi), \psi]:=(\varphi, \psi).
\end{gather*}
Similarly, the linear functionals $F: \bbV\to \RR$, $G: \bQ\to\RR$, $\cF: \bH_{\mathrm{N}}^{4}(\vdiv,\Omega)\to \RR$, and $\cG: \rL^2(\Omega)\to \RR$ are 
\begin{gather*} F(\vec{\btau}) : = \langle \bu_{\mathrm{D}},\btau\bn\rangle_{\Gamma_{\mathrm{D}}} + (g,q), \quad G(\vec{\bv}): = - (\ff,\bv) + \langle p_{\mathrm{D}},\bw\cdot\bn\rangle_{\Gamma_{\mathrm{D}}}, \\
[\cF,\bxi]:= - \langle \varphi_{\mathrm{D}},\bxi\cdot\bn\rangle_{\Gamma_{\mathrm{D}}}, \quad 
[\cG,\psi]:= -(\ell,\psi).
\end{gather*}

We proceed to examine the properties of the bilinear forms and linear functionals for the Biot block. As an intermediate step we denote by $\bB$ and $\bB^*$ the operators induced by the bilinear form $B(\bullet,\bullet)$. Their kernels admit the following characterisations:
\begin{subequations}\label{eq:kernel-V0}
\begin{align}
\bbV_0 &:= \mathrm{ker}(\bB) = \{ \vec{\btau} \in \bbV: \ B(\vec{\btau},\vec{\bv}) = 0 ,\, \forall \vec{\bv}\in \bQ\} \notag \\
&=: \bbV_{01} \times \bbV_{02} \equiv \{ \btau \in \bbH^{\mathrm{sym}}_{\mathrm{N}}(\bdiv,\Omega): \ \bdiv\btau = \cero \ \text{in}\ \Omega\} \times \{0\},\\
\bQ_0 &:= \mathrm{ker}(\bB^*) = \{ \vec{\bv} \in \bQ: \ B(\vec{\btau},\vec{\bv}) = 0 ,\, \forall \vec{\btau}\in \bbV\} \notag \\
&=: \bQ_{01} \times \bQ_{02} \equiv \{\cero\}\times \{ \bw \in \bH_{\mathrm{N}}(\vdiv,\Omega): \ \vdiv\bw = 0 \ \text{in}\ \Omega\}. \label{eq:kernel-Q0}
\end{align}
\end{subequations}
 The characterisation of $\bbV_{02}$ (and similarly for $\bQ_{01}$) follows as in \cite[Section 3.3]{cgm-M2AN-2020}. It is possible to realise that $\nabla q =\cero$ in the distributional sense, which gives $q \in \rH^{1}_{}(\Omega)$. Moreover, integrating by parts $(q, \vdiv \bw)$ in \eqref{eq:kernel-Q0}, we arrive at $\langle \bw \cdot \bn, q\rangle_{\Gamma_{\mathrm{D}}} = 0$  for all $\bw \in \bH_{\mathrm{N}}(\vdiv,\Omega)$. Next, using the surjectivity of the normal trace from $\bH_{\mathrm{N}}(\vdiv,\Omega)$ onto $\rH^{-1/2}_{00}(\Gamma_{\mathrm{D}})$ (cf. \cite[Lemma 51.5]{guermond2021feII}), yields $q=0$ on $\Gamma_{\mathrm{D}}$, and hence $q \in \rH^{1}_{\mathrm{D}}(\Omega)$.

In turn, the spaces $\bbV_{01}^{\perp}$, $\bbV_{02}^{\perp}$, $\bQ_{01}^{\perp}$ and $\bQ_{02}^{\perp}$ are characterised as follows:
\begin{align*}
    \bbV_{01}^{\perp} &\equiv \{ \bsigma \in \bbH^{\mathrm{sym}}_{\mathrm{N}}(\bdiv,\Omega):\ (\bsigma,\btau) =0 ,\ \forall \btau \in  \bbV_{01}\}, \quad \bbV_{02}^{\perp}\equiv \rL^{2}(\Omega),\\
    \bQ_{01}^{\perp} &\equiv \bL^{2}(\Omega), \quad \bQ_{02}^{\perp} \equiv \{ \bz \in \bH_{\mathrm{N}}(\vdiv,\Omega):\ (\bz,\bw) =0, \ \forall \bw \in  \bQ_{02}\},
\end{align*}
and hence $\bbV_0^{\perp}= \bbV_{01}^{\perp} \times \bbV_{02}^{\perp}$ and $\bQ_0^{\perp}= \bQ_{01}^{\perp} \times \bQ_{02}^{\perp}$  are closed subspaces of $\bbV$ and $\bQ$, respectively.
\begin{lemma}[boundedness of the bilinear forms]\label{lem:boundedness_bilinear}
The bilinear forms $A(\bullet,\bullet)$, $B(\bullet,\bullet)$, $C(\bullet,\bullet)$, and $D(\bullet,\bullet)$ are bounded. That is:
\begin{align*}
|A(\vec{\bsigma}, \vec{\btau})| &\leq \|A\| \|\vec{\bsigma}\|_{\bbV} \|\vec{\btau}\|_{\bbV} \quad &&\forall \vec{\bsigma}, \vec{\btau} \in \bbV, \\
|B(\vec{\btau}, \vec{\bv})| &\leq \|B\| \|\vec{\btau}\|_{\bbV} \|\vec{\bv}\|_{\bQ} \quad &&\forall \vec{\btau} \in \bbV, \forall \vec{\bv} \in \bQ, \\
|C(\vec{\bu}, \vec{\bv})| &\leq \|C\| \|\vec{\bu}\|_{\bQ} \|\vec{\bv}\|_{\bQ} \quad &&\forall \vec{\bu}, \vec{\bv} \in \bQ, \\
|D(\psi, \vec{\btau})| &\leq \|D\| \|\psi\|_{0,\Omega} \|\vec{\btau}\|_{\bbV} \quad &&\forall \psi \in \rL^2(\Omega), \forall \vec{\btau} \in \bbV, 
\end{align*}
where the boundedness constants are given by
\begin{gather*}
\|A\| := \max\left\{ \frac{1}{2\mu} + \frac{\lambda}{2\mu(2\mu+d\lambda)}, \frac{\alpha\sqrt{d}}{2\mu+d\lambda}, s_0+\frac{d\alpha^2}{2\mu+d\lambda} \right\}, \quad \|B\| := 1, \quad \|C\| := \frac{1}{\kappa_1}, \quad 
\|D\| := \frac{\beta\sqrt{d}(1+\alpha)}{2\mu+d\lambda}.
\end{gather*}
\end{lemma}
\begin{proof}
The boundedness of the bilinear forms is a direct consequence of the Cauchy--Schwarz inequality, and thus further details are omitted.
\end{proof}

\begin{lemma}[symmetry and positive semi-definiteness of diagonal forms]\label{lemma:positive-definite-diagonal}
The bilinear forms $A(\bullet,\bullet)$ and  $C(\bullet,\bullet)$ are symmetric and positive semi-definite.
\end{lemma}
\begin{proof}
  It is clear that $A(\bullet,\bullet)$ and $C(\bullet,\bullet)$  are symmetric, whereas that $C(\bullet,\bullet)$ is positive semi-definite. To prove that $A(\bullet,\bullet)$ is positive semi-definite,  note that given $\vec{\btau} \in \bbV$, we have
\begin{align}\label{positive-A}
A(\vec{\btau}, \vec{\btau}) &= (\bC^{-1}(\btau+\alpha q\bbI), \btau+\alpha q\bbI)+s_0\|q\|^{2}_{0,\Om} \geq 0,
\end{align}
since $\bC^{-1}$ is positive definiteness and $s_{0}\geq0$.
\end{proof}

We proceed similarly to \cite[Section 2.3]{gatica2006priori} to show that $A(\bullet,\bullet)$ is $\bbV_{0}$-elliptic. To do that, we recall the decomposition
\begin{equation*}
 \bbH(\bdiv_{},\Omega)  = \bbH_0(\bdiv_{},\Omega) \, \oplus \, \bbR \,\bbI, \hbox{ with }
  \bbH_0(\bdiv_{},\Omega)  := \left\{\btau \in \bbH(\bdiv_{},\Omega):\ \int_{\Omega}\tr\btau = 0\right\}.
\end{equation*}
We also recall two useful estimates, whose proofs can be found in \cite[Lemma 2.3]{gatica2006priori} and \cite[Lemma 2.4]{gatica2006priori}. Specifically, there exists $C_{1}>0$, depending only on $\Omega$, such that 
\begin{equation}\label{DES}
   C_{1} \,\|\btau_{0}\|_{0,\Omega} \, \leq \, \| \btau^{\tt d}\|_{0,\Omega} + \| \bdiv \btau\|_{0,\Omega} \quad \forall \, \btau \in \bbH(\bdiv,\Omega), \qan
\end{equation}
 there exists $C_{2}>0$, depending only on $\Gamma_{\mathrm{N}}$ and $\Omega$, such that 
\begin{align}\label{DES-GN}
    C_{2} \, \|\btau \|_{\bdiv,\Omega}  \,\leq \,   \|\btau_{0}\|_{\bdiv,\Omega}  \quad \forall \, \btau:= \btau_{0}+ {m} \bbI \in   \bbH_{\mathrm{N}}(\bdiv,\Omega),
\end{align}
with $\btau_{0} \in \bbH_{0}(\bdiv,\Omega)$ and $m  \in \bbR$. Then, we have the following result.

\begin{lemma}[coercivity for the main diagonal forms]\label{lemma:coercitivity-as}
There exist constants $\alpha_A,\alpha_c,\alpha_{C}>0$ such that 
\begin{subequations}
\begin{alignat}{2}
  A(\vec{\btau}, \vec{\btau}) & \geq  \alpha_A \|\vec{\btau}\|^{2}_{\bbV} \quad &&\forall \, \vec{\btau} \in \bbV_0, \label{coercitivity-A}\\
    C(\vec{\bv}, \vec{\bv}) &\geq   \alpha_C \|\vec{\bv}\|^{2}_{\bQ} \quad &&\forall \, \vec{\bv} \in \bQ_{0}. \label{coercitivity-C}
\end{alignat}\end{subequations}
\end{lemma}
\begin{proof}
For \eqref{coercitivity-A}, we let $\vec{\btau}=(\btau, q) \in  \bbV_{01} \times \bbV_{02}$. This means, according to \eqref{positive-A}, \eqref{DES} and \eqref{DES-GN}, that
\begin{align*}
    A(\vec{\btau}, \vec{\btau}) \geq \alpha_A  \|\vec{\btau}\|^{2}_{\bbV},
\end{align*}
with $\alpha_A = \displaystyle  {} \frac{C_{1}\,C_{2}}{4\mu}$.
On the other hand, given $\vec{\bv} \in \bQ_{0}$ (cf. \eqref{eq:kernel-Q0}),  we have
\begin{align*}
    C(\vec{\bv}, \vec{\bv}) \geq \frac{1}{ \kappa_2}\|\bw\|^{2}_{0,\Om}= \frac{1}{\kappa_2} \left\{ \|\bw\|^{2}_{0,\Om}+\|\vdiv \bw\|^{2}_{0,\Om}+\|\bv\|^{2}_{0,\Om}\right\},
\end{align*}
which shows \eqref{coercitivity-C} with $\displaystyle \alpha_C= \frac{1}{\kappa_2}$.
\end{proof}

\begin{lemma}[continuous inf-sup condition]\label{lemma:inf-sup-B}
There exist positive constants $\beta_B,\beta_b$ such that
\begin{equation}\label{eq:inf-sup-B}
    \sup_{ \vec{\btau} \in \bbV\setminus\{\boldsymbol{0}\}}\frac{ B (\vec{\btau}, \vec{\bv})}{\|\vec{\btau}\|_{\bbV}}  \geq  \beta_{B} \|\vec{\bv}\|_{\bQ} \qquad \forall \, \vec{\bv} \in \bQ_0^{\perp}.
\end{equation}
\end{lemma}
\begin{proof}
To prove \eqref{eq:inf-sup-B}, it suffices to establish the following two independent inf–sup conditions, which follow from the diagonal structure of $B(\bullet,\bullet)$:
 \begin{subequations}
\begin{equation}\label{eq:inf-sup-b1}
   \displaystyle \sup_{\btau \in \bbH^{\mathrm{sym}}_\mathrm{N}(\bdiv,\Omega) \setminus \{\mathbf{0}\}}\frac{  (\bv, \bdiv \btau)}{\|\btau\|_{\bdiv,\Omega}}  \geq  \beta_{1} \|\bv\|_{0,\Om} \quad \forall \, \bv \in \bQ^{\perp}_{01},
\end{equation}
\begin{equation}\label{eq:inf-sup-b2}
   \displaystyle \sup_{q \in \rL^{2}(\Om) \setminus \{0\}}\frac{(q, \vdiv \bw)}{\|q\|_{0,\Om}}  \geq  \beta_{2} \|\bw\|_{\vdiv,\Om} \quad \forall \, \bw \in \bQ^{\perp}_{02}.
\end{equation}\end{subequations}
For \eqref{eq:inf-sup-b1}, we refer to \cite[Lemma 2.2, eq. (14)]{gatica2006priori}, whereas \eqref{eq:inf-sup-b2} holds by virtue of the existence of a constant $\widehat{\beta}_{2}>0$, such that  (see, e.g., \cite[Lemma 3.2]{cgorv-cmame-2015})
\[
    \sup_{\bw \in \bH^{}_\mathrm{N}(\vdiv,\Om) \setminus \{\mathbf{0}\}}\frac{  (q, \vdiv \bw)}{\|\bw\|_{\vdiv,\Om}}  \geq  \widehat{\beta}_{2} \|q\|_{0,\Om} \quad \forall \, q \in \bbV^{\perp}_{02}= \rL^{2}(\Om),
\]
 and the identity given by \cite[eq. (4.3.18)]{boffi13}, which also implies that $\beta_{2}= \widehat{\beta}_{2}$. Thus, the required inequality \eqref{eq:inf-sup-B} is obtained with $\beta_{B}= \frac{\beta_{1}+\beta_{2}}{4}$.
\end{proof}

For the altered diffusion block, the well-posedness analysis will rely on the classical Browder--Minty theorem for a suitable nonlinear operator to be defined next. The strategy consists in reducing the perturbed mixed saddle-point system to a single non-linear operator equation in the space $\bH^4_{\mathrm{N}}(\vdiv,\Omega)$. 
More precisely, since $\cC$ represents the identity mapping in the inner product space $L^2$, it is clear that it is bounded, symmetric, strongly coercive, and has a well-defined bounded inverse $\cC^{-1}: \rL^{2}(\Omega) \to \rL^{2}(\Omega)$. Thus, from  \eqref{eq:weak-diff-2} the concentration variable can be rewritten as $\varphi = \cC^{-1}(-\cG + \mathcal{B}(\bzeta))$, where $\cG \in \rL^{2}(\Omega)$.

Substituting this relation back into \eqref{eq:weak-diff-1} motivates the definition of the global nonlinear operator $\cN: \bH^4_{\mathrm{N}}(\vdiv,\Omega) \to \bH^4_{\mathrm{N}}(\vdiv,\Omega)'$:
\begin{equation*} 
    \cN(\bzeta) := \mathcal{A}_{{\bsigma}}(\bzeta) + \mathcal{B}^* \cC^{-1} \mathcal{B}(\bzeta),
\end{equation*}
where $\mathcal{B}^*: \rL^{2}(\Omega) \to \bH^4_{\mathrm{N}}(\vdiv,\Omega)'$ is the adjoint of $\mathcal{B}$.

We observe that the second term yields $[\mathcal{B}^* \cC^{-1} \mathcal{B}(\bzeta), \bxi] = [\cC^{-1} \mathcal{B}(\bzeta), \mathcal{B}(\bxi)] = (\vdiv \bzeta, \vdiv \bxi)$. Thus
\begin{equation} \label{eq:def_N_variational}
    [\cN(\bzeta), \bxi] = (\varrho(\widehat{\bsigma})^{-1}\bzeta, \bxi) + \eta (|\bzeta|^2\bzeta, \bxi) + (\vdiv \bzeta, \vdiv \bxi) \quad \forall \bzeta, \bxi \in \bH^4_{\mathrm{N}}(\vdiv,\Omega).
\end{equation}
In what follows, we proceed to examine some properties of the nonlinear operator $\cN$. 
\begin{lemma}[boundedness]\label{lem:boundedness}
    The operator $\cN$ is bounded. That is, for any $M > 0$, there exists a constant $C_\star > 0$ such that $\|\cN(\bzeta)\|_{(\bH^4_{\mathrm{N}}(\vdiv,\Omega))'} \le C_\star$ whenever $\|\bzeta\|_{{{4,\vdiv;\Omega}}} \le M$.
\end{lemma}
\begin{proof}
    Let $M > 0$ and assume $\bzeta \in\bH^4_{\mathrm{N}}(\vdiv,\Omega)$ such that $\|\bzeta\|_{{4,\vdiv;\Omega}} \le M$. For any $\bxi \in \bH^4_{\mathrm{N}}(\vdiv,\Omega)$, we bound the duality pairing using the Cauchy-Schwarz and Hölder inequalities
    \begin{align*}
        |[\cN(\bzeta), \bxi]|
        \le \rho_2 \|\bzeta\|_{0, \Omega} \|\bxi\|_{0, \Omega} + \eta \|\bzeta\|_{0,4;\Omega}^3 \|\bxi\|_{0,4;\Omega} + \|\vdiv \bzeta\|_{0,\Omega} \|\vdiv \bxi\|_{0,\Omega}.
    \end{align*}
    Next, thanks to the continuous Sobolev embedding $\mathbf{L}^4(\Omega) \hookrightarrow \mathbf{L}^2(\Omega)$, there exists a constant $C_{\text{emb}} > 0$ such that
    \begin{align*}
        |[\cN(\bzeta), \bxi]| &\le \rho_2 C_{\text{emb}}^2 \|\bzeta\|_{0,4;\Omega} \|\bxi\|_{0,4;\Omega} + \eta \|\bzeta\|_{0,4;\Omega}^3 \|\bxi\|_{0,4;\Omega} + \|\vdiv \bzeta\|_{0,\Omega} \|\vdiv \bxi\|_{0,\Omega} \\
        &\le \left( \rho_2 C_{\text{emb}}^2 \|\bzeta\|_{{4,\vdiv;\Omega}} + \eta \|\bzeta\|_{{4,\vdiv;\Omega}}^3 + \|\bzeta\|_{{4,\vdiv;\Omega}} \right) \|\bxi\|_{{4,\vdiv;\Omega}},
    \end{align*}
    which says that
    \begin{equation*}
        \|\cN(\bzeta)\|_{\bH^4_{\mathrm{N}}(\vdiv,\Omega)'} \le (\rho_2 C_{\text{emb}}^2 + 1) M + \eta M^3 := C_\star.
    \end{equation*}
    Since $C_\star$ is finite and depends only on $M$ and physical parameters, $\cN$ maps bounded sets to bounded sets.
\end{proof}
\begin{lemma}[hemicontinuity]\label{lem:hemicontinuity} $\cN$ is hemicontinuous, that is, for each $ \bzeta, \bxi \in \bH^4_{\mathrm{N}}(\vdiv,\Omega)$, the real mapping $J:\bbR \to \bbR $, $t \mapsto [\cN(\bzeta + t\bxi), \bxi]$ is continuous.
\end{lemma}
\begin{proof}
    From \eqref{eq:def_N_variational} we note that the operator $\cN$ consists of two continuous linear parts and one nonlinear part. The linear parts are clearly continuous with respect to $t \in \bbR$. Regarding the nonlinear part,  we can adapt the result given in \cite[Proposition 3]{Girault2008} (see also \cite[Lemma 3.2]{Caucao2020}) to show that the map $t \mapsto \displaystyle \eta \int_\Omega |\bzeta + t\bxi|^2 (\bzeta + t\bxi) \cdot \bxi$ is continuous for any $\bzeta, \bxi \in \bL^4(\Omega)$. Since $ \bH^4_{\mathrm{N}}(\vdiv,\Omega) \subset \bL^4(\Omega)$, the mapping $t \mapsto [\cN(\bzeta + t\bxi), \bxi]$ is continuous, and thus $\cN$ is hemicontinuous.
\end{proof}
\begin{lemma}[monotonicity]\label{lem:monotonicity}
   The operator $\mathcal{N}$ is monotone, that is
\[[\cN(\bzeta) - \cN(\bxi), \bzeta - \bxi] \ge 0 \quad \forall \bzeta, \bxi \in \bH^4_{\mathrm{N}}(\vdiv,\Omega).\]
\end{lemma}
\begin{proof}
    Let $\bzeta, \bxi \in \bH^4_{\mathrm{N}}(\vdiv,\Omega)$. From the definition of $\mathcal{N}$ (cf. \eqref{eq:def_N_variational}), we have
    \begin{align*}
        [\cN(\bzeta) - \cN(\bxi), \bzeta - \bxi] = (\varrho(\widehat{\bsigma})^{-1}(\bzeta - \bxi), \bzeta - \bxi) + (\vdiv(\bzeta - \bxi), \vdiv(\bzeta - \bxi)) + \eta (|\bzeta|^2\bzeta - |\bxi|^2\bxi, \bzeta - \bxi).
    \end{align*}
    For the first and second terms on the right-hand side, it is easy to see that
    \begin{equation}\label{eq:TDif}
        (\varrho(\widehat{\bsigma})^{-1}(\bzeta - \bxi), \bzeta - \bxi) \geq \rho_1 \|\bzeta - \bxi\|_{0,\Omega}^2 \ge 0 \qan   \|\vdiv(\bzeta - \bxi)\|_{0,\Omega}^2 \ge 0.
    \end{equation} 
    For the nonlinear term, we proceed as in \cite[Section 2.3]{Caucao2020}. More precisely, thanks to \cite[Lemma 5.1]{Glowinski1975}, there exists a constant $C_{\text{F}} > 0$, depending only on $\Omega$, such that
    \begin{equation*}
        (|\mathbf{u}|^2\mathbf{u} - |\mathbf{v}|^2\mathbf{v}) \cdot (\mathbf{u} - \mathbf{v}) \ge C_{\text{F}} |\mathbf{u} - \mathbf{v}|^4 \quad \forall \mathbf{u}, \mathbf{v} \in \bbR^d.
    \end{equation*}
    Setting $\mathbf{u} = \bzeta$ and $\mathbf{v} = \bxi$, we obtain
    \begin{equation}\label{eq:TF}
        (|\bzeta|^2\bzeta - |\bxi|^2\bxi, \bzeta - \bxi) \ge C_{F} \|\bzeta - \bxi\|_{0,4;\Omega}^4 \ge 0.
    \end{equation}
    Finally, combining \eqref{eq:TDif}, and \eqref{eq:TF}, we conclude that
    \begin{equation*}
        [\cN(\bzeta) - \cN(\bxi), \bzeta - \bxi] = \rho_1 \|\bzeta - \bxi\|_{0,\Omega}^2+\eta C_F \|\bzeta - \bxi\|_{0,4;\Omega}^4 + \|\vdiv(\bzeta - \bxi)\|_{0,\Omega}^2 \ge 0,
    \end{equation*}
    which proves the desired result.
\end{proof}
\begin{lemma}[coercivity]\label{lem:coercivity}
    The operator $\cN$ is coercive on $\bH^4_{\mathrm{N}}(\vdiv,\Omega)$, implying that
\[\lim_{\|\bzeta\|_{{4,\vdiv;\Omega}} \to \infty} \frac{[\cN(\bzeta), \bzeta]}{\|\bzeta\|_{{4,\vdiv;\Omega}}} = \infty.\]
\end{lemma}
\begin{proof}
    Let $\bzeta \in \bH^4_{\mathrm{N}}(\vdiv,\Omega)$. It is clear that
    \begin{equation*}
        [\cN(\bzeta), \bzeta] \ge \eta C_F \|\bzeta\|_{0,4;\Omega}^4 + \|\vdiv \bzeta\|_{0,\Omega}^2,
    \end{equation*}
    which means that
    \begin{equation*}
        \frac{[\cN(\bzeta), \bzeta]}{\|\bzeta\|_{{4,\vdiv;\Omega}}} \ge \frac{\eta C_F \|\bzeta\|_{0,4;\Omega}^4 + \|\vdiv \bzeta\|_{0,\Omega}^2}{\|\bzeta\|_{0,4;\Omega} + \|\vdiv \bzeta\|_{0,\Omega}}.
    \end{equation*}
    We now take $\|\bzeta\|_{{4,\vdiv;\Omega}} \to \infty$. This means that at least one of its norm terms, $\|\bzeta\|_{0,4;\Omega}$ or $\|\vdiv \bzeta\|_{0,\Omega}$, must tend to infinity. Since the exponents in the numerator strictly dominate those in the denominator for each respective term, the quotient diverges to infinity, proving that $\cN$ is coercive.
\end{proof}

To analyse the solvability of the coupled system \eqref{eq:weak2}, we simply decouple the problem into the poroelasticity equations (the first two equations in that system) and the remaining diffusion equation in mixed form. We separate the analysis for each problem in the following sub-section. 
\subsection{Unique solvability of decoupled Biot equations and nonlinear diffusion equations}\label{sec:wp-decoupled}

Having verified the properties discussed in the previous subsection, we are now in a position to establish the well-posedness of the decoupled sub-problems. To this end, the analysis of the Biot equations is based on the classical theory for perturbed saddle-point problems from \cite[Theorem 4.3.1]{boffi13}. Meanwhile, the unique solvability of the nonlinear diffusion equations is based on the abstract framework given by \cite[Corollary 2.2]{showalter1997}. We emphasize that the application of this abstract result to our specific setting is not immediate; rather, it is a consequence of the rigorous analysis of boundedness, hemicontinuity, monotonicity, and coercivity carried out in the preceding lemmas. By satisfying these sufficient conditions, the corollary guarantees the surjectivity of $\cN$. Consequently, due to our perturbed saddle-point structure, the well-posedness of the diffusion sub-problem can be readily deduced.

Firstly, let us assume that $\widehat{\varphi} \in \rL^2(\Omega)$ is prescribed. Then, we have the following result.

\begin{theorem}[well-posedness of the Biot equations]\label{th:well-posed-Biot}
 There exists a unique  $( \vec{\bsigma}, \vec{\bu}) \in \bbV\times \bQ$ such that 
 \begin{subequations}\label{eq:well-posed-Biot}
     \begin{align}
            A(\vec{\bsigma}, \vec{\btau})+\,B(\vec{\btau}, \vec{\bu}) &   \, =\, -D(\widehat{\varphi},\vec{\btau}) + F(\vec{\btau})  &&\forall \vec{\btau} \in \bbV,\\ 
    B(\vec{\bsigma},  \vec{\bv}) -\,C(\vec{\bu}, \vec{\bv}) &  \,=\, G(\vec{\bv})&&\forall \vec{\bv} \in \bQ,  
     \end{align}
 \end{subequations}
 and moreover 
 \[ \| ( \vec{\bsigma}, \vec{\bu}) \|_{\bbV{\times \bQ}} \lesssim  \frac{(1+\alpha d)\beta}{2\mu + d\lambda}\|\widehat{\varphi}\|_{0,\Omega} + \|\ff\|_{0,\Omega} + \|\bu_{\mathrm{D}}\|_{1/2,00;\Gamma_{\mathrm{D}}} + \|g\|_{0,\Omega} + \|p_{\mathrm{D}}\|_{1/2,00;\Gamma_{\mathrm{D}}}. \]
\end{theorem}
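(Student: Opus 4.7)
The proof is an application of the classical theory for perturbed saddle-point problems in Hilbert spaces, as stated for instance in \cite[Theorem 4.3.1]{boffi13}. Since all the structural hypotheses have already been verified in Section~\ref{sec:main_properties}, the argument essentially amounts to collecting them and bounding the resulting right-hand side.

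First, I would observe that the problem \eqref{eq:well-posed-Biot} has exactly the form of a perturbed saddle-point system on $\bbV\times\bQ$, with $A(\bullet,\bullet)$ playing the role of the main bilinear form, $C(\bullet,\bullet)$ the perturbation, and $B(\bullet,\bullet)$ the off-diagonal coupling. By Lemma~\ref{lem:boundedness} all three forms are bounded; by Lemma~\ref{lemma:positive-definite-diagonal} both $A(\bullet,\bullet)$ and $C(\bullet,\bullet)$ are symmetric and positive semi-definite; by Lemma~\ref{lemma:coercitivity-as} the form $A(\bullet,\bullet)$ is $\bbV_0$-elliptic with constant $\alpha_A$ and $C(\bullet,\bullet)$ is $\bQ_0$-elliptic with constant $\alpha_C$; and by Lemma~\ref{lemma:inf-sup-B} (cf. \eqref{eq:inf-sup-B}) the form $B(\bullet,\bullet)$ satisfies the continuous inf-sup condition on $[\mathrm{ker}(\bB^*)]^{\perp}$ with constant $\beta_B$. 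These are exactly the four assumptions required by \cite[Theorem 4.3.1]{boffi13}.

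Next, I would combine the right-hand side of the first equation into a single linear functional $\widetilde F:\bbV\to\bbR$, defined by $\widetilde F(\vec{\btau}):= F(\vec{\btau})-D(\widehat{\varphi},\vec{\btau})$. The boundedness constants from Lemma~\ref{lem:boundedness} together with the trace inequality $|\langle \bu_{\mathrm{D}},\btau\bn\rangle_{\Gamma_{\mathrm{D}}}|\lesssim \|\bu_{\mathrm{D}}\|_{1/2,00;\Gamma_{\mathrm{D}}}\|\btau\|_{\bdiv,\Omega}$ and the Cauchy--Schwarz inequality for $(g,q)$ yield
\begin{equation*}
\|\widetilde F\|_{\bbV'} \;\lesssim\; \frac{(1+\alpha d)\beta}{2\mu+d\lambda}\|\widehat{\varphi}\|_{0,\Omega}+\|\bu_{\mathrm{D}}\|_{1/2,00;\Gamma_{\mathrm{D}}}+\|g\|_{0,\Omega}.
\end{equation*}
Analogously for $G:\bQ\to\bbR$ one obtains $\|G\|_{\bQ'}\lesssim \|\ff\|_{0,\Omega}+\|p_{\mathrm{D}}\|_{1/2,00;\Gamma_{\mathrm{D}}}$.

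With these ingredients in hand, the abstract perturbed saddle-point theorem guarantees the existence and uniqueness of $(\vec{\bsigma},\vec{\bu})\in\bbV\times\bQ$ solving \eqref{eq:well-posed-Biot}, together with an a priori estimate of the form $\|\vec{\bsigma}\|_{\bbV}+\|\vec{\bu}\|_{\bQ}\lesssim \|\widetilde F\|_{\bbV'}+\|G\|_{\bQ'}$ with a hidden constant depending only on $\|A\|,\|B\|,\|C\|,\alpha_A,\alpha_C,\beta_B$. Substituting the bounds on $\widetilde F$ and $G$ gives the announced stability estimate. I do not foresee a genuine obstacle here: the main work has already been done in establishing the boundedness, kernel-coercivity, and inf-sup conditions; the only mild point to be careful with is tracking how the perturbation term $D(\widehat{\varphi},\bullet)$ enters the right-hand side and contributes the factor $(1+\alpha d)\beta/(2\mu+d\lambda)$, which comes directly from the explicit expression of $\|D\|$ given in Lemma~\ref{lem:boundedness}.
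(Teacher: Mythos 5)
Your proof is correct and takes essentially the same approach as the paper: the paper's (one-line) argument likewise collects Lemmas~\ref{lemma:positive-definite-diagonal}, \ref{lemma:coercitivity-as}, and \ref{lemma:inf-sup-B} and invokes \cite[Theorem~4.3.1]{boffi13}. Your additional bookkeeping of the right-hand side functionals, making the factor $(1+\alpha d)\beta/(2\mu+d\lambda)$ explicit via $\|D\|$, is exactly what is implicit in the paper's statement of the a priori bound.
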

 \begin{proof}
   It follows from Lemma \ref{lemma:positive-definite-diagonal}, and equations \eqref{coercitivity-A}, \eqref{coercitivity-C}, and \eqref{eq:inf-sup-B} of Lemmas \ref{lemma:coercitivity-as} and \ref{lemma:inf-sup-B}, respectively, and a straightforward application of \cite[Theorem 4.3.1]{boffi13}.
\end{proof}  

Similarly, for a prescribed $\widehat{\bsigma} \in \bbH^{\mathrm{sym}}_{\mathrm{N}}(\bdiv,\Omega)$, we have the following result.
\begin{theorem}[well-posedness of the mixed perturbed nonlinear diffusion equations]\label{th:well-posed-diff}
Given $\cF \in \bH^{4}_{\mathrm{N}}(\vdiv,\Omega)'$ and $\cG \in \rL^2(\Omega)$, there exists a unique $( \bzeta, \varphi) \in  \bH^{4}_{\mathrm{N}}(\vdiv,\Omega) \times \rL^2(\Omega)$ such that
    \begin{align*}
      [\mathcal{A}_{\widehat{\bsigma}}(\bzeta), \bxi] + [\mathcal{B}(\bxi), \varphi] &= [\cF,\bxi] &&\forall \bxi \in \bH^{4}_{\mathrm{N}}(\vdiv,\Omega),\\
     [\mathcal{B}(\bzeta), \psi] - [\cC(\varphi), \psi] &= [\cG,\psi] &&\forall \psi \in \rL^2(\Omega),  
\end{align*}   
 furthermore,
    \begin{equation}\label{eq:apriori_bound}
        \|\bzeta\|_{{4,\vdiv;\Omega}} + \|\varphi\|_{0, \Omega} \le \mathcal{M}(\ell, \varphi_{\mathrm{D}}),
    \end{equation}
where 
    \begin{subequations}\begin{align}\label{eq:def_M_data}
        \mathcal{M}(\ell, \varphi_{\mathrm{D}})& := \left( \frac{2}{\eta} \right)^{1/4} \mathcal{H}(\ell, \varphi_{\mathrm{D}})^{1/4} + 4 \mathcal{H}(\ell, \varphi_{\mathrm{D}})^{1/2} + \|\ell\|_{0,\Omega},\quad \text{with}
\\ \label{eq:def_F_data}
        \mathcal{H}(\ell, \varphi_{\mathrm{D}}) &:= \frac{3}{4(2\eta)^{1/3}} \left( \max\{C_{\mathrm{emb}}, 1\} \|\varphi_{\mathrm{D}}\|_{1/2,00;\Gamma_{\mathrm{D}}} \right)^{4/3} + \frac{3}{2} \left( \max\{C_{\mathrm{emb}}, 1\} \|\varphi_{\mathrm{D}}\|_{1/2,00;\Gamma_{\mathrm{D}}} \right)^2 + \|\ell\|_{0,\Omega}^2.
    \end{align}\end{subequations}
\end{theorem}
\begin{proof}
     Let $\overline{\cF} \in \bH^{4}_{\mathrm{N}}(\vdiv,\Omega)'$ such that $[\overline{\cF}, \bxi] := [\cF, \bxi] + [\cC^{-1}\cG, \mathcal{B}(\bxi)]$. Thanks to Lemmas \ref{lem:boundedness}-\ref{lem:coercivity}, and a direct application of \cite[Corollary 2.2]{showalter1997}, the operator $\mathcal{N}$ is surjective. Thus, there exists $\bzeta \in \bH^{4}_{\mathrm{N}}(\vdiv,\Omega)$ such that $[\mathcal{N}(\bzeta), \bxi] = [\overline{\cF}, \bxi]$ for all $\bxi \in \bH^{4}_{\mathrm{N}}(\vdiv,\Omega)$.\\ 
    We define the concentration variable as $\varphi := \cC^{-1}(\mathcal{B}(\bzeta) - \cG) \in \rL^2(\Omega)$. 
    Let us prove that the pair $(\bzeta, \varphi)$ satisfies the original system. Indeed, from the definition of $\varphi$, we have $\cC(\varphi) = \mathcal{B}(\bzeta) - \cG$, which directly implies that
    \begin{equation*}
        [\mathcal{B}(\bzeta), \psi] - [\cC(\varphi), \psi] = [\cG, \psi] \quad \forall \psi \in \rL^2(\Omega),
    \end{equation*}
    satisfying the second equation of the system.
   For the first equation, we use the fact that $[\mathcal{N}(\bzeta), \bxi] = [\overline{\cF}, \bxi]$ and expand both sides using the definitions of $\mathcal{N}$ and $\overline{\cF}$, and rearrange terms, giving 
    \begin{equation*}
        [\mathcal{A}_{\widehat{\bsigma}}(\bzeta), \bxi] + [\mathcal{B}(\bxi), \cC^{-1}(\mathcal{B}(\bzeta))] = [\cF, \bxi] + [\mathcal{B}(\bxi), \cC^{-1}(\cG)].
    \end{equation*}
    Thus, using the definition of the concentration variable, we find that
    \begin{equation*}
        [\mathcal{A}_{\widehat{\bsigma}}(\bzeta), \bxi] + [\mathcal{B}(\bxi), \varphi] = [\cF, \bxi] \quad \forall \bxi \in \bH^{4}_{\mathrm{N}}(\vdiv,\Omega).
    \end{equation*}
    Hence, existence is proved.
    
    For the uniqueness we suppose that there exist two solutions $(\bzeta_1, \varphi_1)$ and $(\bzeta_2, \varphi_2)$ in $\bH^{4}_{\mathrm{N}}(\vdiv,\Omega) \times \rL^2(\Omega)$ satisfying the mixed system. Subtracting their respective equations yields
    \begin{align*}
        [\mathcal{A}_{\widehat{\bsigma}}(\bzeta_1) - \mathcal{A}_{\widehat{\bsigma}}(\bzeta_2), \bxi] + [\mathcal{B}(\bxi), \varphi_1 - \varphi_2] &= 0 \quad \forall \bxi \in \bH^{4}_{\mathrm{N}}(\vdiv,\Omega),\\ 
        [\mathcal{B}(\bzeta_1 - \bzeta_2), \psi] - [\cC(\varphi_1 - \varphi_2), \psi] &= 0 \quad \forall \psi \in \rL^2(\Omega). 
    \end{align*}
    Choosing the test functions $\bxi = \bzeta_1 - \bzeta_2$ and $\psi = \varphi_1 - \varphi_2$, and  subtracting the second equation from the first one,  we obtain
    \begin{equation*}
        [\mathcal{A}_{\widehat{\bsigma}}(\bzeta_1) - \mathcal{A}_{\widehat{\bsigma}}(\bzeta_2), \bzeta_1 - \bzeta_2] + [\cC(\varphi_1 - \varphi_2), \varphi_1 - \varphi_2] = 0.
    \end{equation*}
    Next, since the operators satisfy $[\mathcal{A}_{\widehat{\bsigma}}(\bzeta_1) - \mathcal{A}_{\widehat{\bsigma}}(\bzeta_2), \bzeta_1 - \bzeta_2] \ge \eta C_F \|\bzeta_1 - \bzeta_2\|_{0,4;\Omega}^4$, and $[\cC(\varphi_1 - \varphi_2), \varphi_1 - \varphi_2] = \|\varphi_1 - \varphi_2\|_{0,\Omega}^2$, it follows that
    \begin{equation*}
        \eta C_F \|\bzeta_1 - \bzeta_2\|_{0,4;\Omega}^4 + \|\varphi_1 - \varphi_2\|_{0,\Omega}^2 \le 0.
    \end{equation*}
 Consequently, $\bzeta_1 = \bzeta_2$ and $\varphi_1 = \varphi_2$, proving the uniqueness of the full system.
 
 Finally, we establish the \emph{a priori} bound. We test the first equation of the system with $\bxi = \bzeta$ and the second with $\psi = -\varphi$, to obtain that
    \begin{equation*}
        [\mathcal{A}_{\widehat{\bsigma}}(\bzeta), \bzeta] + [\cC(\varphi), \varphi] = [\cF, \bzeta] - [\cG, \varphi].
    \end{equation*}
    Using that $[\mathcal{A}_{\widehat{\bsigma}}(\bzeta), \bzeta] \ge \eta \|\bzeta\|_{0,4;\Omega}^4$ and the definition of $\cC$, we obtain
    \begin{equation}\label{eq:energy_bound1}
        \eta \|\bzeta\|_{0,4;\Omega}^4 + \|\varphi\|_{0, \Omega}^2 \le \|\cF\|_{\bH^{4}_{\mathrm{N}}(\vdiv,\Omega)'} \|\bzeta\|_{{{4,\vdiv;\Omega}}} + \|\cG\|_{0, \Omega} \|\varphi\|_{0, \Omega}.
    \end{equation}
    Applying Young's inequality to the last term, we have $\|\cG\|_{0, \Omega} \|\varphi\|_{0, \Omega} \le \frac{1}{2}\|\cG\|_{0, \Omega}^2 + \frac{1}{2}\|\varphi\|_{0, \Omega}^2$, which yields
    \begin{equation}\label{eq:energy_bound2}
        \eta \|\bzeta\|_{0,4;\Omega}^4 + \frac{1}{2}\|\varphi\|_{0, \Omega}^2 \le \|\cF\|_{\bH^{4}_{\mathrm{N}}(\vdiv,\Omega)'} \|\bzeta\|_{{4,\vdiv;\Omega}} + \frac{1}{2}\|\cG\|_{0, \Omega}^2.
    \end{equation}
    On the other hand, from the second equation of the system, $[\mathcal{B}(\bzeta), \psi] = [\cC(\varphi), \psi] + [\cG, \psi]$. Choosing  $\psi = -\vdiv \bzeta$, we get
    \begin{equation*}
        \|\vdiv \bzeta\|_{0,\Omega}^2  \le (\|\varphi\|_{0, \Omega} + \|\cG\|_{0, \Omega}) \|\vdiv \bzeta\|_{0,\Omega},
    \end{equation*}
    which yields 
    \[\|\vdiv \bzeta\|_{0,\Omega} \le \|\varphi\|_{0, \Omega} + \|\cG\|_{0, \Omega}.\]
    Recalling that $\|\bzeta\|_{{4,\vdiv;\Omega}} = \|\bzeta\|_{0,4;\Omega} + \|\vdiv \bzeta\|_{0,\Omega}$, we have
    \begin{equation}\label{eq:norm_V_bound}
        \|\bzeta\|_{{4,\vdiv;\Omega}} \le \|\bzeta\|_{0,4;\Omega} + \|\varphi\|_{0, \Omega} + \|\cG\|_{0, \Omega}.
    \end{equation}
    Substituting \eqref{eq:norm_V_bound} back into \eqref{eq:energy_bound2} yields
    \begin{equation*}
        \eta \|\bzeta\|_{0,4;\Omega}^4 + \frac{1}{2}\|\varphi\|_{0, \Omega}^2 \le \|\cF\|_{\bH^{4}_{\mathrm{N}}(\vdiv,\Omega)'} \|\bzeta\|_{0,4;\Omega} + \|\cF\|_{\bH^{4}_{\mathrm{N}}(\vdiv,\Omega)'} \|\varphi\|_{0, \Omega} + \|\cF\|_{\bH^{4}_{\mathrm{N}}(\vdiv,\Omega)'} \|\cG\|_{0, \Omega} + \frac{1}{2}\|\cG\|_{0, \Omega}^2.
    \end{equation*}
    We now apply Young's inequalities to the terms involving $\|\cF\|_{\bH^{4}_{\mathrm{N}}(\vdiv,\Omega)'}$ as follows. For the first term, applying $ab \le \frac{\epsilon^4}{4} a^4 + \frac{3}{4\epsilon^{4/3}} b^{4/3}$ with $\epsilon^4 = 2\eta$ yields
    \begin{equation*}
        \|\cF\|_{\bH^{4}_{\mathrm{N}}(\vdiv,\Omega)'} \|\bzeta\|_{0,4;\Omega} \le \frac{\eta}{2} \|\bzeta\|_{0,4;\Omega}^4 + \frac{3}{4(2\eta)^{1/3}} \|\cF\|_{\bH^{4}_{\mathrm{N}}(\vdiv,\Omega)'}^{4/3}.
    \end{equation*}
    For the remaining terms, we get
    \begin{align*}
        \|\cF\|_{\bH^{4}_{\mathrm{N}}(\vdiv,\Omega)'} \|\varphi\|_Q \le \frac{1}{4} \|\varphi\|_{0, \Omega}^2 + \|\cF\|_{\bH^{4}_{\mathrm{N}}(\vdiv,\Omega)'}^2, \qan 
        \|\cF\|_{\bH^{4}_{\mathrm{N}}(\vdiv,\Omega)'} \|\cG\|_{0, \Omega} \le \frac{1}{2}\|\cF\|_{\bH^{4}_{\mathrm{N}}(\vdiv,\Omega)'}^2 + \frac{1}{2}\|\cG\|_{0, \Omega}^2.
    \end{align*}
     Inserting these bounds into the energy inequality and absorbing the terms $\frac{\eta}{2} \|\bzeta\|_{0,4;\Omega}^4$ and $\frac{1}{4} \|\varphi\|_{0, \Omega}^2$ into the left-hand side, we arrive at
    \begin{equation}\label{eq:pre_M_bound}
        \frac{\eta}{2} \|\bzeta\|_{0,4;\Omega}^4 + \frac{1}{4}\|\varphi\|_{0, \Omega}^2 \le  \frac{3}{4(2\eta)^{1/3}} \|\cF\|_{\bH^{4}_{\mathrm{N}}(\vdiv,\Omega)'}^{4/3} + \frac{3}{2} \|\cF\|_{\bH^{4}_{\mathrm{N}}(\vdiv,\Omega)'}^2 + \|\cG\|_{0, \Omega}^2.
    \end{equation}
    To complete the bound in terms of the physical data, we recall that
    \begin{equation*}
        \|\cG\|_{0, \Omega} \le \|\ell\|_{0,\Omega} \qan 
        \|\cF\|_{\bH^{4}_{\mathrm{N}}(\vdiv,\Omega)'} \le \max\{C_{\mathrm{emb}}, 1\} \|\varphi_{\mathrm{D}}\|_{1/2,00;\Gamma_{\mathrm{D}}}.
    \end{equation*}
    Substituting these data bounds into the right-hand side of \eqref{eq:pre_M_bound} yields $\mathcal{H}(\ell, \varphi_{\mathrm{D}})$ as defined in \eqref{eq:def_F_data}.
    From this, it is clear that
    \begin{equation*}
        \|\bzeta\|_{0,4;\Omega} \le \left( \frac{2}{\eta}  \mathcal{H}(\ell, \varphi_{\mathrm{D}}) \right)^{1/4} \quad \text{and} \quad \|\varphi\|_{0, \Omega} \le 2  \mathcal{H}(\ell, \varphi_{\mathrm{D}})^{1/2}.
    \end{equation*}
    Finally, recalling from \eqref{eq:norm_V_bound} that $\|\vdiv \bzeta\|_{0,\Omega} \le \|\varphi\|_{0, \Omega} + \|\cG\|_{0, \Omega} \le \|\varphi\|_{0, \Omega} + \|\ell\|_{0,\Omega}$, we have
    \begin{align*}
        \|\bzeta\|_{{4,\vdiv;\Omega}} + \|\varphi\|_{0, \Omega} &= \|\bzeta\|_{0,4;\Omega} + \|\vdiv \bzeta\|_{0,\Omega} + \|\varphi\|_{0, \Omega} \\[1ex]
        &\le \|\bzeta\|_{0,4;\Omega} + 2\|\varphi\|_{0, \Omega} + \|\ell\|_{0,\Omega} \\[1ex]
        &\le \left( \frac{2}{\eta} \right)^{1/4}  \mathcal{H}(\ell, \varphi_{\mathrm{D}})^{1/4} + 4  \mathcal{H}(\ell, \varphi_{\mathrm{D}})^{1/2} + \|\ell\|_{0,\Omega}.
    \end{align*}
    This establishes the \emph{a priori} stability estimate \eqref{eq:apriori_bound} defined by $\mathcal{M}(\ell, \varphi_{\mathrm{D}})$, thus completing the proof.
\end{proof}

\subsection{Solvability of the coupled problem via fixed-point theory}\label{sec:wp}
We define the following map
\[ 
\cJ^{\mathrm{Biot}}:\rL^2(\Omega)\to \bbV \times \bQ, \quad \widehat{\varphi} \mapsto \cJ^{\mathrm{Biot}}(\widehat{\varphi})= \left(\left(\cJ^{\mathrm{Biot}}_1(\widehat{\varphi}), \cJ^{\mathrm{Biot}}_2(\widehat{\varphi})\right),\cJ^{\mathrm{Biot}}_3(\widehat{\varphi})\right) := ( (\bsigma, p),  \vec{\bu}) = ( \vec{\bsigma}, \vec{\bu}), 
\]
where $( \vec{\bsigma}, \vec{\bu}) \in \bbV \times \bQ$ is the unique solution of the poroelasticity equations as stated in Theorem~\ref{th:well-posed-Biot}.

In turn, taking into account the reaction perturbation, we define the solution operator associated with the altered diffusion equations as 
\[ 
\cJ^{\mathrm{diff}}: \bbH^{\mathrm{sym}}_{\mathrm{N}}(\bdiv,\Omega) \to  \bH^4_\mathrm{N}(\vdiv,\Omega)\times \rL^2(\Omega), \quad \widehat{\bsigma} \mapsto \cJ^{\mathrm{diff}}(\widehat{\bsigma}) = \left(\cJ^{\mathrm{diff}}_1(\widehat{\bsigma}),\cJ^{\mathrm{diff}}_2(\widehat{\bsigma})\right) := ( \bzeta, \varphi),
\]
where $(\bzeta, \varphi)$ is the unique solution of the nonlinear diffusion equations as stated in Theorem~\ref{th:well-posed-diff}. 
These maps are well-defined and so it is the following one
\begin{equation}\label{Operator-J}
\mathcal{J}: \rL^{2}(\Omega)  \to  \rL^{2}(\Omega), \quad \widehat{\varphi} \mapsto \cJ(\widehat{\varphi}):= \cJ^{\mathrm{diff}}_2\left(\cJ^{\mathrm{Biot}}_1(\widehat{\varphi})\right).
\end{equation}

Finding a fixed point $\varphi$ of $\cJ$ is therefore equivalent to solving the fully coupled continuous problem \eqref{eq:weak2}. For this we use the  Schauder fixed-point theorem and start by considering, for a generic $r>0$, the following closed ball
\[ 
\rW : = \{ \widehat{\varphi} \in \rL^{2}(\Omega): \|\widehat{\varphi}\|_{0,\Omega}  \leq r \},
\]
and proceed next to show that $\cJ$ maps $\rW$ into itself and that $\cJ$ is Lipschitz continuous.
We consider, for a generic $r > 0$, the following closed ball
\begin{equation*}
    \mathrm{W} := \{ \widehat{\varphi} \in \rL^2(\Omega) : \|\widehat{\varphi}\|_{0,\Omega} \le r \},
\end{equation*}
and proceed next to show that $\mathcal{J}$ maps it to itself and that $\mathcal{J}$ is continuous.
\begin{lemma}[ball mapping property]\label{lemma:ball_mapping}
    Under the assumption
    \begin{equation}\label{eq:data_assumption}
        \mathcal{M}(\ell, \varphi_{\mathrm{D}}) \le r,
    \end{equation}
    where $\mathcal{M}(\ell, \varphi_{\mathrm{D}})$ is the continuous data bound defined in \eqref{eq:def_M_data}, there holds $\mathcal{J}(\mathrm{W}) \subseteq \mathrm{W}$.
\end{lemma}
\begin{proof}
    Given $\widehat{\varphi} \in \mathrm{W}$, by the \emph{a priori} stability estimate \eqref{eq:apriori_bound} and the assumption \eqref{eq:data_assumption}, we have
    \begin{equation*}
        \|\mathcal{J}(\widehat{\varphi})\|_{0,\Omega} = \| \cJ^{\mathrm{diff}}_2\left(\cJ^{\mathrm{Biot}}_1(\widehat{\varphi})\right)\|_{0,\Omega} \le \mathcal{M}(\ell, \varphi_{\mathrm{D}}) \le r,
    \end{equation*}
    which means that $\mathcal{J}(\mathrm{W}) \subseteq \mathrm{W}$.
\end{proof}

We continue the analysis with the continuity property of ${\mathcal{J}}$. To this end, we recall that Theorem \ref{th:well-posed-Biot} establishes the existence of a positive constant $C_{\mathrm{B}}$ such that 
\begin{align}\label{global-inf-sup-Jhat}
&\sup_{\stackrel{(\vec{\btau},\vec{\bv}) \in \bbV\times \bQ}{ (\vec{\btau},\vec{\bv}) \neq \mathbf{0}}}\frac{A(\vec{\bzeta},\vec{\btau})+B(\vec{\btau},\vec{\bw})+ B(\vec{\bzeta},\vec{\bv})-C(\vec{\bw},\vec{\bv})}{\|(\vec{\btau},\vec{\bv})\|_{\bbV\times \bQ}}\geq \,C_{\mathrm{B}}\|(\vec{\bzeta},\vec{\bw})\|_{\bbV\times \bQ} \ \forall\, (\vec{\bzeta},\vec{\bw})\in \bbV\times \bQ.
\end{align}
\begin{lemma}[Hölder continuity]\label{lemma:continuity-J} 
 There exists a positive constant $\mathcal{L}_{\mathcal{J}}$ such that 
 \begin{equation}\label{continuity-J}
  \|\mathcal{J}(\varphi_{1})-\mathcal{J}(\varphi_{2})\|_{0,\Omega} \leq \mathcal{L}_{\mathcal{J}}\|\varphi_{1}-\varphi_{2}\|_{0, \Omega}^{2/3} \qquad \forall \varphi_1,\varphi_2 \in \rL^2(\Omega).
 \end{equation} 
 \end{lemma}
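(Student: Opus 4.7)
The plan is to trace a pair of concentrations $\varphi_1, \varphi_2 \in \rL^2(\Omega)$ through the composition defining $\mathcal{J}$, bounding each link in terms of quantities already controlled by Theorems \ref{th:well-posed-Biot} and \ref{th:well-posed-diff}. For $i \in \{1,2\}$, I would set $(\vec{\bsigma}_i, \vec{\bu}_i) := \cJ^{\mathrm{Biot}}(\varphi_i)$, with $\vec{\bsigma}_i = (\bsigma_i, p_i)$ and $\vec{\bu}_i = (\bu_i, \bz_i)$, and $(\bzeta_i, \varphi_i^*) := \cJ^{\mathrm{diff}}(\bsigma_i)$, so that $\mathcal{J}(\varphi_i) = \varphi_i^*$, and then proceed in two steps.

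First, I would exploit the linearity of \eqref{eq:well-posed-Biot} in the data $\widehat{\varphi}$: the pair $(\vec{\bsigma}_1 - \vec{\bsigma}_2, \vec{\bu}_1 - \vec{\bu}_2)$ satisfies the Biot system with right-hand side $-D(\varphi_1 - \varphi_2, \bullet)$ on the first equation and zero on the Darcy block. Invoking the global inf-sup condition \eqref{global-inf-sup-Jhat} together with the boundedness of $D(\bullet,\bullet)$ from Lemma \ref{lem:boundedness} would yield
\[
\|\bsigma_1 - \bsigma_2\|_{0,\Omega} \leq \|(\vec{\bsigma}_1 - \vec{\bsigma}_2, \vec{\bu}_1 - \vec{\bu}_2)\|_{\bbV\times\bQ} \leq \frac{\|D\|}{C_{\mathrm{B}}} \|\varphi_1 - \varphi_2\|_{0,\Omega}.
\]

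Second, I would subtract the diffusion systems \eqref{eq:well-posed-diff} evaluated at $\bsigma_1$ and $\bsigma_2$. Using the splitting $a_{\bsigma_1}(\bzeta_1,\bxi) - a_{\bsigma_2}(\bzeta_2,\bxi) = a_{\bsigma_1}(\bzeta_1 - \bzeta_2, \bxi) + ((\varrho(\bsigma_1)^{-1} - \varrho(\bsigma_2)^{-1})\bzeta_2, \bxi)$, the pair $(\bzeta_1 - \bzeta_2, \varphi_1^* - \varphi_2^*)$ solves the perturbed saddle-point system associated with $a_{\bsigma_1}(\bullet,\bullet)$, $b(\bullet,\bullet)$, $c(\bullet,\bullet)$, with right-hand side $-((\varrho(\bsigma_1)^{-1} - \varrho(\bsigma_2)^{-1})\bzeta_2, \bullet)$ on the first equation and zero on the second. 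The residual is then controlled via the generalised Hölder inequality with exponents $(2,4,4)$ and the Lipschitz hypothesis \eqref{properties-stress-assisted},
\[
|((\varrho(\bsigma_1)^{-1} - \varrho(\bsigma_2)^{-1})\bzeta_2, \bxi)| \leq L_{\varrho} \|\bsigma_1 - \bsigma_2\|_{0,\Omega} \|\bzeta_2\|_{0,4;\Omega} \|\bxi\|_{0,4;\Omega},
\]
which is precisely where the $\bL^4$-regularity built into $\bH^4_{\mathrm{N}}(\vdiv,\Omega)$ becomes indispensable. Applying the global inf-sup \eqref{global-inf-sup--diff} and then the a priori bound of Theorem \ref{th:well-posed-diff} to estimate $\|\bzeta_2\|_{4,\vdiv;\Omega} \lesssim \|\ell\|_{0,\Omega} + \|\varphi_{\mathrm{D}}\|_{1/2,00;\Gamma_{\mathrm{D}}}$, I would obtain
\[
\|\varphi_1^* - \varphi_2^*\|_{0,\Omega} \leq \frac{L_{\varrho}}{C_{\mathrm{D}}} \|\bzeta_2\|_{4,\vdiv;\Omega} \|\bsigma_1 - \bsigma_2\|_{0,\Omega}.
\]
Chaining this with the first-step bound on $\|\bsigma_1 - \bsigma_2\|_{0,\Omega}$ then delivers \eqref{continuity-J} with $L_{\mathcal{J}}$ proportional to $L_{\varrho} \|D\| (C_{\mathrm{B}} C_{\mathrm{D}})^{-1}(\|\ell\|_{0,\Omega} + \|\varphi_{\mathrm{D}}\|_{1/2,00;\Gamma_{\mathrm{D}}})$.

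The main obstacle is the nonlinear residual in the second step. Unlike in the Hilbertian frameworks used in \cite{gatica18,gomez23}, one cannot afford an $\bL^\infty$-type Lipschitz control of $\varrho^{-1}$; the three-factor Hölder splitting $\bbL^2 \cdot \bL^4 \cdot \bL^4$ is the only viable route, and it closes only because both $\bzeta_2$ and $\bxi$ belong to $\bL^4(\Omega)$, and because the a priori estimate of Theorem \ref{th:well-posed-diff} delivers control of $\bzeta_2$ in the $\bL^4$-norm in terms of the data. Everything else reduces to a direct combination of the global inf-sup conditions \eqref{global-inf-sup-Jhat}--\eqref{global-inf-sup--diff} and the boundedness of the coupling forms established in Lemma \ref{lem:boundedness}.
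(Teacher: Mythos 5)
Your proposal is correct and follows essentially the same route as the paper's own proof: subtract the two Biot systems, read off the difference-system residual as $-D(\varphi_1-\varphi_2,\bullet)$, invoke the global inf-sup \eqref{global-inf-sup-Jhat} together with the boundedness of $D(\bullet,\bullet)$ to control $\|\bsigma_1-\bsigma_2\|_{0,\Omega}$; then subtract the two diffusion systems, split the stress-assisted term via the identity $a_{\bsigma_1}(\bzeta_1,\bxi)-a_{\bsigma_2}(\bzeta_2,\bxi)=a_{\bsigma_1}(\bzeta_1-\bzeta_2,\bxi)+((\varrho(\bsigma_1)^{-1}-\varrho(\bsigma_2)^{-1})\bzeta_2,\bxi)$, estimate the residual with the $(2,4,4)$ H\"older inequality and the Lipschitz hypothesis \eqref{properties-stress-assisted}, close with the global inf-sup \eqref{global-inf-sup--diff} and the a priori bound on $\|\bzeta_2\|_{4,\vdiv;\Omega}$ from Theorem~\ref{th:well-posed-diff}, and chain the two estimates. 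Your resulting constant $L_{\mathcal{J}}\propto L_\varrho\|D\|(C_{\mathrm{B}}C_{\mathrm{D}})^{-1}(\|\ell\|_{0,\Omega}+\|\varphi_{\mathrm{D}}\|_{1/2,00;\Gamma_{\mathrm{D}}})$ matches the paper's expression in \eqref{LJ}.
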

\begin{proof}
    Given $\varphi_{1}, \varphi_{2} \in \rL^2(\Omega)$, we let $\cJ^{\mathrm{Biot}}(\varphi_{1})=(\vec{\bsigma}_{1}, \vec{\bu}_{1}) \in \bbV\times \bQ$ and $\cJ^{\mathrm{Biot}}(\varphi_{2})=(\vec{\bsigma}_{2}, \vec{\bu}_{2}) \in \bbV\times \bQ$ be the unique solutions of \eqref{eq:well-posed-Biot}. Then, applying the inf-sup condition  \eqref{global-inf-sup-Jhat} with $(\vec{\bzeta}, \vec{\bw})=(\vec{\bsigma}_{1}-\vec{\bsigma}_{2},  \vec{\bu}_{1}- \vec{\bu}_{2})$, it follows that
    \begin{align*}
     \displaystyle C_{\mathrm{B}}\|(\vec{\bsigma}_{1}-\vec{\bsigma}_{2},\vec{\bu}_{1}- \vec{\bu}_{2})\|_{\bbV\times \bQ} &\leq \sup_{\stackrel{(\vec{\btau},\vec{\bv}) \in \bbV\times \bQ}{ (\vec{\btau},\vec{\bv}) \neq \mathbf{0}}}\frac{A(\vec{\bsigma}_{1}-\vec{\bsigma}_{2},\vec{\btau})+B(\vec{\btau},\vec{\bu}_{1}- \vec{\bu}_{2})+ B(\vec{\bsigma}_{1}-\vec{\bsigma}_{2},\vec{\bv})-C(\vec{\bu}_{1}- \vec{\bu}_{2},\vec{\bv})}{\|(\vec{\btau},\vec{\bv})\|_{\bbV\times \bQ}}\\
     & = \sup_{\stackrel{(\vec{\btau},\vec{\bv}) \in \bbV\times \bQ}{ (\vec{\btau},\vec{\bv}) \neq \mathbf{0}}}\frac{D({\varphi}_{1},\vec{\btau})-D({\varphi}_{2},\vec{\btau})}{\|(\vec{\btau},\vec{\bv})\|_{\bbV\times \bQ}} \\
     & \leq \frac{(1+\alpha d)\beta}{2\mu + d\lambda}\|\varphi_{1}-\varphi_{2}\|_{0,\Omega}.
\end{align*}
The bound above implies that
\begin{align}\label{continuity-hat-biot}
\|\cJ_{1}^{\mathrm{Biot}}(\varphi_{1})-\cJ_{1}^{\mathrm{Biot}}(\varphi_{2})\|_{\bdiv, \Omega}
\leq  \mathcal{L}_{\mathrm{Biot}} \|\varphi_{1}-\varphi_{2}\|_{0,\Omega}, \quad \text{with} \quad \mathcal{L}_{\mathrm{Biot}} := \frac{(1+\alpha d)\beta}{C_{\mathrm{B}}(2\mu + d\lambda)}.
\end{align}
On the other hand, given ${\bsigma}_{1}, {\bsigma}_{2} \in \bbH^{\mathrm{sym}}_{\mathrm{N}}(\bdiv,\Omega)$, we let $\cJ^{\mathrm{diff}}({\bsigma}_{1})=(\bzeta_{1}, \varphi_{1})$ and $\cJ^{\mathrm{diff}}({\bsigma}_{2})=(\bzeta_{2}, \varphi_{2})$ in $\bH^{4}_{\mathrm{N}}(\vdiv,\Omega)\times \rL^2(\Omega)$ be the unique solutions of the nonlinear diffusion sub-problem. This means that their respective differences satisfy the error system
    \begin{subequations}\label{eq:diff_error_system}
    \begin{align}
        [\cA_{{\bsigma}_{1}}(\bzeta_{1}) - \cA_{{\bsigma}_{2}}(\bzeta_{2}), \bxi] + [\cB(\bxi), \varphi_{1}-\varphi_{2}] &= 0 \qquad \forall \bxi \in \bH_{\mathrm{N}}^4(\vdiv,\Omega), \label{eq:diff_error_1} \\
        [\cB(\bzeta_{1}-\bzeta_{2}),  \psi] - [\cC(\varphi_{1}-\varphi_{2}), \psi] &= 0 \qquad \forall \psi \in \rL^2(\Omega). \label{eq:diff_error_2}
    \end{align}
    \end{subequations}
To exploit the saddle-point structure, we test the first equation \eqref{eq:diff_error_1} with $\bxi = \bzeta_1 - \bzeta_2$ and the second equation \eqref{eq:diff_error_2} with $\psi = -(\varphi_1 - \varphi_2)$. Adding both equations we obtain
\begin{align*}
    [\cA_{{\bsigma}_1}(\bzeta_1) - \cA_{{\bsigma}_2}(\bzeta_2), \bzeta_1 - \bzeta_2] + \|\varphi_1 - \varphi_2\|_{0,\Omega}^2 = 0.
\end{align*}
Thus, adding and subtracting $a_{{\bsigma}_1}(\bzeta_2, \bzeta_1 - \bzeta_2)$, utilising the strong monotonicity of the nonlinear operator, we obtain
\begin{align*}
   \eta C_F \|\bzeta_1 - \bzeta_2\|_{0,4;\Omega}^4 + \|\varphi_1 - \varphi_2\|_{0,\Omega}^2 &\leq [\cA_{{\bsigma}_2}(\bzeta_2) - \cA_{{\bsigma}_1}(\bzeta_2), \bzeta_1 - \bzeta_2] \nonumber \\
    &= \int_{\Omega} \left( \varrho({\bsigma}_2)^{-1} - \varrho({\bsigma}_1)^{-1} \right) \bzeta_2 \cdot (\bzeta_1 - \bzeta_2) .
\end{align*}
Next, applying Hölder's inequality, the Lipschitz continuity of $\varrho^{-1}$, the a priori bound $\|\bzeta_2\|_{0,4;\Omega} \leq \mathcal{M}(\ell, \varphi_{\mathrm{D}})$ given by \eqref{eq:apriori_bound}, and Young's inequality $\displaystyle ab \leq \frac{\epsilon^4}{4}a^4 + \frac{3}{4\epsilon^{4/3}}b^{4/3}$ choosing $\epsilon^4 = 2\eta C_F$, we find that
\begin{align*}
   \eta C_F \|\bzeta_1 - \bzeta_2\|_{0,4;\Omega}^4 + \|\varphi_1 - \varphi_2\|_{0,\Omega}^2 &
    \leq L_\varrho \mathcal{M}(\ell, \varphi_{\mathrm{D}}) \|{\bsigma}_1 - {\bsigma}_2\|_{0,\Omega} \|\bzeta_1 - \bzeta_2\|_{0,4;\Omega}\\
    & \leq \frac{\eta C_F}{2} \|\bzeta_1 - \bzeta_2\|_{0,4;\Omega}^4 + \frac{3}{4(2\eta C_F)^{1/3}} \left( L_\varrho \mathcal{M}(\ell, \varphi_{\mathrm{D}}) \|{\bsigma}_1 - {\bsigma}_2\|_{0,\Omega} \right)^{4/3}.
\end{align*}
It follows immediately that
\begin{align}\label{continuity-tilde-assited}
\|\cJ_{2}^{\mathrm{diff}}({\bsigma}_{1})-\cJ_{2}^{\mathrm{diff}}({\bsigma}_{2})\|_{0,\Omega} \leq \mathcal{L}_{\mathrm{Diff}} \|{\bsigma}_{1}-{\bsigma}_{2}\|_{0, \Omega}^{2/3}, \quad \text{with} \quad \mathcal{L}_{\mathrm{Diff}} := \frac{\sqrt{3} (L_\varrho \mathcal{M}(\ell, \varphi_{\mathrm{D}}))^{2/3}}{2(2\eta C_F)^{1/6}}.
\end{align}
Then, the global estimate in \eqref{continuity-J} follows directly from the fact that $\mathcal{J} = \cJ_{2}^{\mathrm{diff}} \circ \cJ_{1}^{\mathrm{Biot}}$, the Hölder continuity of $\cJ_{2}^{\mathrm{diff}}$ (cf. \eqref{continuity-tilde-assited}), and the Lipschitz continuity of $\cJ_{1}^{\mathrm{Biot}}$ (cf. \eqref{continuity-hat-biot}):
\begin{align*}
    \|\mathcal{J}(\varphi_{1})-\mathcal{J}(\varphi_{2})\|_{0,\Omega} 
    &= \|\cJ_{2}^{\mathrm{diff}}(\cJ_{1}^{\mathrm{Biot}}(\varphi_{1})) - \cJ_{2}^{\mathrm{diff}}(\cJ_{1}^{\mathrm{Biot}}(\varphi_{2}))\|_{0,\Omega} \\
    &= \|\cJ_{2}^{\mathrm{diff}}({\bsigma}_{1})-\cJ_{2}^{\mathrm{diff}}({\bsigma}_{2})\|_{0,\Omega} \\
    &\leq \mathcal{L}_{\mathrm{Diff}} \|{\bsigma}_{1}-{\bsigma}_{2}\|_{0, \Omega}^{2/3} 
    \leq \mathcal{L}_{\mathrm{Diff}} \|\cJ_{1}^{\mathrm{Biot}}(\varphi_{1})-\cJ_{1}^{\mathrm{Biot}}(\varphi_{2})\|_{\bdiv, \Omega}^{2/3} \\
    & \leq \mathcal{L}_{\mathrm{Diff}} \left( \mathcal{L}_{\mathrm{Biot}}\|\varphi_1 - \varphi_2\|_{0,\Omega} \right)^{2/3},
\end{align*}
which yields exactly $\|\mathcal{J}(\varphi_{1})-\mathcal{J}(\varphi_{2})\|_{0,\Omega} \leq \mathcal{L}_{\mathcal{J}} \|\varphi_1 - \varphi_2\|_{0,\Omega}^{2/3}$, where
 $\displaystyle \mathcal{L}_{\mathcal{J}}:= \mathcal{L}_{\mathrm{Diff}} \mathcal{L}_{\mathrm{Biot}}^{2/3}$.    
\end{proof}

 Owing to the above analysis, we now establish the main result of this section.
\begin{theorem}[existence of solution for the fully-coupled continuous problem]\label{th:J-unique-solvability-fixed}
 Suppose that $\mathcal{M}(\ell, \varphi_{\mathrm{D}}) \leq r$.  Then, the coupled problem \eqref{eq:weak2} has at least one solution $(\vec{\bsigma}, \vec{\bu}) \in \bbV \times \bQ$ and $( \bzeta, \varphi) \in   \bH^{4}_{\mathrm{N}}(\vdiv,\Omega) \times \rL^2(\Omega)$. Moreover, we have 
\begin{align}
    \|(\vec{\bsigma}, \vec{\bu},\bzeta, \varphi)\|_{\bbV \times \bQ \times \bH^{4}_{\mathrm{N}}(\vdiv,\Omega) \times \rL^2(\Omega)}  &  \lesssim \biggl(\frac{(1+\alpha d)\beta}{2\mu + d\lambda}+1\biggr)\mathcal{M}(\ell, \varphi_{\mathrm{D}}) + \|\ff\|_{0,\Omega}  + \|g\|_{0,\Omega} \notag \\ &\quad + \|\bu_{\mathrm{D}}\|_{1/2,00;\Gamma_{\mathrm{D}}} +\, \|p_{\mathrm{D}}\|_{1/2,00;\Gamma_{\mathrm{D}}}. \label{estimate su}
\end{align}
\end{theorem}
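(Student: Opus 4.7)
The plan is to apply Banach's fixed point theorem to the composite operator $\mathcal{J}$ defined in \eqref{Operator-J}. First I would observe that the closed ball $\rW$, being closed in the complete space $\rL^{2}(\Omega)$, is itself a nonempty complete metric space when endowed with the induced norm. Under the standing hypothesis $\|\ell\|_{0,\Omega} +  \|\varphi_{\mathrm{D}}\|_{1/2,00;\Gamma_{\mathrm{D}}} \leq r$, Lemma~\ref{lemma:J-W-delta-W-delta} ensures that $\mathcal{J}(\rW) \subseteq \rW$, while Lemma~\ref{lemma:continuity-J} provides the Lipschitz estimate with constant $L_{\mathcal{J}}$ given in \eqref{LJ}. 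Since by assumption $L_{\mathcal{J}} < 1$, the operator $\mathcal{J}$ is a strict contraction on $\rW$, and Banach's theorem therefore yields a unique fixed point $\varphi \in \rW$.

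The next step is to reconstruct the full solution tuple of \eqref{eq:weak2} from this fixed point. Setting $(\vec{\bsigma}, \vec{\bu}) := \cJ^{\mathrm{Biot}}(\varphi) \in \bbV \times \bQ$ through Theorem~\ref{th:well-posed-Biot} and then $(\bzeta, \phi) := \cJ^{\mathrm{diff}}(\bsigma) \in \bH^{4}_{\mathrm{N}}(\vdiv,\Omega) \times \rL^{2}(\Omega)$ through Theorem~\ref{th:well-posed-diff}, the defining identity $\phi = \cJ^{\mathrm{diff}}_2(\cJ^{\mathrm{Biot}}_1(\varphi)) = \mathcal{J}(\varphi) = \varphi$ makes it immediate that $(\vec{\bsigma}, \vec{\bu}, \bzeta, \varphi)$ satisfies the coupled system \eqref{eq:weak2}. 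For uniqueness at the level of the full problem, I would note that the Lipschitz estimate of Lemma~\ref{lemma:continuity-J} is valid on the whole space $\rL^{2}(\Omega)$, so that any two concentration components associated to solutions of \eqref{eq:weak2} are necessarily fixed points of $\mathcal{J}$, and the contraction bound $L_{\mathcal{J}} < 1$ forces them to coincide; the remaining unknowns are then uniquely recovered by Theorems~\ref{th:well-posed-Biot} and~\ref{th:well-posed-diff}.

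Finally, for the a priori bound \eqref{estimate su}, I would chain together the two subproblem estimates. The diffusion estimate in Theorem~\ref{th:well-posed-diff} gives $\|\bzeta\|_{4,\vdiv;\Omega} + \|\varphi\|_{0,\Omega} \lesssim \|\ell\|_{0,\Omega} + \|\varphi_{\mathrm{D}}\|_{1/2,00;\Gamma_{\mathrm{D}}}$ independently of $\bsigma$, and inserting this control of $\|\varphi\|_{0,\Omega}$ into the Biot estimate of Theorem~\ref{th:well-posed-Biot} produces the corresponding bound for $\|(\vec{\bsigma}, \vec{\bu})\|_{\bbV \times \bQ}$ with the factor $\frac{(1+\alpha d)\beta}{2\mu + d\lambda}$ multiplying the diffusion data; summing the two bounds yields \eqref{estimate su}. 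I do not foresee any serious technical obstacle, since the heavy lifting has been absorbed by Theorems~\ref{th:well-posed-Biot} and~\ref{th:well-posed-diff} together with Lemmas~\ref{lemma:J-W-delta-W-delta} and~\ref{lemma:continuity-J}; the only subtle point worth double-checking is the consistency between the small-data assumption used for the ball-mapping property and the one encoded in the contraction constant $L_{\mathcal{J}}$, both of which are driven by the same quantity $\|\ell\|_{0,\Omega} + \|\varphi_{\mathrm{D}}\|_{1/2,00;\Gamma_{\mathrm{D}}}$.
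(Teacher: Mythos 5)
Your proposal is correct and follows essentially the same route as the paper: apply Banach's fixed-point theorem on the closed ball $\rW$ via Lemmas~\ref{lemma:J-W-delta-W-delta} and~\ref{lemma:continuity-J}, then invoke the a priori bounds of Theorems~\ref{th:well-posed-Biot} and~\ref{th:well-posed-diff} to deduce \eqref{estimate su}. You spell out a couple of points that the paper leaves implicit (reconstruction of the full tuple from the fixed point, and global uniqueness via the fact that the Lipschitz estimate in Lemma~\ref{lemma:continuity-J} holds on all of $\rL^2(\Omega)$, not merely on $\rW$), which is a welcome clarification but does not change the substance of the argument.
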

\begin{proof}
We first recall that Lemma \ref{lemma:ball_mapping} guarantees that $\mathcal{J}$ maps $\rW$ into itself. Then, bearing in mind the Hölder-continuity of $\mathcal{J} : \rW \to \rW$ given by Lemma \ref{lemma:continuity-J}, a direct application of the Schauder fixed-point Theorem yields the existence of at least one fixed point $\varphi \in \rW$ of this operator, and hence at least one solution to \eqref{eq:weak2}. In addition, the {\it a priori} estimates provided by Theorem \ref{th:well-posed-Biot} and \ref{th:well-posed-diff} yield \eqref{estimate su}, which completes the proof.
\end{proof}

\begin{remark}[on the uniqueness of the continuous solution]Theorem \ref{th:J-unique-solvability-fixed} establishes the existence of at least one weak solution to problem \eqref{eq:weak2}, but does not assert uniqueness. This is due to the highly nonlinear nature of the stress-assisted diffusion operator, which renders the fixed-point operator $\mathcal{J}$ H\"{o}lder continuous rather than Lipschitz. Far from being an isolated limitation of our model, this analytical hurdle is a well-documented characteristic of such coupled problems; indeed, similar mathematical challenges are encountered in recent literature, such as in \cite{Allendes2023}, where only global existence is established for coupled Darcy--Forchheimer equations, reflecting the fact that unconditional uniqueness for this class of systems remains a prominent open question.
\end{remark}

\section{Virtual element discretisation}\label{sec:discrete}
This section introduces the VEM-based discrete formulation for the fully-coupled problem \eqref{eq:weak1}-\eqref{eq:weak2}. We employ a VEM for both 2D/3D linear elasticity problems based on the Hellinger--Reissner variational principle (cf. \eqref{eq:poroelast-1}-\eqref{eq:poroelast-3}). The main advantage of this type of VE space is that it allows the symmetry of the discrete tensor to be enforced strongly. Moreover, its definition is unified in both 2D and 3D, taking into account that in 3D, facets correspond to the faces of the polyhedral element, while in 2D, they correspond to the edges of the polygonal element. On the other hand, the VEM employed here for mixed second order elliptic problems (corresponding to the equations \eqref{eq:poroelast-4}-\eqref{eq:poroelast-5} and \eqref{eq:poroelast-6}-\eqref{eq:poroelast-7})   requires separate definitions in 2D and 3D. In addition, we introduce appropriate polynomial projection, interpolation and stabilisation operators to guarantee computability of the discrete formulation. 

We recall that the detailed construction, unisolvence in terms of the corresponding {Degrees of Freedom} (DoFs), additional properties of the VE spaces; as well with the properties of the polynomial spaces and the computability of the polynomial projection operators in terms of the (respective) DoFs presented in this section are provided in \cite{artioli18,visinoni24,beirao16,beirao16b}.

\paragraph{Assumptions on the mesh.} Let $\cT_h$ be a collection of polygonal/polyhedral meshes on $\Omega$ and $\cF_h$ be the set of all facets in 3D (edges in 2D). The diameter of a polygon/polyhedron $K$ is represented by $h_K$ and the length/area of a facet $f$ is represented by $h_f$. The maximum diameter of elements in $\cT_h$ is represented by $h$. It is assumed that there exists a uniform positive constant $\eta$ such that
\begin{enumerate}[label={\textbf{(A\arabic*)}}, align=left, leftmargin=*, labelwidth=!, labelsep=1em]
    \item \label{A1} Every element $K$ has a star-shaped interior with respect to a ball with a radius greater than $\eta h_K$. 
    \item \label{A2} Every facet $f\in \partial K$ has a star-shaped interior with respect to a ball with a radius greater than $\eta h_K$.
    \item \label{A3} Every facet $f\in \partial K$ satisfies the inequality $h_f \geq \eta h_K$.
\end{enumerate}

\paragraph{Polynomial spaces.} In this paper, we consider an arbitrary polynomial degree $k\geq 1$. The space of polynomials of total degree at most $k$ defined locally on $K\in \cT_h$ (or facet $f\in \cF_h$) is represented by $\rP_k (K)$, and its vector and tensor counterparts are represented by $\bP_k(K)$ and $\bbP_k(K)$, respectively. We also consider the standard notation $\rP_{-1}(K)=\{0\}$. 

The spaces $\bG_k(K):= \nabla (\rP_{k+1}(K))$ and $\bG_k^\oplus(K)$ denote the gradients of polynomials of degree $\leq k+1$ on $K$ and the complement of the space $\bG_k(K)$ in the vector polynomial space $\bP_k(K)$ such that the direct sum $\bP_k(K) = \bG_k(K) \oplus  \bG_k^\oplus(K)$ holds, respectively. In particular, we select $\bG_k^\oplus(K)= \bx^\perp \rP_{k-1}(K)$ (resp. $\bG_k^\oplus(K):= \bx\wedge(\bP_{k-1}(K)$) where $\bx^\perp = (x_2,-x_1)^{\tt t}$ in 2D (resp. $\bx := (x_1,x_2,x_3)^{\tt t}$ and $\wedge$ the usual external product in 3D). 

Let $\bx_K = (x_{1,K},x_{2,K})^{\tt t}$ (resp. $\bx_K = (x_{1,K},x_{2,K},x_{3,K})^{\tt t}$) denote the barycentre of $K$ and let $\bM_k(K)$ be the set of vector scaled monomials as
\begin{align*}
    \bM_k(K):=\left\{ \left( \frac{\bx-\bx_K}{h_K} \right)^{\boldsymbol{\alpha}}\in \bP_k(K):\ 0\leq |\boldsymbol{\alpha}|\leq k \right\},
\end{align*}
where $\boldsymbol{\alpha}=(\alpha_1,\alpha_2)^{\tt t}$ (resp. $\boldsymbol{\alpha}=(\alpha_1,\alpha_2,\alpha_3)^{\tt t}$) is a non-negative multi-index with $|\boldsymbol{\alpha}|=\alpha_1+\alpha_2$ and $\bx^{\boldsymbol{\alpha}}=x_1^{\alpha_1}x_2^{\alpha_2}$ in 2D (resp. $|\boldsymbol{\alpha}|=\alpha_1+\alpha_2+\alpha_3$ and $\bx^{\boldsymbol{\alpha}}=x_1^{\alpha_1}x_2^{\alpha_2}x_3^{\alpha_3}$ in 3D), with analogous definition for the scalar and tensor version $\rM_k$ and $\bbM_k$. Notice that the polynomial decompositions presented before hold also in terms of the scaled monomial. For example, in the 2D case, we can take the sets $\bG_{k}(K)$ and $\bG_{k}^\oplus(K)$ as $\bM_{k}^{\nabla}(K):=\nabla\rM_{k+1}(K)\setminus\{\mathbf{0}\}$ and $\bM_{k}^{\oplus}(K):=\bm^\perp \rM_{k-1}(K)$, with $\bm^\perp := (\frac{x_2-x_{2,E}}{h_K},\frac{x_{1,K}-x_1}{h_K})^{\tt t}$ and $\bm :=  \frac{\bx-\bx_K}{h_K}$, respectively; and providing the decomposition $\bM_{k}(K)=\bM_{k}^{\nabla}(K)\oplus \bM_{k}^{\oplus}(K)$.

The set of polynomials that solves locally the constitutive law in linear elasticity is defined as $\widetilde{\bbM}_k(K):= \{\widetilde{\bm}_k \in \bbM_k(K):\ \widetilde{\bm}_k=\bC\beps(\bm_{k+1})\ \text{for some}\ \bm_{k+1}\in \bM_{k+1}(K)\}$. On the other hand, the set of scaled rigid body motions of an element $K$ is given by
\begin{align*}
    \textbf{RBM}(K) &:= 
    \begin{cases}
        \begin{aligned}
            &\left\{ \begin{pmatrix} \frac{1}{h_E} \\ 0 \end{pmatrix}, \begin{pmatrix} 0 \\ \frac{1}{h_E} \end{pmatrix}, \begin{pmatrix} \frac{x_{2,E}-x_2}{h_E} \\ \frac{x_1-x_{1,E}}{h_E} \end{pmatrix} \right\}&\quad \text{in 2D},\\
            &\left\{ \begin{pmatrix} \frac{1}{h_P} \\ 0 \\ 0 \end{pmatrix}, \begin{pmatrix} 0 \\ \frac{1}{h_P} \\ 0 \end{pmatrix}, \begin{pmatrix} 0 \\ 0 \\ \frac{1}{h_P} \end{pmatrix}, \begin{pmatrix} \frac{x_{2,P}-x_2}{h_P} \\ \frac{x_1-x_{1,P}}{h_P} \\ 0 \end{pmatrix}, \begin{pmatrix} 0 \\ \frac{x_{3,P}-x_3}{h_P} \\ \frac{x_2-x_{2,P}}{h_P} \end{pmatrix}, \begin{pmatrix} \frac{x_3-x_{3,P}}{h_P} \\ 0 \\ \frac{x_{1,P}-x_1}{h_P} \end{pmatrix}\right\} &\quad \text{in 3D}.
        \end{aligned}
    \end{cases}
\end{align*}
In this case, the polynomial decomposition $\bP_k(K) = \textbf{RBM}(K) \oplus \textbf{RBM}^\perp_k(K)$, holds with
\begin{align*}
    \textbf{RBM}^\perp(K) := \left\{ \bm_k \in \bM_k: \ \int_{K} \bm_k \cdot \bm_{{\textbf{RBM}}} = 0 ,\, \forall \bm_{{\textbf{RBM}}}\in  \textbf{RBM}(K) \right\}.
\end{align*}

\subsection{VEM for Hellinger--Reissner linear elasticity}\label{sec:HRVEM} The associated (conforming) VE space for $\bbH({\mathbf{div}},\Omega)$ in both 2D and 3D locally solves the constitutive law in linear elasticity \cite{artioli18,visinoni24} and its defined by 
\begin{align*} 
    \bbS^{h,k}(K):= \{\btau_h\in \bbH({\mathbf{div}},K) :\ &\btau_h\bn|_{{f}} \in \bP_k(f) ,\, \forall f\in \partial K, \\
    & \bdiv\btau_h \in \bP_k(K),\ \btau_h=\bC\beps(\bv^*) \ \text{for some}\ \bv^*\in \bH^1(K) \}.
\end{align*}
Notice that the polynomial space $\widetilde{\bbP}_k(K):=\{\widetilde{\bp}_k \in \bbP_k(K):\ \widetilde{\bp}_k=\bC\beps(\bp_{k+1})\ \text{for some}\ \bp_{k+1}\in \bP_{k+1}(K)\}\subseteq \bbS^{h,k}(K)$. To define the global discrete spaces we patch together the local spaces in the following way
\begin{align*}
    \bbS^{h,k} &:= \{ \btau_h \in \bbH^{\mathrm{sym}}_{\mathrm{N}}(\bdiv,\Omega) \colon \btau_h|_K \in \bbS^{h,k}(K) ,\, \forall K\in {\cT_h} \}, \\
    \bU^{h,k} &:= \{ \bv_h\in \bL^2(\Omega) \colon \bv_h|_K\in \bP_{k}(K) ,\, \forall K\in \cT_h \}.
\end{align*}
The associated DoFs for $\btau_h \in \bbS^{h,k}(K)$ and $\bv_h \in \bU^{h,k}(K):=\bP_{k}(K)$ are given as follows
\begin{align*}
    &\bullet \frac{1}{h_f}\int_f \btau_h\bn \cdot \bm_k ,\quad \forall \bm_k \in \bM_k(f), \\
    &\bullet \frac{1}{h_K}\int_K \bdiv\btau_h\cdot\bm_{{\textbf{RBM}}^\perp} ,\quad  \forall \bm_{{\textbf{RBM}}^\perp}\in \textbf{RBM}^\perp(K), \\
    &\bullet \frac{1}{h_K}\int_K \bv_h \cdot \bm_k ,\quad \forall \bm_k \in \bM_k(K).
\end{align*}

\subsection{VEM for perturbed mixed second-order elliptic problems}\label{sec:MixedVEM} The conforming VE approximation for the space  $\bH(\vdiv,\Omega)$ in 2D is defined locally by solving a $\vdiv$-$\vrot$ problem \cite{beirao16}, as follows  
\begin{align*}
    \bV_{\text{2D}}^{h,k}(K) := \{ \bxi_h\in \bH(\vdiv,K)\cap \bH(\vrot,K) \colon\ &\bxi_h\cdot \bn|_{{f}}\in \rP_{k}(f),\, \forall f\subset \partial K, \\
    &\vdiv\bxi_h \in \rP_{k}(K), \ \vrot\bxi_h \in \rP_{k-1}(K)\}.
\end{align*}
Observe that $\bP_{k}(K)\subseteq \bV_{\text{2D}}^{h,k}(K)$. In turn, the global discrete space is defined as 
\begin{align*}
    \bV_{\text{2D}}^{h,k} &:= \{ \bxi_h \in \bH_{\mathrm{N}}(\vdiv,\Omega) \colon \bxi_h|_K \in \bV_{\text{2D}}^{h,k}(K) ,\, \forall K\in \cT_h \}.
\end{align*}
We consider the following DoFs for $\bxi_h\in \bV_{\text{2D}}^{h,k}(K)$:
\begin{align*}
    &\bullet \text{The values of } \bxi_h\cdot \bn \text{ at the $k+1$ Gauss--Lobatto quadrature points of each edge of $K$}, \\
    &\bullet \frac{1}{h_K}\int_K \bxi_h\cdot \bm_{k-1}^\nabla ,\quad \forall \bm_{k-1}^{\nabla} \in \bM_{k-1}^{\nabla}(K),\\
    &\bullet \frac{1}{h_K}\int_K \bxi_h \cdot \bm_{k}^{\oplus} ,\quad \forall \bm_{k}^{\oplus} \in \bM_{k}^{\oplus}(K).
\end{align*}

In contrast, the 3D version of the conforming VE approximation for the space $\bH(\vdiv,\Omega)$ locally solves a $\nabla(\vdiv)$--$\bcurl\bcurl$ problem \cite{beirao16b} and it is defined as
\begin{align*}
    \bV_{\text{3D}}^{h,k+1}(K) := \{ \bxi_h \in \bH(\vdiv,K)\cap \bH(\bcurl,K) \colon\ &\bxi_h \cdot \bn|_{{f}} \in \rP_{k+1}(f),\, \forall f\in \partial K, \\
    & \nabla (\vdiv \bxi_h) \in \bG_{k-1}(K),\ \bcurl\bcurl \bxi_h \in \bP_{k}(K)\}.
\end{align*}
Note that $\bP_{k+1}(K)\subseteq\bV_{\text{3D}}^{h,k+1}(K)$. Then, the discrete global spaces are defined by
\begin{align*}
    \bV_{\text{3D}}^{h,k+1} &:= \{ \bxi_h \in \bH_{\mathrm{N}}(\vdiv,\Omega) \colon \bxi_h|_K \in \bV_{\text{3D}}^{h,k+1}(K) ,\, \forall K\in \cT_h \}.
\end{align*}
The set of DoFs for $\bxi_h\in \bV_{\text{3D}}^{h,k+1}(K)$ is provided next
\begin{align*}
    &\bullet \text{The values of } \bxi_h\cdot \bn \text{ at the $k+2$ quadrature points on each face of $K$}, \\
    &\bullet \frac{1}{h_K}\int_K \bxi_h\cdot \bm_{k-1}^\nabla ,\quad \forall \bm_{k-1}^{\nabla} \in \bM_{k-1}^{\nabla}(K),\\
    &\bullet \frac{1}{h_K}\int_K \bxi_h \cdot \bm_{k+1}^{\oplus} ,\quad \forall \bm_{k+1}^{\oplus} \in \bM_{k+1}^{\oplus}(K).
\end{align*}

Finally, the global discrete space for the space $\rL^2(\Omega)$ is defined in general for 2D and 3D as
\begin{align*}
    \mathrm{Q}^{h,k} &:= \{ \psi_h\in \rL^2(\Omega) \colon \psi_h|_K\in \rP_{k}(K) ,\, \forall K\in \cT_h \},
\end{align*}
and, for a given $\psi_h\in \mathrm{Q}^{h,k}(K):=\rP_{k}(K)$, the DoFs for the space above are defined by
\begin{align*}
    &\bullet \frac{1}{h_K}\int_K \psi_h m_{k} ,\quad  \forall m_{k}\in \rM_{k}(K).
\end{align*}

\subsection{Polynomial projection and interpolation operators}\label{sec:proj_inter_op} For each element $K$, we introduce the following local polynomial projection operators:
\begin{itemize}
    \item   The $\bC$-energy projection is defined as $\bbPi_{k}^{\bC,K}: \widetilde{\bbS}(K)\rightarrow \widetilde{\bbM}_k(K)$ by
            \begin{align*}
                \int_K {\bC^{-1}\left(\btau-\bbPi_{k}^{\bC,K}\btau\right)} : \widetilde{\bm}_k = 0, \quad  
                \forall \widetilde{\bm}_k\in \widetilde{\bbM}_k(K),
            \end{align*}
            where $\widetilde{\bbS}(K):=\{\btau\in \bbH(\bdiv,K):\ \btau= \bC\beps(\bv)\ \text{for some}\ \bv\in \bH^1(K)\}$.
    \item   The $\bL^2$ projection is defined by $\bPi_{k}^{0,K}: \bL^2(K)\rightarrow \bM_{k}(K)$ where
            \begin{align}\label{PiVector}
                \int_K \left(\bxi-\bPi_{k}^{0,K}\bxi\right)\cdot \bm_{k} = 0, \quad 
                \forall \bm_{k} \in \bM_{k}(K),
            \end{align}
            with an analogous definition for scalar functions.
\end{itemize}
The detailed proof of computability of these operators in terms of the respective DoFs can be found in \cite{artioli18,visinoni24,beirao16,beirao16b}. In addition, we state a result involving classical polynomial approximation theory \cite{brenner08}. The estimate is presented for scalar functions, but it also holds in general for vector and tensor functions.
\begin{proposition}[polynomial approximation] \label{prop:est_poly}
    Given $K\in \cT_h$, assume that $v\in  \rH^{{\overline{s}}}(K)$, with  $1 \leq {\overline{s}} \leq k+1$. Then, there exist $v_\pi \in \rP_k(K)$ and a positive constant that depends only on $\eta$ ({cf.} \ref{A1}-\ref{A3}) such that for $0\leq \overline{r}\leq {\overline{s}}$ the following estimate holds
    \begin{align*}
        |v - v_\pi|_{\overline{r},K} &\lesssim  h_K^{{\overline{s}}-\overline{r}}|v|_{{\overline{s}},K}.
    \end{align*} 
\end{proposition}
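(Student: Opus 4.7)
The plan is to invoke the classical Bramble--Hilbert theory via averaged Taylor polynomials, as developed in \cite{brenner08}. Assumption \ref{A1} ensures that $K$ is star-shaped with respect to a ball $\mathcal{B}_K \subset K$ of radius $\rho_K \geq \eta\, h_K$, so the chunkiness parameter $h_K/\rho_K$ is controlled by $\eta^{-1}$. This is exactly the geometric hypothesis required to define a Riesz averaged Taylor polynomial of degree at most $k$ on $K$ with respect to $\mathcal{B}_K$.

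First, I would define $v_\pi := Q^k v \in \rP_k(K)$ as the averaged Taylor polynomial of $v$ of degree $k$ with respect to $\mathcal{B}_K$, which is well defined for any $v \in \rH^{\overline{s}}(K)$ with $\overline{s}\geq 1$ (indeed, it only requires integrability in a neighbourhood of $\mathcal{B}_K$). Next, I would apply the standard Bramble--Hilbert estimate for averaged Taylor polynomials (see, e.g., \cite[Lemma 4.3.8]{brenner08}), which asserts that for $0\leq \overline{r}\leq \overline{s}\leq k+1$ one has
\begin{equation*}
|v - Q^k v|_{\overline{r},K} \,\leq\, C(k,d,\overline{r},\overline{s},\eta)\, h_K^{\overline{s}-\overline{r}}\, |v|_{\overline{s},K},
\end{equation*}
with the constant depending only on the polynomial degree, the space dimension, and the chunkiness parameter $h_K/\rho_K$. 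Since the latter is bounded solely in terms of $\eta$ by assumption \ref{A1}, the constant depends only on $\eta$ (and on the fixed quantities $k,d,\overline{r},\overline{s}$), which is precisely the claim.

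No real obstacle is expected: the estimate is a textbook consequence of the mesh-regularity hypothesis \ref{A1}, and assumptions \ref{A2}--\ref{A3} are not required here (they become relevant only for trace and interpolation estimates on facets). The same argument carries over verbatim to vector- and tensor-valued functions by applying it componentwise, which is the reason the statement remains valid in those settings.
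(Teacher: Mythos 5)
Your argument matches the paper's intent exactly: the paper does not give a proof, but simply states the result as a consequence of classical polynomial approximation theory and cites Brenner--Scott, which is precisely the Bramble--Hilbert / averaged Taylor polynomial machinery you invoke, with the chunkiness parameter controlled by \ref{A1}. Your observation that \ref{A2}--\ref{A3} are not needed here, and that the vector/tensor cases follow componentwise, is also consistent with the paper's remark.
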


Next, {the} nature of the space defined in Section~\ref{sec:HRVEM} allow us to define locally the Fortin--like interpolation operator $\bbF^{k,K}:\bbH^{1}(K)\rightarrow \bbS^{h,k}(K)$ through the associated DoFs in a unified way for an element $K$ ({cf.} \cite{botti25}). Whereas, following Section~\ref{sec:MixedVEM}, the Fortin--like interpolation operators $\bF_{\text{2D}}^{k,K}:\mathbf{H}^{1}(K)\rightarrow \bV_{\text{2D}}^{h,k}(K)$ and $\bF_{\text{3D}}^{k+1,K}:\mathbf{H}^{1}(K)\rightarrow \bV_{\text{3D}}^{h,k+1}(K)$ are defined by their associated DoFs, taking into account that the element $K$ refers to a polygon in 2D and a polyhedral in 3D. See, e.g.,  \cite[Section 3.2]{beirao16} and \cite[Section 4.1]{beirao22} for their respective constructions. Moreover, the associated commutative property holds for each operator as follows: for each $K\in \cT_h$, we have
\begin{align}\label{commutative_property}
    \bdiv(\bbF^{k,K}\btau) = \bPi^{0,K}_k (\bdiv\btau), \quad \vdiv (\bF_{\text{2D}}^{k,K}\bxi) = \Pi_{k}^{0,K} (\vdiv \bxi ), \quad \vdiv (\bF_{\text{3D}}^{k+1,K}\bxi) = \Pi_{k}^{0,K} (\vdiv \bxi).
\end{align}

\begin{proposition}[Hellinger--Reissner VEM interpolation estimates] \label{prop:est_int_HR}
    Given $K\in \cT_h$, assume that $\btau\in  \bbH(\bdiv,K)\cap\bbH^{{\overline{s}}}(K)$, with  $1 \leq {\overline{s}} \leq k+1$. Then, there {exists} a positive constant that depends only on $\eta$ ({cf.} \ref{A1}-\ref{A3}) such that, for $0\leq \overline{r}\leq {\overline{s}}$, the following estimate holds
    \begin{align*}
        |\btau - \bbF^{k,K}\btau|_{\overline{r},K} &\lesssim h_K^{{\overline{s}}-\overline{r}}|\btau|_{{\overline{s}},K}.
    \end{align*} 
\end{proposition}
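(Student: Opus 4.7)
The approach is the classical Bramble--Hilbert strategy adapted to Fortin-type VEM interpolants. The first step is to construct a polynomial $\btau_\pi$ lying in the subspace $\widetilde{\bbM}_k(K) \subseteq \bbS^{h,k}(K)$ that approximates $\btau$ with the optimal rate $|\btau - \btau_\pi|_{\overline{r},K} \lesssim h_K^{{\overline{s}}-\overline{r}}|\btau|_{{\overline{s}},K}$ for $0\leq \overline{r}\leq \overline{s}$. Since $\widetilde{\bbM}_k(K)$ consists of tensor polynomials of the form $\cC\beps(\bm_{k+1})$ with $\bm_{k+1}\in \bM_{k+1}(K)$, a natural candidate is $\btau_\pi := \cC\beps(\bv_\pi)$, where $\bv_\pi \in \bP_{k+1}(K)$ is a vector polynomial approximation provided by the vector-valued version of Proposition \ref{prop:est_poly} applied to a representative $\bv$ obtained by locally inverting $\cC\beps(\cdot)$ modulo rigid motions (a local Korn-type argument).

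The second step exploits the unisolvence of the DoFs associated with $\bbS^{h,k}(K)$, which implies that the Fortin interpolant preserves $\widetilde{\bbM}_k(K)$, i.e., $\bbF^{k,K}\btau_\pi = \btau_\pi$. This yields the identity
\[
    \btau - \bbF^{k,K}\btau = (\btau - \btau_\pi) - \bbF^{k,K}(\btau - \btau_\pi),
\]
so that the triangle inequality reduces the problem to estimating $|\bbF^{k,K}(\btau - \btau_\pi)|_{\overline{r},K}$. For this term I would invoke an $h_K$-scaled stability bound for $\bbF^{k,K}$: a reference-element argument based on the continuity of the DoFs on $\bbH^1(\widehat{K})$ (through trace and Cauchy--Schwarz inequalities), combined with a Piola-type transformation and the shape-regularity assumptions \ref{A1}--\ref{A3}, gives
\[
    \|\bbF^{k,K}\bxi\|_{0,K} \lesssim \|\bxi\|_{0,K} + h_K\,|\bxi|_{1,K}, \quad \forall \bxi \in \bbH^1(K).
\]

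Combining this stability estimate with the $\rL^2$ and $\rH^1$ bounds for $\btau - \btau_\pi$ obtained in the first step immediately yields the desired estimate for $\overline{r}=0$. For $\overline{r} \geq 1$, I would invoke an inverse inequality on the finite-dimensional local space $\bbS^{h,k}(K)$, again via scaling to the reference element, which produces the requisite $h_K^{-\overline{r}}$ factor; applied to $\bbF^{k,K}(\btau - \btau_\pi)\in \bbS^{h,k}(K)$ together with the just-obtained $\rL^2$-bound, this closes the estimate.

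The principal technical obstacle is ensuring that the polynomial approximation $\btau_\pi$ belongs to the constrained subspace $\widetilde{\bbM}_k(K)$ rather than to the full polynomial space $\bbP_k(K)$. While the construction $\cC\beps(\bv_\pi)$ is natural, verifying the optimal approximation rates requires delicate handling of the representation of $\btau$ modulo rigid motions. A cleaner alternative---which matches the strategy of \cite{botti25} cited in the paper---is to import the Fortin operator from that reference and then apply an abstract Bramble--Hilbert lemma to the linear operator $\btau \mapsto \btau - \bbF^{k,K}\btau$, whose kernel contains $\widetilde{\bbM}_k(K)$ thanks to the polynomial-preservation property. The $\rH^{\overline{r}}$-stability of the DoF evaluations for $\overline{r}\geq 1$ also requires care, but this is standard since the DoFs are integrals of polynomial test quantities on facets and elements, which scale correctly under the reference map.
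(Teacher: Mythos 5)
The paper does not actually prove this proposition---it imports it from the cited references on Hellinger--Reissner VEM (\cite{artioli18,visinoni24,botti25})---so the comparison is with a blind reconstruction rather than with the paper's own argument. Your overall strategy (polynomial reproduction, $\bbF^{k,K}$-stability, inverse estimates) is the standard one, and steps 2--4 are sound in outline, but there is a genuine gap in step 1 that you flag as a "technical obstacle" without resolving, and in fact it is more than technical.

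The difficulty is that the polynomial space preserved by $\bbF^{k,K}$ is $\widetilde{\bbP}_k(K) = \cC\beps\bigl(\bP_{k+1}(K)\bigr)$, and this is a \emph{strict} subspace of $\bbP_k^{\mathrm{sym}}(K)$ as soon as $k\geq 2$. A dimension count in 2D gives $\dim\widetilde{\bbP}_k(K) = (k+3)(k+2)-3$ (the kernel of $\beps$ on $\bP_{k+1}$ being the three rigid motions), while $\dim\bbP_k^{\mathrm{sym}}(K) = \tfrac{3}{2}(k+2)(k+1)$; for $k=2$ this is $17$ vs.\ $18$. The missing direction corresponds to Saint--Venant incompatibility: a degree-$k$ symmetric tensor $\btau_\ast$ with $\mathrm{inc}\bigl(\cC^{-1}\btau_\ast\bigr)\neq 0$ is not $\cC\beps(\bv)$ for \emph{any} $\bv\in\bH^1(K)$, e.g.\ $\btau_\ast = y^2\,\be_1\otimes\be_1$ for $k=2$. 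Consequently, your candidate construction $\btau_\pi = \cC\beps(\bv_\pi)$ with $\bv$ "obtained by locally inverting $\cC\beps$ modulo rigid motions" has no $\bv$ to start from for general $\btau$---the inverse simply does not exist outside the range of $\cC\beps$. Your fallback alternative (abstract Bramble--Hilbert applied to $\btau\mapsto\btau-\bbF^{k,K}\btau$) suffers from the same issue: the kernel of this operator contains only $\widetilde{\bbP}_k(K)$, not $\bbP_k^{\mathrm{sym}}(K)$, so Bramble--Hilbert does not deliver the $h^{k+1}|\btau|_{k+1,K}$ bound when $\overline{s}=k+1$ and $k\geq 2$. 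A correct proof must exploit more than polynomial reproduction---e.g.\ the commutativity $\bdiv(\bbF^{k,K}\btau)=\bPi_k^{0,K}\bdiv\btau$ together with the specific structure of the facet DoFs, or a restriction of the regularity hypothesis to the compatible class $\widetilde{\bbS}(K)$ as in the source references. Two smaller points: the $h_K$-scaled stability bound you invoke for $\bbF^{k,K}$ cannot be obtained by a reference-element/Piola argument on general polytopes (it has to be proved directly on $K$ via trace-inequality scaling), and inverse inequalities on $\bbS^{h,k}(K)$ require justification since the local VE space is infinite-regularity-deficient on polygonal/polyhedral cells---both are standard but need stating.
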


\begin{proposition}[mixed VEM interpolation estimates] \label{prop:est_int_mixed}
    {Given $K\in \cT_h$ 
    and $1 \leq {\overline{s}} \leq k+1$, there exist positive constants that depend} only on $\eta$ ({cf.} \ref{A1}-\ref{A3}) such that for $0\leq \overline{r}\leq {\overline{s}}$ the following estimates hold
    \begin{gather*}
       \|\bxi - \bF_{\mathrm{2D}}^{k,K}\bxi\|_{0,\overline{l};K} \lesssim h_K^{{\overline{s}}-\overline{r}}|\bxi|_{{\overline{s}},\overline{l};K},\qquad \|\bxi - \bF_{\mathrm{3D}}^{k+1,K}\bxi\|_{0,\overline{l};K} \lesssim h_K^{{\overline{s}}-\overline{r}}|\bxi|_{\overline{l},\overline{s};K} \qquad \forall \bxi \in \bW^{\overline{s},\overline{l}}(K).
    \end{gather*}
\end{proposition}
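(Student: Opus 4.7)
The plan is to adapt the classical VEM-interpolation template used for Proposition~\ref{prop:est_int_HR} to the non-Hilbert Banach setting $\bW^{\overline{s},\overline{l}}(K)$. Specifically, I would combine (i) a Bramble--Hilbert / Deny--Lions polynomial approximation result in $\bW^{\overline{s},\overline{l}}(K)$, (ii) the exact preservation of the polynomial subspaces $\bP_k(K)\subseteq \bV_{\text{2D}}^{h,k}(K)$ and $\bP_{k+1}(K)\subseteq \bV_{\text{3D}}^{h,k+1}(K)$ by the Fortin-like operators, and (iii) a local $\bL^{\overline{l}}$-stability estimate for those operators.

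First I would invoke a scaled Deny--Lions lemma (valid on star-shaped Lipschitz elements under \ref{A1}--\ref{A3}) to produce $\bxi_\pi \in \bP_k(K)$ in 2D (resp.\ $\bxi_\pi \in \bP_{k+1}(K)$ in 3D) satisfying $|\bxi-\bxi_\pi|_{j,\overline{l};K} \lesssim h_K^{\overline{s}-j}|\bxi|_{\overline{s},\overline{l};K}$ for $0\le j \le \overline{s}$. Since the DoFs uniquely determine polynomials on $K$, one has $\bF^{k,K}\bxi_\pi=\bxi_\pi$, and therefore
\[
\|\bxi - \bF^{k,K}\bxi\|_{0,\overline{l};K} \le \|\bxi - \bxi_\pi\|_{0,\overline{l};K} + \|\bF^{k,K}(\bxi-\bxi_\pi)\|_{0,\overline{l};K}.
\]
To control the second summand I would establish the local stability
\[
\|\bF^{k,K}\bv\|_{0,\overline{l};K} \lesssim \|\bv\|_{0,\overline{l};K} + h_K |\bv|_{1,\overline{l};K},\qquad \forall \bv\in \bW^{1,\overline{l}}(K),
\]
via (a) mapping $K$ to a reference element $\widehat{K}$ through the diffeomorphism provided by shape-regularity; (b) bounding the face/edge DoFs of $\bv$ with the $\bW^{1,\overline{l}}$ trace inequality $\|\bv\cdot\bn\|_{0,\overline{l};f}\lesssim h_K^{-1/\overline{l}}\|\bv\|_{0,\overline{l};K}+h_K^{1-1/\overline{l}}|\bv|_{1,\overline{l};K}$ tested against the corresponding polynomial weights via H\"older's inequality, and bounding the interior moment DoFs directly by H\"older; and (c) using the equivalence of norms on the finite-dimensional image space on $\widehat{K}$ to recover the $\bL^{\overline{l}}$ norm of $\bF^{k,K}\bv$ after scaling back. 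Applying this stability bound to $\bv = \bxi-\bxi_\pi$ and invoking the Deny--Lions estimates with $j=0$ and $j=1$ yields the target bound $h_K^{\overline{s}}|\bxi|_{\overline{s},\overline{l};K}$, which corresponds to the statement with $\overline{r}=0$; intermediate values of $\overline{r}$ follow by applying the same argument to the seminorms through an inverse estimate on the reference element.

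The main obstacle is the $\bL^{\overline{l}}$-stability of the Fortin-like operators for $\overline{l}\neq 2$. In the standard VEM literature this is typically derived in $\mathbf{H}^1$ via Hilbertian duality, whereas in our Banach setting it is necessary to track $h_K$-scalings carefully in the non-Hilbert trace theorem and in the norm-equivalence step on the reference element, while verifying that all constants depend only on $\eta$. Once this stability is established, the rest of the proof follows in parallel for the 2D and 3D cases, the only differences being the structure of the underlying VE spaces and the precise form of the DoFs listed in Section~\ref{sec:MixedVEM}.
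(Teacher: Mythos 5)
The paper does not actually prove this proposition: it is stated without proof, with the reader referred to the literature (the opening of Section~\ref{sec:discrete} cites \cite{artioli18,visinoni24,beirao16,beirao16b} for the VE-space properties, and the remark immediately following the proposition cites \cite{gatica2021p,gharibi2025mixed,gharibi2025stabfree} for the $L^p$ mixed-VEM interpolation estimates). So there is no in-paper proof to compare against; I am judging your sketch on its own.

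Your overall skeleton --- a Deny--Lions/Bramble--Hilbert polynomial approximant in $\bW^{\overline{s},\overline{l}}(K)$, reproduction of $\bP_k(K)$ (resp.\ $\bP_{k+1}(K)$) by $\bF^{k,K}$, the triangle inequality, and then a local $\bL^{\overline{l}}$-stability bound for $\bF^{k,K}$ --- is the right route and matches what the cited references do. The gap is in your steps (a) and, more seriously, (c). In VEM there is \emph{no fixed reference element}: the mesh assumptions \ref{A1}--\ref{A3} constrain shape regularity but do not furnish a diffeomorphism from a generic polygon/polyhedron $K$ onto a single $\widehat{K}$, so the FEM template ``pull back to $\widehat{K}$, use finite-dimensional norm equivalence there, push forward'' is not available. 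Compounding this, the image of $\bF^{k,K}$ is the local VE space $\bV_{\mathrm{2D}}^{h,k}(K)$ (or $\bV_{\mathrm{3D}}^{h,k+1}(K)$), whose non-polynomial members are defined only implicitly as solutions of local PDE problems; a uniform-in-$K$ equivalence between a DoF vector norm and the $\bL^{\overline{l}}$ norm on such a space is an $\bL^{\overline{l}}$ inverse estimate for virtual functions, and is precisely the hard content of the proposition --- it cannot be invoked as a generic ``equivalence of norms in finite dimensions.'' Your closing admission that the obstacle is ``tracking the $h_K$-scalings and verifying constants depend only on $\eta$'' actually names the crux, but the reference-element device you propose to handle it does not exist in this setting.

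To repair the argument you should replace the reference-element mapping by a direct scaling argument on $K$ under \ref{A1}--\ref{A3} (so that only $h_K$ and the star-shapedness constant enter), keep the scaled $\bW^{1,\overline{l}}$ trace inequality for the facet DoFs and Hölder for the moment DoFs as you have them, and then either prove an $\bL^{\overline{l}}$-inverse/stability estimate for functions in $\bV_{\mathrm{2D}}^{h,k}(K)$ or $\bV_{\mathrm{3D}}^{h,k+1}(K)$ in terms of their DoFs, or explicitly cite it from the $L^p$-VEM literature, as the paper does via \cite{gatica2021p,gharibi2025mixed,gharibi2025stabfree}. Once that one stability estimate is in hand, the remainder of your outline (Deny--Lions, $\bF^{k,K}\bxi_\pi=\bxi_\pi$, triangle inequality) is correct and delivers the claimed rate.
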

\begin{remark}
 Typically, one also requires an interpolation property for the divergence part of the flux (or stress) space to obtain error estimates (see, for example, \cite{gatica2021p,gharibi2025mixed,gharibi2025stabfree} for mixed VEM in the $L^p$ context). Such a property calls for additional regularity for the divergence part, for example (using the notation from Proposition~\ref{prop:est_int_mixed} in the 2D case)
\[\|\vdiv(\bxi - \bF_{\mathrm{2D}}^{k,K}\bxi)\|_{0,K} \lesssim h_K^{{\overline{s}}-\overline{r}} |\vdiv\bxi|_{\overline{s},K} \qquad \forall \bxi \in \bW^{1,1}(K) \ \text{such that} \ \vdiv\bxi \in \rH^{\overline{s}}(K).\]
Here we proceed differently and derive estimates involving the divergence by simply using the commutativity property \eqref{commutative_property} and applying Proposition~\ref{prop:est_poly}. This avoids the assumption of more regularity for the divergence, but rather asking it for the concentration. \end{remark}

\subsection{Discrete problem} \label{sec:discrete-problem} 
Without losing generality we denote by $\bV^{h,\overline{k}}$ the global discrete spaces defined in Section~\ref{sec:MixedVEM}, i.e., $\bV^{h,\overline{k}}=\bV^{h,k}_{\text{2D}}$ (resp. $\bV^{h,\overline{k}}=\bV^{h,k+1}_{\text{3D}}$) for polygonal elements (resp. polyhedral elements), we also introduce the discrete product spaces $\bbV^{h,k}:= \bbS^{h,k}\times \rQ^{h,k}$ and $\bQ^{h,k}:=\bU^{h,k}\times \bV^{h,\overline{k}}$, 
and note that the space $\widetilde{\bV}^{h,\overline{k}}:=\bV^{h,\overline{k}}\cap\bH_{\mathrm{N}}^4(\vdiv,\Omega)$ consists of the discrete space $\bV^{h,\overline{k}}$ equipped with  the norm  $\norm{\cdot}_{4,\vdiv;\Omega}$. For brevity, (and wherever needed) the polynomial projections of $\bsigma_h\in \bbS^{h,k}$, $\bz_h\in \bV^{h,\overline{k}}$ and $\bzeta_h \in \widetilde{\bV}^{h,\overline{k}}$ are denoted by  $\bsigma_h^{\bbPi}:=\bbPi_{k}^{\bC,K}\bsigma_h$, $\bz_h^{\bPi}:=\bPi_{\overline{k}}^{0,K}\bz_h$ and $\bzeta_h^{\bPi}:=\bPi_{\overline{k}}^{0,K}\bzeta_h$, respectively, where the projection $\bPi_{\overline{k}}^{0,K}$ refers to $\bPi_{k}^{0,K}$ in the two dimensional setting (resp. $\bPi_{k+1}^{0,K}$ in the three dimensional setting). We recall that the computability of the discrete formulation (introduced below) follows directly from the computability of the projection operators discussed in Section~\ref{sec:proj_inter_op}.

Given $\vec{\bsigma}_h := (\bsigma_h, p_h), \  \vec{\btau}_h := (\btau_h, q_h) \in \bbV^{h,k}$,  $\vec{\bu}_h := (\bu_h, \bz_h)$, and $\vec{\bv}_h := (\bv_h, \bw_h) \in \bQ^{h,k}$, the computable discrete bilinear forms $A_h:\bbV^{h,k}\times\bbV^{h,k}\to\RR$, $B:\bbV^{h,k}\times\bQ^{h,k}\to\RR$, $C_h:\bQ^{h,k}\times\bQ^{h,k}\to\RR$, $D_h:\rQ^{h,k}\times\bbV^{h,k}\to\RR$,  
the computable discrete lineal operators $\cB:  \widetilde{\bV}^{h,\overline{k}} \to (\rQ^{h,k})^\prime$, $\cC:\rQ^{h,k}\to (\rQ^{h,k})^\prime$, and (for a given polynomial $\widehat{\bsigma}_h^{\bbPi}\in \widetilde{\bbP}_{k}(K)$) the nonlinear operator $\cA_{h,\widehat{\bsigma}_h^{\bbPi}}:\widetilde{\bV}^{h,\overline{k}}\to (\widetilde{\bV}^{h,\overline{k}})^{\prime}$, are defined as 
\begin{align*}
    A_h(\vec{\bsigma}_h, \vec{\btau}_h) &= \sum_{K\in \cT_h} A_h^K(\vec{\bsigma}_h, \vec{\btau}_h) \\
    &:=  \sum_{K\in \cT_h} \biggl[ (\bC^{-1}(\bbPi_{k}^{\bC,K}\bsigma_h), \bbPi_{k}^{\bC,K}\btau_h)_K + S_1^{\bC,K}((\bbOne-\bbPi_{k}^{\bC,K})\bsigma_h,(\bbOne-\bbPi_{k}^{\bC,K})\tau_h) \\
    &\quad + \bigl(\frac{\alpha p_h}{2\mu + d\lambda}, \tr(\bbPi_{k}^{\bC,K}\btau_h) \bigr)_K + \bigl(\frac{\alpha q_h}{2\mu + d\lambda}, \tr(\bbPi_{k}^{\bC,K}\bsigma_h) \bigr)_K + \bigl[s_0+\frac{d\alpha^2}{2\mu + d\lambda}\bigr] (p_h,q_h)_K\biggr],\\
    B(\vec{\btau}_h, \vec{\bv}_h) &= \sum_{K\in \cT_h} B^K(\vec{\btau}_h, \vec{\bv}_h) := \sum_{K\in \cT_h} \biggl[(\bv_h, \bdiv \btau_h)_K + (q_h , \vdiv \bw_h)_K \biggr], \\ 
    C_h(\vec{\bu}_h, \vec{\bv}_h) &= \sum_{K\in \cT_h}C_h^K(\vec{\bu}_h, \vec{\bv}_h) \\
    &:= \sum_{K\in \cT_h} \biggl[(\bkappa^{-1}(\bPi_{\overline{k}}^{0,K}\bz_h),\bPi_{\overline{k}}^{0,K}\bw_h)_K + S_2^{0,K}((\bOne-\bPi_{\overline{k}}^{0,K})\bz_h,(\bOne-\bPi_{\overline{k}}^{0,K})\bw_h)\biggr], \\ 
    D_h(\psi_h,\vec{\btau}_h) &= \sum_{K\in \cT_h} D_h^{K}(\psi_h,\vec{\btau}_h) := \sum_{K\in \cT_h} (\frac{\beta \psi_h}{2\mu + d\lambda}, \tr(\bbPi_{k}^{\bC,K}\btau_h) + \alpha d q_h )_K , \\
    [\cA_{h,\widehat{\bsigma}_h^{\bbPi}}(\bzeta_h),\bxi_h] &= \sum_{K\in \cT_h} [\cA_{h,\widehat{\bsigma}_h^{\bbPi}}^K(\bzeta_h),\bxi_h]\\
    &:= \sum_{K\in \cT_h} \biggl[ (\varrho(\widehat{\bsigma}_h^{\bbPi})^{-1}(\bPi_{\overline{k}}^{0,K}\bzeta_h),\bPi_{\overline{k}}^{0,K}\bxi_h)_K \\
    &\quad + \eta(|\bPi_{\overline{k}}^{0,K}\bzeta_h|^{2}\bPi_{\overline{k}}^{0,K}\bzeta_h, \bPi_{\overline{k}}^{0,K}\bxi_h) + S_3^{\rF,K}((\bOne - \bPi_{\overline{k}}^{0,K})\bzeta_h,(\bOne - \bPi_{\overline{k}}^{0,K})\bxi_h)\biggr], \\
    {[\cB(\bxi_h), \psi_h]} &{= \sum_{K\in \cT_h} [\cB^K(\bxi_h), \psi_h] := -\sum_{K\in \cT_h} (\vdiv \bxi_h, \psi_h)_K,} \\
    {[\cC(\varphi_h), \psi_h]} &{= \sum_{K\in \cT_h} [\cC_h^K(\varphi_h), \psi_h] := \sum_{K\in \cT_h} (\varphi_h, \psi_h)_K.}
\end{align*}
The stabilisation terms $S_1^{\bC,K}:\bbV^{h,k}\times\bbV^{h,k}\to\RR$, $S_2^{0,K}:\bQ^{h,k}\times\bQ^{h,k}\to\RR$, and {$S_3^{\rF,K}:\widetilde{\bV}^{h,\overline{k}}\times \widetilde{\bV}^{h,\overline{k}}\to \RR$} are assumed to be any positive semi-definite inner products satisfying the following condition: for each $K\in\cT_h$, there exist positive constants $C_{s_1}$, $C_{s_2}$, $C_{s_3}$ (independent of $h$ and $K$) such that
\begin{subequations}\label{stab_bounds}
    \begin{alignat}{2}
        C_{s1}^{-1}(\bC^{-1}\btau_h, \btau_h)_K&\leq S_1^{\bC,K}(\btau_h,\btau_h)\leq C_{s1}(\bC^{-1}\btau_h, \btau_h)_K\quad &&\forall \btau_h\in \mathrm{ker}(\bbPi_{k}^{\bC,K}),\label{s1}\\
        C_{s2}^{-1}(\bkappa^{-1}\bw_h,\bw_h)_K&\leq S_2^{0,K}(\bw_h,\bw_h)\leq C_{s2}(\bkappa^{-1}\bw_h,\bw_h)_K\quad &&\forall \bw_h\in \mathrm{ker}(\bPi_{\overline{k}}^{0,K}),\label{s2}\\
        {C_{s3}^{-1}(|\bxi_h|\bxi_h,\bxi_h)_K}&{\leq S_3^{\rF,K}(\bxi_h,\bxi_h)\leq C_{s3}(|\bxi_h|\bxi_h,\bxi_h)_K}\quad &&\forall \bxi_h\in \mathrm{ker}(\bPi_{\overline{k}}^{0,K}).\label{s3}
    \end{alignat}
\end{subequations}
Finally, the computable linear functionals $F: \bbV^{h,k}\to \RR$, $G: \bQ^{h,k}\to\RR$, $\cF:\widetilde{\bV}^{h,\overline{k}}\to \RR$, and $\cG: \rQ^{k,k}\to \RR$ are given by
\begin{align*}
    F(\vec{\btau}_h) &= \sum_{K\in \cT_h} F^K(\vec{\btau}_h) := \sum_{K\in \cT_h} \biggl[ (g,q_h)_K  + \sum_{f\in \partial K\cap \Gamma_{\mathrm{D}}}\langle \bu_{\mathrm{D}},\btau_h\bn\rangle_{f} \biggr], \\
    G(\vec{\bv}_h) &= \sum_{K\in \cT_h} G^K(\vec{\bv}_h) := - \sum_{K\in \cT_h} \biggl[ -(\ff,\bv_h)_K + \sum_{f\in \partial K\cap \Gamma_{\mathrm{D}}} \langle p_{\mathrm{D}},\bw_h\cdot\bn\rangle_{f}\biggr], \\
    {[\cF,\bxi_h]} &{= \sum_{K\in \cT_h} [\cF,\bxi_h] := - \sum_{K\in \cT_h} \sum_{f\in \partial K\cap \Gamma_{\mathrm{D}}} \langle \varphi_{\mathrm{D}},\bxi_h\cdot\bn\rangle_{f},} \\
    {[\cG,\psi_h]} &{= \sum_{K\in \cT_h} [\cG^K,\psi_h] := - \sum_{K\in \cT_h} (\ell,\psi_h)_K.}
\end{align*}
The discrete version of \eqref{eq:weak2} is defined next: find $(\vec{\bsigma}_h, \vec{\bu}_h) \in \bbV^{h,k} \times \bQ^{h,k}$ and $(\bzeta_h, \varphi_h) \in \widetilde{\bV}^{h,\overline{k}} \times \rQ^{h,k}$, such that
\begin{subequations}\label{eq:weak_disc}
    \begin{align}
        A_h(\vec{\bsigma}_h, \vec{\btau}_h)+\,B(\vec{\btau}_h, \vec{\bu}_h) + D_h(\varphi_h, \vec{\btau}_h) &   \, =\, F(\vec{\btau}_h)  &&\forall \vec{\btau}_h \in \bbV^{h,k},\label{eq:weak_disc1}\\
        B(\vec{\bsigma}_h,  \vec{\bv}_h) -\,C_h(\vec{\bu}_h, \vec{\bv}_h) &  \,=\, G(\vec{\bv}_h)&&\forall \vec{\bv}_h \in \bQ^{h,k}, \label{eq:weak_disc2} \\  
        [\cA_{h,\bsigma_h^\bbPi}(\bzeta_h), \bxi_h]+\,[\cB(\bxi_h), \varphi_h] &   \, =\, [\cF,\bxi_h] && \forall \bxi_h \in \widetilde{\bV}^{h,\overline{k}},\label{eq:weak_disc3}\\
        [\cB(\bzeta_h),  \psi_h]-\,[\cC(\varphi_h), \psi_h] &  \,=\, [\cG,\psi_h] && \forall \psi_h \in \rQ^{h,k}. \label{eq:weak_disc4}
    \end{align}
\end{subequations} 
{\begin{remark}
    Note that the non--standard stabilisation operator $S_3^{\rF,K}:\widetilde{\bV}^{h,\overline{k}}\times \widetilde{\bV}^{h,\overline{k}}\to \RR$ has to be treated carefully. We propose the operator
    $$S_3^{\rF,K}(\bzeta_h,\bxi_h):=h_K^{-d}\left(\sum_{i}^{\mathrm{\#DoFs}} \mathrm{dof}_i(\bzeta_h)\mathrm{dof}_i(\bxi_h)\right)^2.$$
    Note that this is a modification of the standard $\mathrm{DOFI}$-$\mathrm{DOFI}$ stabilisation, which is well known in the VEM literature. In addition, we have that
    $$\norm{\bzeta_h}_{0,K}^2\lesssim \sum_{i}^{\mathrm{\#DoFs}} \mathrm{dof}_i(\bzeta_h)\mathrm{dof}_i(\bzeta_h) \lesssim\norm{\bzeta_h}_{0,K}^2\quad \forall \bzeta_h\in \mathrm{ker}(\bPi_{\overline{k}}^{0,K}).$$
    Thus, the inequality above together with the equivalence between the norms $\bL^2$ and $\bL^4$ for finite dimensional spaces (see \cite[Lemma 12.1]{ErnGuermond2021FE1}) given by $C_{\mathrm{eqv}}h_K^{-d/4}\|\bzeta_h\|_{0,K} \le \|\bzeta_h\|_{0,4;K}\le \overline{C}_{\mathrm{eqv}}h_K^{-d/4}\|\bzeta_h\|_{0,K}$ with $C_{\mathrm{eqv}},\overline{C}_{\mathrm{eqv}}>0$, imply that
    $$\norm{\bzeta}_{0,4;K}^4\lesssim S_3^{\rF,K}(\bzeta_h,\bzeta_h)\lesssim\norm{\bzeta}_{0,4;K}^4\quad  \forall \bzeta_h\in \mathrm{ker}(\bPi_{\overline{k}}^{0,K}).$$
\end{remark}}
\section{Discrete well-posedness analysis}\label{sec:discrete_wp}
This section extends the results {shown} in Section~\ref{sec:continuous_wp} to the VEM formulation proposed in \eqref{eq:weak_disc}. Following the analysis for the continuous problem,
we establish the solvability of the fully coupled discrete problem via a discrete fixed-point argument. We recall that, thanks to stabilisation,  the discrete operators inherit the properties presented in Section~\ref{sec:main_properties}.
\subsection{Properties of the discrete operators}
Note that, for each $K\in \cT_h$, given $\vec{\btau}_h\in \bbV^{h,k}$ and $\vec{\bv}_h\in \bQ^{h,k}$, we have that $\bdiv\btau_h \in \bP_k(K)$, $\vdiv\bw_h \in \rP_{k}(K)$ (see also the definition of the 2D (resp. 3D) VEM space in Section~\ref{sec:MixedVEM} (resp. \cite[Theorem 8.2]{beirao16b})), $q_h\in \rP_{k}(K)$, and $\bv_h\in \bP_k(K)$. Hence, the the following characterisations hold:
    \begin{align*}
        \bbV_0^h := \mathrm{ker}(\bB |_{\bbV^{h,k}}) &= \{ \vec{\btau}_h \in \bbV^{h,k}: \ B(\vec{\btau}_h,\vec{\bv}_h) = 0 ,\ \forall \vec{\bv}_h\in \bQ^{h,k}\} \nonumber
        \\ &= \bbV_{01}^h \times \bbV_{02}^h \equiv \{ \btau_h \in \bbS^{h,k}: \ \bdiv\btau_h|_K = \cero, \ \forall K\in\cT_h\} \times \{0\}, 
        \\
        \bQ_0^h := \mathrm{ker}(\bB^*|_{\bQ^{h,k}}) &= \{ \vec{\bv}_h \in \bQ^{h,k}: \ B(\vec{\btau}_h,\vec{\bv}_h) = 0 ,\ \forall \vec{\btau}_h\in \bbV^{h,k}\}  \nonumber 
        \\ &= \bQ_{01}^h \times \bQ_{02}^h \equiv \{\cero\}\times \{ \bw_h \in \bV^{h,\overline{k}}: \ \vdiv\bw_h|_K = 0, \ \forall K\in\cT_h\}. 
    \end{align*}
On the other hand, the orthogonal spaces $(\bbV_0^h)^{\perp}= (\bbV_{01}^h)^{\perp} \times (\bbV_{02}^h)^{\perp}$ and $(\bQ_0^h)^{\perp}= (\bQ_{01}^h)^{\perp} \times (\bQ_{02}^h)^{\perp}$ are closed subspaces of $\bbV^{h,k}$ and $\bQ^{h,k}$, where 
    \begin{gather*}
        (\bbV_{01}^h)^{\perp}  \equiv \{ \bsigma_h \in \bbS^{h,k}:\ (\bsigma_h,\btau_h)_K =0 ,\ \forall \btau_h \in  \bbV_{01}^h,\ \forall K\in\cT_h\}, \quad (\bbV_{02}^h)^{\perp}\equiv \rQ^{h,k},\\
        (\bQ_{01}^h)^{\perp} \equiv \bQ^{h,k}, \qan (\bQ_{02}^h)^{\perp} \equiv \{ \bz_h \in \bV^{h,\overline{k}}:\ (\bz_h,\bw_h)_K =0, \ \forall \bw_h \in  \bQ_{02}^h, \ \forall K\in\cT_h\}.
    \end{gather*}
    
In what follows, we prove some key properties of the discrete bilinear forms.
{\begin{lemma}[boundedness of the discrete bilinear forms]\label{lem:boundedness_bilinear_h}
The bilinear forms $A_h(\bullet,\bullet)$, $C_h(\bullet,\bullet)$, and $D_h(\bullet,\bullet)$ are bounded. That is:
\begin{align*}
|A_h(\vec{\bsigma}, \vec{\btau})| &\leq \|A_h\| \|\vec{\bsigma}_h\|_{\bbV} \|\vec{\btau}_h\|_{\bbV} \quad &&\forall \vec{\bsigma}_h, \vec{\btau}_h \in \bbV^{h,k}, \\
|C_h(\vec{\bu}_h, \vec{\bv}_h)| &\leq \|C_h\| \|\vec{\bu}_h\|_{\bQ} \|\vec{\bv}_h\|_{\bQ} \quad &&\forall \vec{\bu}_h, \vec{\bv}_h \in \bQ^{h,k}, \\
|D_h(\psi_h, \vec{\btau}_h)| &\leq \|D_h\| \|\psi_h\|_{0,\Omega} \|\vec{\btau}_h\|_{\bbV} \quad &&\forall \psi_h \in \rQ^{h,k}, \forall \vec{\btau}_h \in \bbV^{h,k},
\end{align*}
where the boundedness constants are given by
\begin{gather*}
\|A_h\| := \max\left\{ \left(\frac{1}{2\mu} + \frac{\lambda}{2\mu(2\mu+d\lambda)}\right)+C_{s1}, \frac{\alpha\sqrt{d}}{2\mu+d\lambda}C_\bbPi, s_0+\frac{d\alpha^2}{2\mu+d\lambda} \right\}, \quad \|C_h\| := \frac{1}{\kappa_1}+C_{s2}, \quad \\
\|D_h\| := \frac{\beta\sqrt{d}(1+\alpha)}{2\mu+d\lambda}C_\bbPi,
\end{gather*}
where $C_\bbPi$ denotes the continuity constant of the projection operator $\bbPi_{k}^{\bC,K}$.
\end{lemma}
\begin{proof}
The result follows from a direct application of the Cauchy--Schwarz inequality, the stabilisation bounds in \eqref{stab_bounds}, and the continuity of the projection operator $\bbPi_{k}^{\bC,K}$.
\end{proof}}
{\begin{lemma}[symmetry and positive semi-definiteness of discrete diagonal forms]\label{lemma:positive-definite-diagonal-discrete}
    The bilinear forms $A_h(\bullet,\bullet)$ and  $C_h(\bullet,\bullet)$ are symmetric and positive semi-definite.
\end{lemma}
\begin{proof}
    The proof reduces to employ the arguments in Lemma~\ref{lemma:positive-definite-diagonal} together with the properties of the stabilisation operators in \eqref{stab_bounds}. Indeed, we can extend \eqref{positive-A} for all $\vec{\btau}_h\in \bbV^{h,k}$ as follows
    \begin{align}
        A_h(\vec{\btau}_h, \vec{\btau}_h) &\geq \frac{1}{2\mu}\|(\bbPi_{k}^{\bC,K}\btau_h)^{\tt d}\|^{2}_{0,\Om}+ S_1^{\bC,K}((\bbOne-\bbPi_{k}^{\bC,K})\btau_h,(\bbOne-\bbPi_{k}^{\bC,K})\btau_h) \nonumber\\
        & \quad + \, \frac{s_{0}}{2}\|q_h\|^{2}_{0,\Om}\, +
        \frac{s_0}{d(s_0(2\mu + d\lambda) + 2d\alpha^2)}\|\tr (\bbPi_{k}^{\bC,K}\btau_h)\|^{2}_{0,\Om} \geq 0, \label{positive-definite-A_h}
    \end{align}
    thanks to the positive semi-definitess of $S_1^{\bC,K}(\bullet,\bullet)$.
\end{proof}
}
{\begin{lemma}[coercivity for the main discrete diagonal forms]\label{lemma:coercitivity-as_h}
There exist  constants $\overline{\alpha}_A,\overline{\alpha_C}>0$ such that 
\begin{subequations}
    \begin{alignat}{2}
        A_h(\vec{\btau}_h, \vec{\btau}_h) & \geq  \overline{\alpha}_A \|\vec{\btau}_h\|^{2}_{\bbV} \quad &&\forall \, \vec{\btau}_h \in \bbV_0^h, \label{coercitivity-A_h}\\
        C_h(\vec{\bv}_h, \vec{\bv}_h) &\geq   \overline{\alpha_C} \|\vec{\bv}_h\|^{2}_{\bQ} \quad &&\forall \, \vec{\bv}_h \in \bQ_{0}^h. \label{coercitivity-C_h}
    \end{alignat}
\end{subequations}
\end{lemma}
\begin{proof}
    Note that \eqref{positive-definite-A_h} and \eqref{s1} imply that for all $\vec{\btau}_h \in \bbV_0^h$
    \begin{align*}
        A_h(\vec{\btau}_h, \vec{\btau}_h) &\geq \min\{\frac{1}{2\mu},C_{s1}^{-1}\}\|\btau_h^{\tt d}\|^{2}_{0,\Om} +  \frac{s_{0}}{2}\|q_h\|^{2}_{0,\Om}\, +
        \min\{\frac{s_0}{d(s_0(2\mu + d\lambda) + 2d\alpha^2)},C_{s1}^{-1}\}\|\tr \btau_h\|^{2}_{0,\Om}. 
    \end{align*}
    Thus, applying \eqref{DES} and \eqref{DES-GN} to $\btau_h$, there exist $\overline{C}_1,\overline{C}_2>0$ such that  
    $A_h(\vec{\btau}_h, \vec{\btau}_h) \geq \overline{\alpha}_A\norm{\vec{\btau}_h}_{\bdiv,\Omega}^2$,
    where $\overline{\alpha}_A = \frac{\overline{C}_1,\overline{C}_2}{4\mu}\min\{1,C_{s1}^{-1}\}$. Finally, in a similar manner to Lemma~\ref{lemma:coercitivity-as}, we obtain that \eqref{coercitivity-C_h} holds with $\overline{\alpha}_C=\min\{\alpha_C,C_{s2}^{-1}\}$.
\end{proof}
}
{\begin{lemma}[discrete inf-sup conditions]\label{lemma:inf-sup-B_h}
There exist a positive constant $\overline{\beta}_B$ such that
\begin{alignat}{2}
    \sup_{ \vec{\btau}_h \in \bbV^{h,k}\setminus\{\boldsymbol{0}\}}\frac{ B (\vec{\btau}_h, \vec{\bv}_h)}{\|\vec{\btau}_h\|_{\bbV}} & \geq  \overline{\beta}_{B} \|\vec{\bv}_h\|_{\bQ} \qquad &&\forall \, \vec{\bv}_h \in (\bQ_0^h)^{\perp}.\label{eq:inf-sup-Bh}
\end{alignat}    
\end{lemma}
\begin{proof}
    We start by recalling from \cite[Proposition 5.6]{artioli17b} the following discrete inf-sup condition
    \begin{align}\label{eq:inf-sup-b1h}
        \sup_{\btau \in \bbS^{h,k} \setminus \{\mathbf{0}\}}\frac{  (\bv_h, \bdiv \btau_h)}{\|\btau_h\|_{\bdiv,\Omega}}  \geq  \overline{\beta}_{1} \|\bv_h\|_{0,\Om} \quad \forall \, \bv_h \in (\bQ_{01}^h)^{\perp}.
    \end{align}
    Similarly to \cite[Proposition 5.4.2]{boffi13}, given that $\vdiv \bw_h \in \bP_{k}(K)$ for all $K\in \cT_h$, $\bw_h\in (\bQ_{02}^h)^\perp\subseteq \bH_\mathrm{N}(\vdiv,\Omega)$, and the definition of $\Pi_{k}^{0,K}$ in \eqref{PiVector}, we have that 
    \begin{align}\label{eq:inf-sup-b2h}
        \sup_{q_h\in \rQ^{h,k}\setminus\{0\}} \frac{(q_h,\vdiv \bw_h)}{\|q_h\|_{0,\Omega}} \geq \sup_{q\in \rL^{2}(\Omega)\setminus\{0\}} \frac{(\Pi_{k}^{0}q,\vdiv \bw_h)}{\|\Pi_{k}^{0}q\|_{0,\Omega}} \geq \sup_{q\in \rL^{2}(\Omega)\setminus\{0\}} \frac{(q,\vdiv \bw_h)}{C_\Pi\|q\|_{0,\Omega}} \geq \overline{\beta}_2 \|\bw_h\|_{\vdiv,\Omega},
    \end{align}
  where in the last inequality we have used the continuous inf-sup condition  \eqref{eq:inf-sup-b2}. Here  $\overline{\beta}_2 = \frac{\beta_2}{C_\Pi}$, $\Pi_{k}^{0}q := \Pi_{k}^{0,K}q|_K$ for all $K\in \cT_h$, and $C_\Pi$ being the associated continuity constant of $\Pi_{k}^{0}$ in the $\rL^2$-norm. Therefore, the bounds in \eqref{eq:inf-sup-b1h}-\eqref{eq:inf-sup-b2h} yields \eqref{eq:inf-sup-Bh} with $\overline{\beta}_{B} = \frac{\overline{\beta}_1+\overline{\beta}_2}{4}$.
\end{proof}}

{Regarding the discrete counterpart of the altered diffusion block, we extend the analysis performed in Section~\ref{sec:wp-decoupled} by setting $\varphi_h = \cC^{-1}(-\cG + \mathcal{B}(\bzeta_h))$ and defining the discrete global nonlinear operator $\cN_h:\widetilde{\bV}^{h,\overline{k}} \to (\widetilde{\bV}^{h,\overline{k}})'$, defined by
\begin{equation*} 
    \cN_h(\bzeta_h) := \mathcal{A}_{h,\bsigma_h^\bbPi}(\bzeta_h) + \mathcal{B}^* \cC^{-1} \mathcal{B}(\bzeta_h).
\end{equation*}
Since the operators in the second term remain unchanged with respect to the continuous setting, we readily see that
\begin{align*} 
    [\cN_h(\bzeta_h), \bxi_h] &= \sum_{K\in \cT_h} [\cN_h^K(\bzeta_h), \bxi_h]\notag \\
    &:= \sum_{K\in \cT_h} \biggl[ (\varrho(\widehat{\bsigma}_h^{\bbPi})^{-1}(\bPi_{\overline{k}}^{0,K}\bzeta_h),\bPi_{\overline{k}}^{0,K}\bxi_h)_K \notag \\
    &\quad + \eta(|\bPi_{\overline{k}}^{0,K}\bzeta_h|^{2}\bPi_{\overline{k}}^{0,K}\bzeta_h, \bPi_{\overline{k}}^{0,K}\bxi_h) + S_3^{\rF,K}((\bOne - \bPi_{\overline{k}}^{0,K})\bzeta_h,(\bOne - \bPi_{\overline{k}}^{0,K})\bxi_h) \notag\\
    &\quad + (\vdiv \bzeta_h, \vdiv \bxi_h)_K\bigg] \quad \forall \bzeta_h, \bxi_h \in \widetilde{\bV}^{h,\overline{k}}.
\end{align*}
}
{Now, we encapsulate the properties of the discrete nonlinear operator $\cN_h$ in the following results.
\begin{lemma}[discrete boundedness]\label{lem:boundedness_h}
    The operator $\cN_h$ is bounded. That is, for any $M > 0$, there exists a constant $\overline{C}_\star> 0$ such that $\|\cN_h(\bzeta_h)\|_{(\bH^4_{\mathrm{N}}(\vdiv,\Omega))'} \le \overline{C}_\star$ whenever $\|\bzeta_h\|_{{{4,\vdiv;\Omega}}} \le M$.
\end{lemma}
\begin{proof}
    Let $M > 0$ and assume $\bzeta_h \in\widetilde{\bV}^{h,\overline{k}}$ such that $\|\bzeta_h\|_{{4,\vdiv;\Omega}} \le M$. For any $\bxi_h \in \widetilde{\bV}^{h,\overline{k}}$, we bound the duality pairing using the Cauchy-Schwarz and Hölder inequalities, together with the equivalence between the norms in $\bL^2$ and $\bL^4$ for discrete spaces (see \cite[Lemma 12.1]{ErnGuermond2021FE1}) given by $C_{\text{eqv}}h^{-d/4}\|\bzeta_h\|_{0,\Omega} \le \|\bzeta_h\|_{0,4;\Omega}\le \overline{C}_{\text{eqv}}h^{-d/4}\|\bzeta_h\|_{0,\Omega}$ with $C_{\text{eqv}},\overline{C}_{\text{eqv}}>0$, as follows
    \begin{align*}
        |[\cN_h(\bzeta_h), \bxi_h]|
        \le \rho_2C_{\bPi}^2 \|\bzeta_h\|_{0, \Omega} \|\bxi_h\|_{0, \Omega} + (\eta+C_{s3}) \|\bzeta_h\|_{0,4;\Omega}^3 \|\bxi_h\|_{0,4;\Omega} + \|\vdiv \bzeta_h\|_{0,\Omega} \|\vdiv \bxi_h\|_{0,\Omega},
    \end{align*}
    which turns into
    \begin{align*}
        |[\cN_h(\bzeta_h), \bxi_h]| &\le \rho_2C_{\bPi}^2C_{\text{emb}}^2 \|\bzeta_h\|_{0,4;\Omega} \|\bxi_h\|_{0,4;\Omega} + (\eta+C_{s3}) \|\bzeta_h\|_{0,4;\Omega}^3 \|\bxi_h\|_{0,4;\Omega} + \|\vdiv \bzeta_h\|_{0,\Omega} \|\vdiv \bxi_h\|_{0,\Omega} \\
        &\le \left( \rho_2C_{\bPi}^2C_{\text{emb}}^2|\bzeta_h\|_{{4,\vdiv;\Omega}} + (\eta+C_{s3})  \|\bzeta_h\|_{{4,\vdiv;\Omega}}^3 + \|\bzeta_h\|_{{4,\vdiv;\Omega}} \right) \|\bxi_h\|_{{4,\vdiv;\Omega}},
    \end{align*}
    where $C_{\bPi}$ is the continuity constant of the projection operator $\bPi_{\overline{k}}^{0,K}$ in the $\bL^4(\Omega)$--norm. Finally, the result holds with 
    $\overline{C}_\star:=\left(\rho_2C_{\bPi}^2C_{\text{emb}}^2 + 1\right) M + (\eta+C_{s3})  M^3$.
\end{proof}}
{\begin{lemma}[discrete hemicontinuity]\label{lem:hemicontinuity_h} $\cN_h$ is hemicontinuous, that is, for each $ \bzeta_h, \bxi_h \in \widetilde{\bV}^{h,\overline{k}}$, the real mapping $J:\bbR \to \bbR $, $t \mapsto [\cN_h(\bzeta_h + t\bxi_h), \bxi_h]$ is continuous.
\end{lemma}
\begin{proof}
    The continuity of both the projection operator $\bPi_{\overline{k}}^{0,K}$ and the stabilisation operator $S_3^{0,K}(\bullet,\bullet)$, together with the arguments used in Lemma~\ref{lem:hemicontinuity} lead to the result.
\end{proof}}
{\begin{lemma}[monotonicity]\label{lem:monotonicity_h}
   The operator $\cN_h$ is monotone, that is
\[[\cN_h(\bzeta_h) - \cN_h(\bxi_h), \bzeta_h - \bxi_h] \ge 0 \quad \forall \bzeta_h, \bxi_h \in \widetilde{\bV}^{h,\overline{k}}.\]
\end{lemma}
\begin{proof}
    The result follows by putting together the arguments in Lemma~\ref{lem:monotonicity} and the positive semi-definiteness of the stabilisation operator $S_3^{\rF,K}(\bullet,\bullet)$.
\end{proof}}
{\begin{lemma}[coercivity]\label{lem:coercivity_h}
    The operator $\cN_h$ is coercive on $\widetilde{\bV}^{h,\overline{k}}$, meaning that
\[\lim_{\|\bzeta_h\|_{{4,\vdiv;\Omega}} \to \infty} \frac{[\cN_h(\bzeta_h), \bzeta_h]}{\|\bzeta_h\|_{{4,\vdiv;\Omega}}} = \infty.\]
\end{lemma}
\begin{proof}
    Let $\bzeta_h \in \widetilde{\bV}^{h,\overline{k}}$. From \eqref{stab_bounds}, we readily see that
    \begin{equation*}
        [\cN_h(\bzeta_h), \bzeta_h] \ge \min\{\eta C_F,C_{s3}^{-1}\} \|\bzeta_h\|_{0,4;\Omega}^4 + \|\vdiv \bzeta_h\|_{0,\Omega}^2.
    \end{equation*}
    Thus, The result follows by dividing the inequality above by $\|\bzeta_h\|_{4,\vdiv;\Omega}$ and taking the limit $\|\bzeta_h\|_{4,\vdiv;\Omega} \to \infty$.
\end{proof}}

\subsection{Solvability of the discrete coupled problem}
We follow the analysis in Section~\ref{sec:wp-decoupled}-\ref{sec:wp} to derive the solvability of the discrete problem \eqref{eq:weak_disc}. Given two computable prescribed functions $\widehat{\varphi}_h \in \rQ^{h,k}$ and $\widehat{\bsigma}_h^{\bbPi} \in \bbS^{h,k}$, the following results imply the well-posedness of the decoupled equations corresponding to the discrete Biot equations \eqref{eq:weak_disc1}-\eqref{eq:weak_disc2} and the discrete altered diffusion equation \eqref{eq:weak_disc3}-\eqref{eq:weak_disc4}. The proof follows as in the continuous case by employing Lemmas~\ref{lemma:positive-definite-diagonal-discrete}-\ref{lemma:inf-sup-B_h} for the Biot equations and Lemmas~\ref{lem:boundedness_h}-\ref{lem:coercivity_h} for the nonlinear diffusion equations, together with the discrete versions of \cite[Theorem 4.3.1]{boffi13} and \cite[Corollary 2.2]{showalter1997}, respectively.
\begin{theorem}[well-posedness of the discrete Biot equations]\label{th:well-posed-Biot-disc}
 There exists a unique  $( \vec{\bsigma}_h, \vec{\bu}_h) \in \bbV^{h,k}\times \bQ^{h,k}$ such that 
 \begin{subequations}\label{eq:well-posed-Biot-disc}
     \begin{align}
        A_h(\vec{\bsigma}_h, \vec{\btau}_h)+\,B(\vec{\btau}_h, \vec{\bu}_h) &   \, =\, -D_h(\widehat{\varphi}_h,\vec{\btau}_h) + F(\vec{\btau}_h)  &&\forall \vec{\btau}_h \in \bbV^{h,k},\\ 
        B(\vec{\bsigma}_h,  \vec{\bv}_h) -\,C_h(\vec{\bu}_h, \vec{\bv}_h) &  \,=\, G(\vec{\bv}_h)&&\forall \vec{\bv}_h \in \bQ^{h,k}.  
     \end{align}
 \end{subequations}
 Moreover, 
 \[ \| ( \vec{\bsigma}_h, \vec{\bu}_h) \|_{\bbV\times \bQ} \lesssim  \frac{(1+\alpha d)\beta}{2\mu + d\lambda}\|\widehat{\varphi}_h\|_{0,\Omega} + \|\ff\|_{0,\Omega} + \|\bu_{\mathrm{D}}\|_{1/2,00;\Gamma_{\mathrm{D}}} + \|g\|_{0,\Omega} + \|p_{\mathrm{D}}\|_{1/2,00;\Gamma_{\mathrm{D}}}. \]
\end{theorem}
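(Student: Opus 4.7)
The plan is to replicate the proof of Theorem~\ref{th:well-posed-Biot} at the discrete level. The framework is identical: equations \eqref{eq:well-posed-Biot-disc} form a perturbed saddle-point system of the same structure as the continuous one, so I would apply the discrete version of \cite[Theorem 4.3.1]{boffi13}. The required hypotheses are all already in hand: the symmetry and positive semi-definiteness of $A_h(\bullet,\bullet)$ and $C_h(\bullet,\bullet)$ come from Lemma~\ref{lemma:positive-definite-diagonal-discrete}; the $\bbV_0^h$-ellipticity of $A_h(\bullet,\bullet)$ is provided by \eqref{coercitivity-A_h}; and the discrete inf-sup condition for $B(\bullet,\bullet)$ on the full discrete space, with an $h$-independent constant $\overline{\beta}_B$, is given by \eqref{eq:inf-sup-Bh}.

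The boundedness of the discrete bilinear forms $A_h$, $B$, $C_h$, and $D_h$ on the appropriate product spaces follows term-by-term as in Lemma~\ref{lem:boundedness} by Cauchy--Schwarz: the projection-based terms inherit continuity from the continuous bounds through the stability of $\bbPi_k^{\cC,K}$ and $\bPi_{\overline{k}}^{0,K}$ in the $\cC^{-1}$- and $\bkappa^{-1}$-weighted inner products, while the stabilisation terms are controlled uniformly in $h$ by the upper bounds in \eqref{s1}--\eqref{s2}. Crucially, the continuity constant of $D_h(\bullet,\bullet)$ carries the factor $\frac{(1+\alpha d)\beta}{2\mu + d\lambda}$, exactly as in the continuous setting.

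To close the argument, I would invoke the continuous-dependence estimate supplied by the abstract theorem. Treating $-D_h(\widehat{\varphi}_h,\cdot) + F(\cdot)$ as the linear functional on $\bbV^{h,k}$ and $G(\cdot)$ as that on $\bQ^{h,k}$, and using the respective continuity bounds (together with the standard trace estimates for $\bu_{\mathrm{D}}$ and $p_{\mathrm{D}}$ in $\rH^{1/2}_{00}(\Gamma_{\mathrm{D}})$), one arrives at the asserted inequality for $\|(\vec{\bsigma}_h, \vec{\bu}_h)\|_{\bbV \times \bQ}$. I do not expect any genuine obstacle in this argument: the single point demanding care is the $h$-independence and parameter-robustness of all constants, which is guaranteed by the stabilisation bounds \eqref{s1}--\eqref{s2} and by the explicit tracking of constants in the proof of Lemma~\ref{lemma:coercitivity-as_h} leading to \eqref{bound-A_h}, so that the estimate is inherited verbatim from the continuous analysis.
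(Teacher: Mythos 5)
Your proposal matches the paper's approach exactly: the paper's own justification is precisely the one-line remark preceding Theorems~\ref{th:well-posed-Biot-disc} and~\ref{th:well-posed-diff-disc}, namely that the proof ``follows as in the continuous case by employing Lemmas~\ref{lemma:positive-definite-diagonal-discrete}--\ref{lemma:inf-sup-B_h}, and the discrete versions of \cite[Theorem 4.3.1]{boffi13} and Theorem~\ref{thm:main-theorem}.'' Your elaboration of the boundedness and the tracking of the $\frac{(1+\alpha d)\beta}{2\mu+d\lambda}$ factor in $D_h$ is correct and in the spirit of the paper.

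One small item worth flagging: to mirror the continuous proof of Theorem~\ref{th:well-posed-Biot} faithfully, one also needs the discrete analogue of the $\bQ_0$-ellipticity of $C$ (i.e.\ of~\eqref{coercitivity-C}), namely $C_h(\vec{\bv}_h,\vec{\bv}_h)\gtrsim\|\vec{\bv}_h\|_{\bQ}^{2}$ for $\vec{\bv}_h\in\bQ_0^{h}$. Neither your write-up nor the paper's Lemma~\ref{lemma:coercitivity-as_h} states this explicitly — it only records $\bbV_0^h$-ellipticity of $A_h$ and $\rQ^{h,k}$-ellipticity of $c$ — but it does hold by the same argument as~\eqref{coercitivity-C} combined with the lower stabilisation bound~\eqref{s2}, since on $\bQ_0^h$ one has $\bv_h=\cero$ and $\vdiv\bw_h=0$, so $C_h(\vec{\bv}_h,\vec{\bv}_h)\geq\min\{\kappa_2^{-1},C_{s2}^{-1}\kappa_2^{-1}\}\|\bw_h\|_{0,\Omega}^{2}=\min\{\kappa_2^{-1},C_{s2}^{-1}\kappa_2^{-1}\}\|\vec{\bv}_h\|_{\bQ}^{2}$. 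Making this explicit would fully close the application of the abstract perturbed saddle-point theorem.
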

\begin{proof}
    The arguments are standard and therefore omitted.
\end{proof}
{\begin{theorem}[well-posedness of the discrete nonlinear diffusion equations]\label{th:well-posed-diff-disc}
There exists a unique $( \bzeta_h, \varphi_h) \in  \widetilde{\bV}^{h,\overline{k}} \times \rQ^{h,k}$ such that
\begin{subequations}\label{eq:well-posed-diff-disc}
    \begin{align}
      [\mathcal{A}_{h,\widehat{\bsigma}_h^\bbPi}(\bzeta_h), \bxi_h] + [\mathcal{B}(\bxi_h), \varphi_h] &= [\cF,\bxi_h] &&\forall \bxi_h \in \widetilde{\bV}^{h,\overline{k}},\label{eq:well-posed-diff-disc1}\\
     [\mathcal{B}(\bzeta_h), \psi_h] - [\cC(\varphi_h), \psi_h] &= [\cG,\psi_h] &&\forall \psi_h \in \rQ^{h,k}, \label{eq:well-posed-diff-disc2} 
\end{align}   \end{subequations}
 furthermore,
    \begin{equation}\label{eq:apriori_bound_h}
        \|\bzeta_h\|_{{4,\vdiv;\Omega}} + \|\varphi_h\|_{0, \Omega} \le \mathcal{M}_h(\ell, \varphi_{\mathrm{D}}),
    \end{equation}
where 
    \begin{equation}\label{eq:def_M_data_h}
        \mathcal{M}_*(\ell, \varphi_{\mathrm{D}}) := \left( \frac{2}{\min\{\eta C_F,C_{s3}^{-1}\}} \right)^{1/4} \mathcal{H}_*(\ell, \varphi_{\mathrm{D}})^{1/4} + 4 \mathcal{H}_*(\ell, \varphi_{\mathrm{D}})^{1/2} + \|\ell\|_{0,\Omega},
    \end{equation}
    with 
    \begin{equation*}\label{eq:def_F_data_h}
        \mathcal{H}_*(\ell, \varphi_{\mathrm{D}}) := \frac{3}{4(2\min\{\eta C_F,C_{s3}^{-1}\})^{1/3}} \left( \max\{C_{\mathrm{emb}}, 1\} \|\varphi_{\mathrm{D}}\|_{1/2,00;\Gamma_{\mathrm{D}}} \right)^{4/3} + \frac{3}{2} \left( \max\{C_{\mathrm{emb}}, 1\} \|\varphi_{\mathrm{D}}\|_{1/2,00;\Gamma_{\mathrm{D}}} \right)^2 + \|\ell\|_{0,\Omega}^2.
    \end{equation*}
\end{theorem}}
\begin{proof}
    {We start by proving the existence of a discrete solution to \eqref{eq:well-posed-diff-disc}. Let $\overline{\cF} \in (\widetilde{\bV}^{h,\overline{k}})^\prime$ such that $[\overline{\cF}, \bxi_h] := [\cF, \bxi_h] + [\cC^{-1}\cG, \mathcal{B}(\bxi_h)]$. We readily see that Lemmas~\ref{lem:boundedness}-\ref{lem:coercivity}, together with \cite[Corollary 2.2]{showalter1997} imply the surjectivity of the operator $\mathcal{N}_h$. Therefore, there exists $\bzeta_h \in \widetilde{\bV}^{h,\overline{k}}$ such that $[\mathcal{N}_h(\bzeta_h), \bxi_h] = [\overline{\cF}, \bxi_h]$ for all $\bxi_h \in \widetilde{\bV}^{h,\overline{k}}$.}
    
     {Similarly to the continuous setting, let us define $\varphi_h := \cC^{-1} (\mathcal{B}(\bzeta_h) - \cG) \in \rQ^{h,k}$. Then, $\cC(\varphi_h) = \mathcal{B}(\bzeta_h) - \cG$ and \eqref{eq:well-posed-diff-disc2} holds, i.e.,
    \begin{equation*}
        [\mathcal{B}(\bzeta_h), \psi_h] - [\cC(\varphi_h), \psi_h] = [\cG, \psi_h] \quad \forall \psi_h \in \rQ^{h,k}.
    \end{equation*}
    Next, we recover \eqref{eq:well-posed-diff-disc1} by using that $[\mathcal{N}_h(\bzeta_h), \bxi_h] = [\overline{\cF}, \bxi_h]$ to obtain
    \begin{equation*}
        [\mathcal{A}_{h,\widehat{\bsigma}_h^\bbPi}(\bzeta_h), \bxi_h] + [\mathcal{B}^* \cC^{-1} \mathcal{B}(\bzeta_h), \bxi_h] = [\cF, \bxi_h] + [\mathcal{B}^* \cC^{-1}(\cG), \bxi_h].
    \end{equation*}
    Thus,
    \begin{equation*}
        [\mathcal{A}_{h,\widehat{\bsigma}_h^\bbPi}(\bzeta_h), \bxi_h] + [\mathcal{B}(\bxi_h), \cC^{-1}(\mathcal{B}(\bzeta_h))] = [\cF, \bxi_h] + [\mathcal{B}(\bxi_h), \cC^{-1}(\cG)],
    \end{equation*}
    and
    \begin{equation*}
        [\mathcal{A}_{h,\widehat{\bsigma}_h^\bbPi}(\bzeta_h), \bxi_h] + [\mathcal{B}(\bxi_h), \varphi_h] = [\cF, \bxi_h] \quad \forall \bxi_h \in \widetilde{\bV}^{h,\overline{k}}.
    \end{equation*}
    }

    {For the uniqueness of the decoupled altered diffusion block, we assume that $(\bzeta_{1h} \varphi_{1h}), (\bzeta_{2h}, \varphi_{2h})\in \widetilde{\bV}^{h,\overline{k}}\times \rQ^{h,k}$ satisfy \eqref{eq:well-posed-diff-disc}. Then, some algebraic manipulations lead to
    \begin{align*}
        [\mathcal{A}_{h,\widehat{\bsigma}_h^\bbPi}(\bzeta_{1h}) - \mathcal{A}_{h,\widehat{\bsigma}_h^\bbPi}(\bzeta_{2h}), \bxi_h] + [\mathcal{B}(\bxi_h), \varphi_{1h} - \varphi_{2h}] &= 0 \quad \forall \bxi_h \in \widetilde{\bV}^{h,\overline{k}},\\ 
        [\mathcal{B}(\bzeta_{1h} - \bzeta_{2h}), \psi_h] - [\cC(\varphi_{1h} - \varphi_{2h}), \psi_h] &= 0 \quad \forall \psi_h \in \rQ^{h,k}. 
    \end{align*}
    We now choose $\bxi_h = \bzeta_{1h} - \bzeta_{2h}$ and $\psi_h = \varphi_{1h} - \varphi_{2h}$, to obtain
    \begin{equation*}
        [\mathcal{A}_{h,\widehat{\bsigma}_h^\bbPi}(\bzeta_{1h}) - \mathcal{A}_{h,\widehat{\bsigma}_h^\bbPi}(\bzeta_{2h}), \bzeta_{1h} - \bzeta_{2h}] + [\cC(\varphi_{1h} - \varphi_{2h}), \varphi_{1h} - \varphi_{2h}] = 0.
    \end{equation*}
    Since $[\mathcal{A}_{h,\widehat{\bsigma}_h^\bbPi}(\bzeta_{1h}) - \mathcal{A}_{h,\widehat{\bsigma}}(\bzeta_{2h}), \bzeta_{1h} - \bzeta_{2h}] \ge \min\{\eta C_F,C_{s3}^{-1}\} \|\bzeta_{1h} - \bzeta_{2h}\|_{0,4;\Omega}^4$ and $[\cC(\varphi_1 - \varphi_2), \varphi_1 - \varphi_2] = \|\varphi_1 - \varphi_2\|_{0,\Omega}^2$, we readily see that $\bzeta_{1,h} = \bzeta_{2,h}$ and $\varphi_{1,h} = \varphi_{2,h}$.}
    
    {Now, we focus in proving \eqref{eq:apriori_bound_h}. First, we test \eqref{eq:well-posed-diff-disc} with $\bxi_h = \bzeta_h$ and $\psi_h = -\varphi_h$, to obtain that
    \begin{equation*}
        [\mathcal{A}_{h,\widehat{\bsigma}_h^\bbPi}(\bzeta_h), \bzeta_h] + [\cC(\varphi_h), \varphi_h] = [\cF, \bzeta_h] - [\cG, \varphi_h].
    \end{equation*}
    We readily see that the bound $[\mathcal{A}_{h,\widehat{\bsigma}_h^\bbPi}(\bzeta_h), \bzeta_h] \ge \min\{\eta C_F,C_{s3}^{-1}\} \|\bzeta_h\|_{0,4;\Omega}^4$ implies that
    \begin{equation*}
        \min\{\eta C_F,C_{s3}^{-1}\}  \|\bzeta_h\|_{0,\Omega}^2 + \|\varphi_h\|_{0, \Omega}^2 \le \|\cF\|_{(\bH^{4}_{\mathrm{N}}(\vdiv,\Omega))^\prime} \|\bzeta_h\|_{{{4,\vdiv;\Omega}}} + \|\cG\|_{0, \Omega} \|\varphi_h\|_{0, \Omega}.
    \end{equation*}
 From this, the proof follows exactly as in Theorem~\ref{th:well-posed-diff} and is therefore omitted.}
\end{proof}

Next, we define the following discrete maps
\begin{align*}
    \cJ_h^{\mathrm{Biot}}:\rQ^{h,k}(\Omega)&\to \bbV^{h,k} \times \bQ^{h,k},\\
    \widehat{\varphi}_h &\mapsto \cJ_h^{\mathrm{Biot}}(\widehat{\varphi}_h)= ((\cJ^{\mathrm{Biot}}_{1h}(\widehat{\varphi}_h), \cJ^{\mathrm{Biot}}_{2h}(\widehat{\varphi}_h)),\cJ^{\mathrm{Biot}}_{3h}(\widehat{\varphi})):= ( (\bsigma_h, p_h),  \vec{\bu}_h)=( \vec{\bsigma}_h, \vec{\bu}_h),
\end{align*} 
where $( \vec{\bsigma}_h, \vec{\bu}_h) \in \bbV^{h,k} \times \bQ^{h,k}$ is given by Theorem~\ref{th:well-posed-Biot-disc}; and 
\begin{align*}
    \cJ_h^{\mathrm{diff}}: \bbS^{h,k} &\to \widetilde{\bV}^{h,\overline{k}} \times \rQ^{h,k}, \\
    \widehat{\bsigma}_h &\mapsto \cJ_h^{\mathrm{diff}}(\widehat{\bsigma}_h)=(\cJ^{\mathrm{diff}}_{1h}(\widehat{\bsigma}_h),\cJ^{\mathrm{diff}}_{2h}(\widehat{\bsigma}_h)):= (\bzeta_h,\varphi_h ),
\end{align*}
with $(\bzeta_h,\varphi_h)$ provided by Theorem~\ref{th:well-posed-diff-disc}. These maps are well-defined, along with the following discrete solution operator  
\begin{align}\label{Operator-Jh}
    \mathcal{J}_h: \rQ^{h,k}  &\to  \rQ^{h,k}, \notag \\
    \widehat{\varphi}_h &\mapsto \cJ_h(\widehat{\varphi}_h):= \cJ^{\mathrm{diff}}_{2h}(\cJ^{\mathrm{Biot}}_{1h}(\widehat{\varphi}_h)).
\end{align}
In what follows, we show {the existence of a solution to} the fully-coupled discrete problem \eqref{eq:weak_disc} through the equivalent fixed-point formulation $\cJ_h(\varphi_h)=\varphi_h$. First, we define the discrete closed ball for some $r>0$
\[ \rW_h : = \{ \widehat{\varphi}_h \in \rQ^{h,k}:
  \|\widehat{\varphi}_h\|_{0,\Omega}  \leq r \}.\]
Next, we prove that $\cJ_h$ maps $\rW_h$ into itself and show the continuity of {$\cJ_h$}.
{\begin{lemma}[discrete ball mapping property]\label{lemma:J-W-delta-W-delta-disc}
    Under the assumption 
    \begin{equation}\label{eq:data_assumption_h}
        \mathcal{M}_*(\ell, \varphi_{\mathrm{D}})\leq r,
    \end{equation}
    where $\mathcal{M}_*(\ell, \varphi_{\mathrm{D}})$ is defined as in \eqref{eq:apriori_bound_h}, it follows that  $\mathcal{J}\big(\rW_h\big)\subseteq\rW_h$.
\end{lemma}
\begin{proof}
  Given $\widehat{\varphi}_h \in \rW_h$, the definition \eqref{Operator-Jh}, the small data assumption \eqref{eq:data_assumption_h}, and the continuous dependence on data provided in \eqref{eq:def_M_data_h}, it holds that
     \begin{align*}
        \|\mathcal{J}_h(\widehat{\varphi}_h)\|_{0,\Omega}= \|  \cJ^{\mathrm{diff}}_{2h}(\cJ^{\mathrm{Biot}}_{1h}(\widehat{\varphi}_h))\|_{0,\Omega} \lesssim \|\ell \|_{0,\Omega} +  \|\varphi_{\mathrm{D}}\|_{1/2,00;\Gamma_{\mathrm{D}}} \leq r.
     \end{align*}
\end{proof}}
{\begin{lemma}[discrete H\"older continuity]\label{lemma:continuity-Jh} 
    There exists a positive constant {$\mathcal{L}_{\cJ_h}$} such that 
    \begin{equation*}
        \|\mathcal{J}_{{h}}({\varphi_{1h}})-\mathcal{J}_h({\varphi_{2h}})\|_{0,\Omega}
        \leq \mathcal{L}_{\mathcal{J}_h}\|\varphi_{1h}-\varphi_{2h}\|_{0, \Omega}^{2/3} \qquad \forall \varphi_{1h},\varphi_{2h} \in {\rQ^{h,k}}.
    \end{equation*} 
\end{lemma}
\begin{proof}
    Given $\varphi_{1h}, \varphi_{2h} \in {\rQ^{h,k}}$, we let $\cJ_h^{\mathrm{Biot}}(\varphi_{1h})=(\vec{\bsigma}_{1h}, \vec{\bu}_{1h}) \in \bbV^{h,k}\times \bQ^{h,k}$ and $\cJ_h^{\mathrm{Biot}}(\varphi_{2h})=(\vec{\bsigma}_{2h}, \vec{\bu}_{2h}) \in \bbV^{h,k}\times \bQ^{h,k}$ be the unique solutions of \eqref{eq:well-posed-Biot-disc}. Then, applying the discrete version of the inf-sup condition \eqref{global-inf-sup-Jhat} with $(\vec{\bzeta}_h, \vec{\bw}_h)=(\vec{\bsigma}_{1h}-\vec{\bsigma}_{2h},  \vec{\bu}_{1h}- \vec{\bu}_{2h})$, imply that $\overline{C}_{\mathrm{B}}\|(\vec{\bzeta}_h, \vec{\bw}_h)\|_{\bbV^{h,k}\times \bQ^{h,k}}$ is bounded by
    \begin{align*}
        \displaystyle & \sup_{(\vec{\btau}_h,\vec{\bv}_h) \in (\bbV^{h,k}\times \bQ^{h,k})\setminus \{(\mathbf{0},\mathbf{0})\}}\frac{A_h(\vec{\bsigma}_{1h}-\vec{\bsigma}_{2h},\vec{\btau}_h)+B(\vec{\btau}_h,\vec{\bu}_{1h}- \vec{\bu}_{2h})+ B(\vec{\bsigma}_{1h}-\vec{\bsigma}_{2h},\vec{\bv}_h)-C_h(\vec{\bu}_{1h}- \vec{\bu}_{2h},\vec{\bv}_h)}{\|(\vec{\btau}_h,\vec{\bv}_h)\|_{\bbV\times \bQ}}\\
        & = \sup_{(\vec{\btau}_h,\vec{\bv}_h) \in (\bbV^{h,k}\times \bQ^{h,k})\setminus \{(\mathbf{0},\mathbf{0})\}}\frac{D_h({\varphi}_{1h},\vec{\btau}_h)-D_h({\varphi}_{2h},\vec{\btau}_h)}{\|(\vec{\btau}_h,\vec{\bv}_h)\|_{\bbV\times \bQ}} \leq \frac{(1+\alpha d)\beta}{2\mu + d\lambda}\|\varphi_{1h}-\varphi_{2h}\|_{0,\Omega}.
    \end{align*}
Thus,
\begin{equation*}
\|\cJ_{1h}^{\mathrm{Biot}}(\varphi_{1h})-\cJ_{1h}^{\mathrm{Biot}}(\varphi_{2h})\|_{4,\div;\Om}
\leq  \frac{(1+\alpha d)\beta}{\overline{C}_{\mathrm{B}}(2\mu + d\lambda)}\|\varphi_{1h}-\varphi_{2h}\|_{0,\Omega}.
\end{equation*}
Similarly, let  $\bsigma_{1h}, \bsigma_{2h} \in  \bbS^{h,k}$, such that $\cJ_{h}^{\mathrm{diff}}(\bsigma_{1h})=( \varphi_{1h}, \bzeta_{1h}) \in  \rQ^{h,k} \times \widetilde{\bV}^{h,\overline{k}}$ and $\cJ_{h}^{\mathrm{diff}}(\bsigma_{2h})=( \varphi_{2h}, \bzeta_{2h}) \in  \rQ^{h,k} \times \widetilde{\bV}^{h,\overline{k}}$ be the unique solutions of \eqref{eq:well-posed-diff-disc}.  Equivalently, we have
    \begin{subequations}
    \begin{align*}
        [\cA_{h,{\bsigma}_{1h}^\bbPi}(\bzeta_{1h}) - \cA_{h,{\bsigma}_{2h}^\bbPi}(\bzeta_{2h}), \bxi_h] + [\cB(\bxi_h), \varphi_{1h}-\varphi_{2h}] &= 0 \qquad \forall \bxi_h \in \widetilde{\bV}^{h,\overline{k}}, \\ 
        [\cB(\bzeta_{1h}-\bzeta_{2h}),  \psi_h] - [\cC(\varphi_{1h}-\varphi_{2h}), \psi_h] &= 0 \qquad \forall \psi_h \in \rQ^{h,K}. 
    \end{align*}
    \end{subequations}
Next, we choose the test functions $\bxi_h = \bzeta_{1h} - \bzeta_{2h}$ and $\psi_h = -(\varphi_{1h} - \varphi_{2h})$ in the equation above to arrive at
\begin{align*}
    [\cA_{h,{\bsigma}_{1h}^{\bbPi}}(\bzeta_{1h}) - \cA_{h,{\bsigma}_{2h}^\bbPi}(\bzeta_{2h}), \bzeta_{1h} - \bzeta_{2h}] + \|\varphi_{1h} - \varphi_{2h}\|_{0,\Omega}^2 = 0.
\end{align*}
Thus, the addition and subtraction of $\cA_{{\bsigma}_{1h}^\bbPi}(\bzeta_{2h}, \bzeta_{1h} - \bzeta_{2h})$ in the previous equation, together with the strong monotonicity of the nonlinear operator lead to
\begin{align*}
    \min\{\eta,C_{s3}^{-1}\} \|\bzeta_{1h} - \bzeta_{2h}\|_{0,4;\Omega}^4 + \|\varphi_{1h} - \varphi_{2h}\|_{0,\Omega}^2 &\leq [\cA_{{\bsigma}_{2h}^\bbPi}(\bzeta_{2h}) - \cA_{{\bsigma}_{1h}^\bbPi}(\bzeta_{2h}), \bzeta_{1h} - \bzeta_{2h}] \nonumber \\
    &= \int_\Omega \left( \varrho({\bsigma}_{2h}^\bbPi)^{-1} - \varrho({\bsigma}_{1h}^\bbPi)^{-1} \right) \bzeta_{2h} \cdot (\bzeta_{1h} - \bzeta_{2h}) .
\end{align*}
The result follows by employing the arguments in Theorem~\ref{continuity-J}, in this case the continuity constant is given by
$$\mathcal{L}_{\cJ_h}:=\left(\frac{\sqrt{3} (L_\varrho C_\bbPi \mathcal{M}(\ell, \varphi_{\mathrm{D}}))^{2/3}}{2(2\min\{\eta,C_{s3}^{-1}\})^{1/6}}\right) \left(\frac{(1+\alpha d)\beta}{\overline{C}_{\mathrm{B}}(2\mu + d\lambda)}\right)^{2/3}.$$
\end{proof}}
 
We are ready to state the main result of this section which is a consequence of Lemmas~\ref{lemma:J-W-delta-W-delta-disc}-\ref{lemma:continuity-Jh} together with the Schauder fixed-point theorem.
\begin{theorem}[existence of solution for the fully-coupled discrete problem]\label{th:J-solvability-fixed-disc}
 Suppose that {$\mathcal{M}_*(\ell, \varphi_{\mathrm{D}})\leq r$}.  Then, the coupled problem \eqref{eq:weak_disc} has at least one solution $(\vec{\bsigma}_h, \vec{\bu}_h) \in \bbV^{h,k} \times \bQ^{h,k}$ and $(\bzeta_h,\varphi_h) \in  \widetilde{\bV}^{h,\overline{k}} \times \rQ^{h,k}$. Moreover, and similarly to the continuous case, we have 
\begin{align*}
    \|(\vec{\bsigma}_h, \vec{\bu}_h, \bzeta_h, \varphi_h)\|_{\bbV \times \bQ \times \bH^4_{\mathrm{N}}(\vdiv,\Omega)\times \rL^2(\Omega)}  &  \lesssim \left(\frac{(1+\alpha d)\beta}{2\mu + d\lambda}+ 1\right){\mathcal{M}_*(\ell, \varphi_{\mathrm{D}})} +  \|\ff\|_{0,\Omega}  + \|g\|_{0,\Omega} \notag\\
    &\quad +\, \|\bu_{\mathrm{D}}\|_{1/2,00;\Gamma_{\mathrm{D}}} +  \|p_{\mathrm{D}}\|_{1/2,00;\Gamma_{\mathrm{D}}} . 
\end{align*}
\end{theorem}

\section{A priori error analysis}\label{sec:error}
This section is devoted to deriving the optimal a priori error estimate. The first step is to establish the Strang-type inequalities which are formulated in the theorem below.
\begin{theorem}[quasi-optimality]\label{theorem:a_priori}
In addition to the assumptions of Theorems \ref{th:J-unique-solvability-fixed} and \ref{th:J-solvability-fixed-disc}, let $(\vec{\bsigma}, \vec{\bu}, \bzeta, \varphi) \in \bbV \times \bQ \times \bH^{4}_{\mathrm{N}}(\vdiv,\Omega) \times \rL^2(\Omega)$ and $(\vec{\bsigma}_h, \vec{\bu}_h, \bzeta_h, \varphi_h) \in \bbV^{h,k} \times \bQ^{h,k} \times \widetilde{\bV}^{h,\overline{k}} \times \rQ^{h,k}$ be solutions to \eqref{eq:weak2} and \eqref{eq:weak_disc}, respectively. Furthermore, assume there exists a constant $\mathcal{K}_1 > 0$, independent of $h$, such that $\|\bF_d^{\overline{k},K}\bzeta\|_{\infty,\Omega} \le \mathcal{K}_1$. Under these conditions, and for all $h \leq h_0$, the following error estimates hold 
\begin{subequations}
\begin{align}
    \|\vec{\bsigma}-\vec{\bsigma}_h\|^2_{\bbV}+\|\vec{\bu}-\vec{\bu}_h\|_{ \bQ }^2  &  \lesssim \|\vec{\bsigma}-\vec{\bsigma}_h^{\bbPi}\|_{\bbV}^2+\|\vec{\bsigma}-\vec{\bsigma}_h^{\bbF}\|^2_{\bbV}+\|\vec{\bu}-\vec{\bu}_h^{\bPi}\|_{\bbV}^2+\|\vec{\bu}-\vec{\bu}_h^{\bF}\|_{\bQ}^2 \notag \\
    &\quad + \, \left(\frac{(1+\alpha d)\beta}{2\mu + d\lambda}\right)^2\|\varphi-\varphi_h\|^2_{{0,\Omega}}, \label{a_priori_poro}\\
    \| \bzeta-\bzeta_h\|_{0,\Omega}^2+\| \bzeta-\bzeta_h\|_{4,\vdiv;\Omega}^4 +\|\varphi-\varphi_h\|_{ 0,\Omega}^2  &  \lesssim \|\bzeta-\bF_d^{\overline{k},K}\bzeta\|^4_{{4,\vdiv;\Omega}}+\|\bzeta-\bF_d^{\overline{k},K}\bzeta\|^2_{{4,\vdiv;\Omega}}+ \|\bzeta-\bF_d^{\overline{k},K}\bzeta\|^2_{{0,\Omega}}\notag\\
    & \quad + \|\bzeta-\bF_d^{\overline{k},K}\bzeta\|^{4/3}_{{0,4/3;\Omega}}+\|\varphi-\Pi_k^{0,K}\varphi\|^{4/3}_{0,\Omega}\,+\, \|\varphi-\Pi_k^{0,K}\varphi\|^{2}_{0,\Omega}\notag\\
    &\quad +\|\varphi-\Pi_k^{0,K}\varphi\|^{4}_{0,\Omega}+\frac{1}{2}L_{\varrho}\mathcal{K}_1\|\bsigma-\bsigma_h^{\bbPi}\|^{2}_{0,\Omega},\label{a_priori_diff}
\end{align}\end{subequations}
where $\vec{\bsigma}_h^{\bbF}:= (\bbF^{k,K}\bsigma,\Pi_{k}^{0,K}p ), \vec{\bsigma}_h^{\bbPi}:= (\bsigma_h^{\bbPi},p_h ), \vec{\bu}_h^{\bF}:= (\bPi^{0,K}_k\bu,\bF_d^{\overline{k},K} \bz), \vec{\bu}_h^{\bPi}:= (\bu_h,{\bPi}_{\overline{k}}^{0,K}\bz)$, and by $\bF_d^{\overline{k},K}$ we represent the Fortin operators either $\bF_{\mathrm{2D}}^{k,K}$ or $\bF_{\mathrm{3D}}^{k+1,K}$, depending on the spatial dimension under consideration. Here, $h_0 > 0$ is a threshold constant whose explicit definition is detailed in the subsequent proof.
\end{theorem}
\begin{proof}
    We proceed in a similar way as in \cite{khot25}, noting from \eqref{eq:weak2} and \eqref{eq:weak_disc} that $(\vec{\bsigma}_h-\vec{\bsigma}_h^{\bbF},\vec{\bu}_h-\vec{\bu}_h^{\bF})\in \bbV^{h,k}\times \bQ^{h,k}$ is the unique solution to 
    \begin{align*}
        A_h(\vec{\bsigma}_h-\vec{\bsigma}_h^{\bbF}, \vec{\btau}_h)+\,B(\vec{\btau}_h, \vec{\bu}_h-\vec{\bu}_h^{\bF}) &   \, =\, \widetilde{F}(\vec{\btau}_h)  &&\forall \vec{\btau}_h \in \bbV^{h,k},\\
        B(\vec{\bsigma}_h-\vec{\bsigma}_h^{\bbF},  \vec{\bv}_h) -\,C_h(\vec{\bu}_h-\vec{\bu}_h^{\bF}, \vec{\bv}_h) &  \,=\, \widetilde{G}(\vec{\bv}_h)&&\forall \vec{\bv}_h \in \bQ^{h,k},
    \end{align*}
    where 
    \begin{align*}
        \widetilde{F}(\vec{\btau}_h)&:=A(\vec{\bsigma},\vec{\btau}_h)-A_h(\vec{\bsigma}_h^{\bbF},\vec{\btau}_h)+B(\vec{\btau}_h,\vec{\bu}-\vec{\bu}_h^{\bF})+D_h(\varphi-\varphi_h,\vec{\btau}_h),\\
        \widetilde{G}(\vec{\bv}_h)&:=B(\vec{\bsigma}-\vec{\bsigma}_h^{\bbF},\vec{\bv}_h)-C(\vec{\bu},\vec{\bv}_h)+C_h(\vec{\bu}_h^{\bF},\vec{\bv}_h).
    \end{align*}
    By exploiting the continuous dependence on data established in Theorem \ref{th:well-posed-Biot-disc}, we can deduce that
    \begin{equation}\label{bound_functionals_biot_a_priori}
    \|(\vec{\bsigma}-\vec{\bsigma}_h^{\bbF},\vec{\bu}-\vec{\bu}_h^{\bF})\|_{\bbV\times \bQ}\lesssim \|\widetilde{F}\|_{\bbV'}+\|\widetilde{G}\|_{\bQ'}.
    \end{equation}
    Now, noting that $A_h(\vec{\bsigma}_h^{\bbPi},\vec{\btau}_h) = A(\vec{\bsigma}_h^{\bbPi},\vec{\btau}_h)$, by applying the continuity of the bilinear forms $A(\bullet,\bullet)$, $A_h(\bullet,\bullet)$, $B(\bullet,\bullet)$, $D(\bullet,\bullet)$, $C(\bullet,\bullet)$, and $C_h(\bullet,\bullet)$, as well as using the triangle inequality, it is possible to deduce that
    \begin{subequations}\label{bounds_a_priori_biot}
    \begin{align}
        |A(\vec{\bsigma},\vec{\btau}_h)-A_h(\vec{\bsigma}_h^{\bbF},\vec{\btau}_h)|&\lesssim (\|\vec{\bsigma}-\vec{\bsigma}_h^{\bbPi}\|_{\bbV}+\|\vec{\bsigma}_h^{\bbPi}-\vec{\bsigma}_h^{\bbF}\|_{\bbV})\|\btau_h\|_{\bbV} \lesssim (\|\vec{\bsigma}-\vec{\bsigma}_h^{\bbPi}\|_{\bbV}+\|\vec{\bsigma}-\vec{\bsigma}_h^{\bbF}\|_{\bbV})\|\btau_h\|_{\bbV},\\
        |B(\vec{\btau}_h,\vec{\bu}-\vec{\bu}_h^{\bF})|&\lesssim \|\vec{\bu}-\vec{\bu}_h^{\bF}\|_{\bQ}\|\btau_h\|_{\bbV},\\
        |D_h(\varphi-\varphi_h,\vec{\btau}_h)|&\lesssim \frac{(1+\alpha d)\beta}{2\mu+d\lambda} \|\varphi-\varphi_h\|_{0,\Omega}\|\btau_h\|_{\bbV},\\
        |B(\vec{\bsigma}-\vec{\bsigma}_h^{\bbF},\vec{\bv}_h)|&\lesssim \|\vec{\bsigma}-\vec{\bsigma}_h^{\bbF}\|_{\bbV}\|\bv_h\|_{\bQ},\\
        |C_h(\vec{\bu}_h^{\bF},\vec{\bv}_h)-C(\vec{\bu},\vec{\bv}_h)|&\lesssim (\|\vec{\bu}-\vec{\bu}_h^{\bPi}\|_{\bQ}+\|\vec{\bu}_h^{\bF}-\vec{\bu}_h^{\bPi}\|_{\bQ})\|\bv_h\|_{\bQ}\lesssim \|(\vec{\bu}-\vec{\bu}_h^{\bPi}\|_{\bQ}+\|\vec{\bu}-\vec{\bu}_h^{\bF}\|_{\bQ})\|\bv_h\|_{\bQ}.
    \end{align}   
    \end{subequations}
    Upon substitution of \eqref{bounds_a_priori_biot} into \eqref{bound_functionals_biot_a_priori}, and invoking the triangle inequality, the result \eqref{a_priori_poro} follows. Conversely, for the diffusivity problem, it follows once more from \eqref{eq:weak2} and \eqref{eq:weak_disc} that $(\bzeta_h - \bF_d^{\overline{k},K}\bzeta , \varphi_h-\Pi_k^{0,K}\varphi) \in \widetilde{\bV}^{h,\overline{k}}\times \rQ^{h,k} $ constitutes the unique solution of
    \begin{subequations}
    \begin{align}
        [\mathcal{A}_{h,\bsigma_h^\bbPi}(\bzeta_h - \bF_d^{\overline{k},K}\bzeta), \bxi_h] + [\mathcal{B}(\bxi_h), \varphi_h-\Pi_k^{0,K}\varphi] & \,=\, [\widetilde{\mathcal{F}},\bxi_h] && \forall \bxi_h \in \widetilde{\bV}^{h,\overline{k}},\label{eq:a_priori_diff_1}\\
        [\mathcal{B}(\bzeta_h - \bF_d^{\overline{k},K}\bzeta), \psi_h] -  [\mathcal{C}(\varphi_h-\Pi_k^{0,K}\varphi), \psi_h]& \,=\, [\widetilde{\mathcal{G}},\psi_h] && \forall \psi_h \in \rQ^{h,k},\label{eq:a_priori_diff_2}
    \end{align}
    where 
    \begin{align*}
        [\widetilde{\mathcal{F}},\bxi_h]&:=[\mathcal{A}_{\bsigma}(\bzeta),\bxi_h]-[\mathcal{A}_{h,\bsigma_h^\bbPi}(\bF_d^{\overline{k},K}\bzeta), \bxi_h]+[\mathcal{B}(\bxi_h),\varphi-\Pi_k^{0,K}\varphi],\\
        [\widetilde{\mathcal{G}},\psi_h]&:=[\mathcal{B}(\bzeta-\bF_d^{\overline{k},K}\bzeta),\psi_h]-[\mathcal{C}(\varphi-\Pi_k^{0,K}\varphi),\psi_h].
    \end{align*}
\end{subequations}
Now, taking $\bxi_h:=\bzeta_h - \bF_d^{\overline{k},K}\bzeta$ and $\psi_h:=\varphi_h-\Pi_k^{0,K}\varphi$ and proceeding as in the proofs of the continuous dependence in Theorems \ref{th:well-posed-diff-disc} and \ref{th:well-posed-diff}, we immediately obtain
\begin{align}\label{eq:pre_M_bound_a_priori}
        &\varrho_1\|\bzeta_h - \bF_d^{\overline{k},K}\bzeta\|_{0,\Omega}^2+{\eta} \|\bzeta_h - \bF_d^{\overline{k},K}\bzeta\|_{0,4;\Omega}^4 + \|\varphi_h-\Pi_k^{0,K}\varphi\|_{0, \Omega}^2 \le  [\mathcal{A}_{\bsigma}(\bzeta)-\mathcal{A}_{h,\bsigma_h^{\bbPi}}(\bF_d^{\overline{k},K}\bzeta),\bzeta_h - \bF_d^{\overline{k},K}\bzeta]\nonumber \\
        & \qquad \qquad  +[\mathcal{B}(\bzeta_h - \bF_d^{\overline{k},K}\bzeta), \varphi-\Pi_k^{0,K}\varphi]- [\mathcal{B}(\bzeta - \bF_d^{\overline{k},K}\bzeta), \varphi_h-\Pi_k^{0,K}\varphi]+[\mathcal{C}(\varphi-\Pi_k^{0,K}\varphi),\varphi_h-\Pi_k^{0,K}\varphi].
    \end{align}
After adding and subtracting suitable terms, applying Young’s inequality and utilising the $\mathrm{L}^\infty$-norm bound provided in the theorem statement, for the first term on the right-hand side of \eqref{eq:pre_M_bound_a_priori} one can derive that 
    \begin{align*}
            |(\varrho(\bsigma)^{-1}\bzeta-\varrho(\bsigma^\Pi_h)^{-1}\bF_d^{\overline{k},K}\bzeta,\bzeta_h - \bF_d^{\overline{k},K}\bzeta)|&\leq \frac{1}{2}L_\varrho\mathcal{K}_1\|\bsigma-{\bsigma}_h^\bbPi\|^2_{0,\Omega}+\left(\frac{1}{2}L_\varrho\mathcal{K}_1+\frac{\varrho_2}{2}\right)\|\bzeta_h-\bF_d^{\overline{k},K}\bzeta\|^2_{{0,\Omega}}\nonumber\\
        &\quad  +\frac{\varrho_2}{2}\|\bzeta-\bF_d^{\overline{k},K}\bzeta\|^2_{{0,\Omega}},
       \quad \text{and}  \\
            |\eta(|\bzeta|^2\bzeta-|\bF_d^{\overline{k},K}\bzeta|^2\bF_d^{\overline{k},K}\bzeta,\bzeta_h - \bF_d^{\overline{k},K}\bzeta)|&\leq \frac{9}{8\epsilon_1^{4/3}} 2^{2/3}\eta(1+\mathrm{C}_F)^{2/3}\mathcal{M}(\ell, \varphi_{\mathrm{D}})^{8/3}\|\bzeta-\bF_d^{\overline{k},K}\bzeta\|^{4/3}_{{0,4/3;\Omega}}\nonumber\\
        &\quad  +\frac{3}{8}\epsilon_1^4\eta\|\bzeta_h-\bF_d^{\overline{k},K}\bzeta\|^{4}_{{0,4;\Omega}},
        \end{align*}
where $\epsilon_1$ is a Young's inequality constant and $\mathrm{C}_F$ denotes the Fortin stability constant. Thus, applying Young's inequality once again, and using the continuity of the operators
$\mathcal{B}(\bullet)$, and $\mathcal{C}(\bullet)$,
we can deduce that
\begin{subequations}\label{bounds_a_priori_diff}
    \begin{align}
        &|[\mathcal{A}_{\bsigma}(\bzeta)-\mathcal{A}_{h,\bsigma_h^{\bbPi}}(\bF_d^{\overline{k},K}\bzeta), \bzeta_h - \bF_d^{\overline{k},K}\bzeta]|\leq\frac{1}{2}L_\varrho\mathcal{K}_1\|\bsigma-{\bsigma}_h^\bbPi\|^2_{0,\Omega}+\left(\frac{1}{2}L_\varrho\mathcal{K}+\frac{\varrho_2}{2}\right)\|\bzeta_h-\bF_d^{\overline{k},K}\bzeta\|^2_{{0,\Omega}}\nonumber\\
        & +\frac{\varrho_2}{2}\|\bzeta-\bF_d^{\overline{k},K}\bzeta\|^2_{{0,\Omega}} + \frac{9}{8\epsilon_1^{4/3}} 2^{2/3}\eta(1+\mathrm{C}_F)^{2/3}\mathcal{M}(\ell, \varphi_{\mathrm{D}})^{8/3}\|\bzeta-\bF_d^{\overline{k},K}\bzeta\|^{4/3}_{{0,4/3;\Omega}}+\frac{3}{8}\epsilon_1^4\eta\|\bzeta_h-\bF_d^{\overline{k},K}\bzeta\|^{4}_{{0,4;\Omega}} ,\\
        &|[\mathcal{B}(\bzeta_h - \bF_d^{\overline{k},K}\bzeta), \varphi-\Pi_k^{0,K}\varphi]|\leq \frac{\epsilon_2^4}{4}\|\bzeta_h - \bF_d^{\overline{k},K}\bzeta\|_{4,\vdiv;\Omega}^4+\frac{3}{4\epsilon_2^{4/3}}\|\varphi-\Pi_k^{0,K}\varphi\|_{0,\Omega}^{4/3},\\
        &|[\mathcal{B}(\bzeta-\bF_d^{\overline{k},K}\bzeta),\varphi_h-\Pi_k^{0,K}\varphi]|\leq \frac{1}{2\epsilon_3}\|\varphi_h-\Pi_k^{0,K}\varphi\|_{0,\Omega}^2+ \frac{\epsilon_3}{2}\|\bzeta-\bF_d^{\overline{k},K}\bzeta\|^2_{4,\vdiv;\Omega} ,\\
        &|[\mathcal{C}(\varphi-\Pi_k^{0,K}\varphi),\varphi_h-\Pi_k^{0,K}\varphi]|\leq \frac{1}{2\epsilon_4}\|\varphi_h-\Pi_k^{0,K}\varphi\|_{0,\Omega}^2+\frac{\epsilon_4}{2}\|\varphi-\Pi_k^{0,K}\varphi\|_{0,\Omega}^2.
    \end{align}   
    \end{subequations}
    Moreover, \eqref{eq:a_priori_diff_2} implies that $\|\vdiv(\bzeta_h - \bF_d^{\overline{k},K}\bzeta) \|_{0,\Omega} \leq \|\varphi_h - \Pi_k^{0,K}\varphi \|_{0,\Omega} + \|\widetilde{G} \|_{0,\Omega}$. Consequently,
\begin{equation*}
    \|\vdiv(\bzeta_h - \bF_d^{\overline{k},K}\bzeta) \|^4_{0,\Omega}\leq 8(\|\varphi_h-\Pi_k^{0,K}\varphi\|^4_{0,\Omega}+\|\widetilde{\mathcal{G}}\|^4_{0,\Omega}).
\end{equation*}
Accordingly, provided there exists a constant $\mathcal{K}_2$ and a threshold $h_0 > 0$ such that $8\|\varphi_h-\Pi_k^{0,K}\varphi\|^4_{0,\Omega} \leq \mathcal{K}_2\|\varphi_h-\Pi_k^{0,K}\varphi\|^2_{0,\Omega}$ for all $h \leq h_0$, we combine \eqref{eq:pre_M_bound_a_priori} and \eqref{bounds_a_priori_diff} to obtain the following result
\begin{align}\label{eq:pre_final_bound_a_priori_diffusion}
        &\varrho_1\|\bzeta_h - \bF_d^{\overline{k},K}\bzeta\|_{0,\Omega}^2+{\eta} \|\bzeta_h - \bF_d^{\overline{k},K}\bzeta\|_{0,4;\Omega}^4+\|\vdiv(\bzeta_h - \bF_d^{\overline{k},K}\bzeta)\|_{0,\Omega}^4 + \|\varphi_h-\Pi_k^{0,K}\varphi\|_{0, \Omega}^2 \nonumber \\
        & \quad \leq   \frac{1}{2}L_\varrho\mathcal{K}\|\bsigma-{\bsigma}_h^\bbPi\|^2_{0,\Omega}+\left(\frac{1}{2}L_\varrho\mathcal{K}_1+\frac{\varrho_2}{2}\right)\|\bzeta_h-\bF_d^{\overline{k},K}\bzeta\|^2_{{0,\Omega}}+\frac{\varrho_2}{2}\|\bzeta-\bF_d^{\overline{k},K}\bzeta\|^2_{{0,\Omega}} \nonumber\\
        &\qquad +\frac{9}{8\epsilon_1^{4/3}} 2^{2/3}\eta(1+\mathrm{C}_F)^{2/3}\mathcal{M}(\ell, \varphi_{\mathrm{D}})^{8/3}\|\bzeta-\bF_d^{\overline{k},K}\bzeta\|^{4/3}_{{0,4/3;\Omega}}+\frac{3}{8}\epsilon_1^4\eta\|\bzeta_h-\bF_d^{\overline{k},K}\bzeta\|^{4}_{{0,4;\Omega}}\nonumber\\
        &\qquad +\frac{\epsilon_2^4}{4}\|\bzeta_h - \bF_d^{\overline{k},K}\bzeta\|_{4,\vdiv;\Omega}^4+\frac{3}{4\epsilon_2^{4/3}}\|\varphi-\Pi_k^{0,K}\varphi\|_{0,\Omega}^{4/3}+\left(\frac{1}{2\epsilon_3}+\frac{1}{2\epsilon_4}+\mathcal{K}_2\right)\|\varphi_h-\Pi_k^{0,K}\varphi\|_{0,\Omega}^2\nonumber\\
        &\qquad + \frac{\epsilon_3}{2}\|\bzeta-\bF_d^{\overline{k},K}\bzeta\|^2_{4,\vdiv;\Omega}+\frac{\epsilon_4}{2}\|\varphi-\Pi_k^{0,K}\varphi\|_{0, \Omega}^2+64\|\varphi-\Pi_k^{0,K}\varphi\|_{0, \Omega}^4+64\|\bzeta_h-\bF_d^{\overline{k},K}\bzeta\|^4_{{4,\vdiv;\Omega}}.
    \end{align}
    Finally, under the assumption that $\left(\frac{1}{2}{L}_\varrho\mathcal{K}_1+\frac{\varrho_2}{2}\right)\leq \varrho_1$, $\left(\frac{1}{2\epsilon_3}+\frac{1}{2\epsilon_4}+\mathcal{K}_2\right)\leq 1$, $\epsilon_1^4\leq\frac{8}{3}$, and ${\epsilon_2^4}\leq \min\{2\eta,4\}$, the desired result follows by combining \eqref{bounds_a_priori_biot} and \eqref{eq:pre_final_bound_a_priori_diffusion} and invoking the triangle inequality.
\end{proof}
\begin{theorem}[convergence rates]\label{th:convergence}
    In addition to the hypotheses of Theorem \ref{theorem:a_priori}, assume that $$\left(\frac{(1+\alpha d)\beta}{2\mu+d\lambda}\right)^2+\frac{1}{2}L_{\varrho}\mathcal{K}_1< \frac{1}{M},$$
     where $M > 0$ denotes the maximum of the generic constants implicitly defined by the $\lesssim$ notation in inequalities \eqref{a_priori_poro} and \eqref{a_priori_diff}. Additionally, suppose that there exist $s \in [1, k+1]$ and $\overline{s} \in [1, \overline{k}+1]$ such that $\bsigma\in \bbH^{s}(\Omega),p\in \mathrm{H}^{s}(\Omega), \bu\in \bH^{s}(\Omega), \bz \in \bH^{\overline{s}}(\Omega),\bzeta \in \bH^{\overline{s}}(\Omega)$ and $\varphi\in \mathrm{H}^{s}(\Omega)$. Then, there holds
    \begin{align*}
        \mathrm{e}_h \lesssim h^{\mathrm{min}\{\frac{s}{3},\frac{\overline{s}}{3}\}}\left(\|\bsigma\|_{s,\Omega}+\|p\|_{s,\Omega}+\|\bu\|_{s,\Omega}+\|\bz\|_{\overline{s},\Omega}+\|\bzeta\|_{\overline{s},\Omega} + \|\varphi\|_{s,\Omega}\right), 
    \end{align*}
    where $\mathrm{e}_h:=\|\vec{\bsigma}-\vec{\bsigma}_h\|_{\bbV} + \|\vec{\bu}-\vec{\bu}_h\|_{\bQ }+     \| \bzeta-\bzeta_h\|_{4,\vdiv;\Omega}+\|\varphi-\varphi_h\|_{0,\Omega}$.
\end{theorem}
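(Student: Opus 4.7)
The plan is to combine the two Strang-type estimates of Theorem~\ref{theorem:a_priori} into a single bound for $\mathrm{e}_h$, absorb the cross-coupling terms using the smallness assumption, and finally invoke the approximation results of Propositions~\ref{prop:est_poly}--\ref{prop:est_int_mixed} to obtain the stated rate.

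First I would add \eqref{a_priori_poro} and \eqref{a_priori_diff}. The right-hand side then consists of projection and interpolation errors of the exact solution plus two cross-coupling contributions: $\frac{(1+\alpha d)\beta}{2\mu+d\lambda}\|\varphi-\varphi_h\|_{0,\Omega}$ from the poroelastic bound, and $L_{\varrho}(\|\ell\|_{0,\Omega}+\|\varphi_{\mathrm{D}}\|_{1/2,00;\Gamma_{\mathrm{D}}})\|\bsigma-\bsigma_h^{\bbPi}\|_{\bbV}$ from the diffusion bound. The former is controlled directly by $\mathrm{e}_h$. For the latter, I would insert $\bbPi_k^{\cC,K}\bsigma$ and use the triangle inequality to write $\|\bsigma-\bsigma_h^{\bbPi}\|_{\bbV}\leq \|\bsigma-\bbPi_k^{\cC,K}\bsigma\|_{\bbV}+\|\bbPi_k^{\cC,K}(\bsigma-\bsigma_h)\|_{\bbV}$, where the first term is a genuine projection error and the second is controlled by $\|\bsigma-\bsigma_h\|_{\bbV}\leq \mathrm{e}_h$ via the $\bbL^2$-continuity of $\bbPi_k^{\cC,K}$. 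The hypothesis $\frac{(1+\alpha d)\beta}{2\mu+d\lambda}+L_{\varrho}(\|\ell\|_{0,\Omega}+\|\varphi_{\mathrm{D}}\|_{1/2,00;\Gamma_{\mathrm{D}}})<\frac{1}{2}$ then yields a total coefficient multiplying $\mathrm{e}_h$ strictly less than one, allowing both cross-terms to be absorbed onto the left-hand side, thus leaving $\mathrm{e}_h$ bounded solely by projection and interpolation errors of the exact solution.

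Those remaining terms are estimated using the approximation results of Section~\ref{sec:proj_inter_op}. For the stress-pressure block, Proposition~\ref{prop:est_int_HR} handles $\|\bsigma-\bbF^{k,K}\bsigma\|_{\bbV}$ (using the commutativity property \eqref{commutative_property} together with Proposition~\ref{prop:est_poly} applied to $\bdiv\bsigma=-\ff$ for the divergence part), while Proposition~\ref{prop:est_poly} controls $\|p-\Pi_k^{0,K}p\|_{0,\Omega}$, $\|\bu-\Pi_k^{0,K}\bu\|_{0,\Omega}$ and $\|\bsigma-\bbPi_k^{\cC,K}\bsigma\|_{\bbV}$, all at rate $h^{s}$. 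For the displacement-discharge block one combines Propositions~\ref{prop:est_int_mixed} and~\ref{prop:est_poly}, producing rate $h^{\overline{s}}$. For the diffusive flux $\bzeta$, Proposition~\ref{prop:est_int_mixed} (taken with $\overline{l}=4$) yields $\|\bzeta-\bF_d^{\overline{k},K}\bzeta\|_{0,4;\Omega}$ and $\|\bzeta-\bPi_{\overline{k}}^{0,K}\bzeta\|_{0,4;\Omega}$ of order $h^{\overline{s}}$, while the commutativity identity combined with Proposition~\ref{prop:est_poly} applied to $\vdiv\bzeta=\ell-\varphi$ controls the divergence part at the same rate. Finally, $\|\varphi-\Pi_k^{0,K}\varphi\|_{0,\Omega}\lesssim h^s\|\varphi\|_{s,\Omega}$ by Proposition~\ref{prop:est_poly}. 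Collecting every bound and taking the slowest rate, $h^{\min\{s,\overline{s}\}}$, delivers the announced estimate.

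The main subtlety I anticipate is the $\bL^4$-control of the Fortin and projection errors for the diffusive flux, since Proposition~\ref{prop:est_int_mixed} naturally calls for $\bW^{\overline{s},4}$-regularity while the hypothesis only provides $\bH^{\overline{s}}$; closing this gap requires the Sobolev embedding $\rH^1(\Omega)\hookrightarrow \rL^4(\Omega)$, valid for $d\in\{2,3\}$, together with a careful tracking of constants to ensure they remain independent of $h$ and of the poromechanical parameters.
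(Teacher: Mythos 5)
Your proposal follows the same route as the paper's proof, which is itself a one-line pointer: add the two Strang estimates \eqref{a_priori_poro} and \eqref{a_priori_diff}, absorb the two cross-coupling terms into $\mathrm{e}_h$ using the smallness hypothesis, and apply Propositions~\ref{prop:est_poly}, \ref{prop:est_int_HR}, \ref{prop:est_int_mixed} together with the commutativity identities \eqref{commutative_property} for the divergence parts. Your absorption argument is correct; one small clarification is that the quantity $\|\bsigma-\bsigma_h^{\bbPi}\|$ entering \eqref{a_priori_diff} comes from the Lipschitz property \eqref{properties-stress-assisted}, so it is genuinely an $\bbL^2$-difference, and your use of the $\bbL^2$-continuity of $\bbPi_k^{\cC,K}$ in the triangle inequality is the right move.

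The subtlety you flag about the $\bL^4$ control of the diffusive-flux error is real, and it is sharper than a matter of tracking constants. The Sobolev embedding gives $\bH^{\overline{s}}(\Omega)\hookrightarrow \bW^{\overline{s}-d/4,4}(\Omega)$ (not $\bW^{\overline{s},4}$), so under the stated $\bH^{\overline{s}}$ hypothesis the scaled Bramble--Hilbert argument yields $\|\bzeta-\bF_d^{\overline{k},K}\bzeta\|_{0,4;K}\lesssim h_K^{\overline{s}-d/4}|\bzeta|_{\overline{s},K}$, i.e.\ a loss of $d/4$ in the rate for that single component, which would contaminate $\mathrm{e}_h$. To recover the clean rate $h^{\overline{s}}$ one should instead assume $\bzeta\in\bW^{\overline{s},4}(\Omega)$ and measure it in that seminorm, which is the natural regularity for Proposition~\ref{prop:est_int_mixed} and is of course satisfied by the smooth manufactured solutions in the numerical tests. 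The paper's one-sentence proof does not engage with this point, so you have in fact identified a genuine imprecision in the theorem's regularity hypothesis, not merely a bookkeeping concern.
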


\begin{proof} The proof relies on estimates \eqref{a_priori_poro} and \eqref{a_priori_diff}, in conjunction with the smallness assumption and the approximation properties of the spaces stated in Propositions \ref{prop:est_poly}, \ref{prop:est_int_HR}, \ref{prop:est_int_mixed}, and in \cite{brenner08}.
\end{proof}
\begin{remark}
        The suboptimal convergence rate of $\mathcal{O}(h^{\min\{s/3, \bar{s}/3\}})$ establishes a rigorous theoretical lower bound that is inherently dictated by the strong nonlinearities of the Forchheimer-type model and its convective-like coupling with the Biot system. Specifically, handling the cubic or power-law terms within the momentum and fluid balance equations requires a repeated application of Hölder's and Young's inequalities in $L^p$ spaces (such as $\mathrm{L}^4(\Omega)$ or $\mathrm{L}^{4/3}(\Omega)$), which inevitably introduces fractional exponents into the a priori error estimates. The occurrence of similar suboptimal or reduced theoretical convergence rates has been observed in other coupled fluid-porous media formulations; see, for instance, \cite{Caucao2020,caucao2021}. Nevertheless, as demonstrated by the numerical experiments in Section~\ref{sec:results}, the scheme asymptotically recovers optimal convergence rates ($\mathcal{O}(h^s)$ and $\mathcal{O}(h^{\bar{s}})$). This gap between the theoretical bound and the computational performance suggests that the a priori analysis accounts for the worst-case scenario under extreme physical regimes, whereas the smooth manufactured solutions and well-behaved source terms typically employed in standard benchmarks prevent the system from driving into the pessimistic nonlinear bounds.
        
         On the other hand, to derive clean a priori bounds for the stress tensor arising from the multiphysics coupling with the Biot formulation, it was necessary to invoke the $\mathrm{L}^\infty(\Omega)$-boundedness of the Fortin operator $\bF_d^{\overline{k},K}$. This assumption is vital to control the cross-coupling convective and nonlinear stress terms without demanding excessive spatial regularity on the exact solutions. Although the $\mathrm{L}^\infty$-stability of interpolation operators adds an extra layer of technicality, it is by no means unusual in the context of complex fluid flows and nonlinear porous media problems, where similar localised scaling and uniform boundedness arguments are routinely deployed to balance the stability of the discrete spaces \cite{cangiani20,Dehghan2023}.
\end{remark}

\section{Numerical results}\label{sec:results}
In this section we illustrate the accuracy and performance of the proposed scheme (cf. Section~\ref{sec:discrete}) through several numerical experiments.  We show the optimal behaviour of the method under different polytopal meshes. Finally, we simulate two application-oriented problems.  

We define the total computable error via the local polynomial approximation of the discrete solutions as $\bar{\mathrm{e}}_{h}^2 := \bar{\text{e}}_{\bsigma_h^{\bbPi}}^2+\bar{\text{e}}_{\bu_h}^2+\bar{\text{e}}_{\bz_h^{\bPi}}^2+\bar{\text{e}}_{p_h}^2+\bar{\text{e}}_{\bzeta_h^{\bPi}}^2+\bar{\text{e}}_{\varphi_h}^2$, with
\begin{align*}
    \bar{\text{e}}_{\bsigma_h^{\bbPi}}^2 &:= \|\bsigma-\bbPi_{k}^{\bC,K}\bsigma_h\|_{0,\Omega}^2 + \|\bdiv \bsigma-\bdiv \bsigma_h\|_{0,\Omega}^2, \quad
    \bar{\text{e}}_{\bu_h}^2 := \|\bu-\bu_h\|_{0,\Omega}^2, \\
    \bar{\text{e}}_{\bz_h^{\bPi}}^2 &:= \|\bz-\bPi_{\overline{k}}^{0,K}\bz_h\|_{0,\Omega}^2 + \|\vdiv \bz - \vdiv \bz_h\|_{0,\Omega}^2, \quad
    \bar{\text{e}}_{p_h}^2 := \|p-p_h\|_{0,\Omega}^2, \\
    \bar{\text{e}}_{\bzeta_h^{\bPi}}^2 &:= \|\bzeta-\bPi_{\overline{k}}^{0,K}\bzeta_h\|_{4,0;\Omega}^2 + \|\vdiv \bzeta - \vdiv \bzeta_h\|_{0,\Omega}^2,\quad
    \bar{\text{e}}_{\varphi_h}^2 := \|\varphi-\varphi_h\|_{0,\Omega}^2.
\end{align*}
The experimental rate of convergence $r(\cdot)$ applied to the total error $\bar{\textnormal{e}_h}$ (or to any of its components) in the refinement $1\leq j$ is computed from the formula $r(\bar{\textnormal{e}_h})^{j+1} = \log(\bar{\textnormal{e}}_h^{j+1}/\bar{\textnormal{e}}_h^{j})/\log(h^{j+1}/h^{j})$, where $h^j$ denotes the mesh size in the refinement $j$. The fixed-point algorithm has stopping criterion driven based on the $\ell^2$-norm of the increments (i.e., the difference between the DoFs at the iteration $i$ and $i-1$ of the fixed-point algorithm) with a tolerance of $5\times 10^{-6}$. We stress that these experiments were  implemented in the library \texttt{VEM++}  \cite{dassi2023vem++}.

Finally, following \cite{artioli18,visinoni24}, we define the stabilisation term $S_1^{\bC,K}(\bsigma_h,\btau_h) := (h_K\tr(\bC)/2)\int_{\partial K} \bsigma_h\bn \cdot \btau_h\bn $, while $S_2^{0,K}(\cdot,\cdot)$  and $S_3^{0,\bsigma_h^{\bbPi},K}(\cdot,\cdot)$ are given by a scaled \texttt{DOFI-DOFI} stabilisation (see \cite{khot25}), with respective scaling factors given by $\norm{\int_K \bkappa^{-1}}_{F}$ and $|\int_K \varrho^{-1}(\bsigma_h^{\bbPi})|$, where $\norm{\cdot}_F$ denotes the Frobenius norm of the matrix. 

\begin{figure}[!h]
    \centering
    \subfigure[Quadrilateral \label{fig:quad}]  {\includegraphics[width=0.24\textwidth,trim={9.5cm 1.25cm 8.75cm  1.5cm},clip]{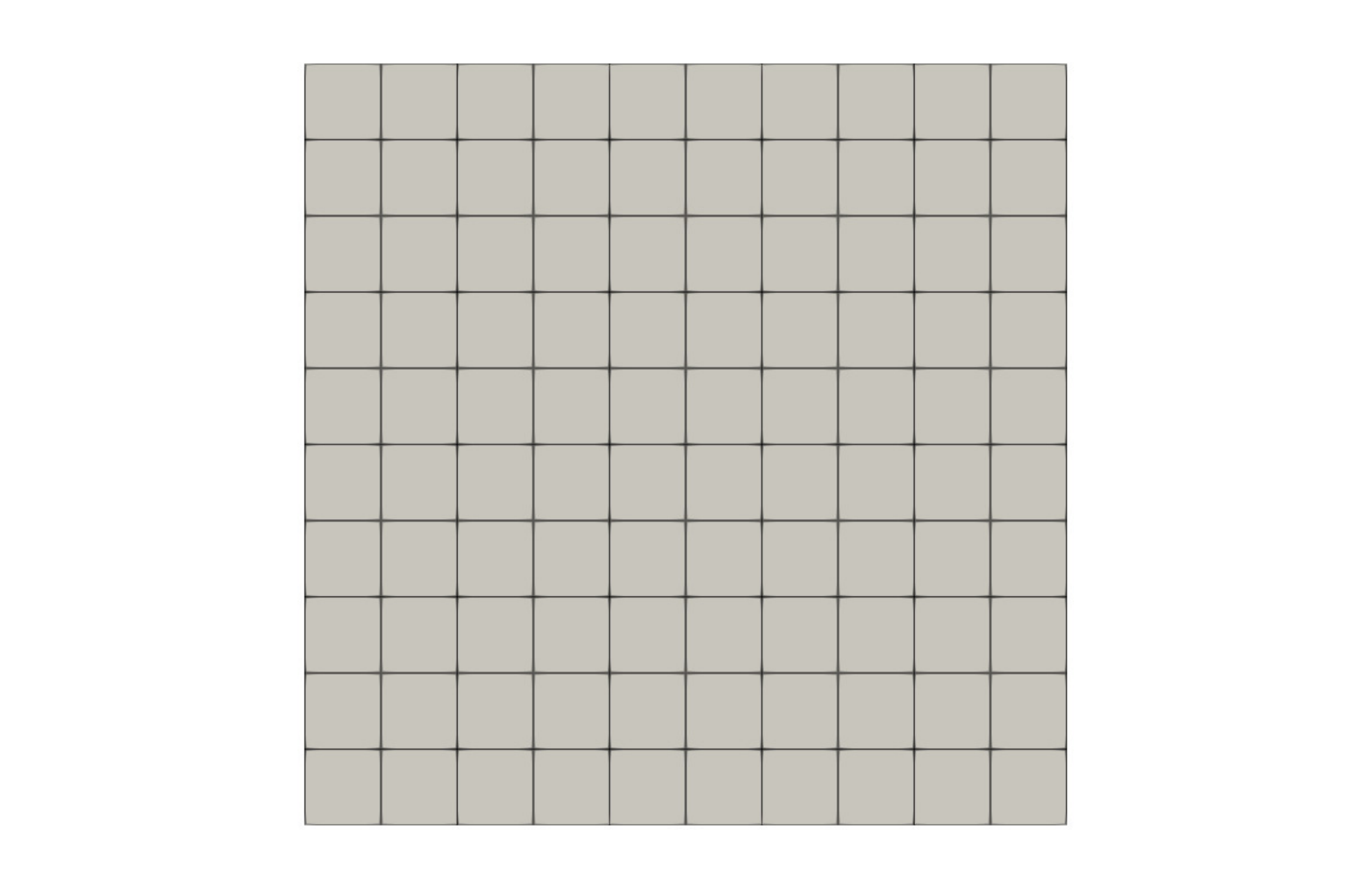}} 
    \subfigure[Distorted \label{fig:hexa}]  {\includegraphics[width=0.24\textwidth,trim={9.5cm 1.25cm 8.75cm  1.5cm},clip]{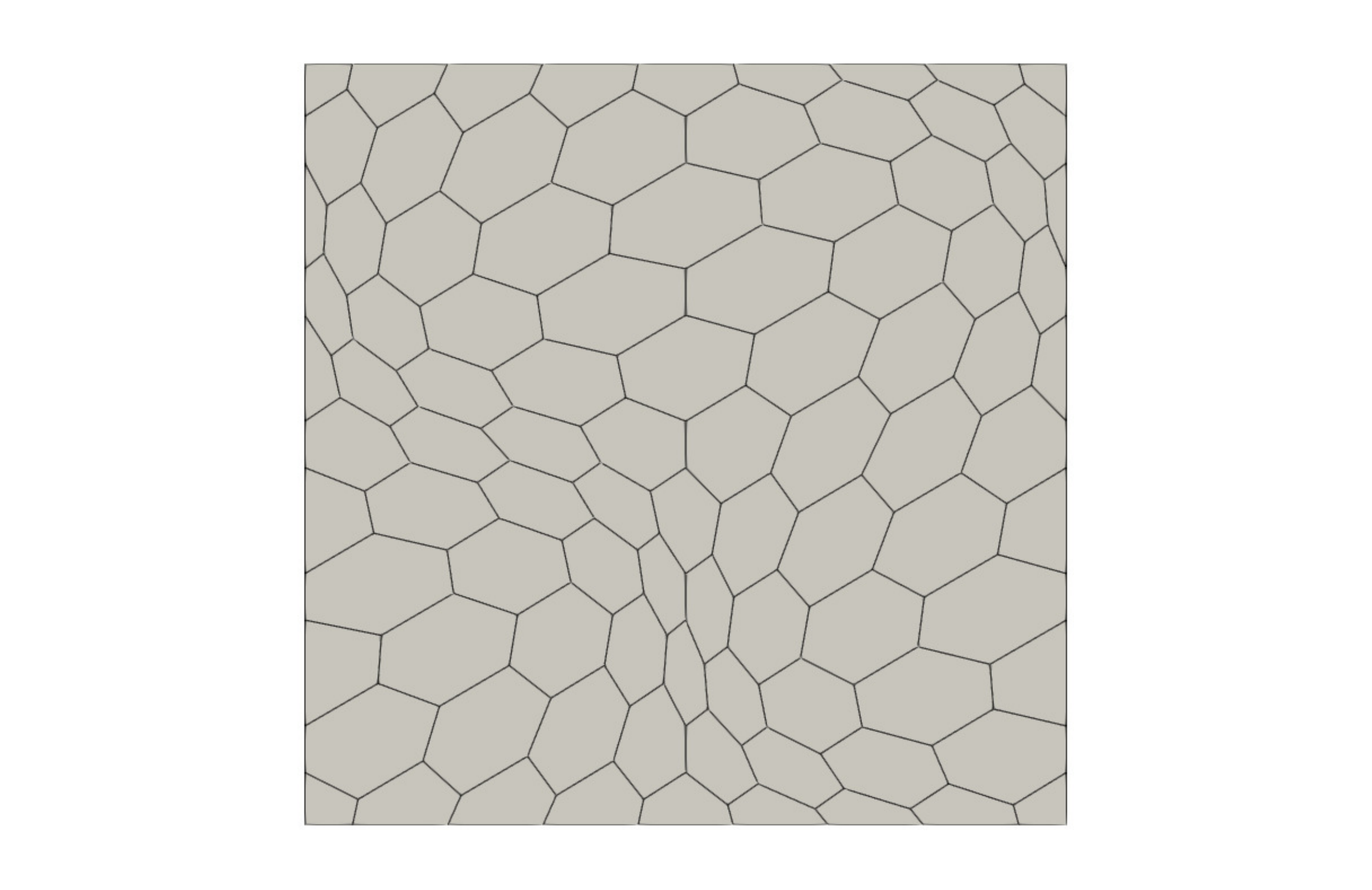}}  
    \subfigure[Hexahedral \label{fig:voro2}] {\includegraphics[width=0.24\textwidth,trim={9.5cm 1.25cm 8.75cm  1.5cm},clip]{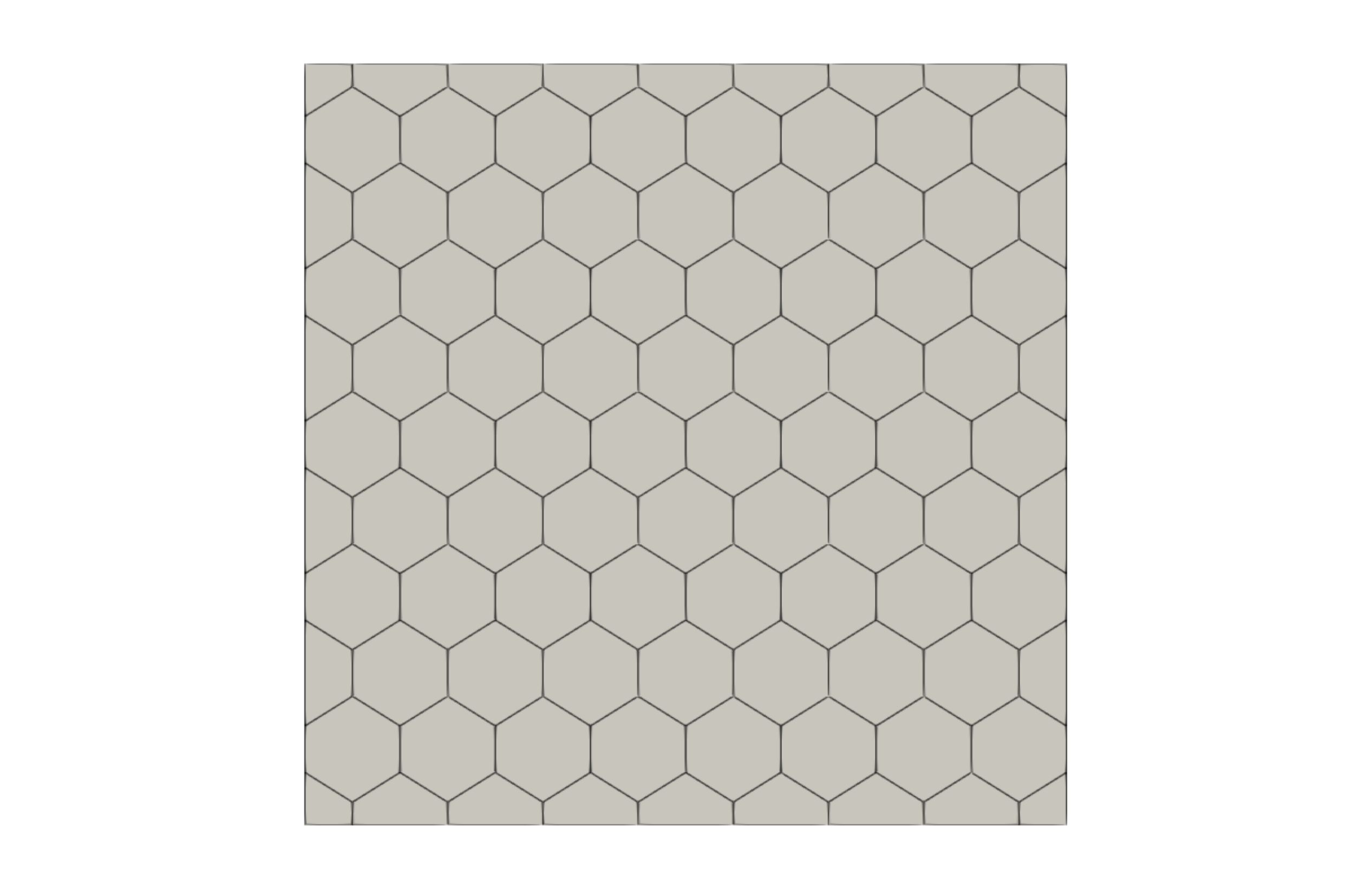}}
    \subfigure[Triangular \label{fig:tria}]  {\includegraphics[width=0.24\textwidth,trim={9.5cm 1.25cm 8.75cm  1.5cm},clip]{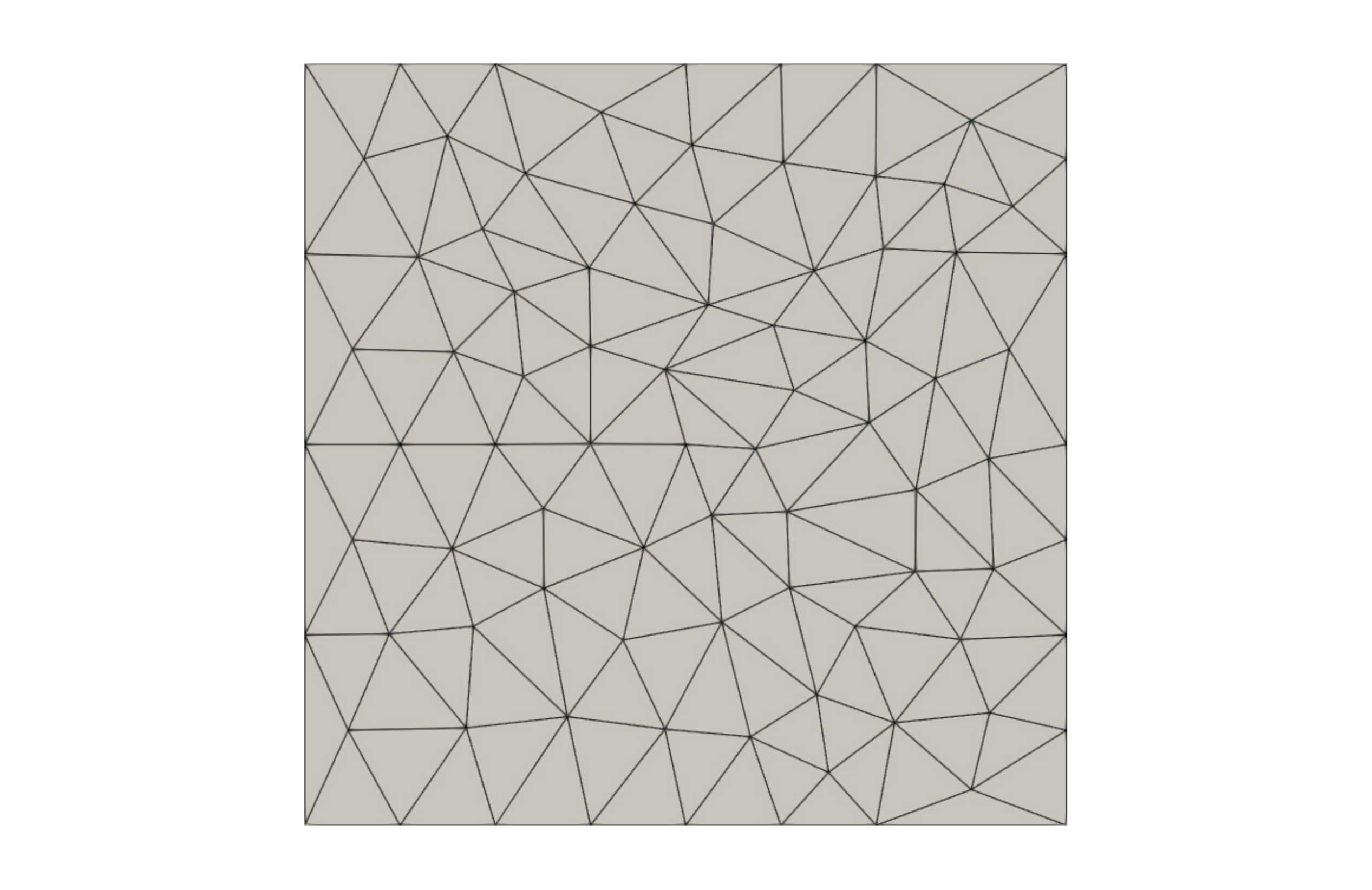}}
    \caption{Example 1. Variety of 2D meshes used in the uniform refinement convergence test.}\label{fig:meshes2D}
\end{figure}

\subsection{Example 1. Convergence rates under uniform refinement: 2D case} \label{sec:ex1}
For this test, the modulation parameter is prescribed as $\eta_1 = 10^{-3}$ (cf. \eqref{eq:D}), the Forchheimer coefficient is set to $\eta=5\times 10^{-4}$, and the permeability tensor is given by $\bkappa=10^{-2}\bbI$.  All remaining model parameters are fixed to unity. The smooth manufactured solutions are defined as follows
\begin{gather*}
  \bu(x_1,x_2) = \left( \cos(4\pi x_1)\cos(4\pi x_2)+e^{-x_1}, \sin(4\pi x_1)\sin(4\pi x_2)+e^{-x_2} \right)^{\tt t}, \\ p(x_1,x_2) = \cos(2\pi x_1)\cos(2\pi x_2) + e^{x_2}, \quad
  \varphi(x_1,x_2) = \sin(2\pi x_1)\sin(2\pi x_2) + e^{x_1}, \\ \bzeta(x_1,x_2) = -\rho(\bsigma(x_1,x_2))\nabla \varphi(x_1,x_2),
\end{gather*}
in the unit square domain $\Omega = (0,1)^2$ with the polygonal discretisations depicted in Figure~\ref{fig:meshes2D}, the boundary conditions are defined in the following sets: $\Gamma_{\mathrm{N}} = \left\{ (x_1,x_2)\in \partial \Omega : x_1=0 \text{ or } x_2=0\right\}$ and $\Gamma_{\mathrm{D}} = \partial \Omega \setminus \Gamma_{\mathrm{N}}$. In particular, the right-hand sides functions ($\mathbf{f}$, $g$ and $\ell$) and the stress-dependent diffusivity (cf. \eqref{eq:D}) are sufficiently smooth, as they are derived from the prescribed manufactured solutions.  {Note that an additional right-hand side term is incorporated into the system to account for the effect of the Forchheimer nonlinearity.} We recall that the polynomial order in two dimensions is given by $k$ for both the Hellinger--Reissner VEM space and the mixed VEM space.

The error history is reported in Table~\ref{tab:convergence2D}. Here, we observe optimal rate of convergence $O(h^{k+1})$ ($k=1,2$) as predicted by Corollary~\ref{th:convergence} for all the meshes listed in Figure~\ref{fig:meshes2D}. Moreover, we provided in detail the computable error for the variables of interest, obtaining their expected optimal convergence rates. The number of iterations required by the fixed-point algorithm to convergence are displayed in the last column. Snapshots of the variables of interest are shown in Figure~\ref{fig:manufacturedSols2D} for the Hexahedral mesh (see Figure~\ref{fig:meshes2D}\subref{fig:voro2}) in the last refinement step with polynomial degree $k=2$.

Finally, Table~\ref{tab:convergence2DRobust} illustrates the performance of the scheme under large variations of the physical parameters. The test considers nearly incompressible materials ($\lambda = 10^6$), very small storativity ($s_0 = 10^{-8}$), and weak Biot--Willis coupling ($\alpha = 10^{-6}$). The mesh is fixed to the Hexahedral case (cf. Figure~\ref{fig:meshes2D}\subref{fig:voro2}) and we set the polynomial degree $k=1$. Once again, we observe the expected optimal convergence rates, confirming the robustness of the method in these extreme settings.

\begin{figure}[!t]
    \centering
    \subfigure[$|\bsigma_h^{\bbPi}|$.] {\includegraphics[width=0.33\textwidth,trim={7.5cm 4.25cm 2.75cm 4.25cm},clip]{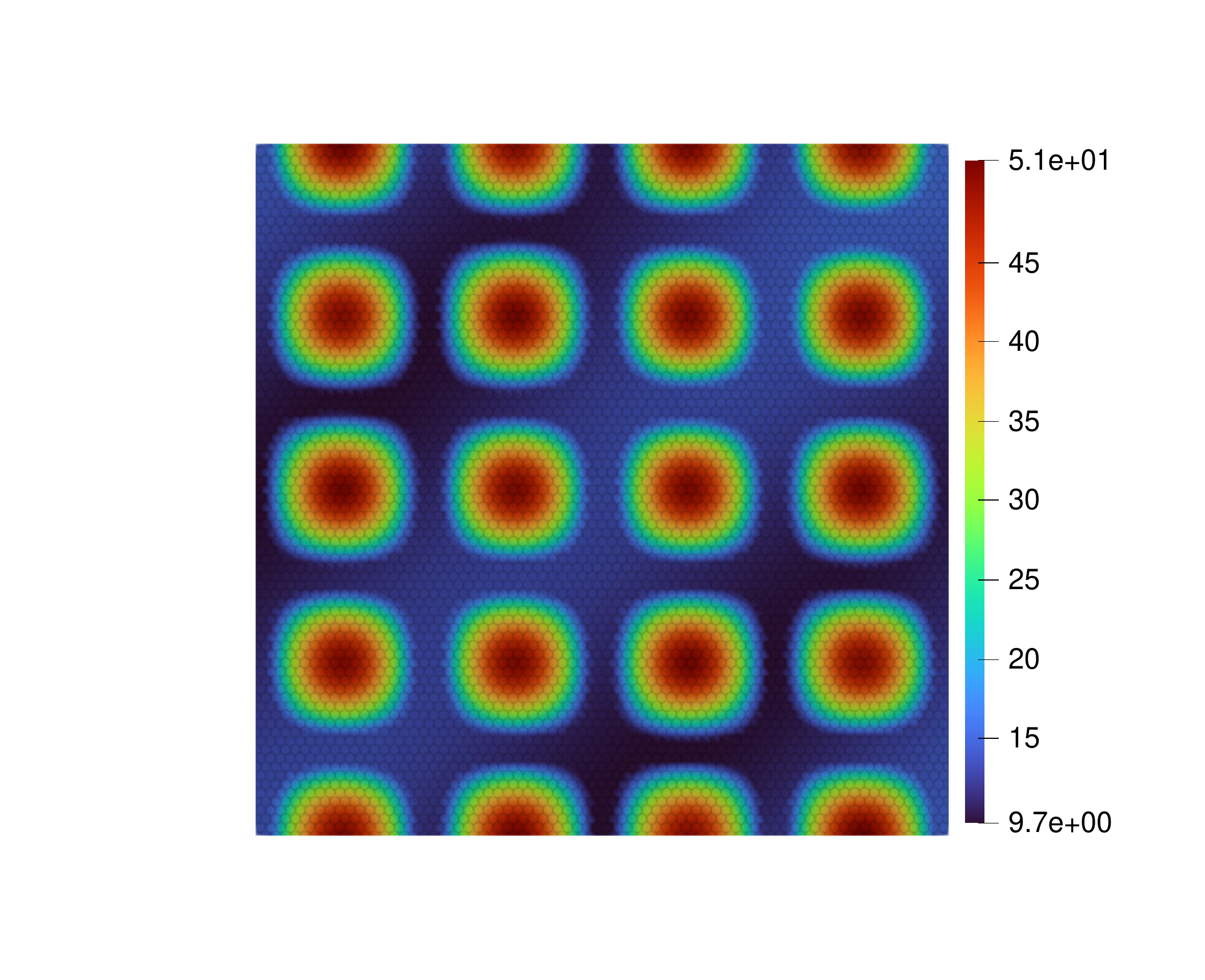}}  
    \subfigure[$|\bz_h^{\bPi}|$.]     {\includegraphics[width=0.33\textwidth,trim={7.5cm 4.25cm 2.75cm 4.25cm},clip]{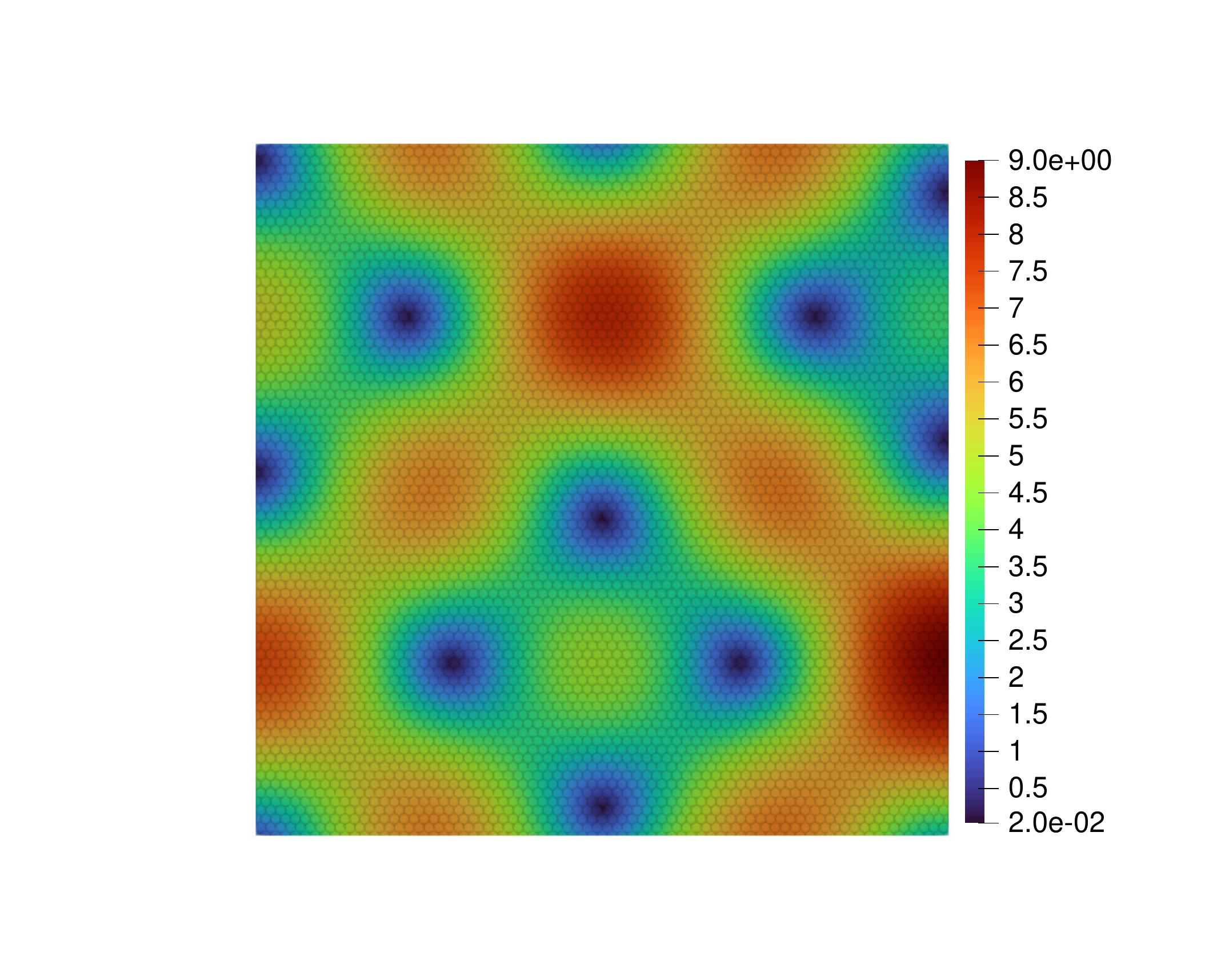}} 
    \subfigure[$|\bzeta_h^{\bPi}|$.]  {\includegraphics[width=0.33\textwidth,trim={7.5cm 4.25cm 2.75cm 4.25cm},clip]{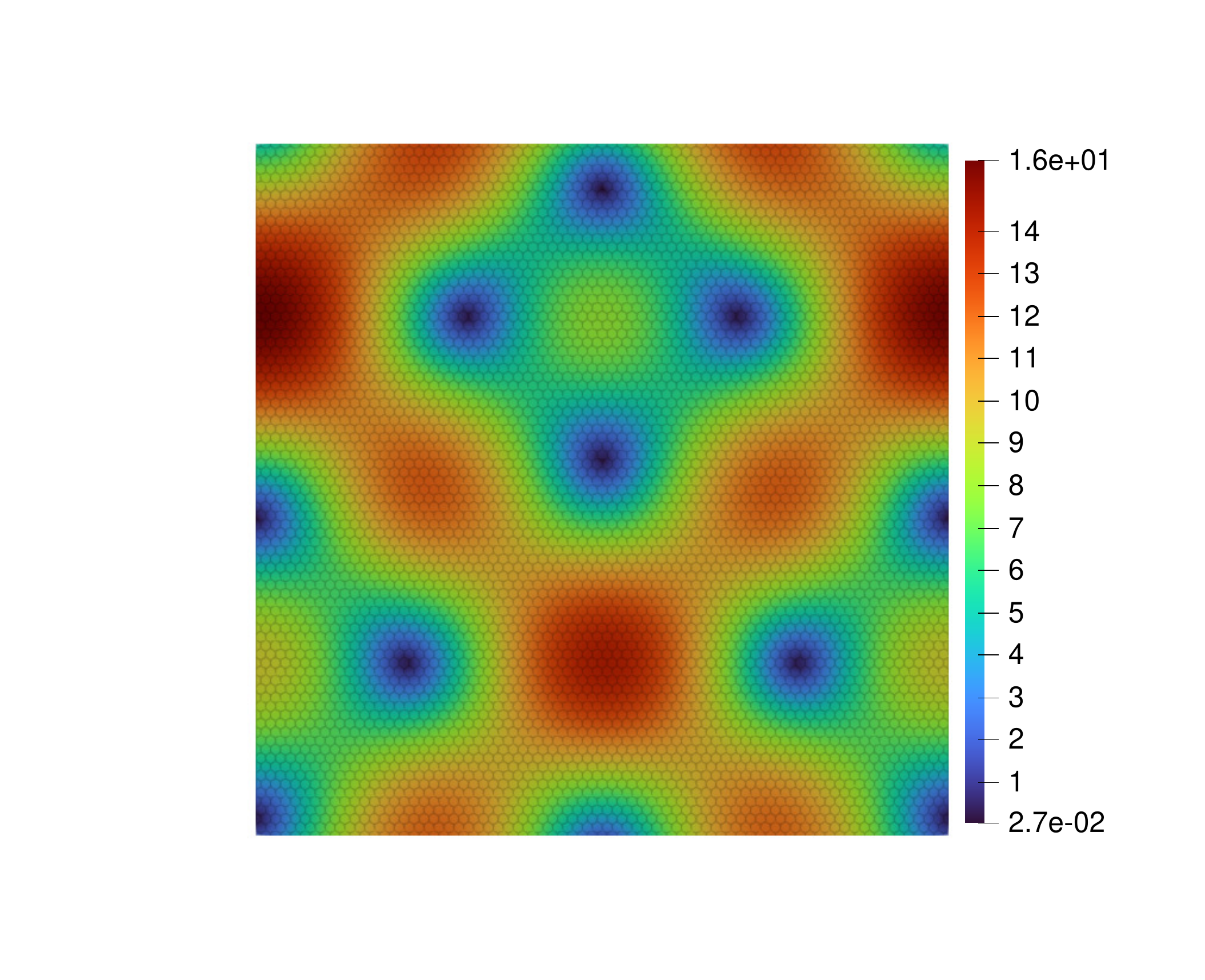}}
    \subfigure[$|\bu_h|$.]         {\includegraphics[width=0.33\textwidth,trim={7.5cm 4.25cm 2.75cm 4.25cm},clip]{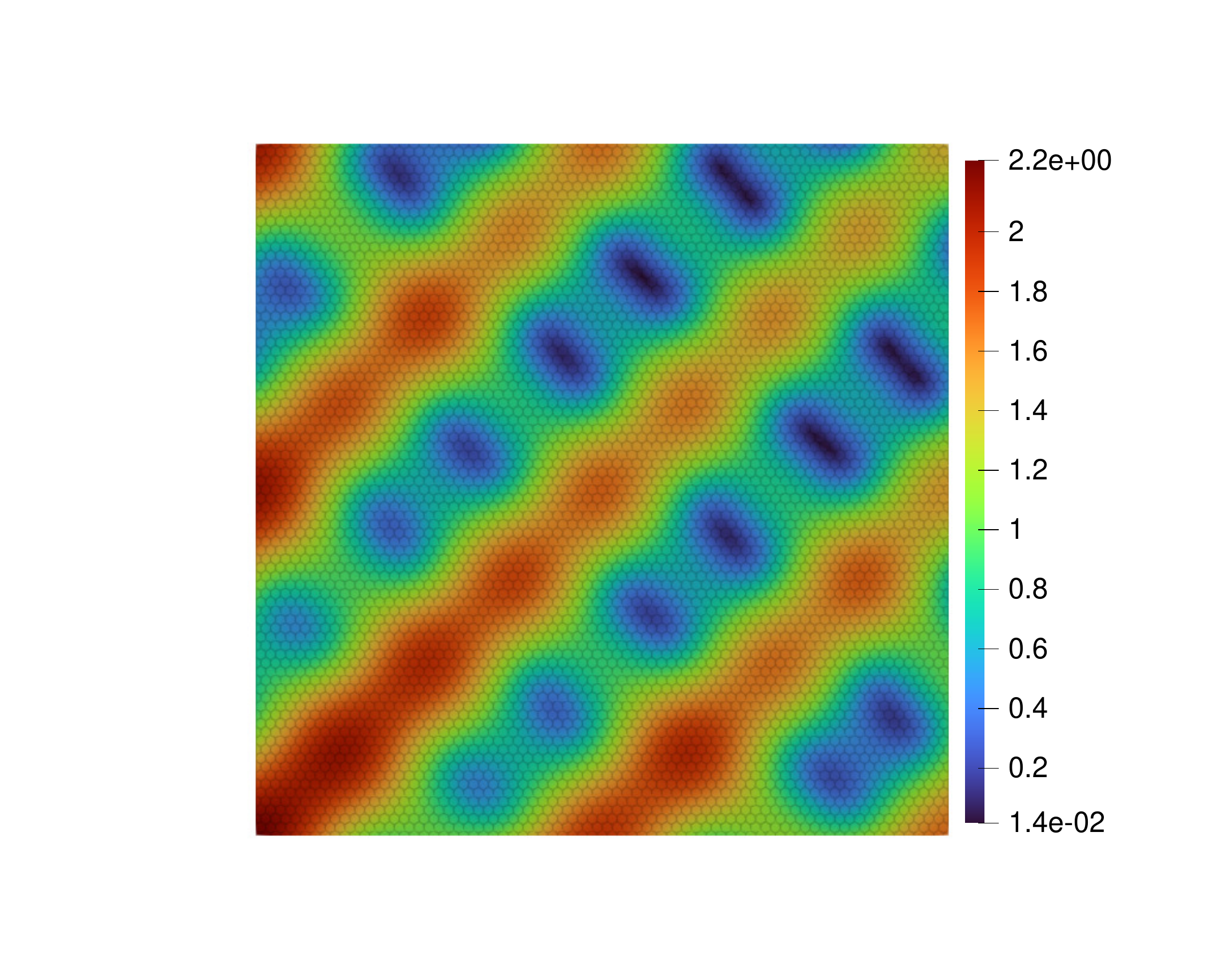}}
    \subfigure[$p_h$.]             {\includegraphics[width=0.33\textwidth,trim={7.5cm 4.25cm 2.75cm 4.25cm},clip]{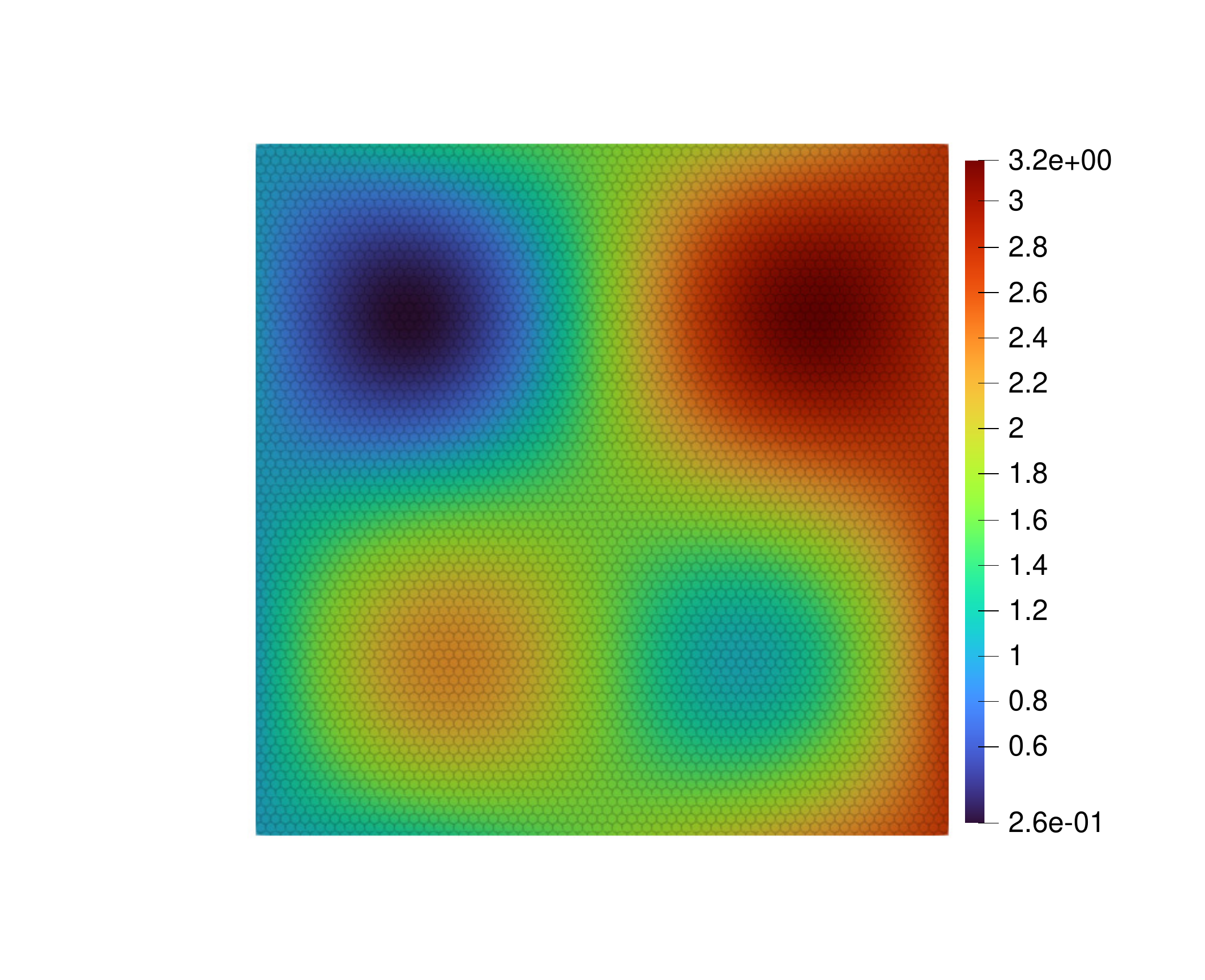}}
    \subfigure[$\varphi_h$.]       {\includegraphics[width=0.33\textwidth,trim={7.5cm 4.25cm 2.75cm 4.25cm},clip]{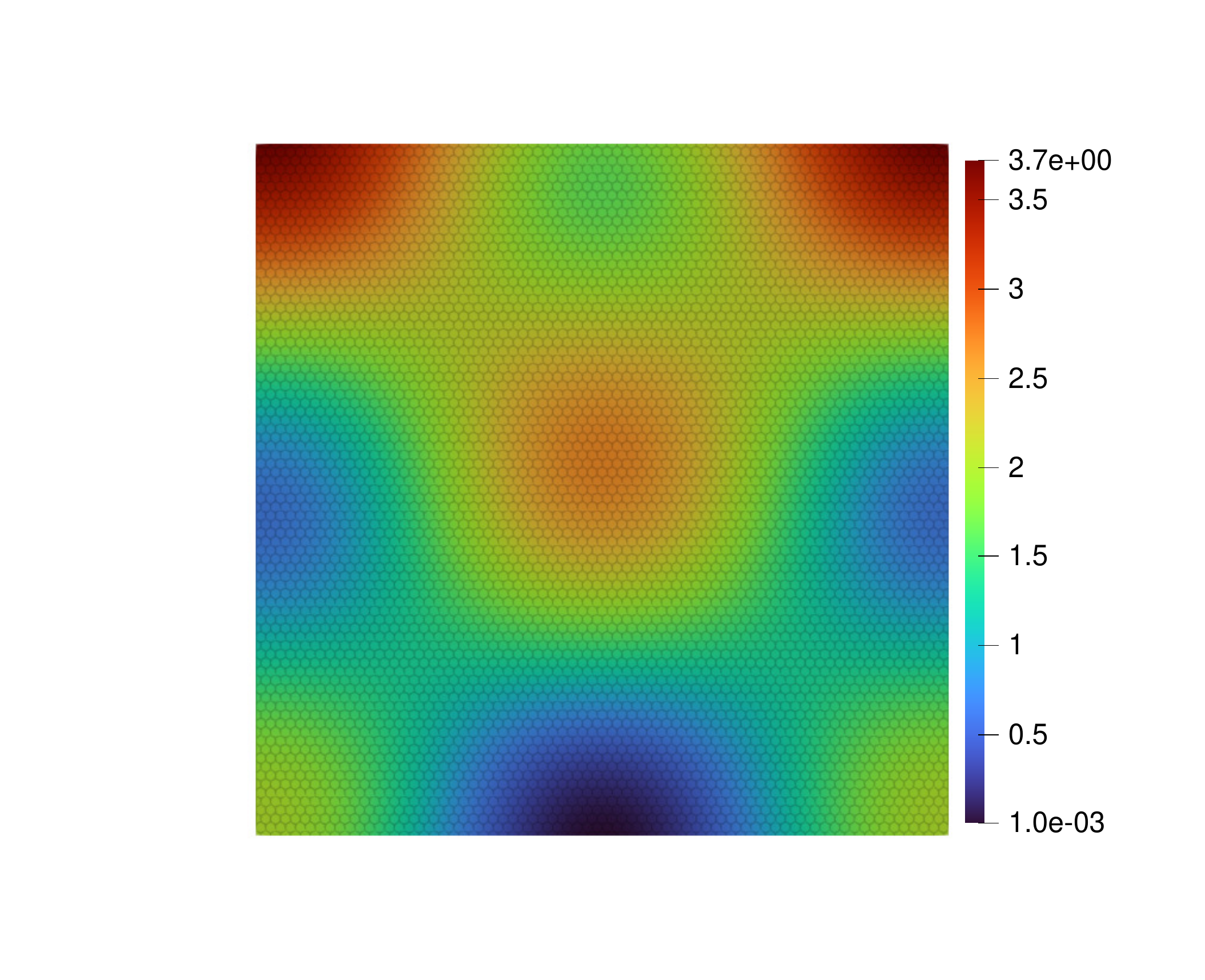}}
    \caption{Example 1. Snapshots of the variables of interest for the Hexahedral mesh in the last refinement step with $k=2$. The parameters are set to unity, except for $\eta_1 = 10^{-3}$, $\eta=5\times10^{-3}$, and $\bkappa=10^{-2}\bbI$.}\label{fig:manufacturedSols2D}
\end{figure}

\begin{table}[!t]
    \setlength{\tabcolsep}{2pt}
    \begin{center}
        \resizebox{\textwidth}{!}{ 
            \begin{tabular}{| c | c | c | c | c | c | c | c | c | c | c | c | c | c | c | c | c | c |}
                \hline
                {$k$} & 
                {$\cT_h$} & 
                {$h$} & 
                {$\bar{\mathrm{e}}_h$} & 
                {$r(\bar{\mathrm{e}}_h)$} & 
                {$\bar{\text{e}}_{\bsigma_h^{\bbPi}}$} & 
                {$r(\bar{\text{e}}_{\bsigma_h^{\bbPi}})$} & 
                {$\bar{\mathrm{e}}_{\bu_h}$} & 
                ${r(\bar{\mathrm{e}}_{\bu_h})}$ & 
                {$\bar{\mathrm{e}}_{\bz_h^{\bPi}}$} & 
                {$r(\bar{\mathrm{e}}_{\bz_h^{\bPi}})$} & 
                {$\bar{\mathrm{e}}_{p_h}$} & 
                {$r(\bar{\mathrm{e}}_{p_h})$} & 
                {$\bar{\mathrm{e}}_{\bzeta_h^{\bPi}}$} & 
                {$r(\bar{\mathrm{e}}_{\bzeta_h^{\bPi}})$} & 
                {$\bar{\mathrm{e}}_{\varphi_h}$} & 
                {$r(\bar{\mathrm{e}}_{\varphi_h})$} & 
                {it} \\ [3pt]
                \hline 
                \hline
                \multirow{16}{*}{$1$}
                &\multirow{4}{*}{\rotatebox{90}{Quadrilateral \rule{0pt}{1pt}}} 
                
                & 1.00e-01 & 3.28e+01 & *    & 3.27e+01 & *    & 1.26e-01 & *    & 6.74e-01 & *    & 1.16e-01 & *    & 3.31e+00 & *    & 1.94e-02 & *    & 13\\
                & & 5.00e-02 & 8.59e+00 & 1.93 & 8.56e+00 & 1.93 & 2.89e-02 & 2.13 & 1.55e-01 & 2.12 & 1.62e-02 & 2.84 & 8.46e-01 & 1.97 & 4.86e-03 & 1.99 & 14\\
                & & 2.50e-02 & 2.17e+00 & 1.98 & 2.17e+00 & 1.98 & 6.97e-03 & 2.05 & 2.63e-02 & 2.56 & 1.77e-03 & 3.19 & 2.20e-01 & 1.94 & 1.22e-03 & 2.00 & 14\\
                & & 1.25e-02 & 5.45e-01 & 2.00 & 5.43e-01 & 2.00 & 1.73e-03 & 2.01 & 3.82e-03 & 2.78 & 3.16e-04 & 2.48 & 6.04e-02 & 1.86 & 3.04e-04 & 2.00 & 15\\ [8pt]
                \cline{2-18}
                & \multirow{4}{*}{\rotatebox{90}{Distorted \rule{0pt}{1pt}}}  
                & 1.03e-01 & 4.04e+01 & *    & 4.03e+01 & *    & 2.89e-01 & *    & 6.59e-01 & *    & 1.83e-01 & *    & 4.31e+00 & *    & 2.53e-02 & *    & 13\\
                & & 5.07e-02 & 1.03e+01 & 1.93 & 1.02e+01 & 1.93 & 4.24e-02 & 2.70 & 1.19e-01 & 2.41 & 1.81e-02 & 3.26 & 1.04e+00 & 2.01 & 5.98e-03 & 2.03 & 14\\
                & & 2.66e-02 & 2.78e+00 & 2.02 & 2.77e+00 & 2.02 & 9.60e-03 & 2.30 & 1.86e-02 & 2.87 & 2.27e-03 & 3.22 & 2.76e-01 & 2.05 & 1.58e-03 & 2.06 & 15\\
                & & 1.32e-02 & 6.87e-01 & 2.00 & 6.85e-01 & 2.00 & 2.22e-03 & 2.10 & 2.59e-03 & 2.83 & 4.26e-04 & 2.40 & 7.04e-02 & 1.96 & 3.89e-04 & 2.01 & 16\\ [2pt]
                \cline{2-18}
                & \multirow{4}{*}{\rule{0pt}{10ex}\rotatebox{90}{Hexahedral \rule{0pt}{1pt}}} 
                & 1.03e-01 & 3.16e+01 & *    & 3.15e+01 & *    & 2.03e-01 & *    & 4.66e-01 & *    & 1.48e-01 & *    & 3.21e+00 & *    & 1.90e-02 & *    & 13\\
                & & 5.07e-02 & 7.75e+00 & 1.98 & 7.72e+00 & 1.98 & 3.10e-02 & 2.64 & 6.09e-02 & 2.86 & 9.58e-03 & 3.86 & 7.51e-01 & 2.04 & 4.44e-03 & 2.05 & 15\\
                & & 2.66e-02 & 2.06e+00 & 2.05 & 2.05e+00 & 2.05 & 7.05e-03 & 2.30 & 8.77e-03 & 3.00 & 1.33e-03 & 3.06 & 1.97e-01 & 2.08 & 1.16e-03 & 2.07 & 15\\
                & & 1.32e-02 & 5.09e-01 & 2.01 & 5.07e-01 & 2.01 & 1.64e-03 & 2.09 & 1.25e-03 & 2.79 & 3.00e-04 & 2.13 & 4.87e-02 & 2.00 & 2.86e-04 & 2.01 & 16\\
                \cline{2-18}
                & \multirow{4}{*}{\rotatebox{90}{Triangular \rule{0pt}{1pt}}} 
                & 4.36e-02 & 8.45e+00 & *    & 8.42e+00 & *    & 2.86e-02 & *    & 1.86e-01 & *    & 2.90e-02 & *    & 8.33e-01 & *    & 4.71e-03 & *    & 14\\
                & & 2.55e-02 & 2.90e+00 & 1.99 & 2.89e+00 & 1.99 & 9.41e-03 & 2.07 & 6.74e-02 & 1.89 & 2.51e-03 & 4.55 & 2.85e-01 & 2.00 & 1.61e-03 & 2.00 & 14\\
                & & 1.79e-02 & 1.45e+00 & 1.96 & 1.44e+00 & 1.96 & 4.63e-03 & 2.00 & 3.38e-02 & 1.94 & 9.14e-04 & 2.84 & 1.43e-01 & 1.93 & 8.07e-04 & 1.95 & 15\\
                & & 1.39e-02 & 8.76e-01 & 1.98 & 8.73e-01 & 1.98 & 2.79e-03 & 2.00 & 2.09e-02 & 1.91 & 5.11e-04 & 2.30 & 8.59e-02 & 2.03 & 4.84e-04 & 2.02 & 15\\
                \hline
                \hline
                \multirow{16}{*}{$2$}
                & \multirow{4}{*}{\rotatebox{90}{Quadrilateral \rule{0pt}{1pt}}} 
                & 1.00e-01 & 6.63e+00 & *    & 6.62e+00 & *    & 3.03e-02 & *    & 1.36e-01 & *    & 9.62e-03 & *    & 4.03e-01 & *    & 2.21e-03 & *    & 13\\
                & & 5.00e-02 & 8.67e-01 & 2.94 & 8.65e-01 & 2.94 & 2.95e-03 & 3.36 & 1.47e-02 & 3.21 & 4.62e-04 & 4.38 & 5.08e-02 & 2.99 & 2.73e-04 & 3.02 & 14\\
                & & 2.50e-02 & 1.10e-01 & 2.98 & 1.09e-01 & 2.98 & 3.52e-04 & 3.07 & 1.18e-03 & 3.64 & 3.36e-05 & 3.78 & 6.29e-03 & 3.02 & 3.41e-05 & 3.00 & 15\\
                & & 1.25e-02 & 1.37e-02 & 3.00 & 1.37e-02 & 3.00 & 4.36e-05 & 3.01 & 9.01e-05 & 3.71 & 3.87e-06 & 3.12 & 7.83e-04 & 3.01 & 4.27e-06 & 3.00 & 15\\ [8pt]
                \cline{2-18}
                & \multirow{4}{*}{\rotatebox{90}{Distorted \rule{0pt}{1pt}}} 
                & 1.03e-01 & 9.60e+00 & *    & 9.59e+00 & *    & 6.67e-02 & *    & 1.67e-01 & *    & 5.16e-02 & *    & 5.69e-01 & *    & 3.80e-03 & *    & 13\\
                & & 5.07e-02 & 1.19e+00 & 2.94 & 1.19e+00 & 2.94 & 5.15e-03 & 3.61 & 2.00e-02 & 2.99 & 6.50e-04 & 6.16 & 6.56e-02 & 3.04 & 3.72e-04 & 3.28 & 14\\
                & & 2.66e-02 & 1.66e-01 & 3.05 & 1.66e-01 & 3.05 & 5.84e-04 & 3.37 & 2.93e-03 & 2.97 & 4.87e-05 & 4.01 & 9.07e-03 & 3.07 & 5.11e-05 & 3.07 & 15\\
                & & 1.32e-02 & 2.04e-02 & 3.01 & 2.03e-02 & 3.01 & 6.61e-05 & 3.12 & 3.78e-04 & 2.93 & 5.68e-06 & 3.08 & 1.12e-03 & 3.00 & 6.25e-06 & 3.01 & 16\\ [2pt]
                \cline{2-18}
                & \multirow{4}{*}{\rule{0pt}{10ex}\rotatebox{90}{Hexahedral \rule{0pt}{1pt}}} 
                & 1.03e-01 & 5.87e+00 & *    & 5.87e+00 & *    & 3.74e-02 & *    & 8.12e-02 & *    & 8.70e-03 & *    & 3.38e-01 & *    & 2.40e-03 & *    & 13\\
                & & 5.07e-02 & 6.99e-01 & 3.00 & 6.98e-01 & 3.00 & 2.87e-03 & 3.62 & 1.02e-02 & 2.92 & 2.40e-04 & 5.05 & 3.73e-02 & 3.10 & 2.11e-04 & 3.42 & 14\\
                & & 2.66e-02 & 9.49e-02 & 3.09 & 9.48e-02 & 3.09 & 3.27e-04 & 3.37 & 1.41e-03 & 3.07 & 2.84e-05 & 3.30 & 5.09e-03 & 3.09 & 2.84e-05 & 3.10 & 15\\
                & & 1.32e-02 & 1.16e-02 & 3.01 & 1.16e-02 & 3.01 & 3.76e-05 & 3.10 & 1.79e-04 & 2.96 & 3.30e-06 & 3.09 & 6.19e-04 & 3.02 & 3.45e-06 & 3.02 & 16\\
                \cline{2-18}
                & \multirow{4}{*}{\rotatebox{90}{Triangular \rule{0pt}{1pt}}} 
                & 4.36e-02 & 8.49e-01 & *    & 8.48e-01 & *    & 2.85e-03 & *    & 2.00e-02 & *    & 4.56e-04 & *    & 6.51e-02 & *    & 3.42e-04 & *    & 14\\
                & & 2.55e-02 & 1.70e-01 & 3.00 & 1.69e-01 & 3.00 & 5.48e-04 & 3.06 & 3.76e-03 & 3.11 & 6.67e-05 & 3.58 & 1.27e-02 & 3.05 & 6.50e-05 & 3.09 & 14\\
                & & 1.79e-02 & 6.33e-02 & 2.77 & 6.32e-02 & 2.77 & 2.02e-04 & 2.81 & 1.37e-03 & 2.83 & 2.01e-05 & 3.38 & 4.68e-03 & 2.80 & 2.39e-05 & 2.82 & 15\\
                & & 1.39e-02 & 2.91e-02 & 3.08 & 2.90e-02 & 3.08 & 9.25e-05 & 3.10 & 6.39e-04 & 3.03 & 8.44e-06 & 3.43 & 2.15e-03 & 3.08 & 1.08e-05 & 3.13 & 15\\
                \hline
            \end{tabular}
        }
    \end{center}
    \vspace{-0.5cm}
    \caption{Example 1. Convergence history and fixed-point iteration count for a variety of 2D meshes with polynomial degrees $k=1,2$. The parameters are set to unity, except for $\eta_1 = 10^{-3}$, $\eta=5\times10^{-4}$, and $\bkappa=10^{-2}\bbI$.}
    \label{tab:convergence2D}
\end{table}

\begin{table}[!t]
    \setlength{\tabcolsep}{2pt}
    \begin{center}
        \resizebox{\textwidth}{!}{ 
            \begin{tabular}{| c | c | c | c | c | c | c | c | c | c | c | c | c | c | c | c | c |}
                \hline
                {} &
                {$h$} & 
                {$\bar{\mathrm{e}}_h$} & 
                {$r(\bar{\mathrm{e}}_h)$} & 
                {$\bar{\text{e}}_{\bsigma_h^{\bbPi}}$} & 
                {$r(\bar{\text{e}}_{\bsigma_h^{\bbPi}})$} & 
                {$\bar{\mathrm{e}}_{\bu_h}$} & 
                {$r(\bar{\mathrm{e}}_{\bu_h})$} & 
                {$\bar{\mathrm{e}}_{\bz_h^{\bPi}}$} & 
                {$r(\bar{\mathrm{e}}_{\bz_h^{\bPi}})$} & 
                {$\bar{\mathrm{e}}_{p_h}$} & 
                {$r(\bar{\mathrm{e}}_{p_h})$} & 
                {$\bar{\mathrm{e}}_{\bzeta_h^{\bPi}}$} & 
                {$r(\bar{\mathrm{e}}_{\bzeta_h^{\bPi}})$} & 
                {$\bar{\mathrm{e}}_{\varphi_h}$} & 
                {$r(\bar{\mathrm{e}}_{\varphi_h})$} & 
                {it} \\ [2pt]
                \hline 
                \hline
                \multirow{4}{*}{\rotatebox{90}{$\lambda = 10^6$}} 
                & 1.03e-01 & 4.60e+02 & *    & 4.58e+02 & *    & 4.10e+01 & *    & 1.55e-02 & *    & 1.90e-02 & *    & 1.70e+00 & *    & 1.90e-02 & *    & 5\\
                & 5.07e-02 & 1.01e+02 & 2.13 & 1.01e+02 & 2.13 & 4.45e+00 & 3.13 & 3.53e-03 & 2.09 & 4.39e-03 & 2.06 & 4.00e-01 & 2.04 & 4.44e-03 & 2.05 & 5\\
                & 2.66e-02 & 2.75e+01 & 2.02 & 2.75e+01 & 2.02 & 6.15e-01 & 3.06 & 9.27e-04 & 2.07 & 1.16e-03 & 2.06 & 1.05e-01 & 2.07 & 1.16e-03 & 2.07 & 5\\
                & 1.32e-02 & 6.67e+00 & 2.03 & 6.67e+00 & 2.03 & 7.44e-02 & 3.03 & 2.30e-04 & 2.00 & 2.86e-04 & 2.00 & 2.60e-02 & 2.00 & 2.86e-04 & 2.01 & 6\\
                \cline{1-17}
                \multirow{4}{*}{\rotatebox{90}{$s_0=10^{-8}$}} 
                & 1.03e-01 & 3.16e+01 & *    & 3.15e+01 & *    & 2.03e-01 & *    & 4.13e-01 & *    & 6.13e-02 & *    & 3.21e+00 & *    & 1.90e-02 & *    & 13\\
                & 5.07e-02 & 7.75e+00 & 1.98 & 7.72e+00 & 1.98 & 3.10e-02 & 2.64 & 5.97e-02 & 2.72 & 5.52e-03 & 3.39 & 7.51e-01 & 2.04 & 4.44e-03 & 2.05 & 15\\
                & 2.66e-02 & 2.06e+00 & 2.05 & 2.05e+00 & 2.05 & 7.05e-03 & 2.30 & 8.71e-03 & 2.98 & 1.18e-03 & 2.39 & 1.97e-01 & 2.08 & 1.16e-03 & 2.07 & 15\\
                & 1.32e-02 & 5.09e-01 & 2.01 & 5.07e-01 & 2.01 & 1.64e-03 & 2.09 & 1.24e-03 & 2.79 & 2.85e-04 & 2.03 & 4.87e-02 & 2.00 & 2.86e-04 & 2.01 & 16\\
                \cline{1-17}
                \multirow{4}{*}{\rotatebox{90}{$\alpha=10^{-6}$}} 
                & 1.03e-01 & 3.16e+01 & *    & 3.15e+01 & *    & 2.03e-01 & *    & 1.55e-02 & *    & 1.90e-02 & *    & 3.30e+00 & *    & 1.90e-02 & *    & 14\\
                & 5.07e-02 & 7.75e+00 & 1.98 & 7.72e+00 & 1.98 & 3.10e-02 & 2.64 & 3.53e-03 & 2.09 & 4.39e-03 & 2.06 & 7.74e-01 & 2.04 & 4.44e-03 & 2.05 & 15\\
                & 2.66e-02 & 2.06e+00 & 2.05 & 2.05e+00 & 2.05 & 7.05e-03 & 2.30 & 9.27e-04 & 2.07 & 1.16e-03 & 2.06 & 2.03e-01 & 2.07 & 1.16e-03 & 2.07 & 16\\
                & 1.32e-02 & 5.09e-01 & 2.01 & 5.07e-01 & 2.01 & 1.64e-03 & 2.09 & 2.30e-04 & 2.00 & 2.86e-04 & 2.00 & 5.02e-02 & 2.00 & 2.86e-04 & 2.01 & 17\\
                \hline
            \end{tabular}
        }
    \end{center}
    \vspace{-0.5cm}
    \caption{Example 1. Convergence history and fixed-point iteration counts are shown for the Hexahedral mesh with polynomial degree $k=1$ and extreme values for the parameters $\lambda$, $s_0$, and $\alpha$. In each test, the remaining parameters are set to unity, except $\eta_1=10^{-3}$, $\eta=5\times10^{-4}$, and $\bkappa=10^{-2}\bbI$.}
    \label{tab:convergence2DRobust}
\end{table}

\begin{figure}[!h]
    \centering
    \subfigure[Cubical \label{fig:cube}]{\includegraphics[width=0.24\textwidth,trim={7.cm 1.85cm 8.05cm 2.15cm},clip]{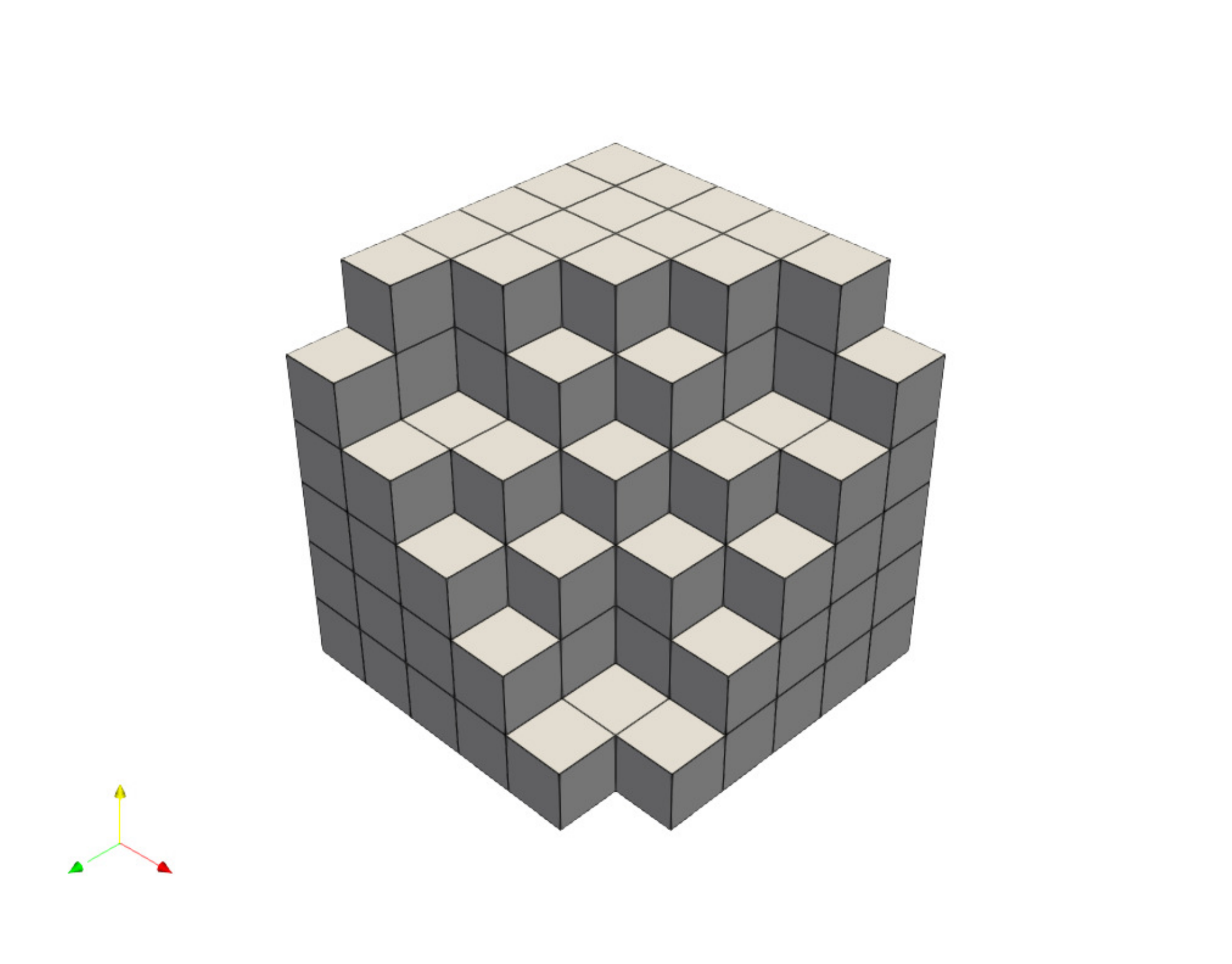}}  
    \subfigure[Octahedral \label{fig:octa}]{\includegraphics[width=0.24\textwidth,trim={7.cm 1.85cm 8.05cm 2.15cm},clip]{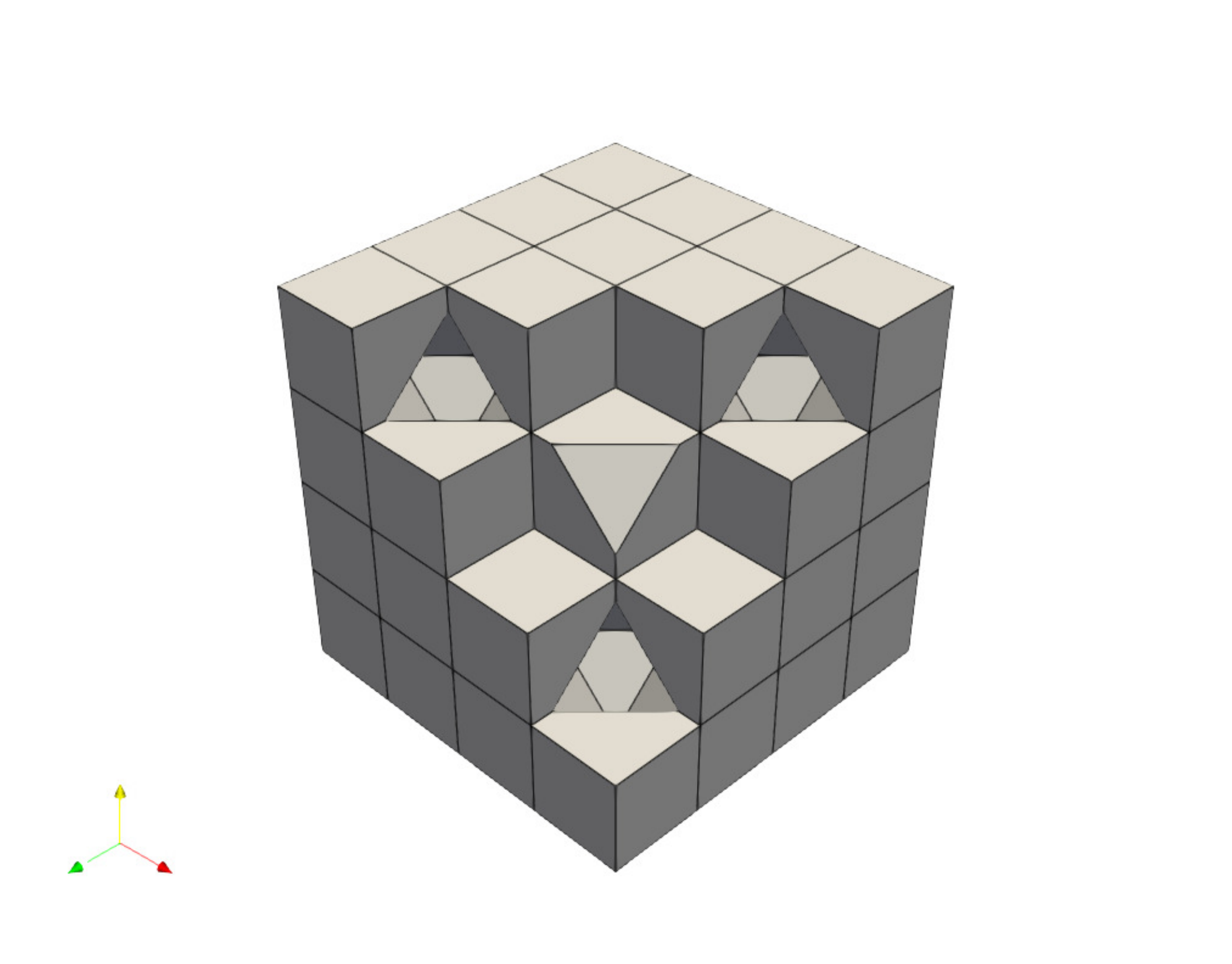}} 
    \subfigure[Voronoi \label{fig:voro}]{\includegraphics[width=0.24\textwidth,trim={7.cm 1.85cm 8.05cm 2.15cm},clip]{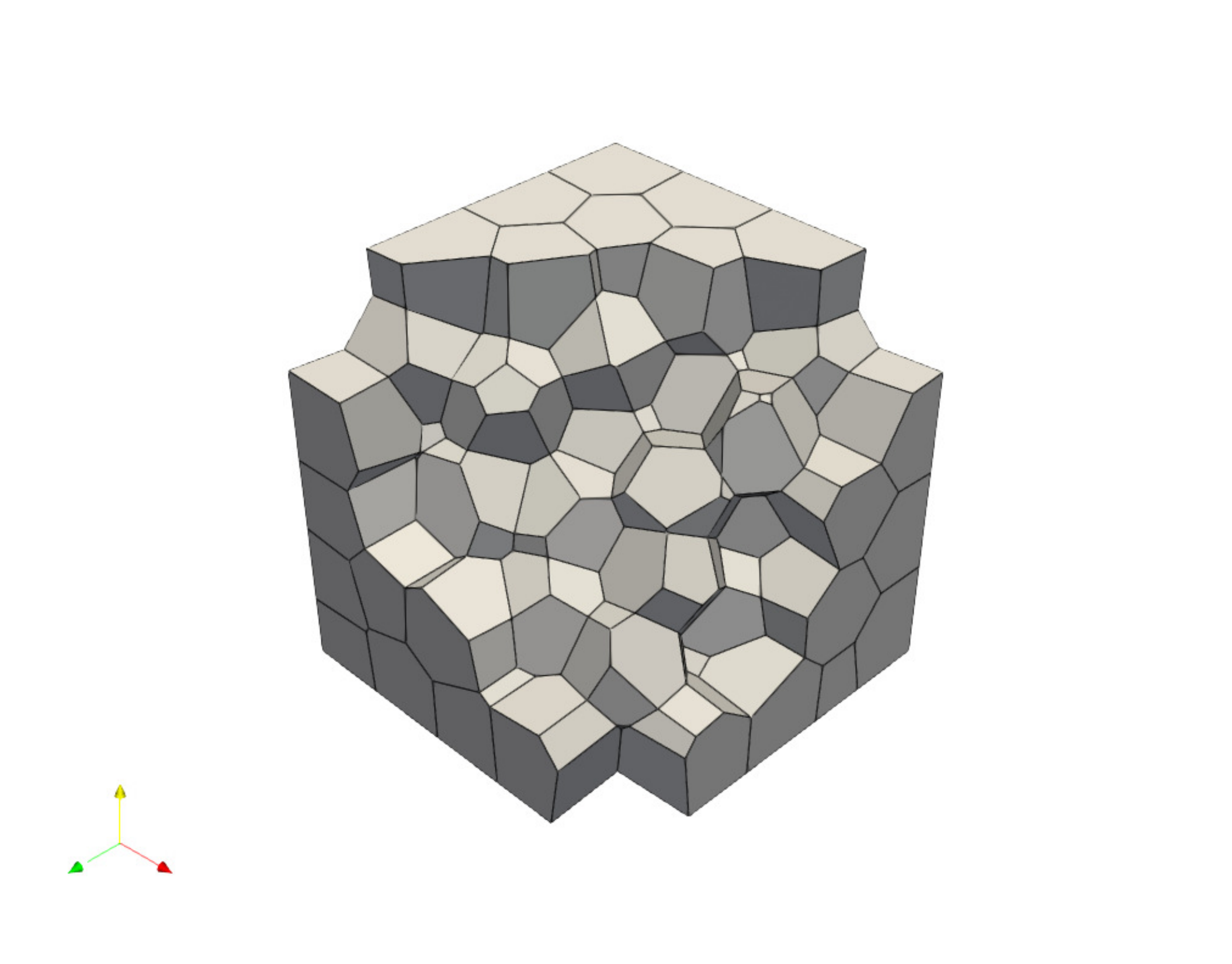}}
    \subfigure[Nonahedral \label{fig:nove}]{\includegraphics[width=0.24\textwidth,trim={7.cm 1.85cm 8.05cm 2.15cm},clip]{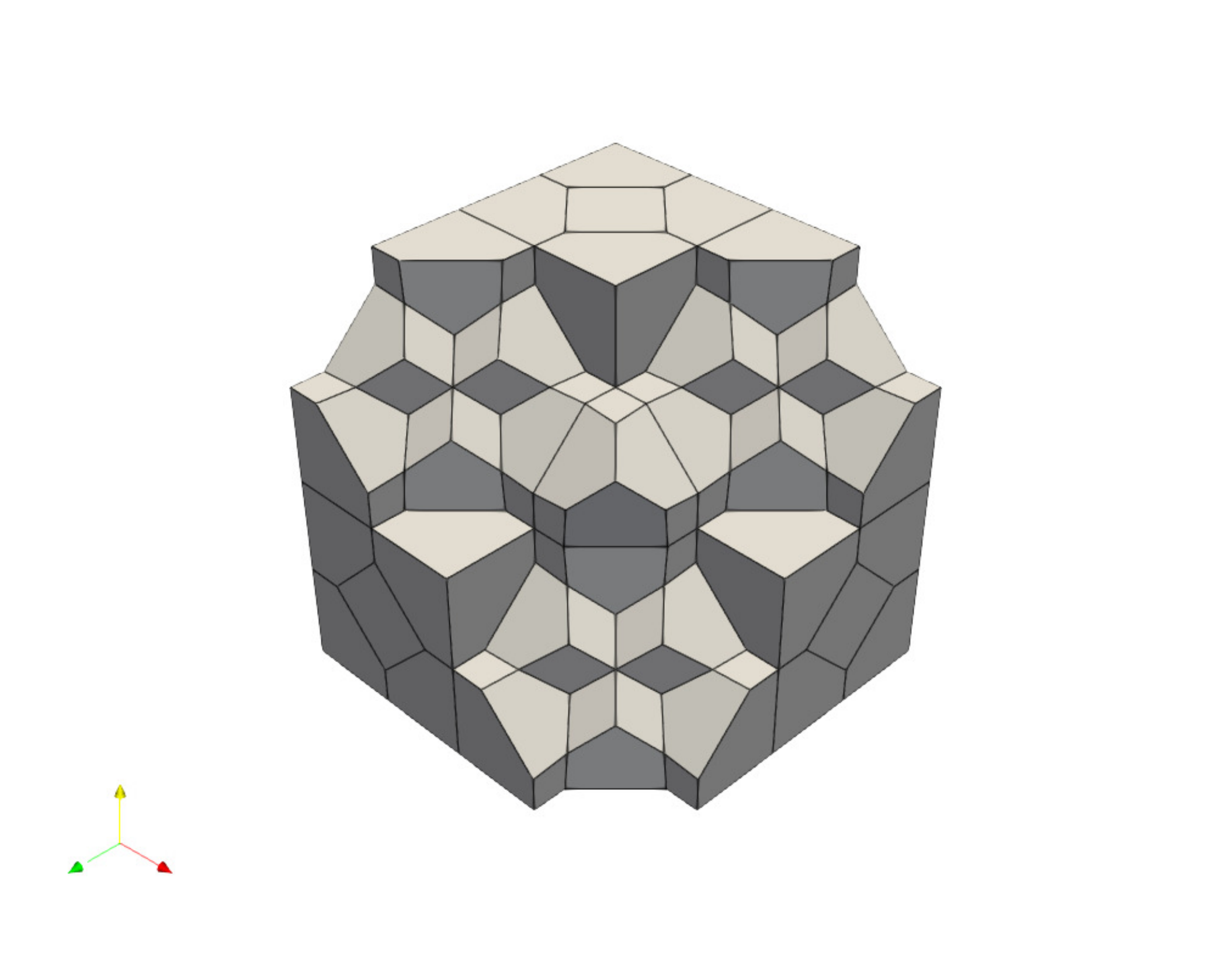}}
    \caption{Example 2. Cross-section of a variety of 3D meshes used in the uniform refinement convergence test.}\label{fig:meshes3D}
\end{figure}

\subsection{Example 2. Convergence rates under uniform refinement: 3D case} \label{sec:ex2}
We extend Example 1 by consider the unit cube domain $\Omega = (0,1)^3$ discretised using the polyhedral meshes illustrated in Figure~\ref{fig:meshes3D}, the sub-boundaries are defined by the sets $\Gamma_{\mathrm{N}} = \left\{ (x_1,x_2,x_3)\in \partial \Omega : x_1=0 \text{ or } x_2=0 \text{ or } x_3=0\right\}$ and $\Gamma_{\mathrm{D}} = \partial \Omega \setminus \Gamma_{\mathrm{N}}$. We set unity model parameters and define the manufactured solutions by
\begin{gather*}
  \bu(x_1,x_2,x_3) = \left( \cos(4\pi x_2)\cos(4\pi x_3) + e^{x_1}, \sin(4\pi x_1)\sin(4\pi x_3) + e^{x_2}, cos(4 \pi x_3)sin(4\pi x_1) + e^{x_3}\right)^{\tt t}, \\
  p(x_1,x_2,x_3) = \sin(2\pi x_2)\sin(2\pi x_3) + e^{x_1},\quad
  \varphi(x_1,x_2,x_3) = \cos(2\pi x_1)\cos(2\pi x_2) + e^{x_3}, \\  \bzeta(x_1,x_2,x_3) = -\rho(\bsigma(x_1,x_2,x_3))\nabla \varphi(x_1,x_2,x_3),
\end{gather*}
where the polynomial order in this case is given by $k$ for the Hellinger--Reissner and $k+1$ for the mixed VEM spaces. One more time, all the model parameters are fixed to unity except for the modulation and Forchheimer parameters which are given by $\eta_1=5\times10^{-6}$ and $\eta=10^{-5}$.

In three dimensions, the computational cost increases substantially, even in the lowest-case order $k=1$; for example, the Hellinger–Reissner subsystem alone yields a linear system of dimension $542{,}925 \times 542{,}925$, with $100{,}405{,}246$ nonzero entries in the last refinement step of the Voronoi mesh (cf. Figure~\ref{fig:meshes3D}\subref{fig:voro}). Such system sizes would normally pose a considerable challenge, both in terms of memory requirements and solution time. However, \texttt{VEM++} exploits parallelisation through \texttt{MPI} and its interface with \texttt{PETSc-MUMPS} (see \cite{mpi2021,petsc2025}), which allows distributed assembly and the efficient solution of large-scale sparse systems. For this study, computations were performed on the NCI Gadi HPC cluster using the hugemem queue ($1{,}5$ TB of RAM per node with 48 CPUs), with 16 CPUs for the first two refinements, 32 CPUs for the third, and 64 CPUs for the final refinement, demonstrating both the scalability of the implementation and its robustness in handling high-dimensional three-dimensional problems.

\begin{figure}[!h]
    \centering
    \subfigure[$|\bsigma_h^{\bbPi}|$.] {\includegraphics[width=0.33\textwidth,trim={2.cm 0cm 0cm .8cm},clip]{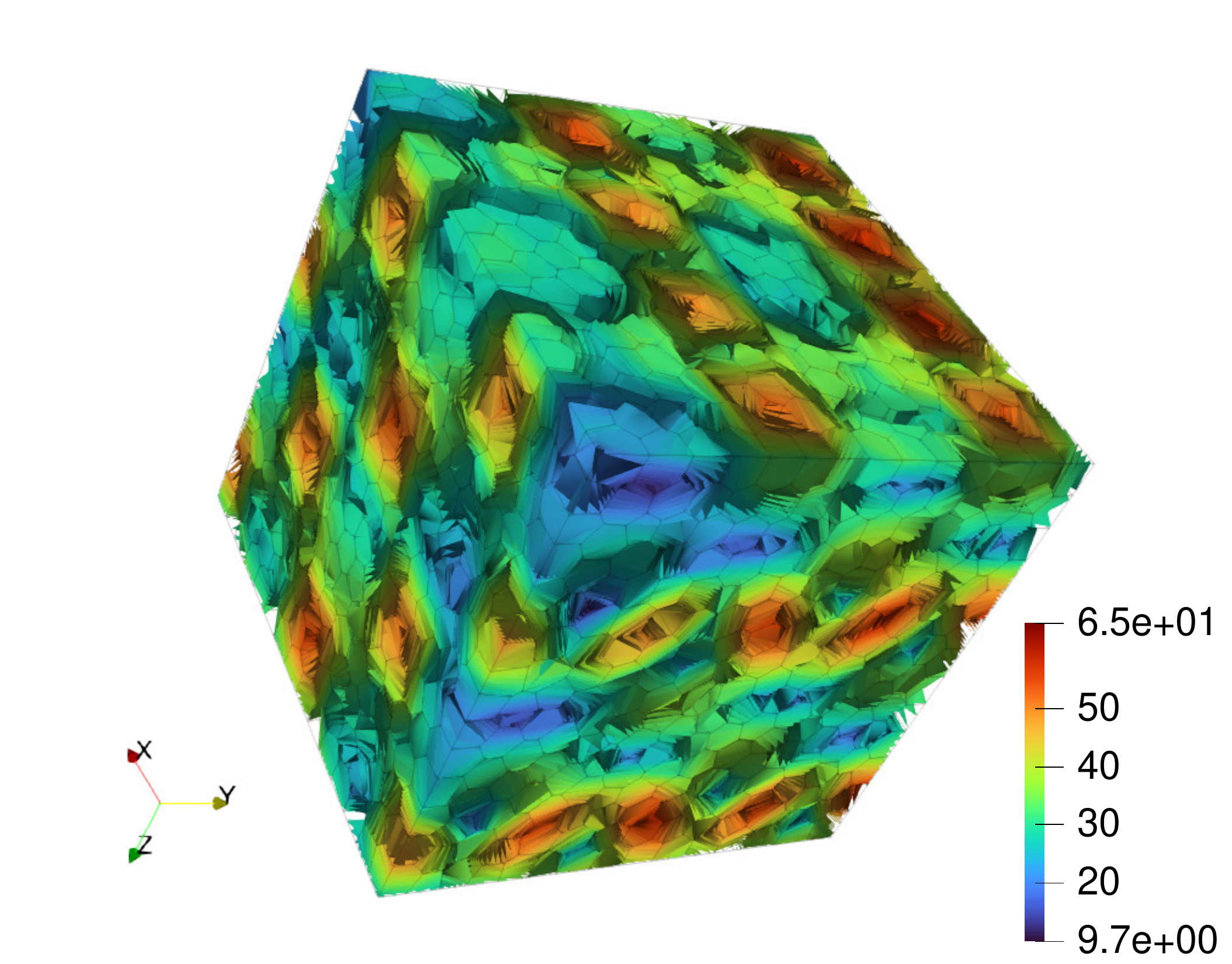}}  
    \subfigure[$|\bz_h^{\bPi}|$.]     {\includegraphics[width=0.33\textwidth,trim={2.cm 0cm 0cm .8cm},clip]{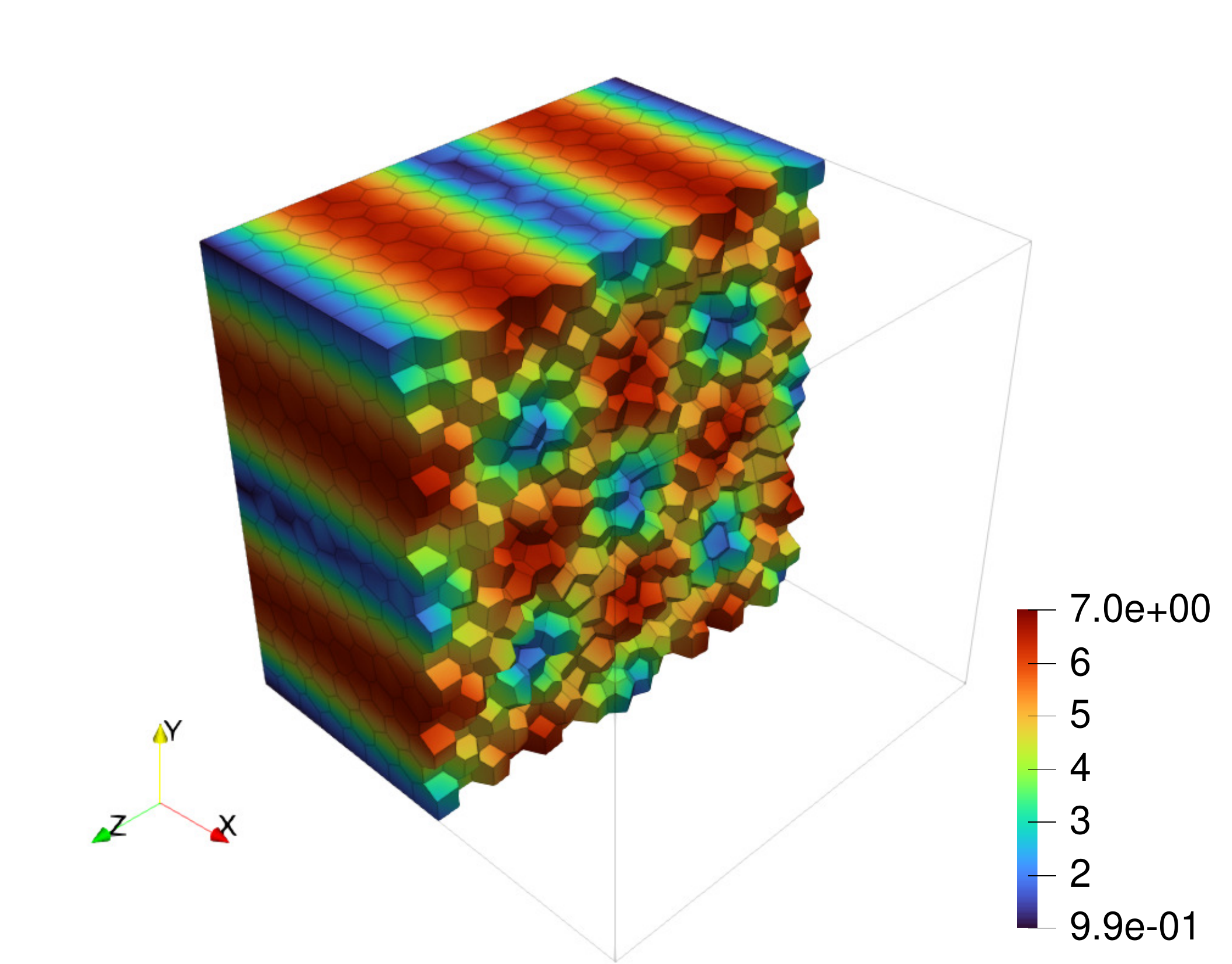}} 
    \subfigure[$|\bzeta_h^{\bPi}|$.]  {\includegraphics[width=0.33\textwidth,trim={2.cm 0cm 0cm .8cm},clip]{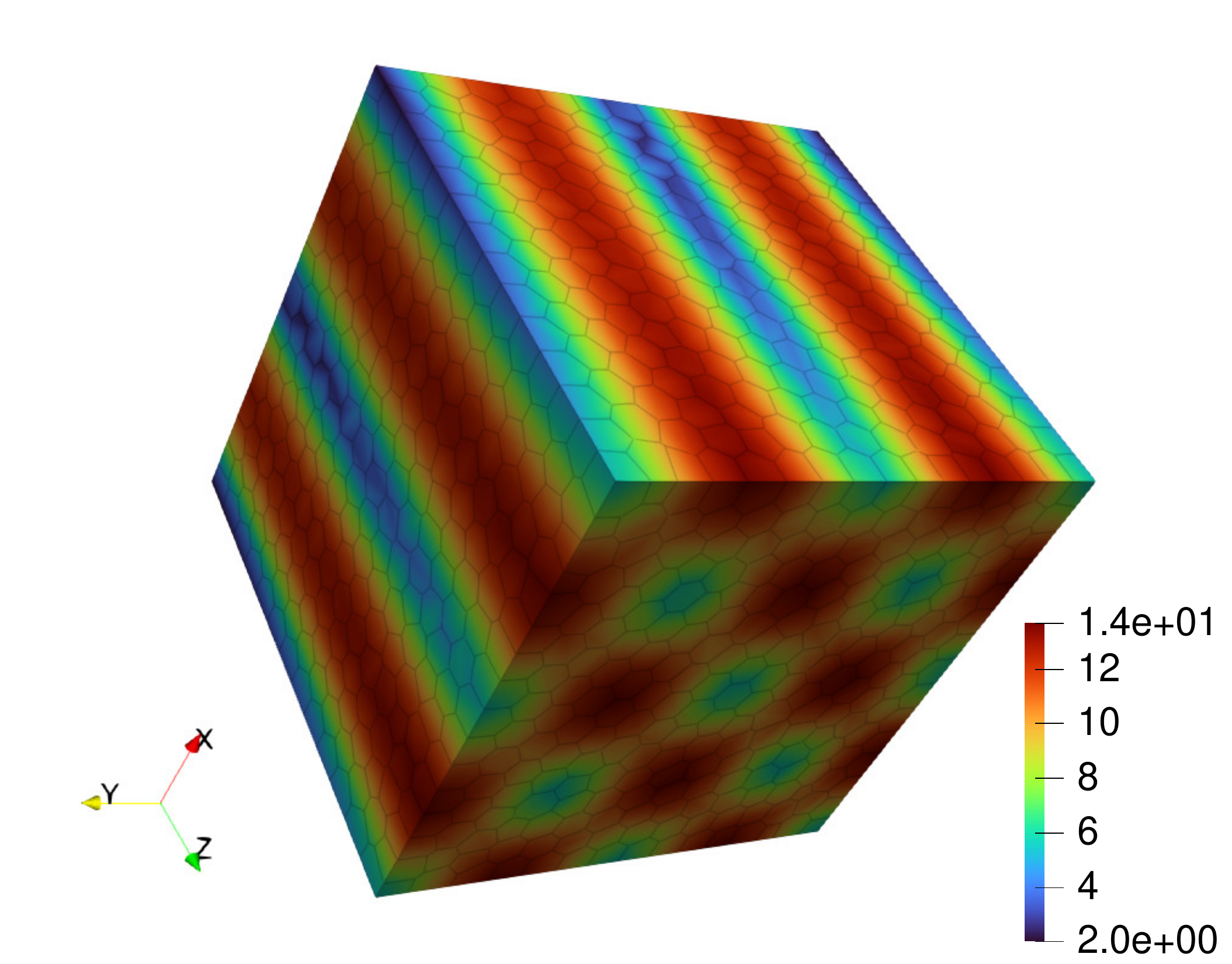}}
    \subfigure[$|\bu_h|$.]         {\includegraphics[width=0.33\textwidth,trim={2.cm 0cm 0cm .8cm},clip]{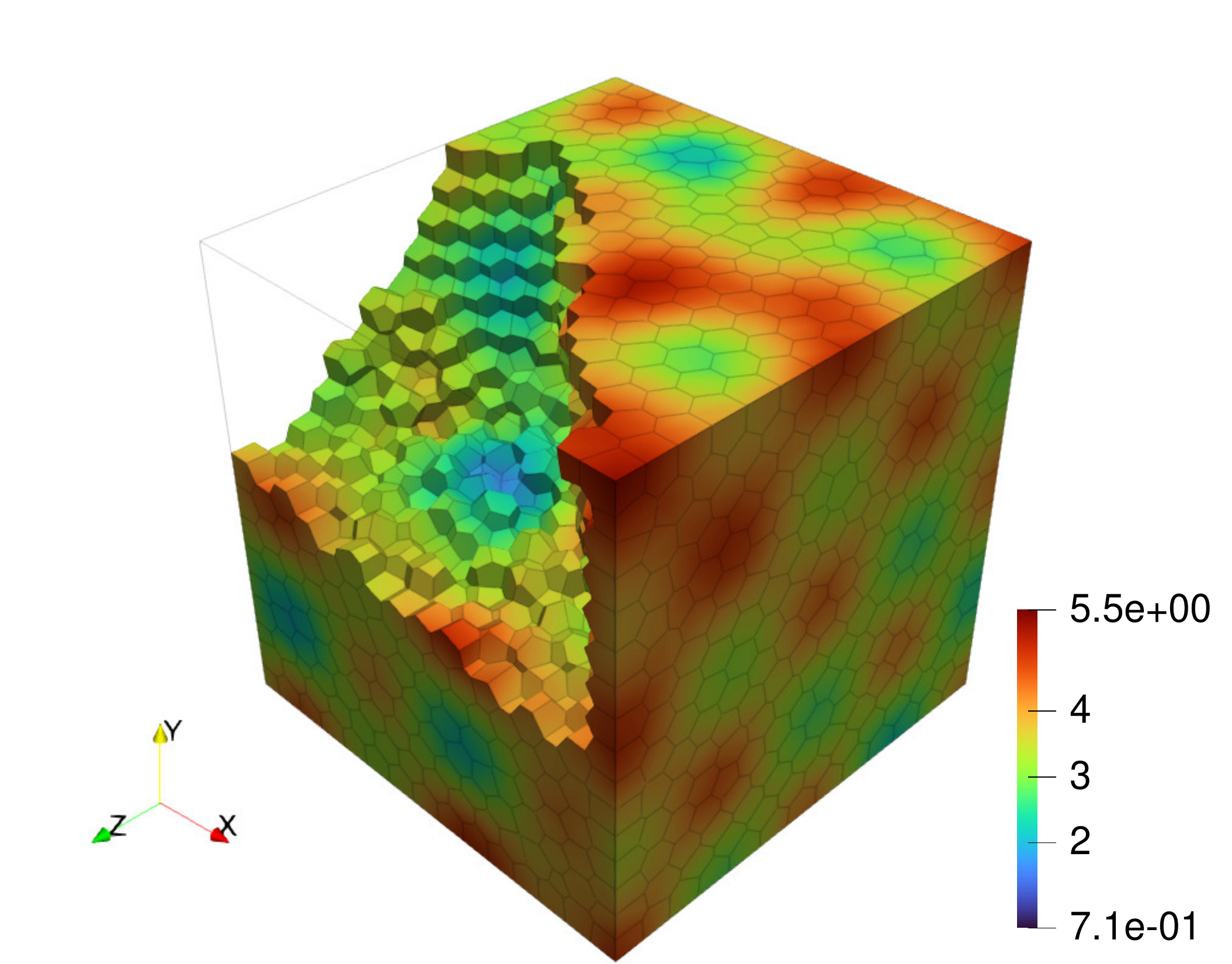}}
    \subfigure[$p_h$.]             {\includegraphics[width=0.33\textwidth,trim={2.cm 0cm 0cm .8cm},clip]{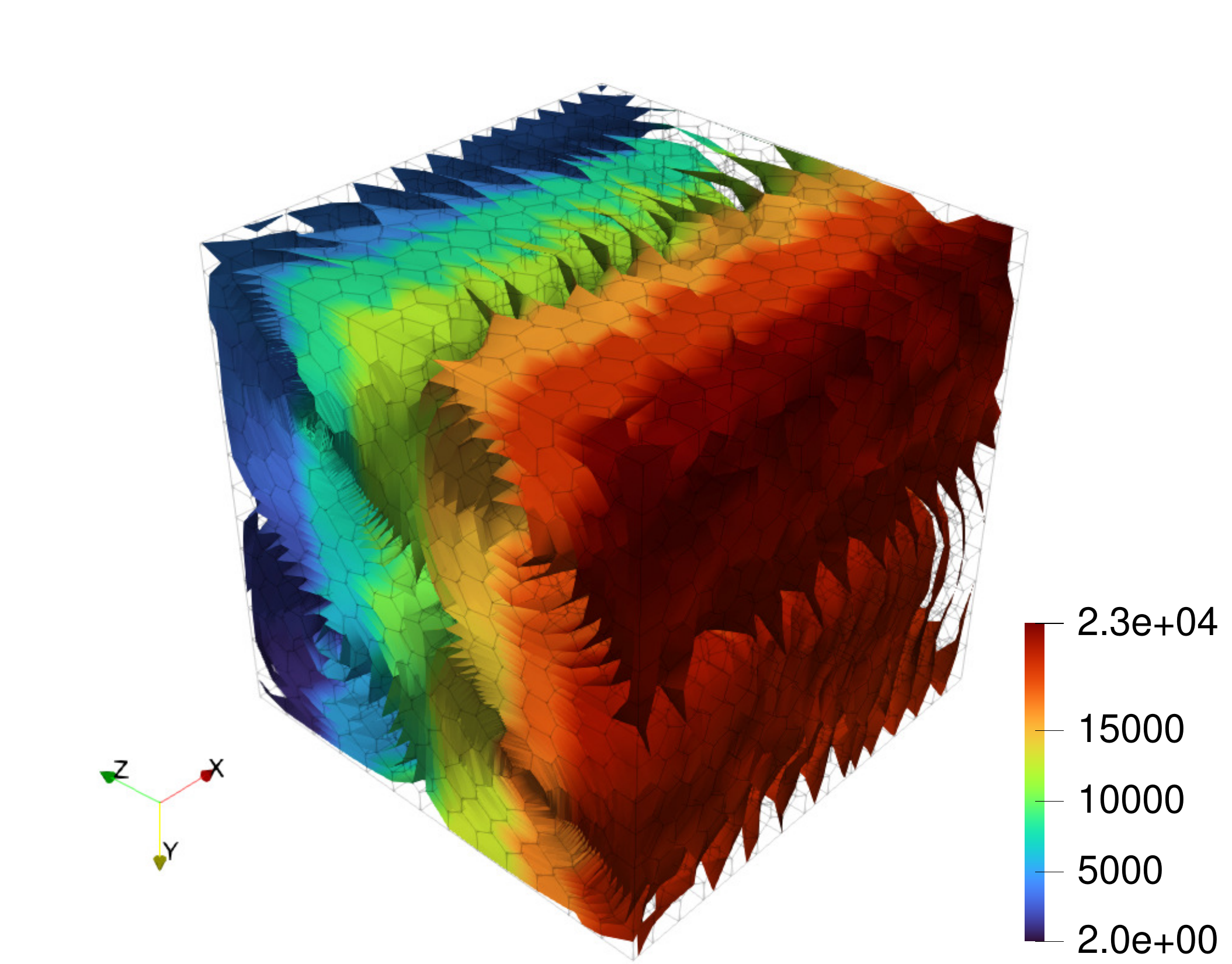}}
    \subfigure[$\varphi_h$.]       {\includegraphics[width=0.33\textwidth,trim={2.cm 0cm 0cm .8cm},clip]{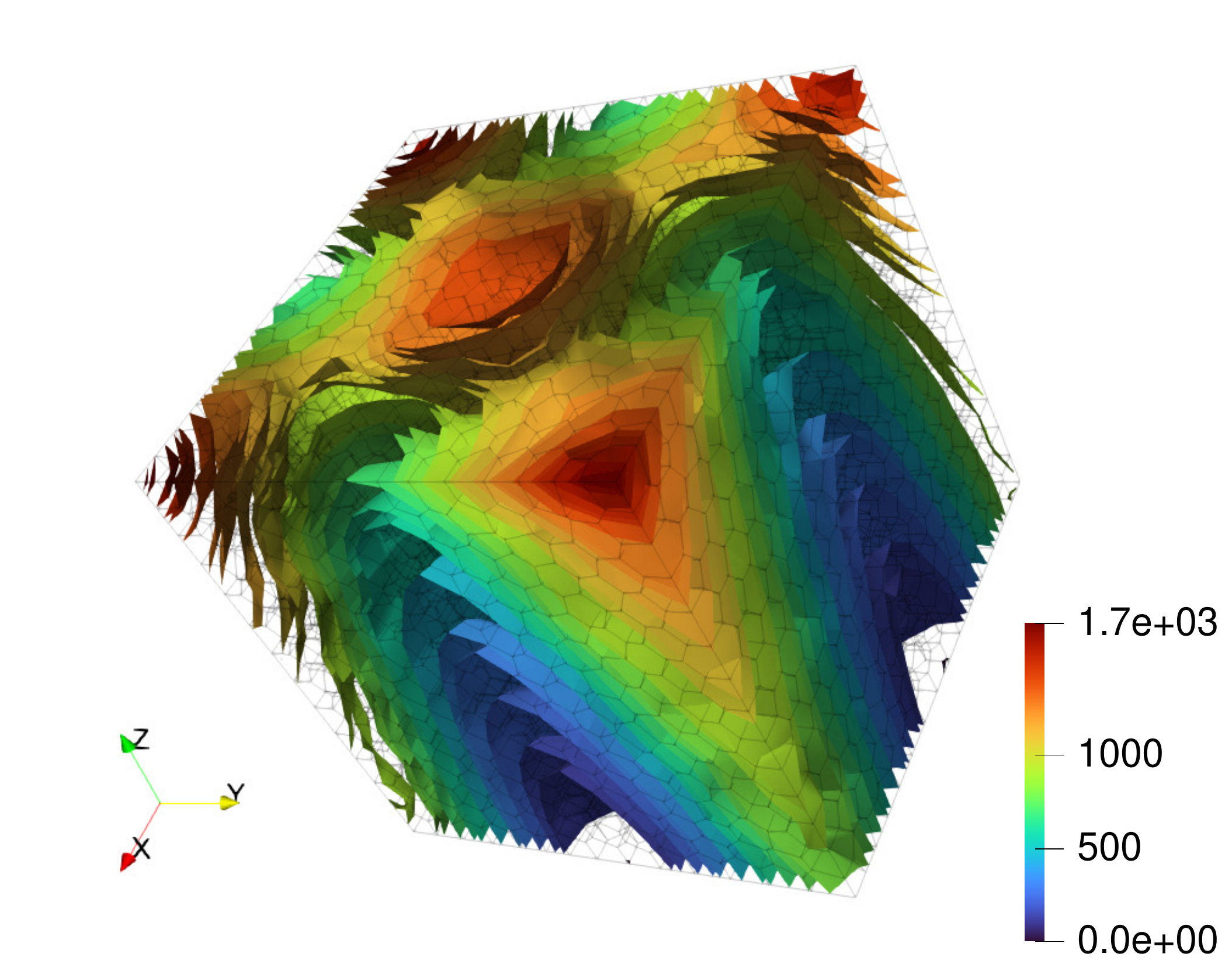}}
    \caption{Example 2. Snapshots of the variables of interest for the Voronoi mesh in the last refinement step with $k=1$. The modulation parameter is set to $\eta_1 = 10^{-5}$, while the remaining parameters are set to unity.}\label{fig:manufacturedSols3D}
\end{figure}

\begin{table}[!h]
    \setlength{\tabcolsep}{2pt}
    \begin{center}
        \resizebox{\textwidth}{!}{ 
            \begin{tabular}{| c | c | c | c | c | c | c | c | c | c | c | c | c | c | c | c | c |}
                \hline
                {$\cT_h$} & 
                {$h$} & 
                {$\bar{\mathrm{e}}_h$} & 
                {$r(\bar{\mathrm{e}}_h)$} & 
                {$\bar{\text{e}}_{\bsigma_h^{\bbPi}}$} & 
                {$r(\bar{\text{e}}_{\bsigma_h^{\bbPi}})$} & 
                {$\bar{\mathrm{e}}_{\bu_h}$} & 
                {$r(\bar{\mathrm{e}}_{\bu_h})$} & 
                {$\bar{\mathrm{e}}_{\bz_h^{\bPi}}$} & 
                {$r(\bar{\mathrm{e}}_{\bz_h^{\bPi}})$} & 
                {$\bar{\mathrm{e}}_{p_h}$} & 
                {$r(\bar{\mathrm{e}}_{p_h})$} & 
                {$\bar{\mathrm{e}}_{\bzeta_h^{\bPi}}$} & 
                {$r(\bar{\mathrm{e}}_{\bzeta_h^{\bPi}})$} & 
                {$\bar{\mathrm{e}}_{\varphi_h}$} & 
                {$r(\bar{\mathrm{e}}_{\varphi_h})$} & 
                {it} \\ [2pt]
                \hline 
                \hline
                \multirow{4}{*}{\rotatebox{90}{Cubical}} 
                & 2.17e-01 & 6.08e+01 & *    & 6.06e+01 & *    & 2.05e-01 & *    & 2.36e+00 & *    & 2.97e-02 & *  & 4.87e+00 & *    & 2.98e-02 & *    & 3\\
                & 1.44e-01 & 2.85e+01 & 1.87 & 2.84e+01 & 1.87 & 9.14e-02 & 1.99 & 1.06e+00 & 1.97 & 1.34e-02 & 1.97 & 2.23e+00 & 1.93 & 1.34e-02 & 1.97 & 3\\
                & 1.08e-01 & 1.63e+01 & 1.94 & 1.63e+01 & 1.94 & 5.18e-02 & 1.97 & 6.01e-01 & 1.98 & 7.56e-03 & 1.98 & 1.27e+00 & 1.94 & 7.57e-03 & 1.98 & 3\\
                & 8.66e-02 & 1.05e+01 & 1.96 & 1.05e+01 & 1.96 & 3.33e-02 & 1.98 & 3.85e-01 & 1.99 & 4.85e-03 & 1.99 & 8.24e-01 & 1.95 & 4.86e-03 & 1.99 & 3\\
                \hline
                \hline
                \multirow{4}{*}{\rotatebox{90}{Octahedral \rule{0pt}{1ex}}} 
                & 2.08e-01 & 5.82e+01 & *    & 5.80e+01 & *    & 1.95e-01 & *    & 2.22e+00 & *    & 2.78e-02 & *  & 4.60e+00 & *    & 2.79e-02 & *    & 3\\
                & 1.04e-01 & 1.53e+01 & 1.93 & 1.52e+01 & 1.93 & 4.86e-02 & 2.01 & 5.69e-01 & 1.96 & 7.08e-03 & 1.98 & 1.18e+00 & 1.96 & 7.08e-03 & 1.98 & 3\\
                & 8.33e-02 & 9.87e+00 & 1.96 & 9.84e+00 & 1.96 & 3.12e-02 & 1.98 & 3.68e-01 & 1.96 & 4.54e-03 & 1.99 & 7.60e-01 & 1.97 & 4.54e-03 & 1.99 & 3\\
                & 6.94e-02 & 6.89e+00 & 1.97 & 6.86e+00 & 1.97 & 2.18e-02 & 1.98 & 2.57e-01 & 1.96 & 3.16e-03 & 1.99 & 5.32e-01 & 1.95 & 3.16e-03 & 1.99 & 3\\
                \hline
                \hline
                \multirow{4}{*}{\rotatebox{90}{Voronoi}} 
                & 3.46e-01 & 1.28e+02 & *    & 1.27e+02 & *    & 4.68e-01 & *    & 5.55e+00 & *    & 7.01e-02 & *    & 1.29e+00 & *    & 7.00e-02 & *    & 2\\
                & 1.73e-01 & 3.65e+01 & 1.81 & 3.64e+01 & 1.80 & 1.26e-01 & 1.89 & 1.42e+00 & 1.96 & 1.77e-02 & 1.99 & 3.24e-01 & 2.00 & 1.70e-02 & 2.05 & 3\\
                & 1.37e-01 & 2.33e+01 & 1.95 & 2.32e+01 & 1.95 & 8.00e-02 & 1.97 & 8.98e-01 & 1.99 & 1.10e-02 & 2.04 & 1.98e-01 & 2.14 & 1.07e-02 & 1.99 & 3\\
                & 1.09e-01 & 1.48e+01 & 1.97 & 1.47e+01 & 1.97 & 5.06e-02 & 1.99 & 5.77e-01 & 1.91 & 6.90e-03 & 2.03 & 1.29e-01 & 1.84 & 6.79e-03 & 1.97 & 3\\
                \hline
                \hline
                \multirow{4}{*}{\rule{0pt}{3ex} \rotatebox{90}{Nonahedral \rule{0pt}{1ex}}} 
                & 3.59e-01 & 1.43e+02 & *    & 1.42e+02 & *    & 5.10e-01 & *    & 6.94e+00 & *    & 8.81e-02 & *  & 1.39e+01 & *    & 8.47e-02 & *    & 3\\
                & 1.80e-01 & 4.40e+01 & 1.70 & 4.39e+01 & 1.70 & 1.48e-01 & 1.78 & 1.64e+00 & 2.08 & 2.04e-02 & 2.11 & 3.31e+00 & 2.07 & 2.04e-02 & 2.06 & 3\\
                & 8.98e-02 & 1.12e+01 & 1.97 & 1.12e+01 & 1.97 & 3.70e-02 & 2.01 & 4.42e-01 & 1.89 & 5.16e-03 & 1.98 & 8.37e-01 & 1.98 & 5.16e-03 & 1.98 & 3\\
                & 7.19e-02 & 7.23e+00 & 1.96 & 7.21e+00 & 1.96 & 2.36e-02 & 2.00 & 2.95e-01 & 1.81 & 3.31e-03 & 1.99 & 5.43e-01 & 1.94 & 3.31e-03 & 1.99 & 3\\
                \hline
            \end{tabular}
        }
    \end{center}
    \vspace{-0.5cm}
    \caption{Example 2. Convergence history and fixed-point iteration count for a variety of 3D meshes for the lowest-case order $k=1$. We considered unit parameters except for $\eta_1=5\times10^{-6}$ and $\eta=10^{-5}$.}
    \label{tab:convergence3D}
\end{table}

We summarise the error history in Table~\ref{tab:convergence3D}. One more time, the prediction provided by Corollary~\ref{th:convergence} holds for all the meshes listed in Figure~\ref{fig:meshes3D}, we observe optimal rate of convergence $O(h^2)$. Snapshots of the variables of interest are shown in Figure~\ref{fig:manufacturedSols3D} for the Voronoi mesh with 4,000 elements (last refinement step).

\subsection{Example 3. Convergence rates for the pure stress-assisted diffusion case}
Although the pure stress-assisted diffusion model is not covered by the present theoretical analysis, we investigate here the approximation properties of the scheme in the particular case where the Forchheimer parameter is set to zero. For this experiment, all physical parameters are taken equal to one, except for $\eta_1=10^{-3}$ and $\eta=0$. The manufactured solutions employed in both the 2D and 3D tests are the same as those introduced in Sections~\ref{sec:ex1} and \ref{sec:ex2}.

The corresponding error histories are reported in Tables~\ref{tab:convergence2DOnlyStress} and \ref{tab:convergence3DOnlyStress}. Even in the case $\eta=0$, the method recovers the optimal convergence rate $O(h^{k+1})$ for all mesh families considered. In addition, the fixed-point algorithm requires fewer iterations to converge than in the altered diffusion model studied in Sections~\ref{sec:ex1} and \ref{sec:ex2}, which is expected since the latter involves two nonlinear contributions.

Finally, the robustness of the method with respect to large variations in some of the physical parameters is also preserved in the pure stress-assisted diffusion regime. This behaviour is illustrated in Table~\ref{tab:convergence2DRobustOnlyStress}.

\begin{table}[!t]
    \setlength{\tabcolsep}{2pt}
    \begin{center}
        \resizebox{\textwidth}{!}{ 
            \begin{tabular}{| c | c | c | c | c | c | c | c | c | c | c | c | c | c | c | c | c | c |}
                \hline
                {$k$} & 
                {$\cT_h$} & 
                {$h$} & 
                {$\bar{\mathrm{e}}_h$} & 
                {$r(\bar{\mathrm{e}}_h)$} & 
                {$\bar{\text{e}}_{\bsigma_h^{\bbPi}}$} & 
                {$r(\bar{\text{e}}_{\bsigma_h^{\bbPi}})$} & 
                {$\bar{\mathrm{e}}_{\bu_h}$} & 
                ${r(\bar{\mathrm{e}}_{\bu_h})}$ & 
                {$\bar{\mathrm{e}}_{\bz_h^{\bPi}}$} & 
                {$r(\bar{\mathrm{e}}_{\bz_h^{\bPi}})$} & 
                {$\bar{\mathrm{e}}_{p_h}$} & 
                {$r(\bar{\mathrm{e}}_{p_h})$} & 
                {$\bar{\mathrm{e}}_{\bzeta_h^{\bPi}}$} & 
                {$r(\bar{\mathrm{e}}_{\bzeta_h^{\bPi}})$} & 
                {$\bar{\mathrm{e}}_{\varphi_h}$} & 
                {$r(\bar{\mathrm{e}}_{\varphi_h})$} & 
                {it} \\ [3pt]
                \hline 
                \hline
                \multirow{16}{*}{$1$}
                &\multirow{4}{*}{\rotatebox{90}{Quadrilateral \rule{0pt}{1pt}}} 
                
                & 1.00e-01 & 3.31e+01 & *    & 3.29e+01 & *    & 5.28e-01 & *    & 1.66e+00 & *    & 1.92e-02 & *    & 2.90e+00 & *    & 1.95e-02 & *    & 3\\
                & & 5.00e-02 & 8.65e+00 & 1.93 & 8.61e+00 & 1.93 & 7.61e-02 & 2.79 & 4.15e-01 & 2.00 & 4.85e-03 & 1.99 & 7.46e-01 & 1.96 & 4.88e-03 & 2.00 & 3\\
                & & 2.50e-02 & 2.19e+00 & 1.99 & 2.17e+00 & 1.99 & 1.14e-02 & 2.74 & 1.00e-01 & 2.05 & 1.22e-03 & 2.00 & 1.97e-01 & 1.92 & 1.22e-03 & 2.00 & 3\\
                & & 1.25e-02 & 5.48e-01 & 2.00 & 5.45e-01 & 2.00 & 2.06e-03 & 2.46 & 2.45e-02 & 2.03 & 3.04e-04 & 2.00 & 5.59e-02 & 1.82 & 3.05e-04 & 2.00 & 4\\ [8pt]
                \cline{2-18}
                & \multirow{4}{*}{\rotatebox{90}{Distorted \rule{0pt}{1pt}}}  
                & 1.03e-01 & 4.07e+01 & *    & 4.04e+01 & *    & 9.22e-01 & *    & 1.97e+00 & *    & 2.42e-02 & *    & 3.74e+00 & *    & 2.54e-02 & *    & 3\\
                & & 5.07e-02 & 1.03e+01 & 1.93 & 1.03e+01 & 1.93 & 1.15e-01 & 2.93 & 4.64e-01 & 2.04 & 5.70e-03 & 2.04 & 8.95e-01 & 2.01 & 5.99e-03 & 2.04 & 3\\
                & & 2.66e-02 & 2.79e+00 & 2.02 & 2.78e+00 & 2.02 & 1.81e-02 & 2.87 & 1.23e-01 & 2.06 & 1.53e-03 & 2.03 & 2.38e-01 & 2.05 & 1.59e-03 & 2.06 & 3\\
                & & 1.32e-02 & 6.90e-01 & 2.00 & 6.87e-01 & 2.00 & 2.90e-03 & 2.63 & 3.00e-02 & 2.02 & 3.77e-04 & 2.01 & 5.88e-02 & 2.00 & 3.89e-04 & 2.01 & 3\\ [2pt]
                \cline{2-18}
                & \multirow{4}{*}{\rule{0pt}{10ex}\rotatebox{90}{Hexahedral \rule{0pt}{1pt}}} 
                & 1.03e-01 & 3.18e+01 & *    & 3.16e+01 & *    & 7.09e-01 & *    & 1.49e+00 & *    & 1.81e-02 & *    & 2.83e+00 & *    & 1.90e-02 & *    & 3\\
                & & 5.07e-02 & 7.79e+00 & 1.98 & 7.75e+00 & 1.98 & 8.71e-02 & 2.95 & 3.48e-01 & 2.04 & 4.32e-03 & 2.02 & 6.63e-01 & 2.04 & 4.44e-03 & 2.05 & 3\\
                & & 2.66e-02 & 2.07e+00 & 2.05 & 2.06e+00 & 2.05 & 1.35e-02 & 2.88 & 9.16e-02 & 2.07 & 1.15e-03 & 2.05 & 1.74e-01 & 2.07 & 1.16e-03 & 2.07 & 3\\
                & & 1.32e-02 & 5.11e-01 & 2.01 & 5.09e-01 & 2.01 & 2.16e-03 & 2.63 & 2.27e-02 & 2.00 & 2.84e-04 & 2.00 & 4.28e-02 & 2.01 & 2.86e-04 & 2.01 & 3\\
                \cline{2-18}
                & \multirow{4}{*}{\rotatebox{90}{Triangular \rule{0pt}{1pt}}} 
                & 4.36e-02 & 8.51e+00 & *    & 8.47e+00 & *    & 6.32e-02 & *    & 3.96e-01 & *    & 4.44e-03 & *    & 7.12e-01 & *    & 4.74e-03 & *    & 3\\
                & & 2.55e-02 & 2.92e+00 & 1.99 & 2.91e+00 & 1.99 & 1.48e-02 & 2.70 & 1.42e-01 & 1.90 & 1.58e-03 & 1.92 & 2.44e-01 & 1.99 & 1.61e-03 & 2.00 & 3\\
                & & 1.79e-02 & 1.46e+00 & 1.96 & 1.45e+00 & 1.96 & 6.15e-03 & 2.47 & 7.18e-02 & 1.93 & 7.97e-04 & 1.93 & 1.22e-01 & 1.94 & 8.08e-04 & 1.95 & 3\\
                & & 1.39e-02 & 8.82e-01 & 1.98 & 8.78e-01 & 1.98 & 3.38e-03 & 2.37 & 4.36e-02 & 1.97 & 4.82e-04 & 1.99 & 7.33e-02 & 2.02 & 4.85e-04 & 2.02 & 3\\
                \hline
                \hline
                \multirow{16}{*}{$2$}
                & \multirow{4}{*}{\rotatebox{90}{Quadrilateral \rule{0pt}{1pt}}} 
                & 1.00e-01 & 6.68e+00 & *    & 6.66e+00 & *    & 2.23e-01 & *    & 1.98e-01 & *    & 1.94e-03 & *    & 3.32e-01 & *    & 2.16e-03 & *    & 3\\
                & & 5.00e-02 & 8.71e-01 & 2.94 & 8.70e-01 & 2.94 & 1.47e-02 & 3.92 & 2.40e-02 & 3.05 & 2.45e-04 & 2.99 & 4.20e-02 & 2.98 & 2.72e-04 & 2.99 & 3\\
                & & 2.50e-02 & 1.10e-01 & 2.99 & 1.10e-01 & 2.99 & 9.68e-04 & 3.92 & 2.70e-03 & 3.15 & 3.07e-05 & 3.00 & 5.25e-03 & 3.00 & 3.41e-05 & 3.00 & 3\\
                & & 1.25e-02 & 1.38e-02 & 3.00 & 1.38e-02 & 3.00 & 7.12e-05 & 3.77 & 3.18e-04 & 3.09 & 3.84e-06 & 3.00 & 6.56e-04 & 3.00 & 4.27e-06 & 3.00 & 4\\ [8pt]
                \cline{2-18}
                & \multirow{4}{*}{\rotatebox{90}{Distorted \rule{0pt}{1pt}}} 
                & 1.03e-01 & 9.66e+00 & *    & 9.63e+00 & *    & 4.39e-01 & *    & 2.70e-01 & *    & 2.87e-03 & *    & 4.73e-01 & *    & 3.16e-03 & *    & 3\\
                & & 5.07e-02 & 1.20e+00 & 2.94 & 1.20e+00 & 2.94 & 2.64e-02 & 3.96 & 3.34e-02 & 2.94 & 3.35e-04 & 3.03 & 5.62e-02 & 3.00 & 3.70e-04 & 3.02 & 3\\
                & & 2.66e-02 & 1.67e-01 & 3.05 & 1.67e-01 & 3.05 & 1.98e-03 & 4.01 & 4.71e-03 & 3.03 & 4.62e-05 & 3.07 & 7.78e-03 & 3.06 & 5.11e-05 & 3.07 & 3\\
                & & 1.32e-02 & 2.05e-02 & 3.01 & 2.04e-02 & 3.01 & 1.31e-04 & 3.89 & 5.88e-04 & 2.98 & 5.64e-06 & 3.01 & 9.53e-04 & 3.01 & 6.25e-06 & 3.01 & 3\\ [2pt]
                \cline{2-18}
                & \multirow{4}{*}{\rule{0pt}{10ex}\rotatebox{90}{Hexahedral \rule{0pt}{1pt}}} 
                & 1.03e-01 & 5.91e+00 & *    & 5.89e+00 & *    & 2.68e-01 & *    & 1.58e-01 & *    & 1.71e-03 & *    & 2.80e-01 & *    & 1.84e-03 & *    & 3\\
                & & 5.07e-02 & 7.02e-01 & 3.00 & 7.01e-01 & 3.00 & 1.52e-02 & 4.04 & 1.88e-02 & 3.00 & 1.98e-04 & 3.04 & 3.20e-02 & 3.05 & 2.09e-04 & 3.06 & 3\\
                & & 2.66e-02 & 9.53e-02 & 3.09 & 9.52e-02 & 3.09 & 1.11e-03 & 4.06 & 2.54e-03 & 3.10 & 2.66e-05 & 3.11 & 4.36e-03 & 3.09 & 2.84e-05 & 3.09 & 3\\
                & & 1.32e-02 & 1.17e-02 & 3.01 & 1.17e-02 & 3.01 & 7.39e-05 & 3.88 & 3.15e-04 & 3.00 & 3.25e-06 & 3.01 & 5.28e-04 & 3.02 & 3.45e-06 & 3.02 & 4\\
                \cline{2-18}
                & \multirow{4}{*}{\rotatebox{90}{Triangular \rule{0pt}{1pt}}} 
                & 4.36e-02 & 8.58e-01 & *    & 8.56e-01 & *    & 1.29e-02 & *    & 3.27e-02 & *    & 2.54e-04 & *    & 5.41e-02 & *    & 3.42e-04 & *    & 3\\
                & & 2.55e-02 & 1.71e-01 & 3.00 & 1.71e-01 & 3.00 & 1.57e-03 & 3.91 & 6.21e-03 & 3.09 & 4.86e-05 & 3.07 & 1.03e-02 & 3.08 & 6.50e-05 & 3.09 & 3\\
                & & 1.79e-02 & 6.39e-02 & 2.78 & 6.37e-02 & 2.78 & 4.19e-04 & 3.71 & 2.27e-03 & 2.83 & 1.77e-05 & 2.85 & 3.79e-03 & 2.82 & 2.39e-05 & 2.82 & 3\\
                & & 1.39e-02 & 2.93e-02 & 3.08 & 2.93e-02 & 3.08 & 1.62e-04 & 3.75 & 1.05e-03 & 3.06 & 8.27e-06 & 3.01 & 1.72e-03 & 3.12 & 1.08e-05 & 3.13 & 4\\
                \hline
            \end{tabular}
        }
    \end{center}
    \vspace{-0.5cm}
    \caption{Example 3. Convergence history and fixed-point iteration count for a variety of 2D meshes with polynomial degrees $k=1,2$. The parameters are set to unity, except for $\eta_1 = 10^{-3}$ and $\eta=0$.}
    \label{tab:convergence2DOnlyStress}
\end{table}

\begin{table}[!h]
    \setlength{\tabcolsep}{2pt}
    \begin{center}
        \resizebox{\textwidth}{!}{ 
            \begin{tabular}{| c | c | c | c | c | c | c | c | c | c | c | c | c | c | c | c | c |}
                \hline
                {$\cT_h$} & 
                {$h$} & 
                {$\bar{\mathrm{e}}_h$} & 
                {$r(\bar{\mathrm{e}}_h)$} & 
                {$\bar{\text{e}}_{\bsigma_h^{\bbPi}}$} & 
                {$r(\bar{\text{e}}_{\bsigma_h^{\bbPi}})$} & 
                {$\bar{\mathrm{e}}_{\bu_h}$} & 
                {$r(\bar{\mathrm{e}}_{\bu_h})$} & 
                {$\bar{\mathrm{e}}_{\bz_h^{\bPi}}$} & 
                {$r(\bar{\mathrm{e}}_{\bz_h^{\bPi}})$} & 
                {$\bar{\mathrm{e}}_{p_h}$} & 
                {$r(\bar{\mathrm{e}}_{p_h})$} & 
                {$\bar{\mathrm{e}}_{\bzeta_h^{\bPi}}$} & 
                {$r(\bar{\mathrm{e}}_{\bzeta_h^{\bPi}})$} & 
                {$\bar{\mathrm{e}}_{\varphi_h}$} & 
                {$r(\bar{\mathrm{e}}_{\varphi_h})$} & 
                {it} \\ [2pt]
                \hline 
                \hline
                \multirow{4}{*}{\rotatebox{90}{Cubical}} 
                & 2.17e-01 & 6.07e+01 & *    & 6.04e+01 & *    & 3.14e-01 & *    & 2.36e+00 & *    & 2.97e-02 & *    & 4.69e+00 & *    & 2.98e-02 & *    & 2\\
                & 1.44e-01 & 2.84e+01 & 1.87 & 2.83e+01 & 1.87 & 1.05e-01 & 2.71 & 1.06e+00 & 1.97 & 1.34e-02 & 1.97 & 2.11e+00 & 1.97 & 1.34e-02 & 1.97 & 2\\
                & 1.08e-01 & 1.63e+01 & 1.94 & 1.62e+01 & 1.93 & 5.44e-02 & 2.29 & 6.01e-01 & 1.98 & 7.56e-03 & 1.98 & 1.19e+00 & 1.99 & 7.57e-03 & 1.99 & 2\\
                & 8.66e-02 & 1.05e+01 & 1.96 & 1.05e+01 & 1.96 & 3.40e-02 & 2.11 & 3.85e-01 & 1.99 & 4.85e-03 & 1.99 & 7.65e-01 & 1.99 & 4.85e-03 & 1.99 & 2\\
                \hline
                \hline
                \multirow{4}{*}{\rotatebox{90}{Octahedral \rule{0pt}{1ex}}} 
                & 2.08e-01 & 5.81e+01 & *    & 5.79e+01 & *    & 2.88e-01 & *    & 2.22e+00 & *    & 2.78e-02 & *    & 4.39e+00 & *    & 2.79e-02 & *    & 2\\
                & 1.04e-01 & 1.53e+01 & 1.93 & 1.52e+01 & 1.93 & 5.07e-02 & 2.50 & 5.67e-01 & 1.97 & 7.08e-03 & 1.98 & 1.12e+00 & 1.98 & 7.08e-03 & 1.98 & 2\\
                & 8.33e-02 & 9.85e+00 & 1.96 & 9.82e+00 & 1.96 & 3.18e-02 & 2.09 & 3.66e-01 & 1.96 & 4.54e-03 & 1.99 & 7.16e-01 & 1.99 & 4.54e-03 & 1.99 & 3\\
                & 6.94e-02 & 6.87e+00 & 1.97 & 6.85e+00 & 1.97 & 2.19e-02 & 2.04 & 2.56e-01 & 1.96 & 3.16e-03 & 1.99 & 4.98e-01 & 1.99 & 3.16e-03 & 1.99 & 2\\
                \hline
                \hline
                \multirow{4}{*}{\rotatebox{90}{Voronoi}} 
                & 3.59e-01 & 1.43e+02 & *    & 1.42e+02 & *    & 1.50e+00 & *    & 6.94e+00 & *    & 8.81e-02 & *    & 1.33e+01 & *    & 8.53e-02 & *    & 2\\
                & 1.80e-01 & 4.39e+01 & 1.70 & 4.38e+01 & 1.70 & 1.91e-01 & 2.98 & 1.63e+00 & 2.09 & 2.04e-02 & 2.11 & 3.21e+00 & 2.05 & 2.04e-02 & 2.07 & 2\\
                & 8.98e-02 & 1.12e+01 & 1.97 & 1.11e+01 & 1.97 & 3.71e-02 & 2.36 & 4.33e-01 & 1.91 & 5.16e-03 & 1.98 & 8.13e-01 & 1.98 & 5.16e-03 & 1.98 & 2\\
                & 7.19e-02 & 7.21e+00 & 1.97 & 7.18e+00 & 1.97 & 2.35e-02 & 2.05 & 2.85e-01 & 1.87 & 3.31e-03 & 1.99 & 5.21e-01 & 2.00 & 3.31e-03 & 1.99 & 4\\
                \hline
                \hline
                \multirow{4}{*}{\rule{0pt}{3ex} \rotatebox{90}{Nonahedral \rule{0pt}{1ex}}} 
                & 3.46e-01 & 1.27e+02 & *    & 1.27e+02 & *    & 1.41e+00 & *    & 5.54e+00 & *    & 7.01e-02 & *    & 1.08e+01 & *    & 6.99e-02 & *    & 2\\
                & 1.73e-01 & 3.64e+01 & 1.81 & 3.63e+01 & 1.80 & 1.59e-01 & 3.15 & 1.42e+00 & 1.97 & 1.77e-02 & 1.99 & 2.66e+00 & 2.03 & 1.69e-02 & 2.05 & 2\\
                & 1.37e-01 & 2.32e+01 & 1.95 & 2.32e+01 & 1.95 & 8.70e-02 & 2.60 & 8.90e-01 & 2.01 & 1.10e-02 & 2.04 & 1.68e+00 & 1.98 & 1.07e-02 & 1.98 & 2\\
                & 1.09e-01 & 1.48e+01 & 1.97 & 1.47e+01 & 1.97 & 5.12e-02 & 2.29 & 5.67e-01 & 1.95 & 6.90e-03 & 2.03 & 1.07e+00 & 1.97 & 6.78e-03 & 1.97 & 2\\
                \hline
            \end{tabular}
        }
    \end{center}
    \vspace{-0.5cm}
    \caption{Example 3. Convergence history and fixed-point iteration count for a variety of 3D meshes for the lowest-case order $k=1$. We considered unit parameters except for $\eta_1=10^{-5}$ and $\eta=0$.}
    \label{tab:convergence3DOnlyStress}
\end{table}

\begin{table}[!t]
    \setlength{\tabcolsep}{2pt}
    \begin{center}
        \resizebox{\textwidth}{!}{ 
            \begin{tabular}{| c | c | c | c | c | c | c | c | c | c | c | c | c | c | c | c | c |}
                \hline
                {} &
                {$h$} & 
                {$\bar{\mathrm{e}}_h$} & 
                {$r(\bar{\mathrm{e}}_h)$} & 
                {$\bar{\text{e}}_{\bsigma_h^{\bbPi}}$} & 
                {$r(\bar{\text{e}}_{\bsigma_h^{\bbPi}})$} & 
                {$\bar{\mathrm{e}}_{\bu_h}$} & 
                {$r(\bar{\mathrm{e}}_{\bu_h})$} & 
                {$\bar{\mathrm{e}}_{\bz_h^{\bPi}}$} & 
                {$r(\bar{\mathrm{e}}_{\bz_h^{\bPi}})$} & 
                {$\bar{\mathrm{e}}_{p_h}$} & 
                {$r(\bar{\mathrm{e}}_{p_h})$} & 
                {$\bar{\mathrm{e}}_{\bzeta_h^{\bPi}}$} & 
                {$r(\bar{\mathrm{e}}_{\bzeta_h^{\bPi}})$} & 
                {$\bar{\mathrm{e}}_{\varphi_h}$} & 
                {$r(\bar{\mathrm{e}}_{\varphi_h})$} & 
                {it} \\ [2pt]
                \hline 
                \hline
                \multirow{4}{*}{\rotatebox{90}{$\lambda = 10^6$}} 
                & 1.03e-01 & 7.52e+02 & *    & 7.22e+02 & *    & 2.11e+02 & *    & 1.43e+00 & *    & 1.81e-02 & *    & 1.50e+00 & *    & 1.90e-02 & *    & 1\\
                & 5.07e-02 & 1.61e+02 & 2.17 & 1.59e+02 & 2.13 & 2.30e+01 & 3.12 & 3.43e-01 & 2.01 & 4.32e-03 & 2.02 & 3.53e-01 & 2.04 & 4.44e-03 & 2.05 & 1\\
                & 2.66e-02 & 4.29e+01 & 2.05 & 4.27e+01 & 2.04 & 3.19e+00 & 3.06 & 9.12e-02 & 2.05 & 1.15e-03 & 2.05 & 9.26e-02 & 2.07 & 1.16e-03 & 2.07 & 1\\
                & 1.32e-02 & 1.04e+01 & 2.03 & 1.04e+01 & 2.03 & 3.85e-01 & 3.03 & 2.26e-02 & 2.00 & 2.84e-04 & 2.00 & 2.28e-02 & 2.01 & 2.86e-04 & 2.01 & 1\\
                \cline{1-17}
                \multirow{4}{*}{\rotatebox{90}{$s_0=10^{-8}$}} 
                & 1.03e-01 & 3.18e+01 & *    & 3.16e+01 & *    & 7.09e-01 & *    & 1.49e+00 & *    & 1.81e-02 & *    & 2.83e+00 & *    & 1.90e-02 & *    & 3\\
                & 5.07e-02 & 7.79e+00 & 1.98 & 7.75e+00 & 1.98 & 8.71e-02 & 2.95 & 3.48e-01 & 2.04 & 4.32e-03 & 2.02 & 6.63e-01 & 2.04 & 4.44e-03 & 2.05 & 3\\
                & 2.66e-02 & 2.07e+00 & 2.05 & 2.06e+00 & 2.05 & 1.35e-02 & 2.88 & 9.16e-02 & 2.07 & 1.15e-03 & 2.05 & 1.74e-01 & 2.07 & 1.16e-03 & 2.07 & 3\\
                & 1.32e-02 & 5.11e-01 & 2.01 & 5.09e-01 & 2.01 & 2.16e-03 & 2.63 & 2.27e-02 & 2.00 & 2.84e-04 & 2.00 & 4.28e-02 & 2.01 & 2.86e-04 & 2.01 & 3\\
                \cline{1-17}
                \multirow{4}{*}{\rotatebox{90}{$\alpha=10^{-6}$}} 
                & 1.03e-01 & 3.18e+01 & *    & 3.16e+01 & *    & 7.08e-01 & *    & 1.43e+00 & *    & 1.81e-02 & *    & 2.91e+00 & *    & 1.90e-02 & *    & 3\\
                & 5.07e-02 & 7.79e+00 & 1.98 & 7.75e+00 & 1.98 & 8.70e-02 & 2.95 & 3.43e-01 & 2.01 & 4.32e-03 & 2.02 & 6.83e-01 & 2.04 & 4.44e-03 & 2.05 & 3\\
                & 2.66e-02 & 2.07e+00 & 2.05 & 2.06e+00 & 2.05 & 1.35e-02 & 2.88 & 9.12e-02 & 2.05 & 1.15e-03 & 2.05 & 1.79e-01 & 2.07 & 1.16e-03 & 2.07 & 3\\
                & 1.32e-02 & 5.11e-01 & 2.01 & 5.09e-01 & 2.01 & 2.16e-03 & 2.63 & 2.26e-02 & 2.00 & 2.84e-04 & 2.00 & 4.41e-02 & 2.01 & 2.86e-04 & 2.01 & 3\\
                \hline
            \end{tabular}
        }
    \end{center}
    \vspace{-0.5cm}
    \caption{Example 3. Convergence history and fixed-point iteration counts are shown for the Hexahedral mesh with polynomial degree $k=1$ and extreme values for the parameters $\lambda$, $s_0$, and $\alpha$. In each test, the remaining parameters are set to unity, except $\eta_1=10^{-3}$ and $\eta=0$.}
    \label{tab:convergence2DRobustOnlyStress}
\end{table}

\subsection{Example 4. Sleep-driven molecular clearance within brain tissue}\label{sec:brainExample1}
Neurodegenerative diseases such as Alzheimer’s and dementia are linked to the accumulation of proteins (functional molecules) and metabolites (intermediate or residual products of metabolism) within brain tissue. To mitigate this, the brain enhances its clearance mechanisms during sleep. Studies indicate that sleep deprivation impairs molecular clearance, and this effect cannot be compensated for by an extra night’s sleep \cite{evpmr2021}. Moreover, it has been shown that the cortical interstitial space in mice (the narrow, irregularly shaped region between neurons and blood vessels in the cerebral cortex) increases by more than $60 \%$ during sleep, resulting in more efficient clearance \cite{Xie2013-me}.

Experimentally, MRI scans can visualise the distribution of the fluorescent cerebrospinal fluid (CSF) tracer Gadobutrol within brain tissue under various conditions, including sleep and awake states \cite{evpmr2021} (see Figure~\ref{fig:brainExampleExperiment}). In this example, we focus on the mathematical modelling of this process by tracking the concentration of the CSF tracer under sleep and awake states in coronal slices of the brain. The mesh originally introduced in \cite{bhkmr_jcp22} provides the geometry of the coronal slice boundary. We extend this geometry by including the left, right, and bottom ventricles, and employ the capabilities of \texttt{PolyMesher} \cite{Talischi2012} to discretise the domain with 19,999 Voronoi cells.

Following \cite{gomez23}, we neglect convection and assume that stress-dependent diffusion is the dominant transport mechanism. This assumption is supported by experimental data indicating that transport within brain tissue occurs $3.5 \pm 1.5$ faster than predicted by Fickian diffusion within the extra-cellular matrix\cite{vinje2023human}. The expansion of the cortical interstitial space during sleep leads to an increase in the volume fraction of brain tissue \cite{Xie2013-me}. This can be measured as a porosity of $\phi = 0.14$ in the awake state and $\phi = 0.23$ in the sleep state.

\begin{figure}[!t]
    \centering
    \subfigure[$\varphi_{h,\text{sleep}}$.]       {\includegraphics[width=0.33\textwidth,trim={0.5cm 2.25cm 4.75cm 1.25cm},clip]{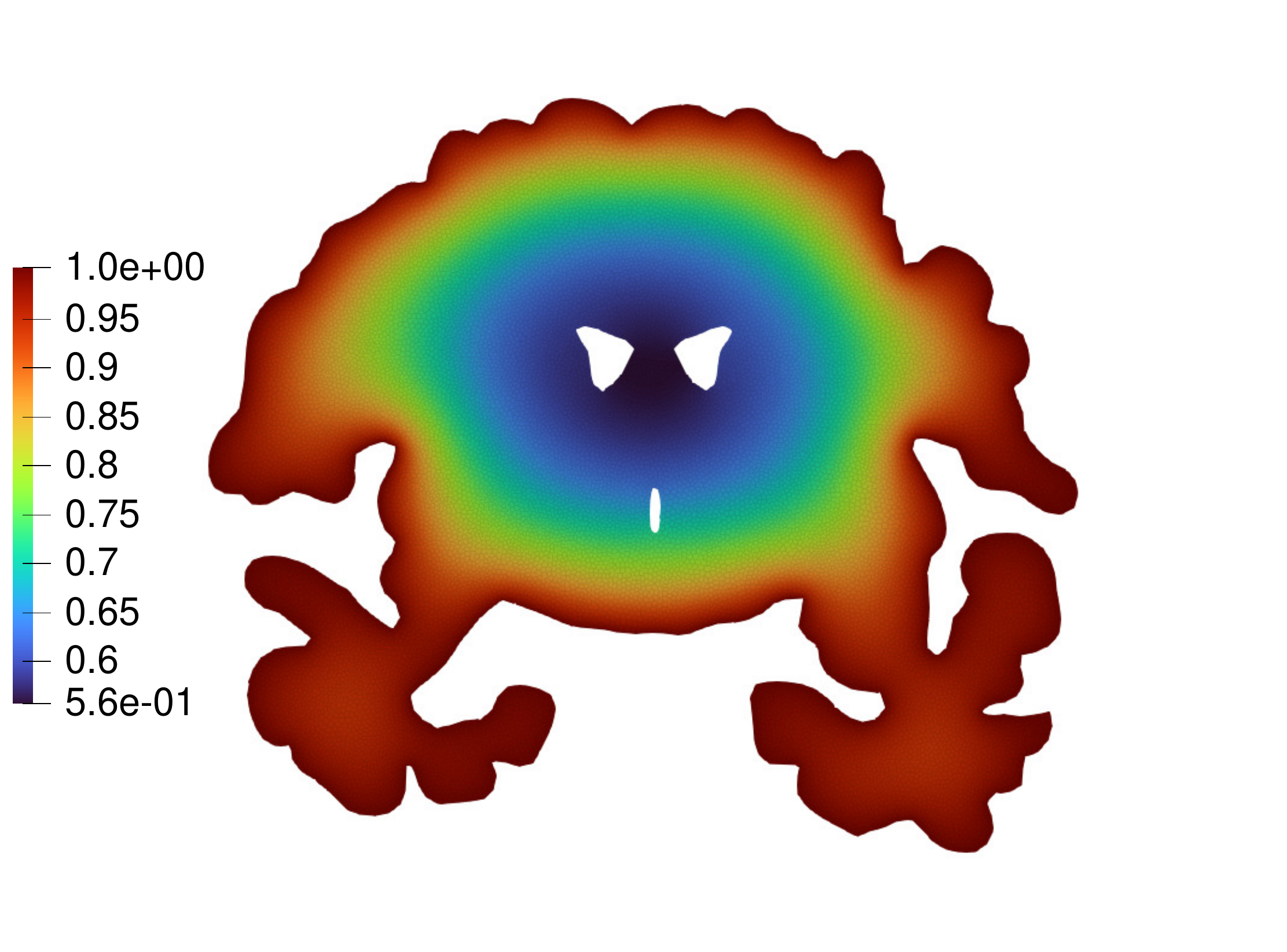}}
    \subfigure[$\varphi_{h,\text{awake}}$.]       {\includegraphics[width=0.33\textwidth,trim={0.5cm 2.25cm 4.75cm 1.25cm},clip]{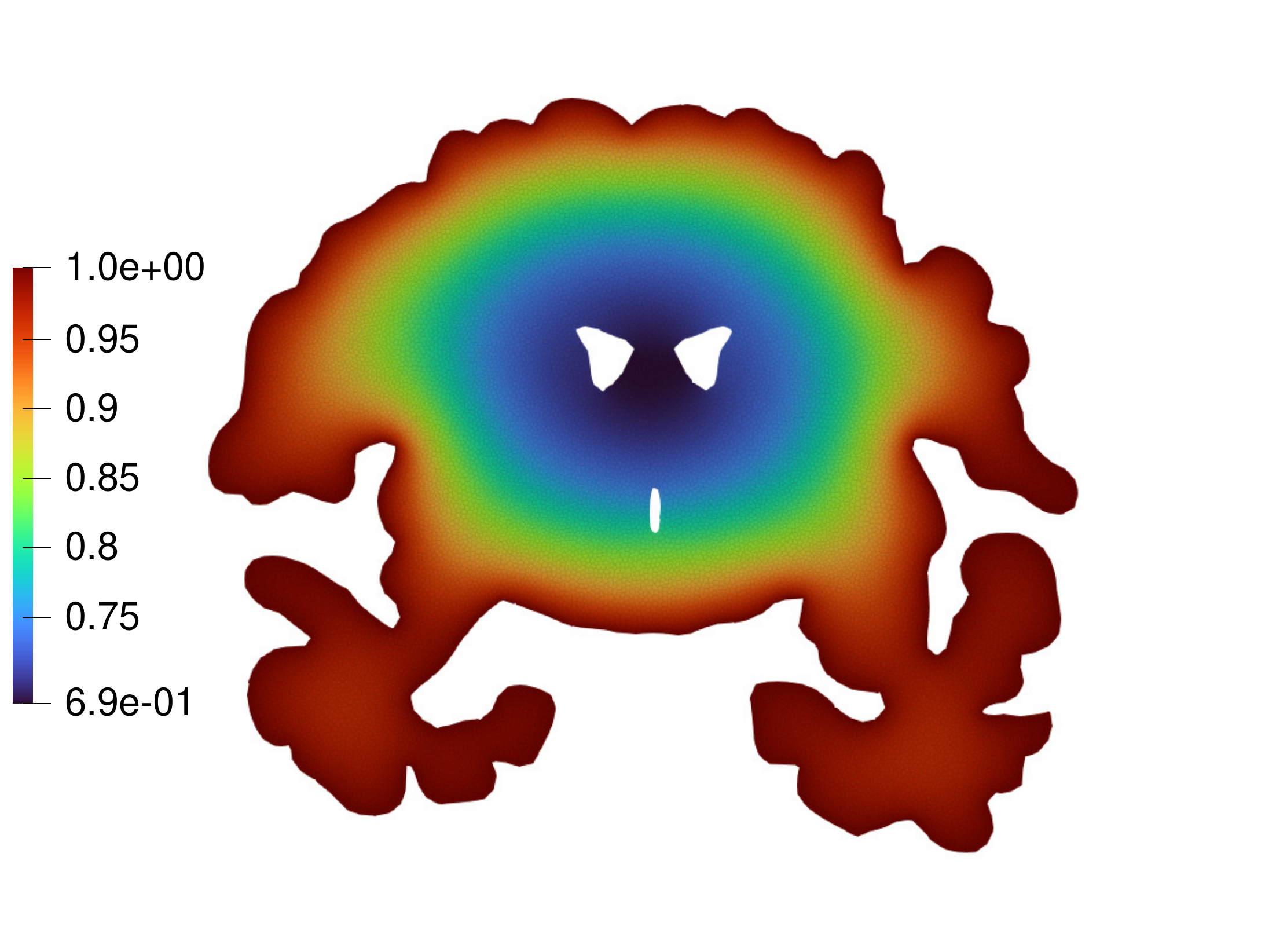}}
    \subfigure[$|\varphi_{h,\text{sleep}}-\varphi_{h,\text{awake}}|$.]       {\includegraphics[width=0.33\textwidth,trim={0.5cm 2.25cm 4.75cm 1.25cm},clip]{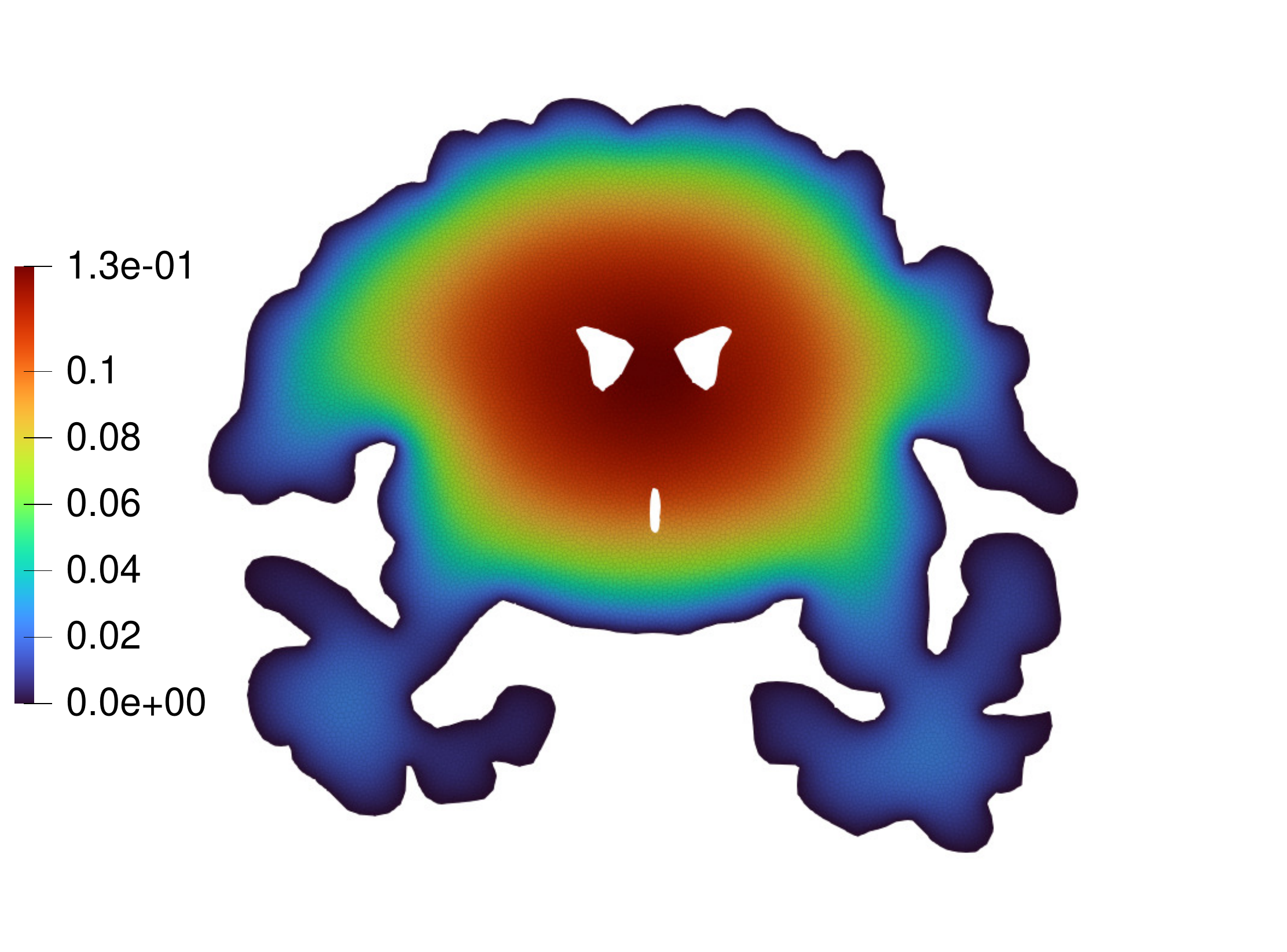}}
    \subfigure[$|\bu_{h,\text{sleep}}|$.]         {\includegraphics[width=0.33\textwidth,trim={0.5cm 2.25cm 4.75cm 1.25cm},clip]{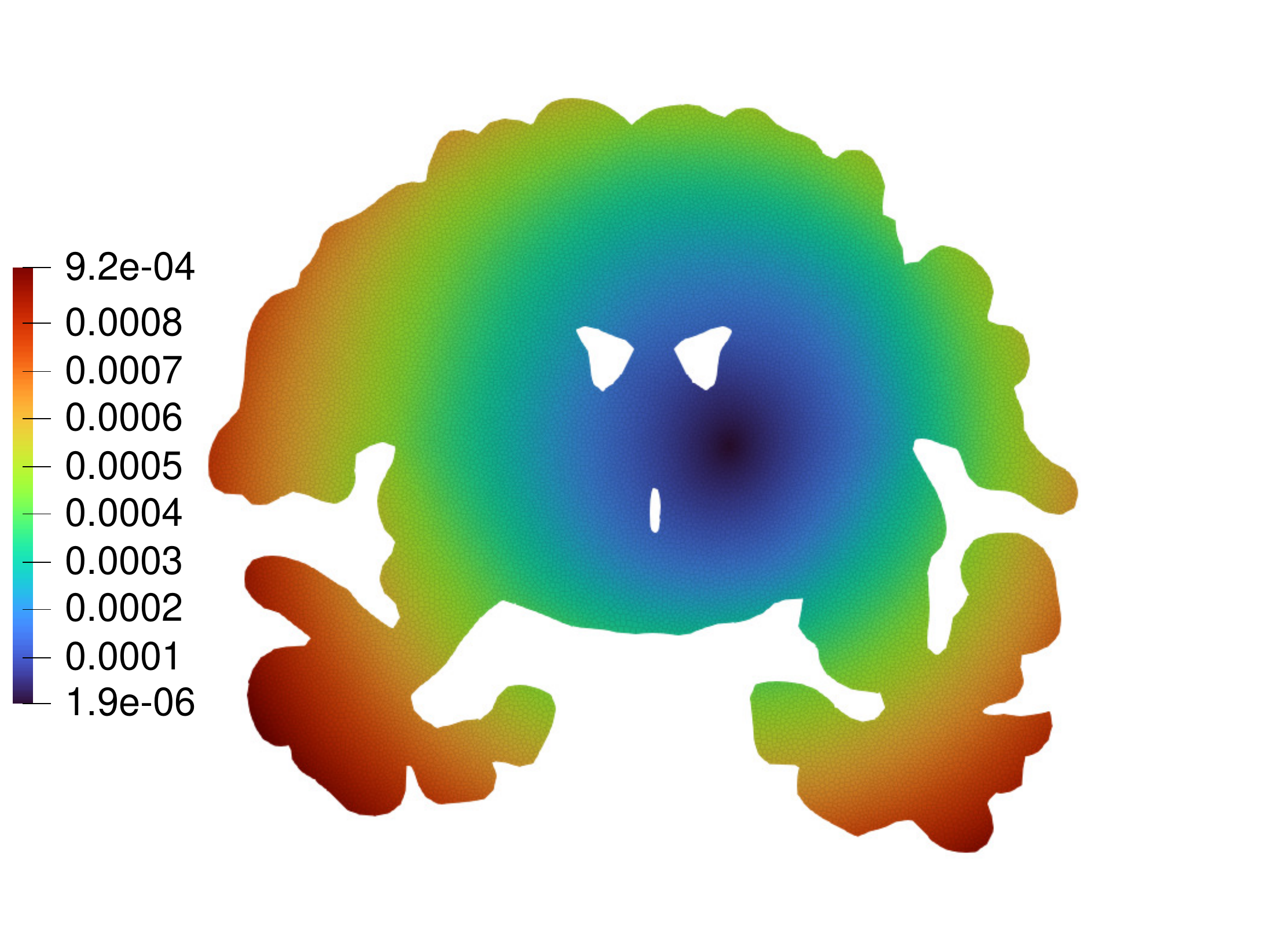}}
    \subfigure[$|\bu_{h,\text{awake}}|$.]         {\includegraphics[width=0.33\textwidth,trim={0.5cm 2.25cm 4.75cm 1.25cm},clip]{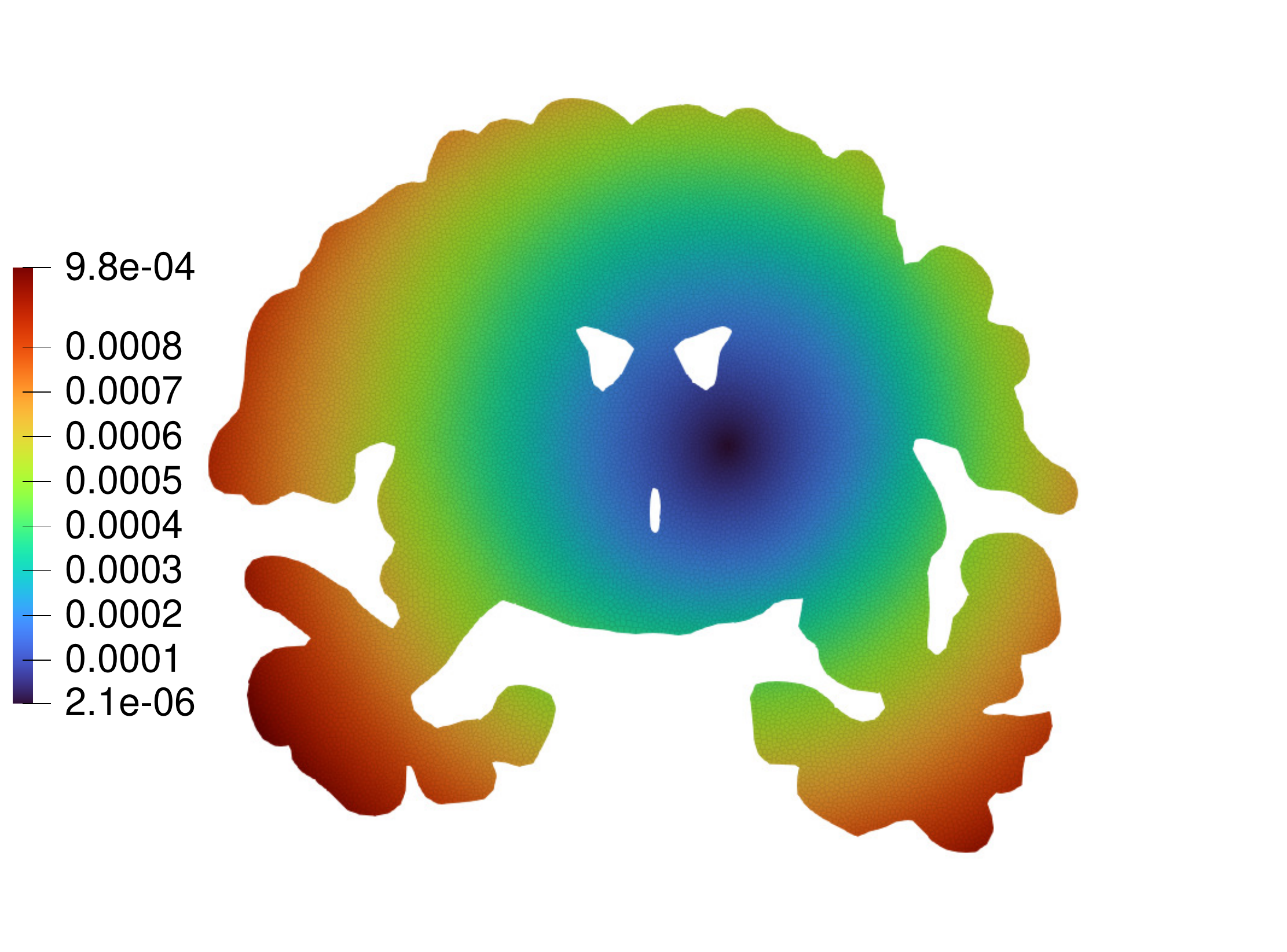}}
    \subfigure[$|\bu_{h,\text{sleep}}-\bu_{h,\text{awake}}|$.]         {\includegraphics[width=0.33\textwidth,trim={0.5cm 2.25cm 4.75cm 1.25cm},clip]{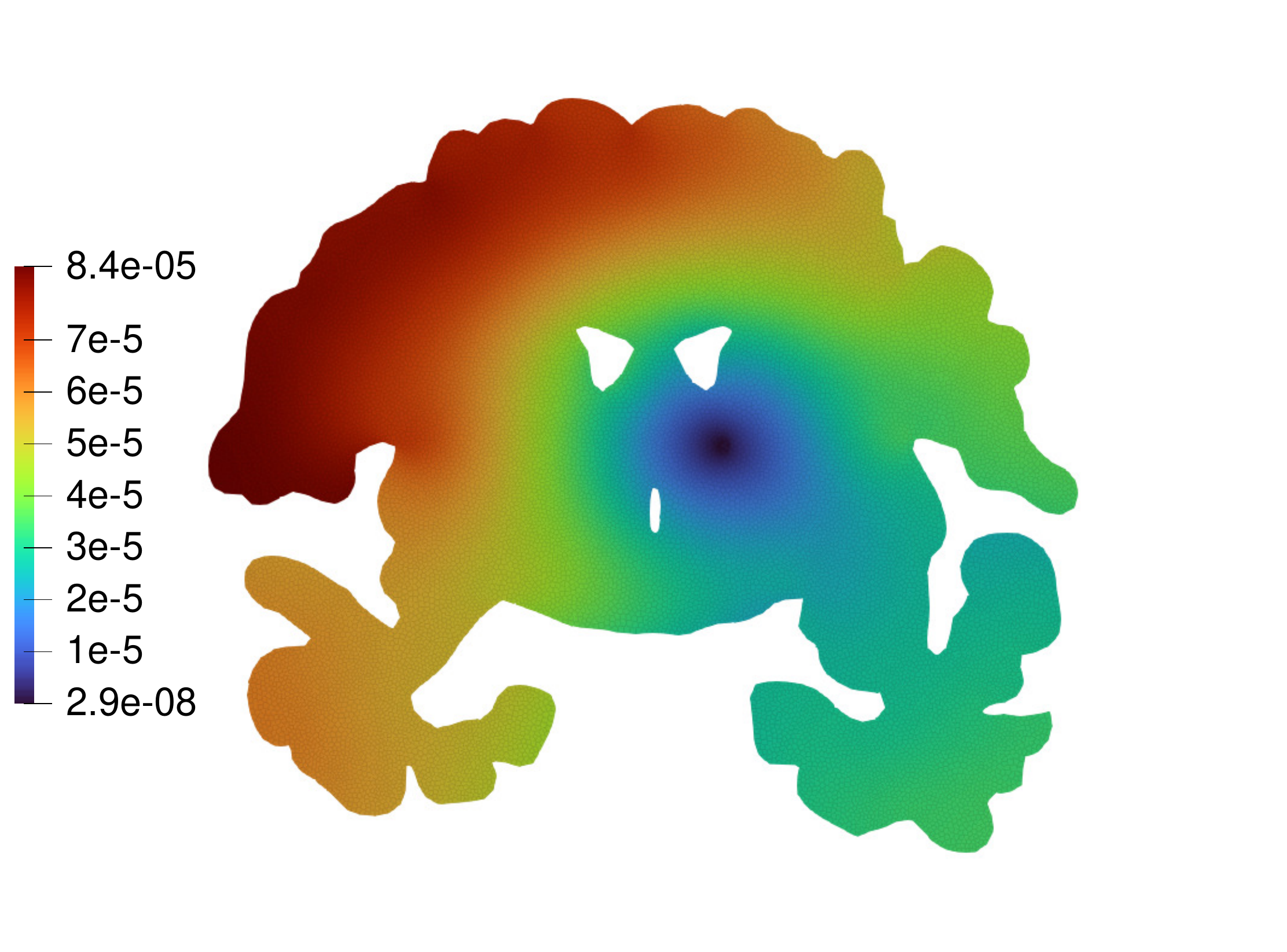}}
    \subfigure[$|\bsigma_{h,\text{sleep}}^{\bbPi}|$.] {\includegraphics[width=0.33\textwidth,trim={0.5cm 2.25cm 4.75cm 1.25cm},clip]{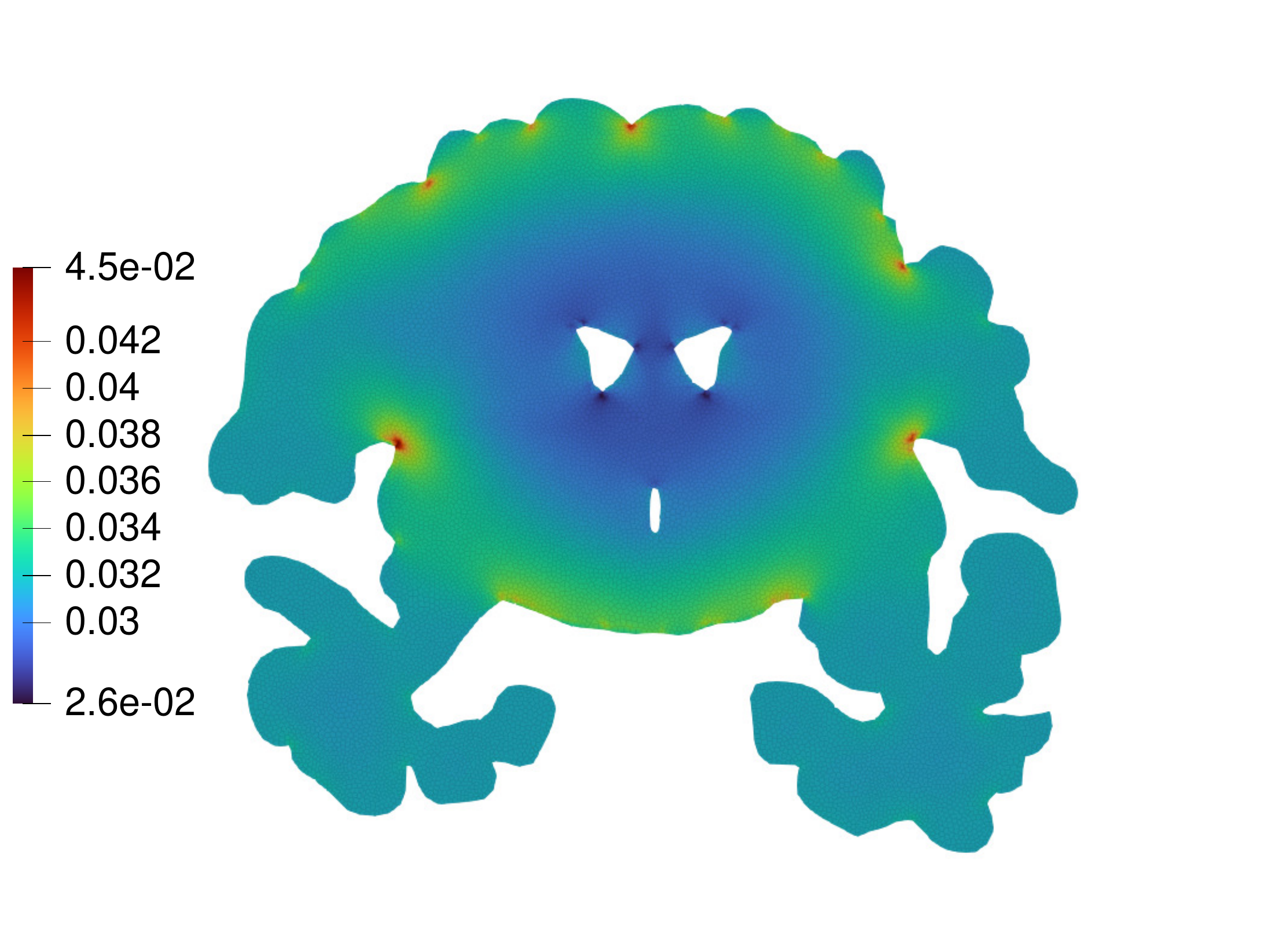}}
    \subfigure[$|\bsigma_{h,\text{awake}}^{\bbPi}|$.] {\includegraphics[width=0.33\textwidth,trim={0.5cm 2.25cm 4.75cm 1.25cm},clip]{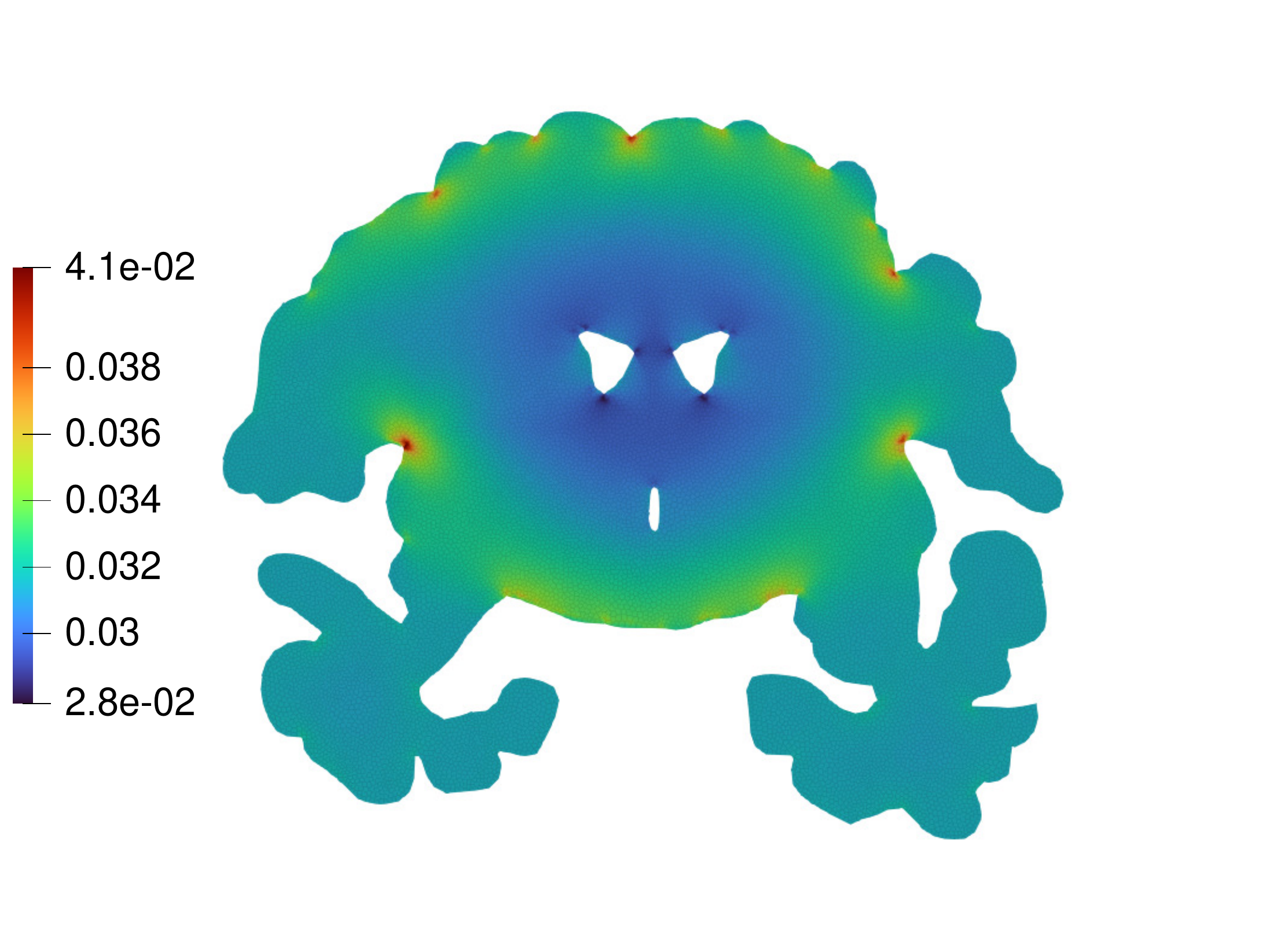}}
    \subfigure[$|\bsigma_{h,\text{sleep}}^{\bbPi}-\bsigma_{h,\text{awake}}^{\bbPi}|$.] {\includegraphics[width=0.33\textwidth,trim={0.5cm 2.25cm 4.75cm 1.25cm},clip]{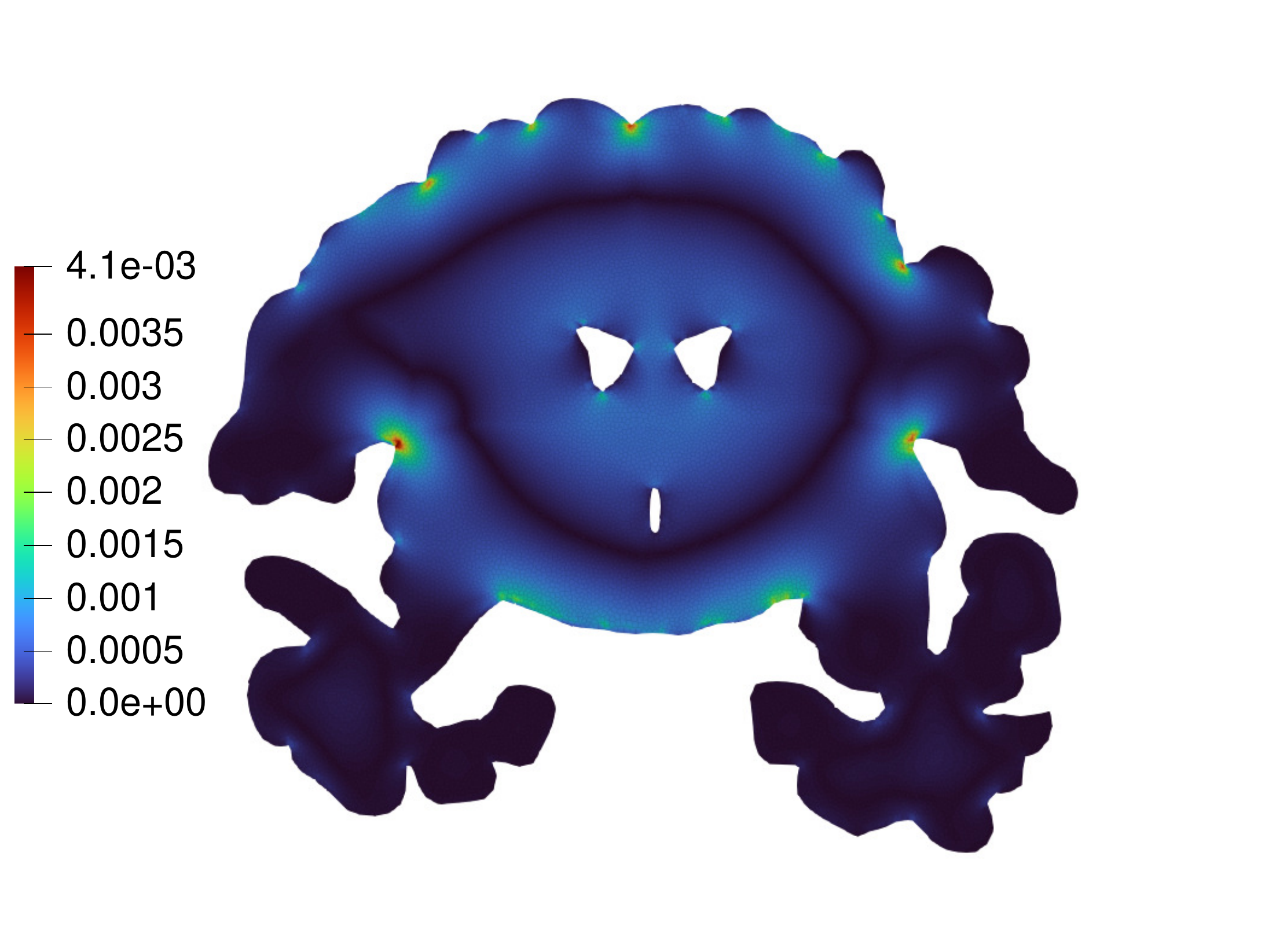}}
    \subfigure[$|\bz_{h,\text{sleep}}^{\bPi}|$.] {\includegraphics[width=0.33\textwidth,trim={0.5cm 2.25cm 4.75cm 1.25cm},clip]{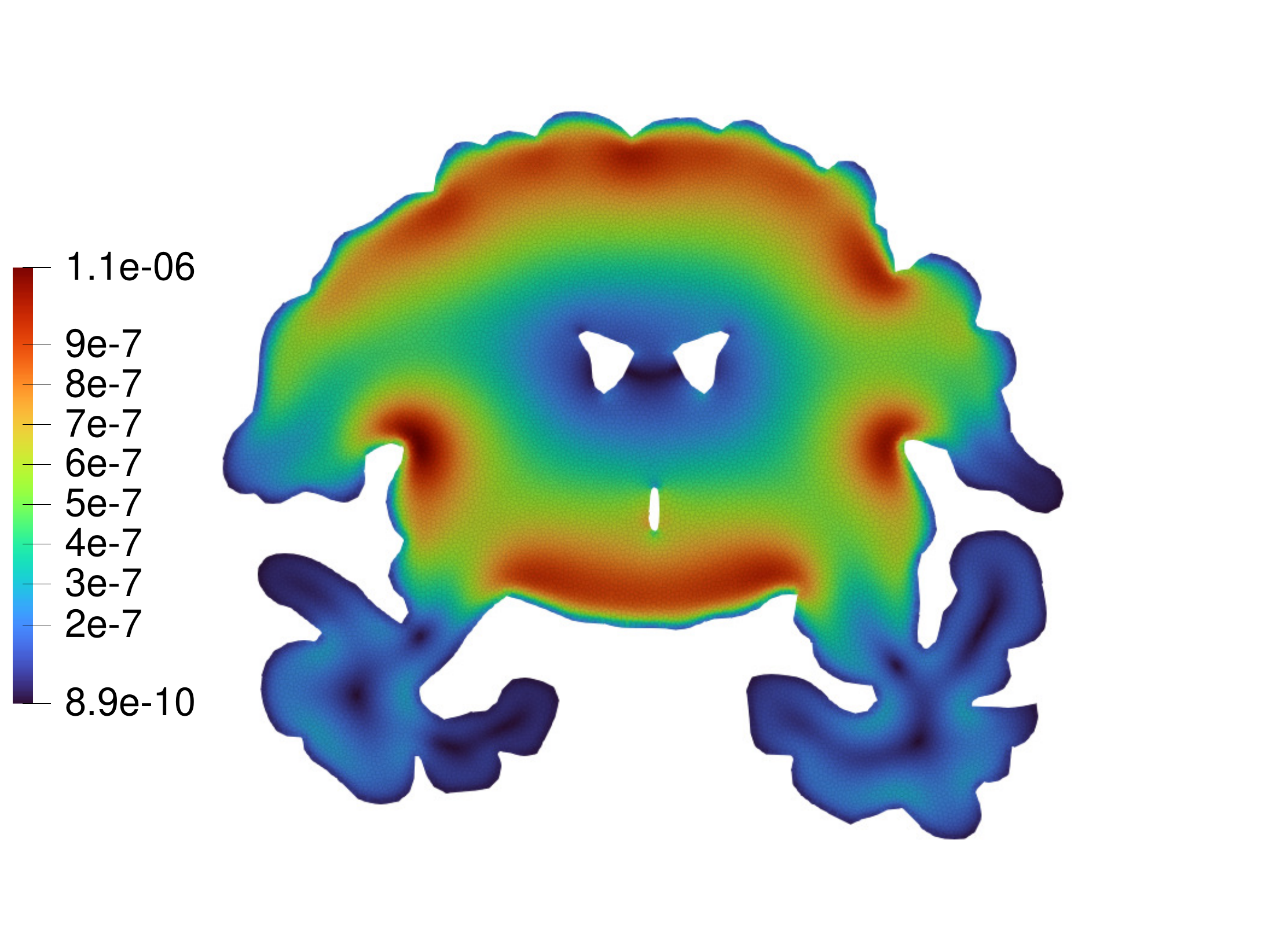}}
    \subfigure[$|\bz_{h,\text{awake}}^{\bPi}|$.] {\includegraphics[width=0.33\textwidth,trim={0.5cm 2.25cm 4.75cm 1.25cm},clip]{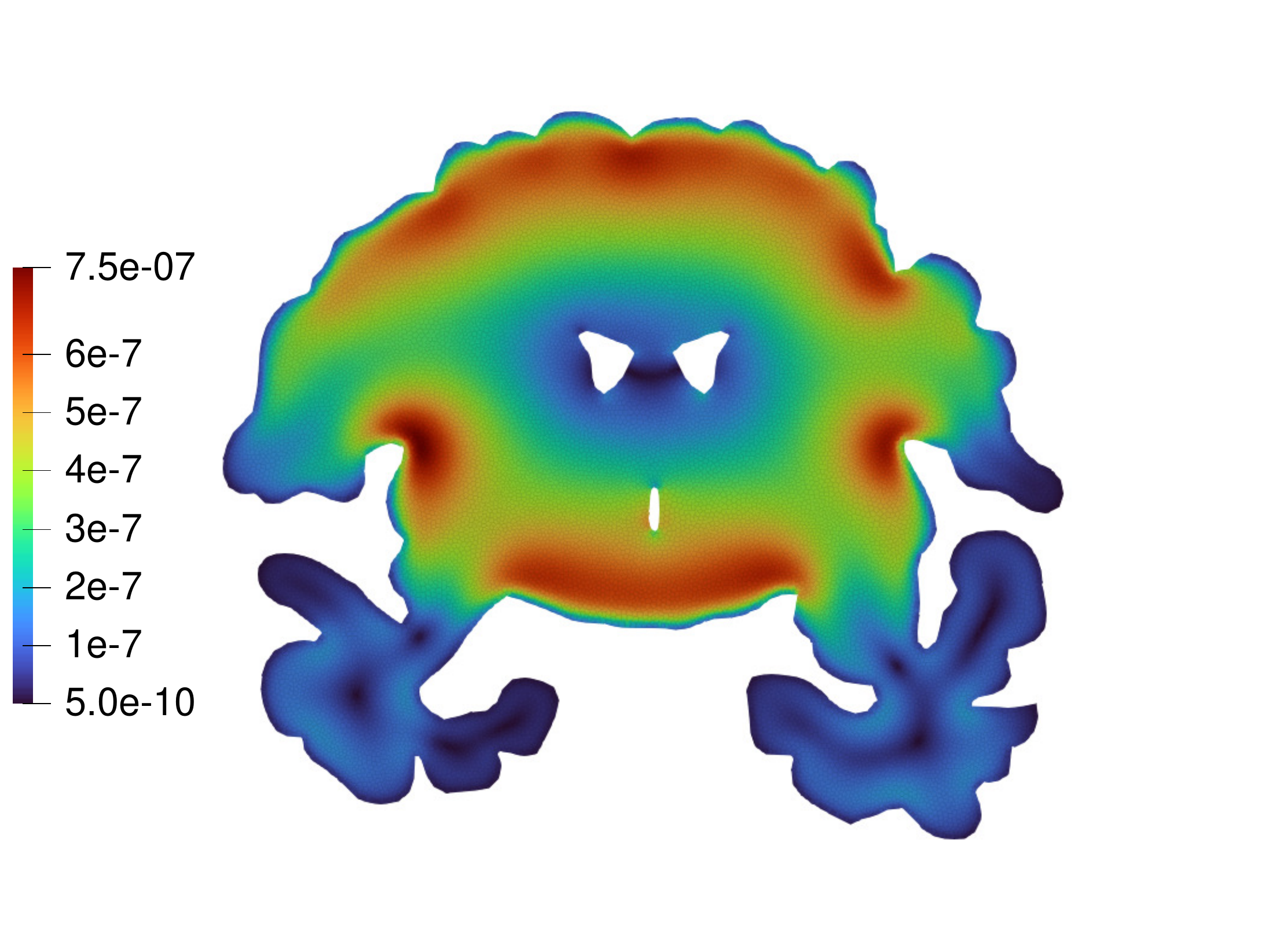}}
    \subfigure[$|\bz_{h,\text{sleep}}^{\bPi}-\bz_{h,\text{awake}}^{\bPi}|$.] {\includegraphics[width=0.33\textwidth,trim={0.5cm 2.25cm 4.75cm 1.25cm},clip]{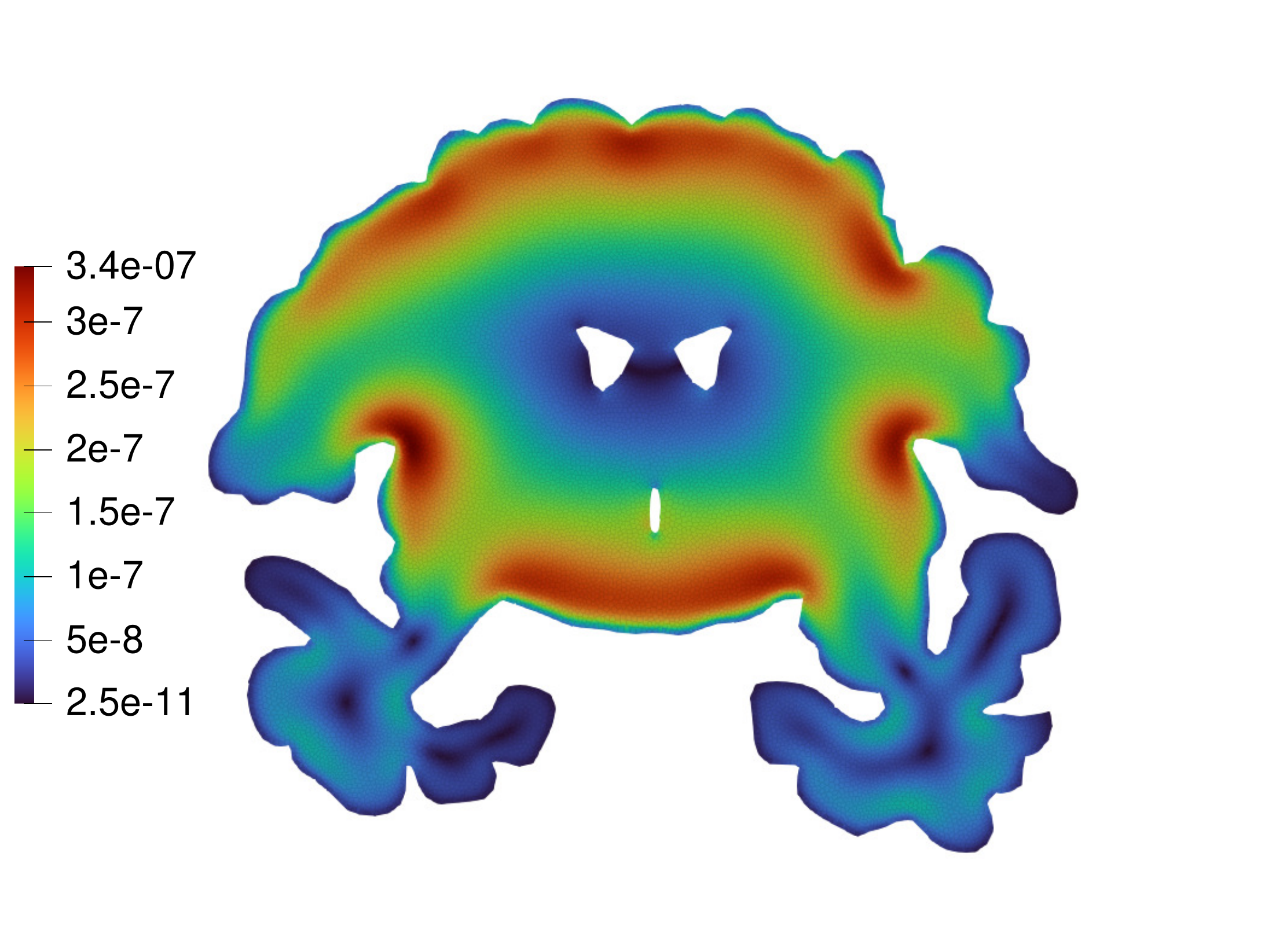}}
    \caption{Example 4. Snapshots of the variables of interest for the sleep-driven metabolite clearance within brain tissue simulation. The geometry of the human brain is discretised with 19,999 Voronoi cells and the polynomial degree for the VEM is set to $k=1$.}\label{fig:brainSols2D}
\end{figure}

We imposed a compression condition $\bsigma \bn = -(p_0 \pi/2)\bn$ on the brain cortex and clamped the brain tissue along the three ventricles. In addition, a ventricular pressure of $p_D = p_0$ is prescribed, while no filtration flux is imposed in the cortex. The concentration of the CSF tracer Gadobutrol is assumed to be uniformly distributed within the brain cortex, with a value of $\varphi_0 = \num[round-mode=figures, round-precision=3]{6.0471e-4} \unit{\g}/\unit{\mm^3}$. We adimensionalise the concentration using this quantity and set the boundary condition $\varphi_D = 1$; no CSF tracer flux is allowed through the ventricles. Finally, we assume that no external forces act in the simulation; that is, the right-hand sides $\ff$, $g$, and $\ell$ are set to zero.

The stress-assisted diffusion coefficient is modified from \eqref{eq:D} and now takes the form
\begin{equation}\label{eq:DExample5}
    \varrho(\bsigma) =  \frac{\varrho_0}{\varphi_0\phi} \left(1+ \exp(-\eta_1[ \tr\bsigma]^2)\right),
\end{equation}
 where $\varrho_0 = \num[round-mode=figures, round-precision=2]{5.3e-2} \unit{\mm^3}/\unit{\s}$ and $\eta_1 = \num[round-mode=figures, round-precision=1]{2.e-1}$. The coupling parameters are set to $\alpha = 1$, $\beta = 0.35$ and Forchheimer effects are neglected for this test. Relevant parameters associated with the brain transport problem including mechanical properties of brain tissue are given next (see \cite{Budday2015-jx,vemrs2020,cadq2023}):
 \begin{gather*}
     E = \num[round-mode=figures, round-precision=3]{8.e2}  \unit{\g}/\unit{\mm \, \s^2}, \quad \nu = 0.495, \quad \lambda = \frac{E\nu}{(1+\nu)(1-2\nu)}, \quad \mu = \frac{E}{2(1+\nu)},\\
     \kappa_0 =  \num[round-mode=figures, round-precision=3]{1.4285e-5}  \unit{\g \, \mm}/\unit{\s^3}, \quad \bkappa = \kappa_0\bbI, \quad s_0 = \num[round-mode=figures, round-precision=3]{2.e-8}  \unit{\mm\, \s^2}/\unit{\g}.
 \end{gather*}
 \begin{figure}[!h]
    \centering
    \includegraphics[width=.9\textwidth,trim={0.2cm 0.9cm 0.2cm 0.2cm},clip]{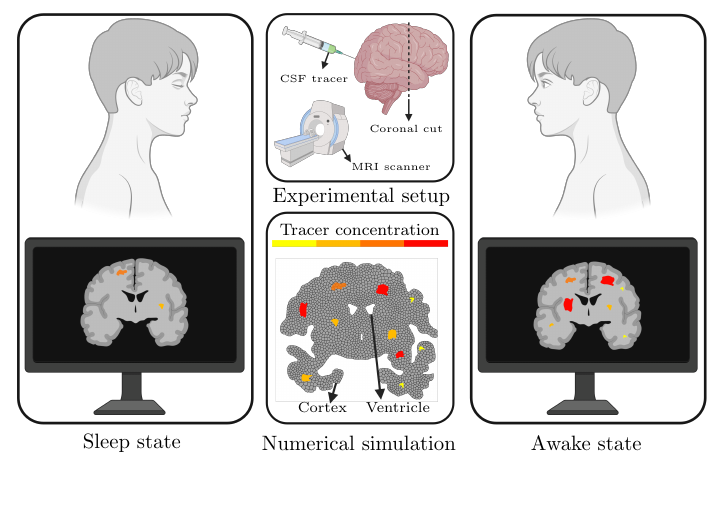}
    \caption{Example 4. Two-dimensional schematic illustration of molecular clearance in brain tissue of a fluorescent CSF tracer. The experimental setup is shown at the top middle panel. The MRI scans for the sleep and awake states are shown on the left and right panels. The bottom middle panel show the expected CSF tracer concentration computational simulations in a polytopal mesh of a coronal slice of the brain with 1,999 Voronoi cells.}\label{fig:brainExampleExperiment}
\end{figure}

 We report snapshots of the approximated variables of interest $\varphi_h$, $\bu_h$, $\bsigma_h^{\bbPi}$, and $\bz_h^{\bPi}$ for the lowest order scheme (cf. Section~\ref{sec:discrete}) in Figure~\ref{fig:brainSols2D}. The sleep state is displayed in the left column, and the awake state in the middle column. The right column illustrates the difference between the two states. The simulations indicate that the CSF tracer concentration is approximately $13\%$ higher in the awake state, particularly in the region adjacent to the ventricles. The computational results reproduce the experimentally observed differences in CSF tracer concentration clearance between the awake and sleep states, indicating that stress-dependent diffusion can play a key role in this process.

\subsection{Example 5. Nonlinear transport and accumulation of Amyloid-$\beta$ in brain tissue}

The brain consists of about $\num[round-mode=figures, round-precision=1]{8}$-$\num[round-mode=figures, round-precision=1]{10}$\% protein, i.e. around $\num[round-mode=figures, round-precision=1]{100} \unit{\g}$, of which the daily turnover is around $\num[round-mode=figures, round-precision=1]{3}$-$\num[round-mode=figures, round-precision=1]{4} \unit{\g}$~\cite{rasmussen2022fluid}. The resulting waste-protein fragments must then be re-cycled or removed away from the brain. In neurodegenerative diseases such as Alzheimer´s (AD) and Parkinson´s diseases the hallmark feature of the disease and progression is accumulation of waste proteins. For instance, while the normal amount of the waste-protein Amyloid-$\beta$ (A$\beta$) is a few $\unit{\m\g}$, AD leads to two orders of magnitude higher amounts~\cite{bateman2006human}. A$\beta$ is known to accumulate in A$\beta$ plaque or fibrils~\cite{schmit2011drives} (see also Figure~\ref{fig:brainExampleExperiment2}) and will as such alter the mechanical properties of in particular the extra-cellular matrix. 
The exact mechanisms of these processes are not currently known. 
One theory propose that prions infect its neighbouring proteins in a similar manner as an infectious disease~\cite{weickenmeier2018multiphysics}. However, it is also well-known 
that  environmental factors such as pH and temperature
affect amyloid formation. More recently, focus is
put on fluid velocity, shear stress and membrane dynamics~\cite{iorio2025fluid, krausser2020physical, willis2025flow}. 
Specifically here we propose a change of permeability as A$\beta$ is accumulating, in response to stress. 

For this simulation, we recall the brain tissue properties and boundary conditions for the Biot block introduced in Example 4 (cf. Section~\ref{sec:brainExample1}). Furthermore, we explore two cases for the $A\beta$ concentration, which is baseline-measured at $\varphi_0 = \num[round-mode=figures, round-precision=3]{4.5e-8} \unit{\g}/\unit{\mm^3}$ (corresponding to approximately $\num[round-mode=figures, round-precision=1]{10} \unit{\micro\mol}$ of $A\beta$). The first case assumes the concentration is uniformly distributed throughout the brain neocortex, the second case introduces a random distribution simulating that only 15\% of the region experiences accumulation. The altered diffusion equation (cf. \eqref{eq:diffWeak}) is then nondimensionalised using $\varphi_0$ as the reference concentration. No-flux boundary conditions are imposed in the ventricles and the remaining section of the cortex. Regarding the nonlinear terms, the stress-assisted diffusion takes the form provided in \eqref{eq:DExample5} with $\rho_0=\num[round-mode=figures, round-precision=5]{3.170979e-7} \unit{\mm^3}/\unit{\s}$, $\eta_1=\num[round-mode=figures, round-precision=1]{1.e-2}$, and $\phi=0.2$; while the Forchheimer coefficient is set to $\eta = \num[round-mode=figures, round-precision=1]{1.e-13}/(\varphi_0 \phi)$.

 \begin{figure}[!h]
    \centering
    \includegraphics[width=.9\textwidth,trim={0.2cm 0.9cm 0.2cm 0.2cm},clip]{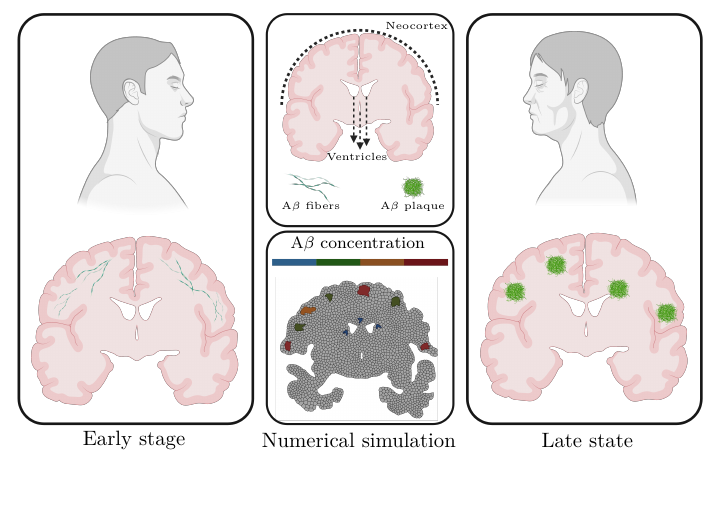}
    \caption{Example 5. Two-dimensional schematic illustration of nonlinear diffusion and accumulation of Amyloid-$\beta$ (A$\beta$) within brain tissue. The geometrical setup is shown in the top-middle panel. The left panel illustrates an early stage of Alzheimer's disease (AD), characterised by a small and localised accumulation of A$\beta$ fibres near the neocortex, whereas the right panel depicts the formation of accumulated A$\beta$ plaques. The bottom-middle panel illustrates the expected outcome of the computational simulation on a polytopal mesh of a coronal brain slice composed of 1,999 Voronoi cells.}\label{fig:brainExampleExperiment2}
\end{figure}

Snapshots of the obtained variables of interest ($\varphi_h$, $\bu_h$, $\bsigma_h^{\bbPi}$, and $\bz_h^{\bPi}$) are shown for the lowest order VEM  (cf. Section~\ref{sec:discrete}) in Figure~\ref{fig:brainSols22D}-\ref{fig:brainSols22DRand}. The numerical simulations show increased accumulation near enclosed regions of the brain tissue for both test cases. These results suggest that impaired ventricular function (i.e., reduced waste clearance mechanisms) promotes the accumulation of A$\beta$ within the brain. Moreover, higher displacement magnitudes are also localised in these regions, potentially contributing to mechanical deterioration of the tissue. 

\begin{figure}[!h]
    \centering
    \subfigure[$|\bsigma_h^{\bbPi}|$.] {\includegraphics[width=0.33\textwidth,trim={10.75cm 6.25cm 3.15cm 6.25cm},clip]{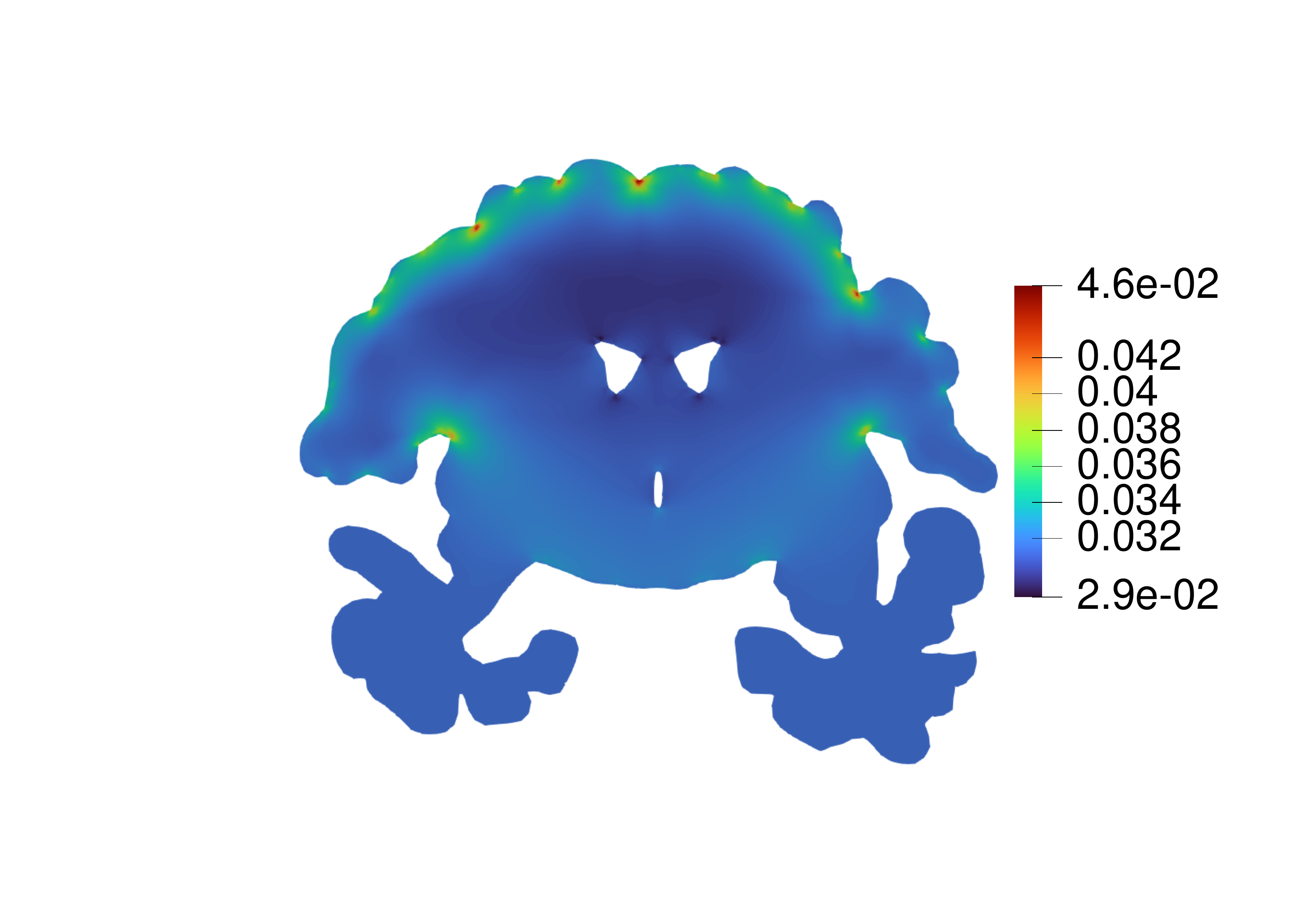}}  
    \subfigure[$|\bz_h^{\bPi}|$.]     {\includegraphics[width=0.33\textwidth,trim={10.75cm 6.25cm 3.15cm 6.25cm},clip]{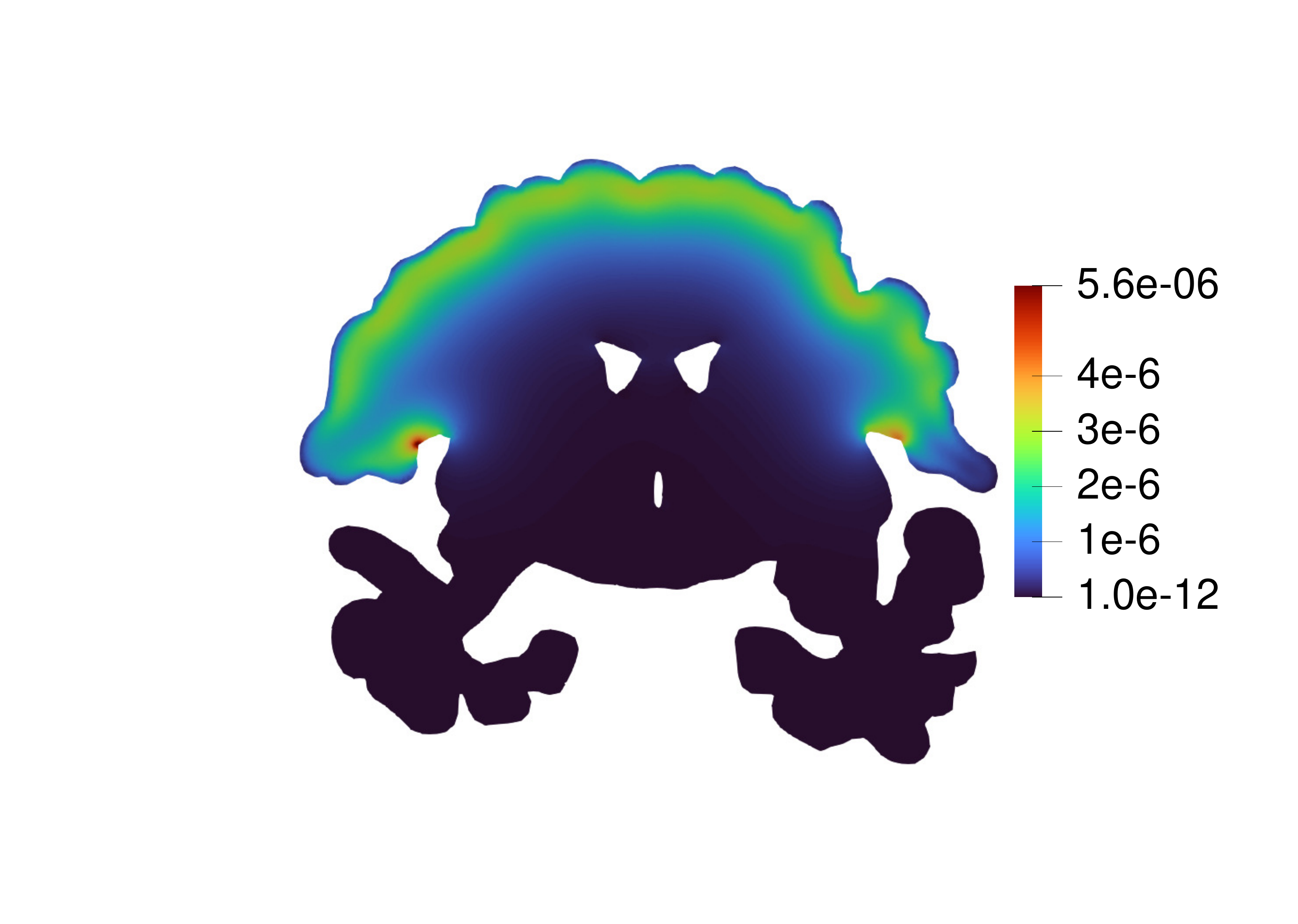}} 
    \subfigure[$|\bzeta_h^{\bPi}|$.]  {\includegraphics[width=0.33\textwidth,trim={10.75cm 6.25cm 2.75cm 6.25cm},clip]{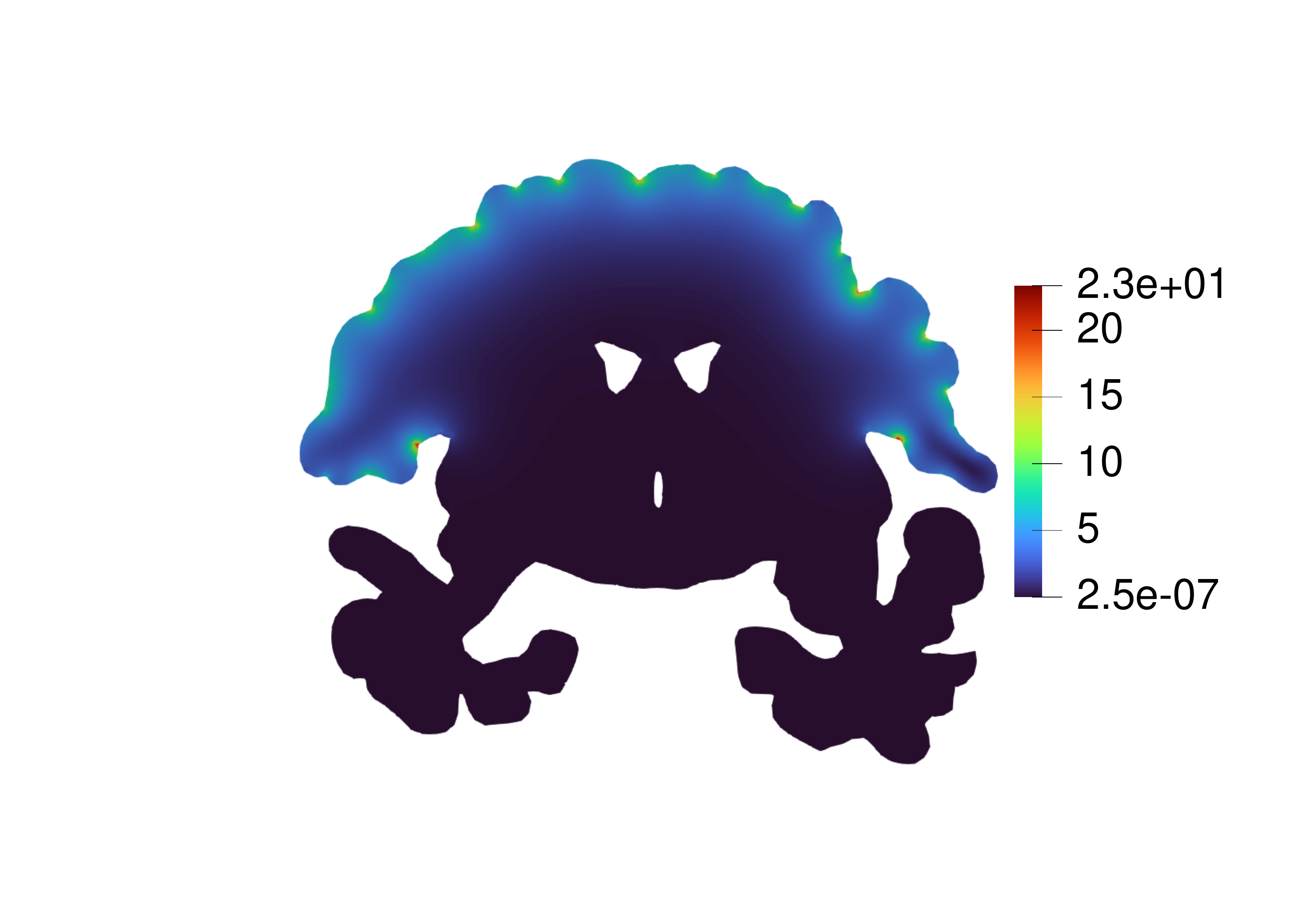}}
    \subfigure[$|\bu_h|$.]         {\includegraphics[width=0.33\textwidth,trim={10.75cm 6.25cm 2.75cm 6.25cm},clip]{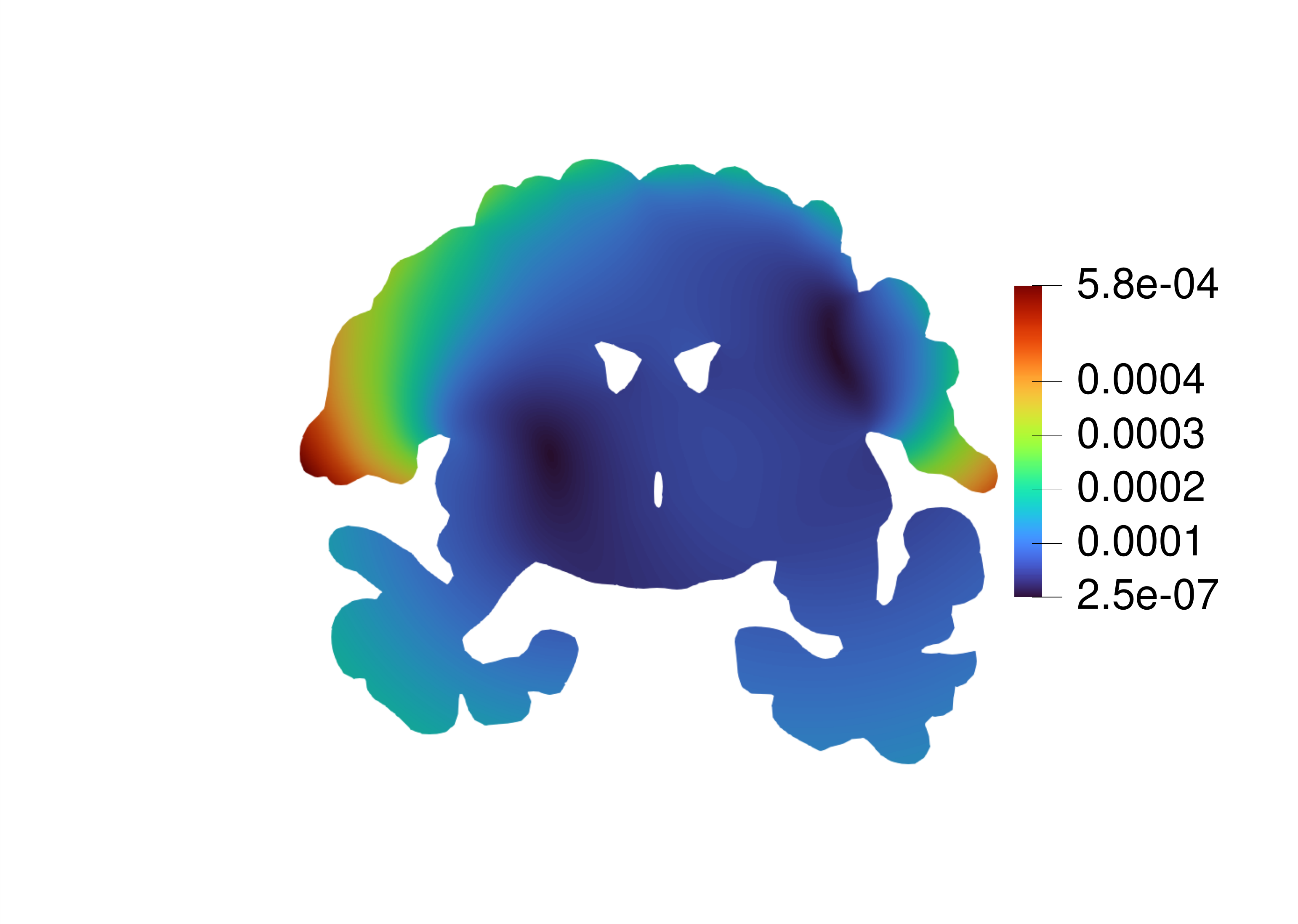}}
    \subfigure[$p_h$.]             {\includegraphics[width=0.33\textwidth,trim={10.75cm 6.25cm 2.75cm 6.25cm},clip]{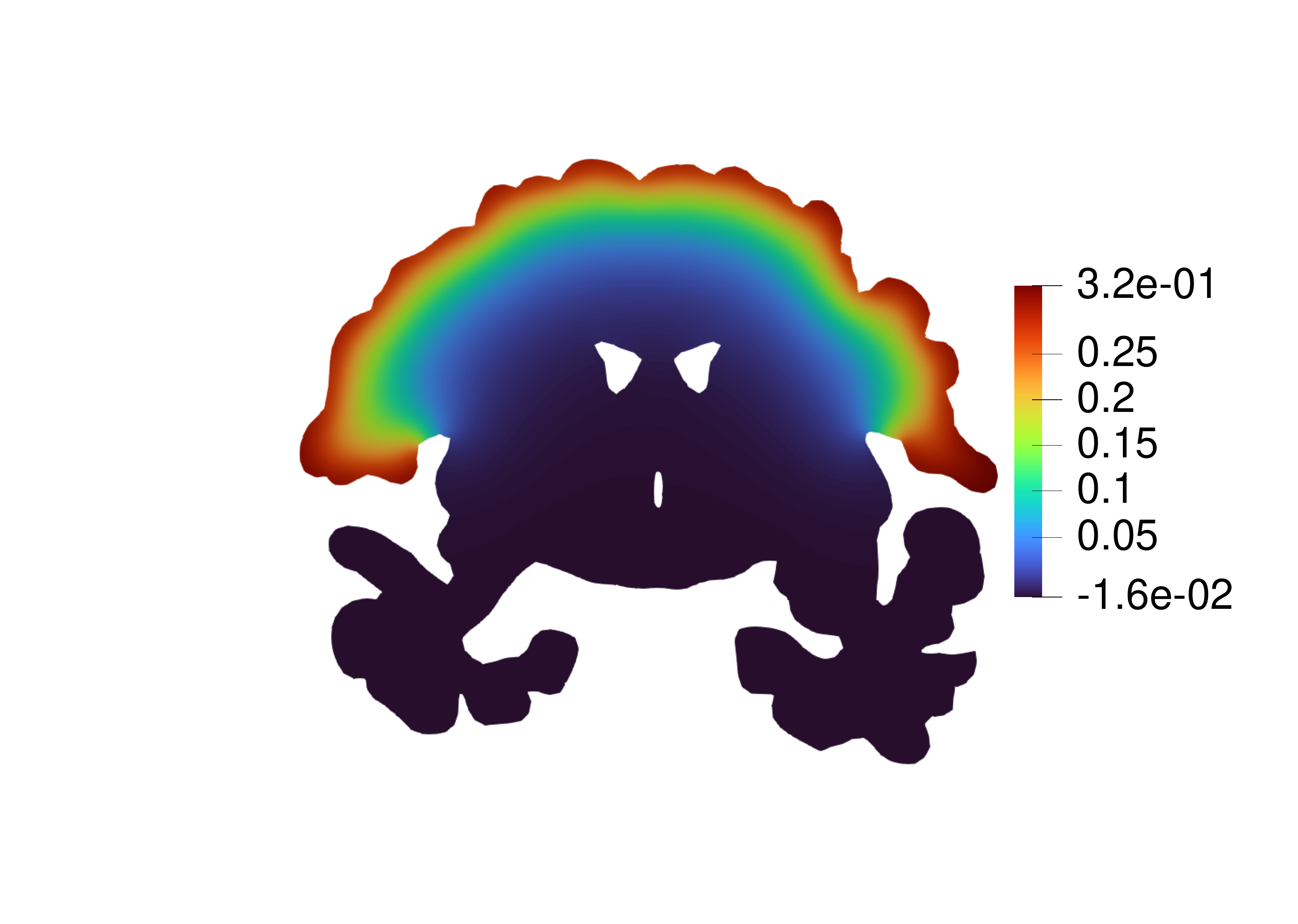}}
    \subfigure[$\varphi_h$.]       {\includegraphics[width=0.33\textwidth,trim={10.75cm 6.25cm 2.75cm 6.25cm},clip]{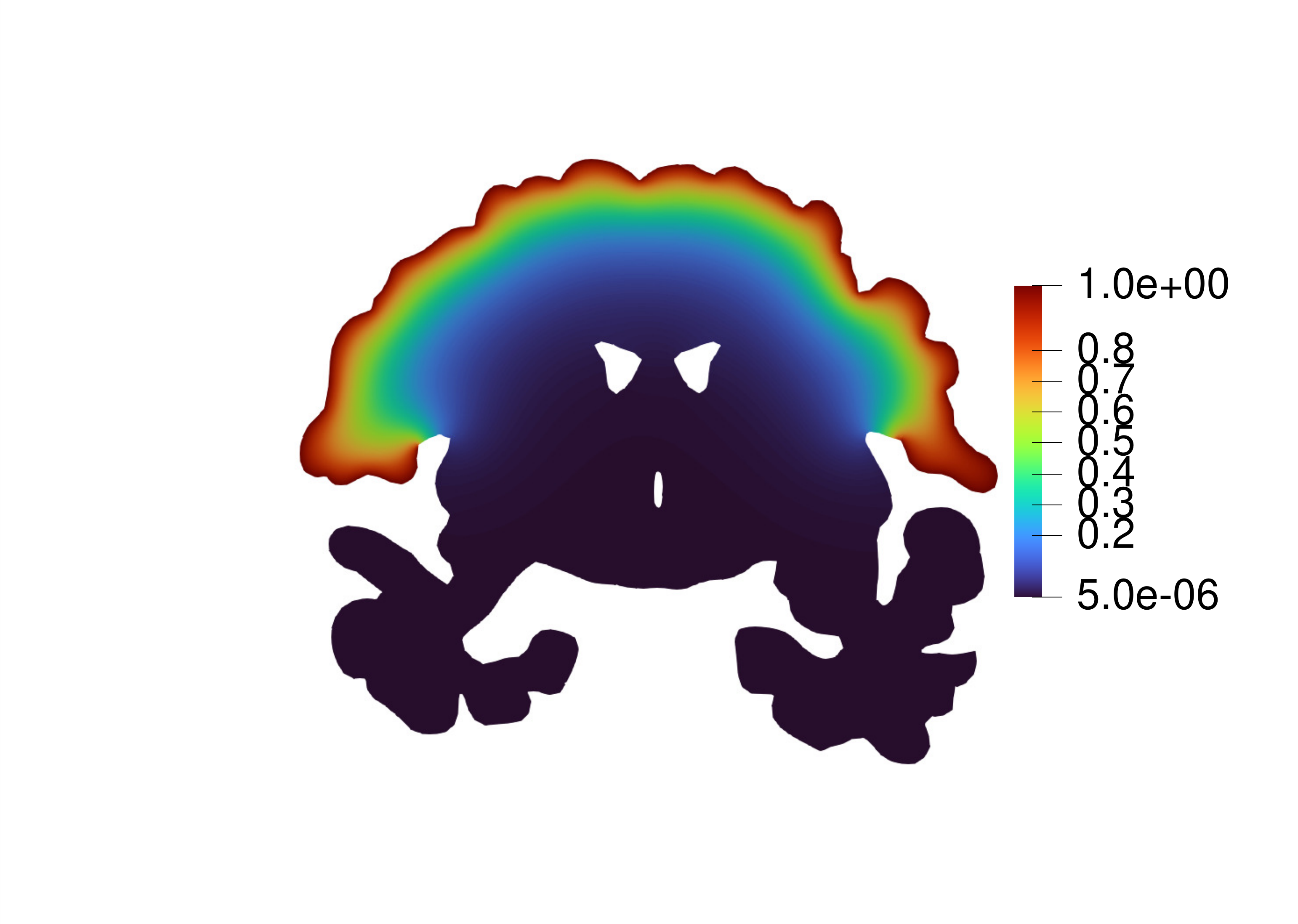}}
    \caption{Example 5. Snapshots of the variables of interest for the transport and accumulation of Amyloid-$\beta$ in brain tissue with uniformly distributed concentration in the neocortex. The geometry of the human brain is discretised with 19,999 Voronoi cells and the polynomial degree for VEM is set to $k=1$.}\label{fig:brainSols22D}
\end{figure}

\begin{figure}[!h]
    \centering
    \subfigure[$|\bsigma_h^{\bbPi}|$.] {\includegraphics[width=0.33\textwidth,trim={10.75cm 6.25cm 2.75cm 6.25cm},clip]{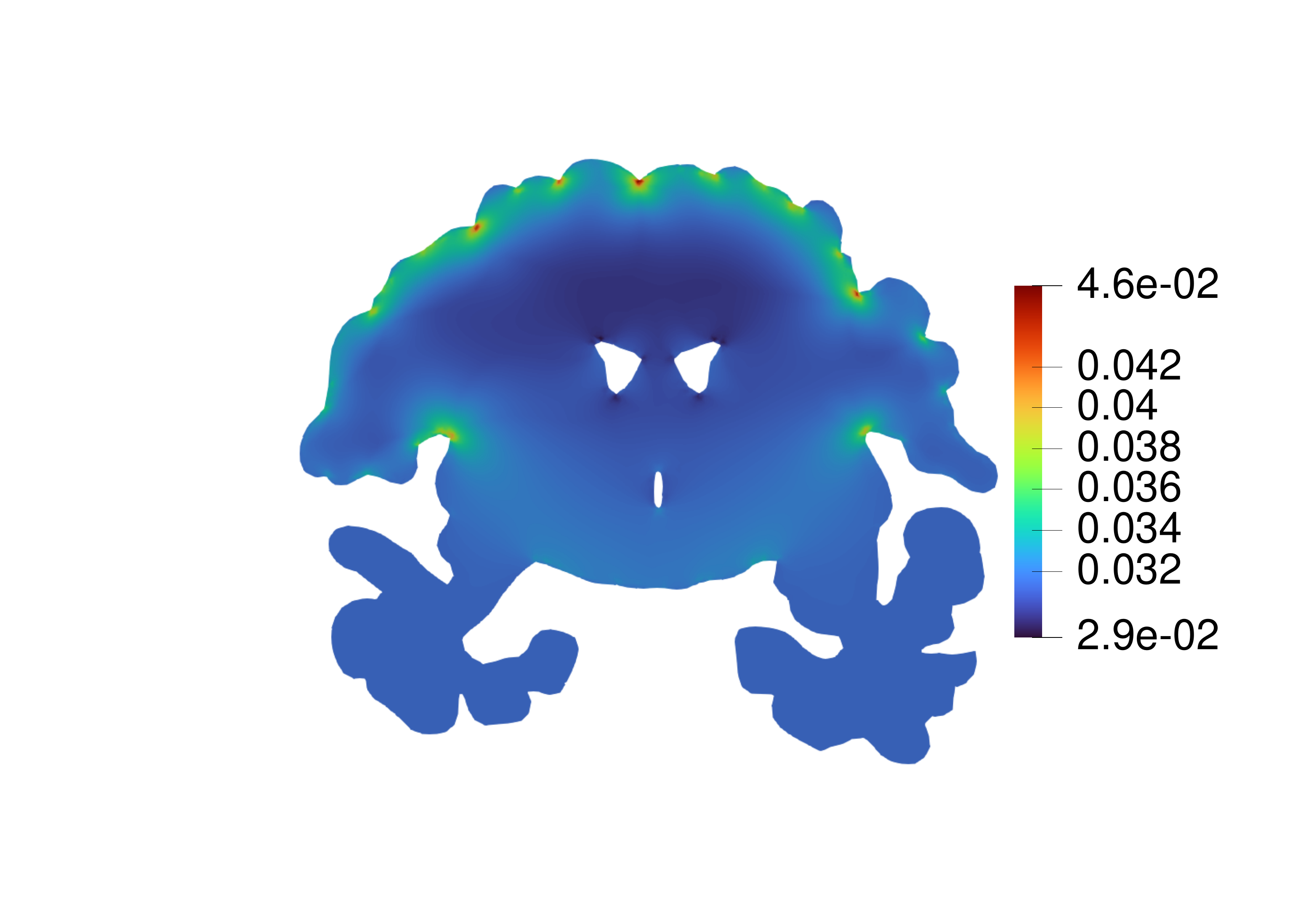}}  
    \subfigure[$|\bz_h^{\bPi}|$.]     {\includegraphics[width=0.33\textwidth,trim={10.75cm 6.25cm 2.75cm 6.25cm},clip]{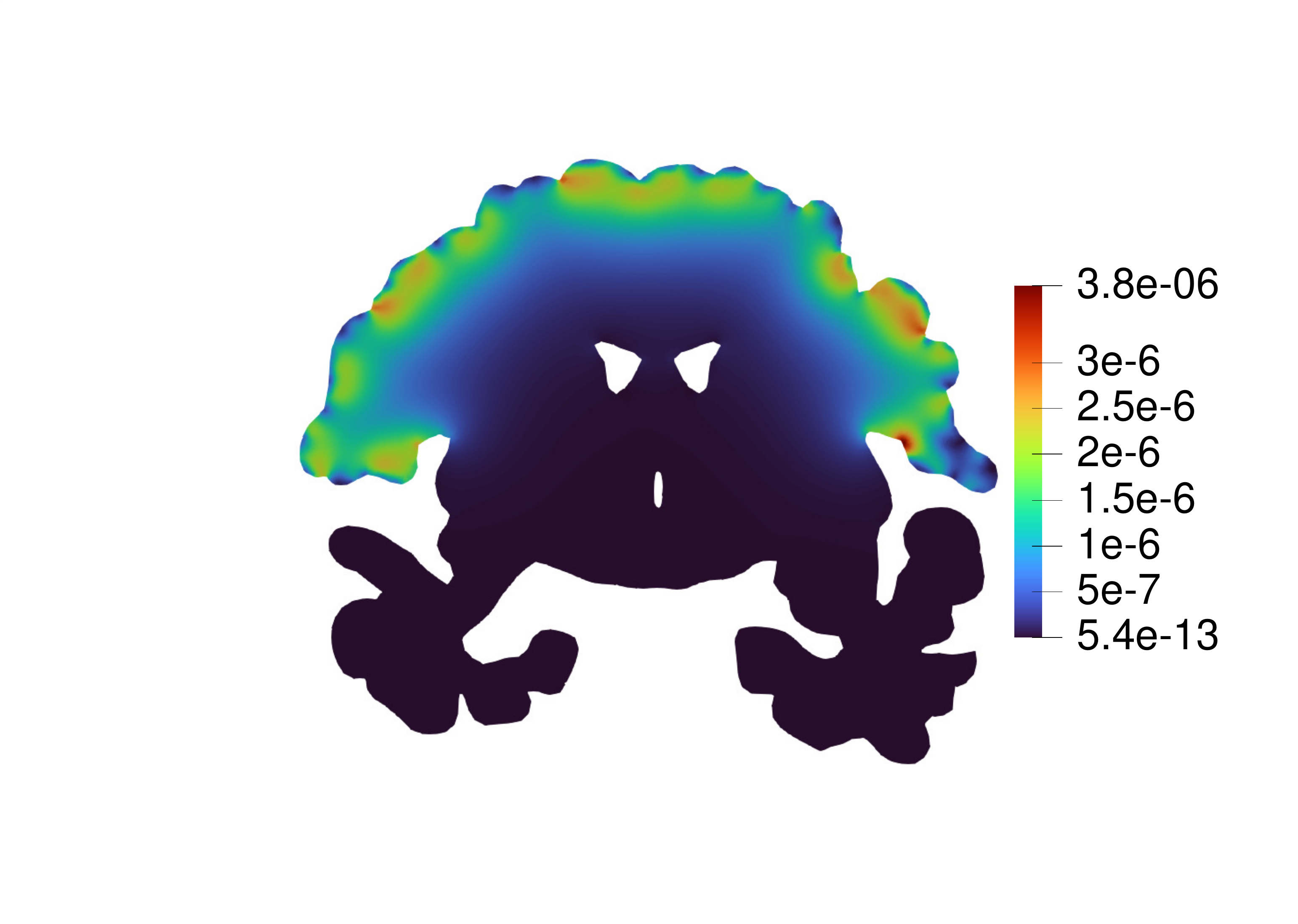}} 
    \subfigure[$|\bzeta_h^{\bPi}|$.]  {\includegraphics[width=0.33\textwidth,trim={10.75cm 6.25cm 2.75cm 6.25cm},clip]{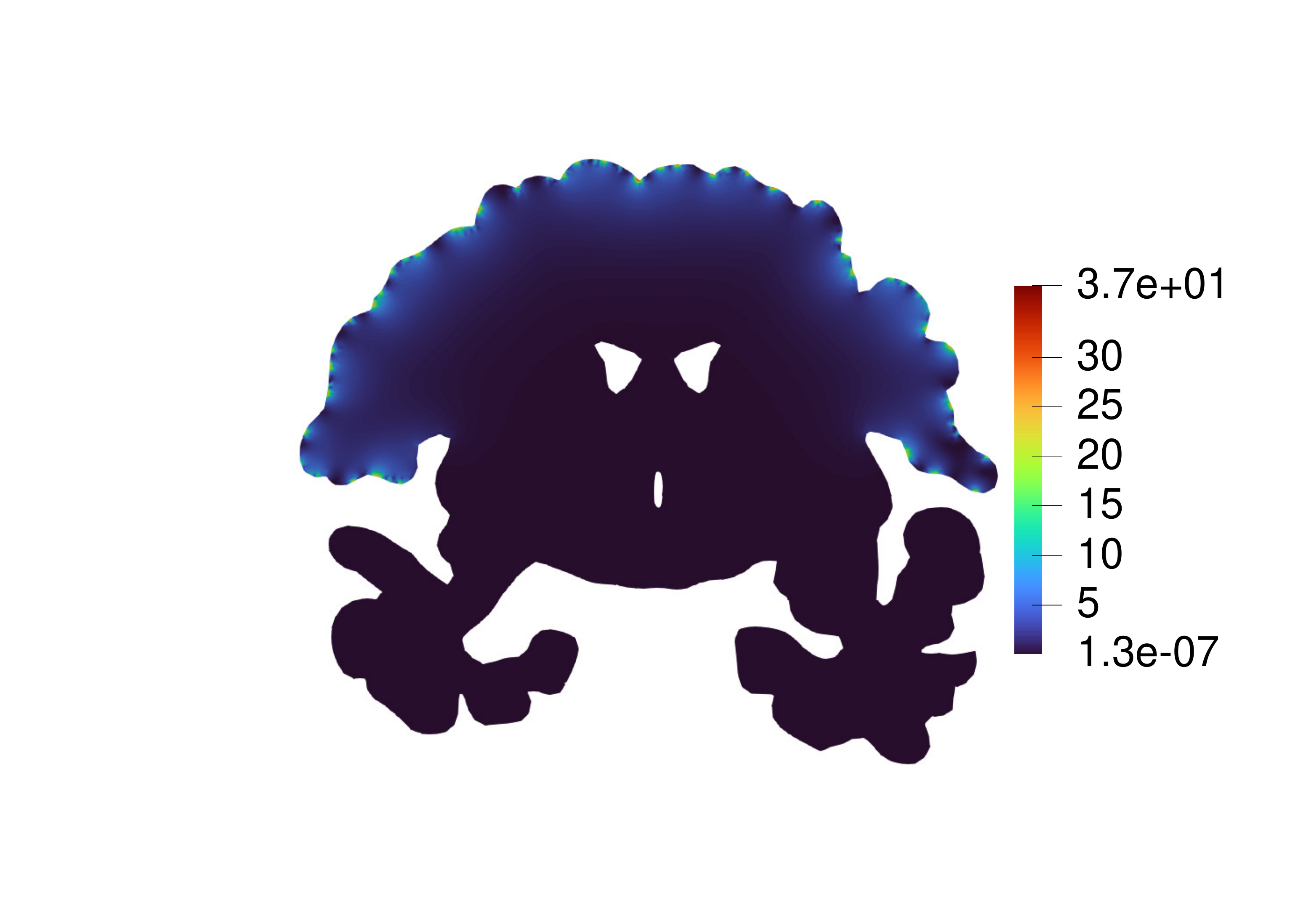}}
    \subfigure[$|\bu_h|$.]         {\includegraphics[width=0.33\textwidth,trim={10.75cm 6.25cm 2.75cm 6.25cm},clip]{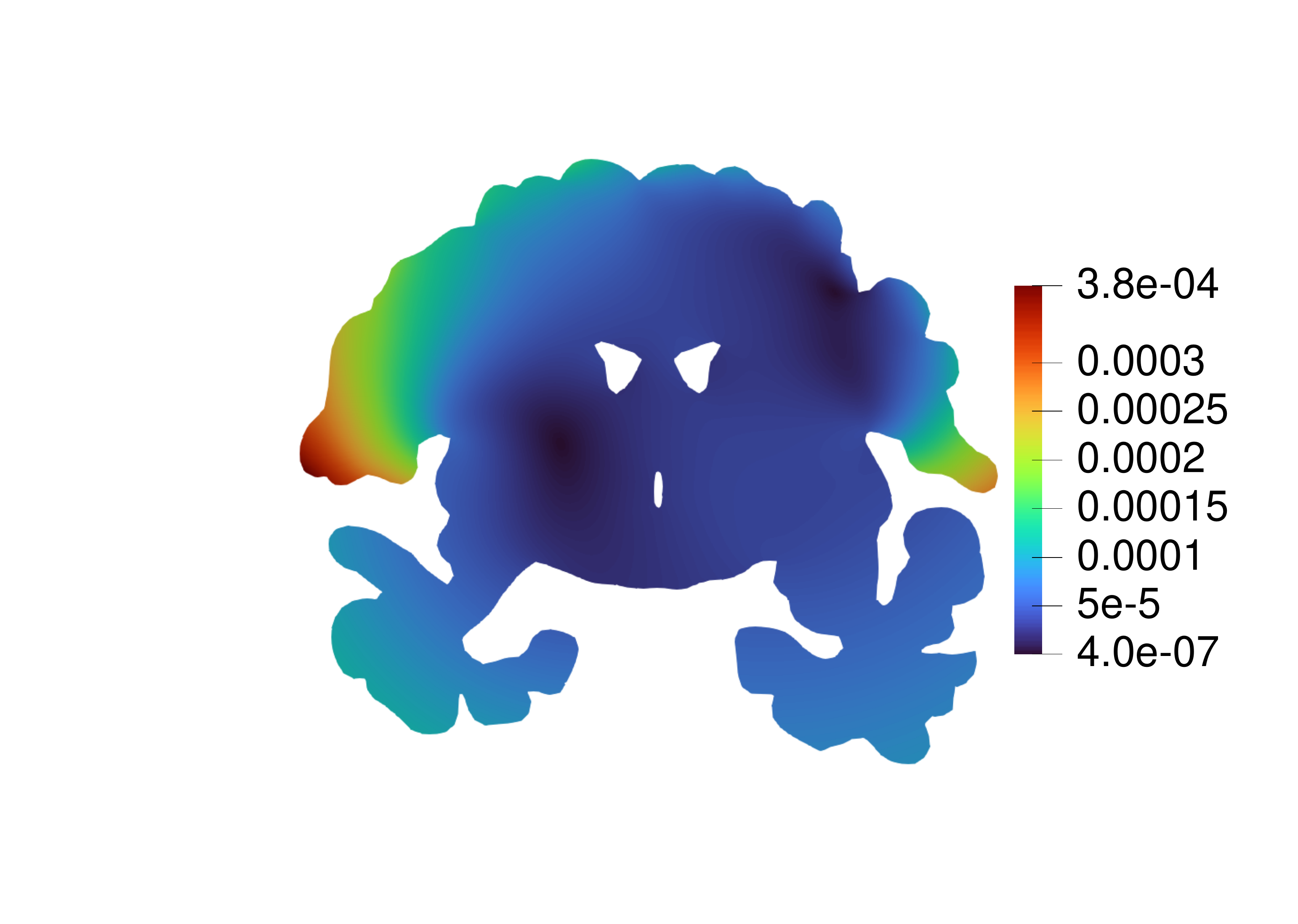}}
    \subfigure[$p_h$.]             {\includegraphics[width=0.33\textwidth,trim={10.75cm 6.25cm 2.75cm 6.25cm},clip]{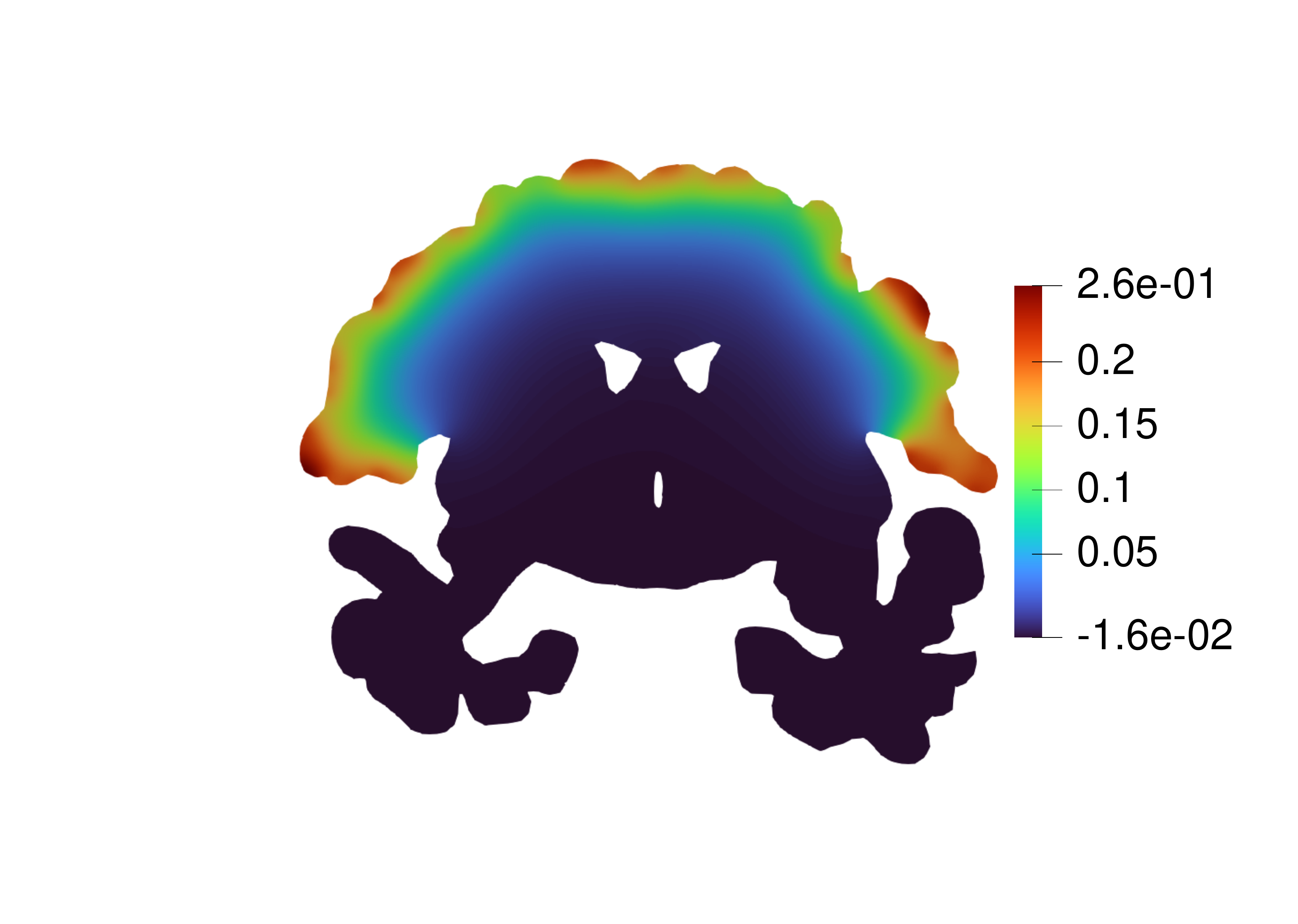}}
    \subfigure[$\varphi_h$.]       {\includegraphics[width=0.33\textwidth,trim={10.75cm 6.25cm 2.75cm 6.25cm},clip]{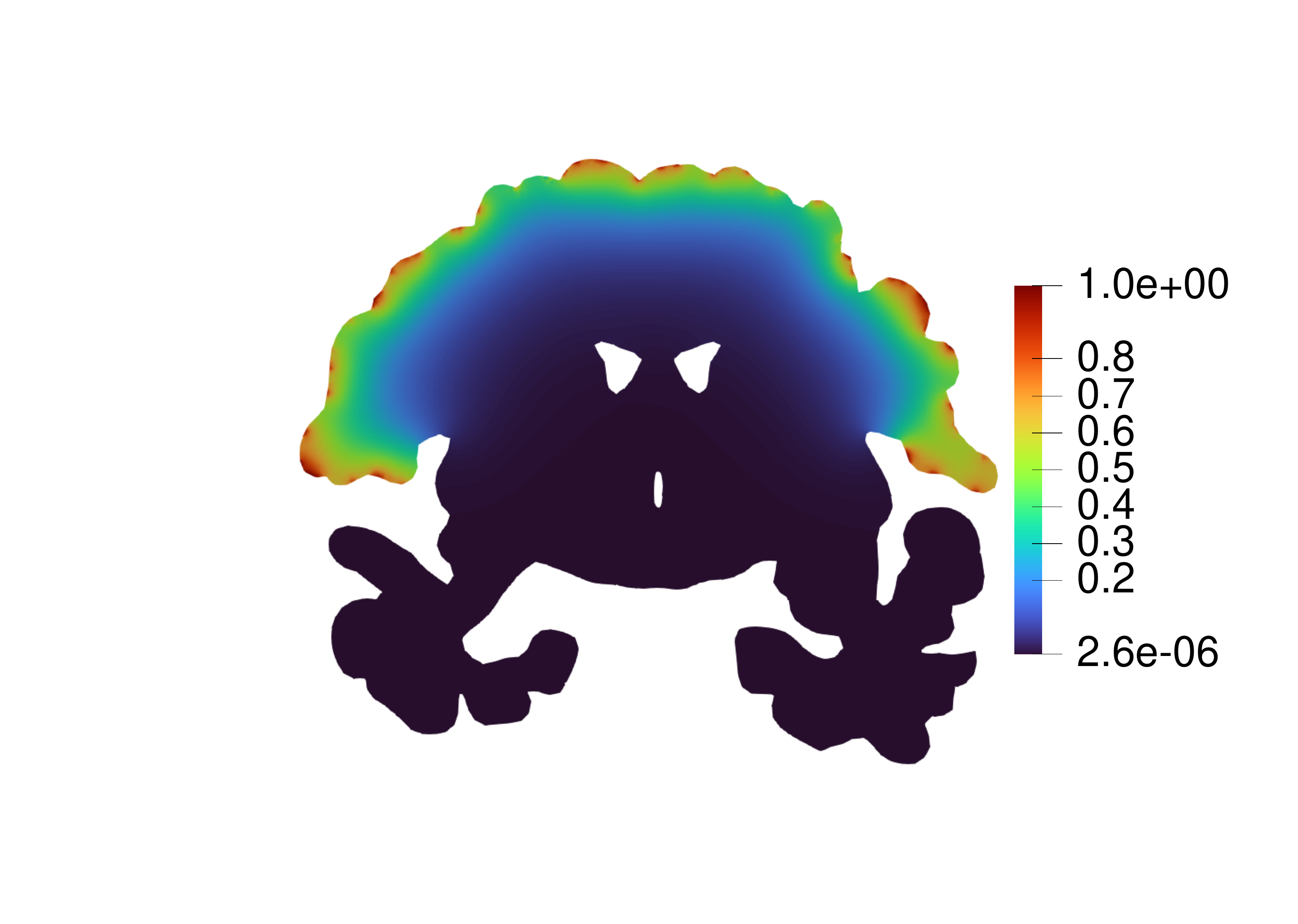}}
    \caption{Example 5. Snapshots of the variables of interest for the transport and accumulation of Amyloid-$\beta$ in brain tissue with randomly distributed concentration confined to 15\% of the neocortex. The geometry of the human brain is discretised with 19,999 Voronoi cells and the polynomial degree for VEM is set to $k=1$.}\label{fig:brainSols22DRand}
\end{figure}

\bigskip
\noindent\textbf{Acknowledgements.} 
We are thankful to Abner J. Salgado for pointing out an issue in the regularity of an auxiliary problem needed in a previous version of this manuscript. We also thank Michele Visinoni for his support in several aspects regarding the VEM implementation for the Hellinger--Reissner subproblem, and  Miroslav Kuchta for providing the brain geometries used in the application examples. 

\noindent\textbf{Funding.} This work has been supported by the Australian Research Council through the \textit{Future Fellowship}  FT220100496; by Vicerrector\'{\i}a de Investigaci\'on project C3088, Sede de Occidente, Universidad de Costa Rica; 
by the European Research Council under grant 101141807 (aCleanBrain);  by the foundation Stiftelsen Kristian Gerhard Jebsen through its center for Brain Fluids and
and by  the Center of Advanced Study (CAS) at the Norwegian Academy of Science and Letters under the program \textit{Mathematical Challenges in Brain Mechanics}.  Computational resources were provided by Monash eResearch, in partnership with the Faculties of Science, Engineering and IT, through the Monash allocation scheme on the National Computational Infrastructure (NCI), Australia (2025).

\bibliographystyle{siam}
\bibliography{references}

\end{document}